\newcommand{\C}{{\mathbb C}}       
\newcommand{\R}{{\mathbb R}}       
\newcommand{\MD}{{\mathcal {MD}}}
\newcommand{\wt}[1]{{\widetilde{#1}}}
\newcommand{\wh}[1]{{\widehat{#1}}}
\renewcommand{\le}{\leqslant}
\renewcommand{\ge}{\geqslant}
\newcommand{\complex}{\mathbb{C}}
\newcommand{\diam}{\text{\,\rm diam}}
\newcommand{\dist}{\text{\,\rm dist}}
\renewcommand{\Re}{\text{\sf Re\,}}
\newcommand{\spt}{\,\mathrm{spt}\,}
\newcommand{\LD}{{\mathsf{LD}}}
\newcommand{\HD}{{\mathsf{HD}}}
\newcommand{\UB}{{\mathsf{UB}}}
\newcommand{\nex}{{\mathsf{Next}}}
\newtheorem{theorem}{Theorem}
\newtheorem*{theorem*}{Theorem}
\newtheorem*{lemma*}{Lemma}
\newtheorem*{mainlemma*}{Main Lemma}
\newtheorem{lemma}{Lemma}
\newtheorem{corollary}{Corollary}
\theoremstyle{definition}
\newcommand{\ps}[1]{\left( #1 \right)}
\theoremstyle{remark}
\newtheorem{remark}{\bf Remark}
\numberwithin{equation}{section}
\newtheoremstyle{citedth}%
  {5pt}
  {5pt}
  {\itshape}
  {}
  {\bfseries}
  {.}
  {.3em}
  {\thmname{#1} \thmnumber{#2} \thmnote{\normalfont#3}}
\theoremstyle{citedth}
\begin{document}
\title[Singular integral operators which control the Cauchy transform]{A family of singular integral operators\\ which control the Cauchy transform}
\author{Petr Chunaev}
\address{Petr Chunaev, Departament de Matem\`{a}tiques, Universitat Aut\`{o}noma de Barce\-lo\-na,  08193 Bellaterra (Barcelona), Catalonia} \email{chunayev@mail.ru,
chunaev@mat.uab.cat}
\author{Joan Mateu}
\address{Joan Mateu, Departament de Matem\`{a}tiques, Universitat Aut\`{o}noma de Barcelona, 08193 Bellaterra (Barcelona), Catalonia} \email{mateu@mat.uab.cat}
\author{Xavier Tolsa}
\address{Xavier Tolsa, ICREA, Passeig Llu\'{\i}s Companys 23 08010 Barcelona, Catalonia, and 
Departament de Matem\`atiques and BGSMath,
Universitat Aut\`onoma de Barce\-lo\-na,
08193 Bellaterra (Barcelona), Catalonia
} \email{xtolsa@mat.uab.cat}
\keywords{Singular integral operator, Cauchy transform, rectifiability, corona type decomposition} \subjclass[2010]{42B20 (primary); 28A75 (secondary)}

\begin{abstract}
We study the behaviour of singular integral operators $T_{k_t}$ of convolution type on $\mathbb{C}$ associated with the parametric kernels
$$
k_t(z):=\frac{(\Re z)^{3}}{|z|^{4}}+t\cdot \frac{\Re z}{|z|^{2}}, \quad t\in \mathbb{R},\qquad k_\infty(z):=\frac{\Re z}{|z|^{2}}\equiv \Re \frac{1}{z},\quad z\in \mathbb{C}\setminus\{0\}.
$$
It is shown that for any positive locally finite Borel measure with linear growth the corresponding $L^2$-norm of $T_{k_0}$ controls
 the $L^2$-norm of $T_{k_\infty}$ and thus of the Cauchy transform.
As a corollary, we prove that the $L^2(\mathcal{H}^1\lfloor
E)$-boundedness of $T_{k_t}$ with a fixed $t\in (-t_0,0)$, where $t_0>0$ is an absolute constant, implies that $E$ is rectifiable. This is so in spite of the fact that the usual curvature method fails to be 
applicable in this case.  
 Moreover, as a corollary of our techniques, we provide an alternative and simpler proof of the bi-Lipschitz invariance of the $L^2$-boundedness of the Cauchy transform, which is the key ingredient for the bilipschitz invariance of analytic capacity.
\end{abstract}

\maketitle

\tableofcontents

\section{Introduction and Theorems}
\label{section1}
 In this paper we study the behaviour of
\textit{singular integral operators} (\textit{SIOs}) in the complex plane associated with the
kernels
\begin{equation}
\label{k_t}
k_t(z):=\frac{(\Re z)^{3}}{|z|^{4}}+t\cdot \frac{\Re z}{|z|^{2}}, \quad t\in \mathbb{R},\qquad k_\infty(z):=\frac{\Re z}{|z|^{2}}\equiv \Re \frac{1}{z},
\end{equation}
where $z\in \mathbb{C}\setminus\{0\}$. This topic was previously discussed in \cite{Chunaev2016,ChMT2016}.
{Among other things, we show that there exists $t_0>0$ such that, given $t\in(-t_0,0)$,
 the $L^2$-boundedness of $T_{k_t}$  implies the $L^2$-boundedness of a wide class of SIOs. We also establish the equivalence between the $L^2(\mu)$-boundedness of $T_{k_t}$  and the uniform rectifiability of $\mu$ in the case when $\mu$ is Ahlfors-David regular. Moreover, as a corollary of our techniques, we also provide an alternative and simpler proof of the bi-Lipschitz invariance of the $L^2$-boundedness of the Cauchy transform, which in turn implies the bilipschitz invariance of analytic capacity modulo constant factors.
  Note that analogous problem in higher dimensions for the Riesz transform is still an open challenging problem.

We start with necessary notation and background facts. Note that we
work only in $\mathbb{C}$ and therefore usually skip dimension
markers in definitions.

Let $E\subset \mathbb{C}$ be a Borel set and $B(z,r)$ be an open disc with center $z\in \mathbb{C}$ and radius~${r>0}$.
 We denote by $\mathcal{H}^1(E)$ the ($1$-dimensional) \textit{Hausdorff measure} of $E$, i.e. \textit{length}, and call $E$ a \textit{$1$-set} if $0<\mathcal{H}^1(E)<\infty$. A set $E$ is called \textit{rectifiable} if it is contained in a countable union of Lipschitz graphs, up to a set of $\mathcal{H}^1$-measure zero. A set $E$ is called \textit{purely unrectifiable} if it intersects any Lipschitz graph in a set of $\mathcal{H}^1$-measure zero.
 By a \textit{measure} often denoted by $\mu$ we mean a \textit{positive locally finite Borel measure on~$\mathbb{C}$}.

Given a measure $\mu$, a
kernel $k_t$ of the form (\ref{k_t}) and an ${f\in L^1(\mu)}$,
 we define the following truncated  SIO 
\begin{equation}
\label{integral_operator} T_{k_t,\varepsilon}f(z):=\int_{E \setminus
B(z,\varepsilon)}
f(\zeta)k_t(z-\zeta)d\mu(\zeta),\qquad \text{where }E=\spt \mu \quad \text{and}\quad \varepsilon>0.
\end{equation}
We do not define the SIO $T_{k_t}$ explicitly because several delicate
problems such as the existence of the principal value might arise. Nevertheless, we  say that $T_{k_t}$ is
\textit{$L^2(\mu)$-bounded} and write $\|T_{k_t}\|_{L^2(\mu)}<\infty$ if
the operators $T_{k_t,\varepsilon}$ are $L^2(\mu)$-bounded uniformly
on $\varepsilon$.

\textit{How to relate the $L^2(\mu)$-boundedness of a certain SIO to
the geometric properties of the support of $\mu$} is an old problem
in Harmonic Analysis. It stems from Calder\'{o}n~\cite{Calderon} and Coifman, McIntosh and Meyer \cite{CMM} who proved that the Cauchy transform is $L^2(\mathcal{H}^1\lfloor
E)$-bounded on  Lipschitz graphs $E$. In \cite{David} David fully characterized
rectifiable \textit{curves} $\Gamma$, for which the Cauchy transform
is $L^2(\mathcal{H}^1\lfloor \Gamma)$-bounded. These results led to
further development of tools for understanding the above-mentioned
problem.

Our purpose is to relate the $L^2(\mathcal{H}^1\lfloor
E)$-boundedness of $T_{k_t}$ associated with the kernel (\ref{k_t}) to
the geometric properties of $E$. Let us mention the known results (we formulate them in a slightly different form than in the original papers).
 In what follows we suppose that $E\subset\mathbb{C}$ is a $1$-set.

The first one is due to David and L\'{e}ger \cite{L} and related to
$k_\infty$, i.e. the real part of the Cauchy kernel, although it was
proved for the Cauchy kernel originally:
 $$
\|T_{k_\infty}\|_{L^2(\mathcal{H}^1\lfloor E)}<\infty \quad
\Rightarrow \quad E \text{ \textit{is rectifiable}}. \eqno{(A)}
$$
This is a very difficult result which generalizes the classical one
of Mattila, Melnikov and Verdera \cite{MMV} for Ahlfors-David
regular sets $E$. As in \cite{MMV}, the proof in \cite{L} uses the
so called \textit{Menger} curvature and the fact that it is
non-negative. Since we use similar tools, all the necessary
definitions will be given below.

A natural question arose consisting in proving analogues of  $(A)$ for SIOs associated with kernels different from
 the Cauchy kernel or its coordinate parts, see
\cite{MMV,CMPT}. Recently Chousionis, Mateu, Prat and Tolsa
\cite{CMPT} gave the first non-trivial example of such SIOs. Namely,
they proved the following implication:
$$
\|T_{k_0}\|_{L^2(\mathcal{H}^1\lfloor E)}<\infty \quad \Rightarrow
\quad E \text{ \textit{is rectifiable}}. \eqno{(B)}
$$

The authors of \cite{CMPT} used a curvature type method. It
allowed them to modify the required parts of the proof from \cite{L}
to obtain their result. Extending this technique, Chunaev
\cite{Chunaev2016} proved that the same is true for a quite large
range of the parameter $t$, additionally to $t=0$ and $t=\infty$:
$$
\|T_{k_t}\|_{L^2(\mathcal{H}^1\lfloor E)}<\infty \text{ \textit{for a fixed }}
t\in (-\infty, -2]\cup (0,+\infty)\quad \Rightarrow \quad E \text{
\textit{is rectifiable}}. \eqno{(C)}
$$

It is also shown in \cite{Chunaev2016} that a direct curvature type
method cannot be applied for $t\in(-2,0)$. Moreover, it is known that for some of these $t$ there exist
counterexamples to the above-mentioned implication due to results of
Huovinen \cite{H} and Jaye and Nazarov \cite{JN}:
$$
t=-1 \text{ \textit{or} } t=-\tfrac{3}{4} \quad \Rightarrow\quad\exists \text{ \textit{purely unrectifiable} } E:
\|T_{k_t}\|_{L^2(\mathcal{H}^1\lfloor E)}<\infty .  \eqno{(D)}
$$

Note that the examples by Huovinen and Jaye and Nazarov  are
 different and essentially use the analytical properties of each of
the kernels. Moreover, the corresponding constructions are quite
complicated and this apparently indicates that
 constructing such examples for some more or less special class
of  kernels is not an easy task. This is an example of
the difficulty of dealing with $T_{k_t}$ for $t\in(-2,0)$. We however
succeeded in \cite{ChMT2016} in proving the following result:
$$
\|T_{k_t}\|_{L^2(\mathcal{H}^1\lfloor E)}<\infty \text{ \textit{for a fixed } }
t\in (-2,-\sqrt{2})\quad \Rightarrow \quad E \text{ \textit{is
rectifiable}}. \eqno{(E)}
$$

This is the first example in the plane when the curvature method cannot be
applied directly (as the corresponding pointwise curvature-like
expressions called \textit{permutations} change sign) but it can
still be proved that $L^2$-boundedness implies rectifiability.

The aim of this paper is to move forward in understanding  the behaviour of $T_{k_t}$ for a fixed $t\in(-2,0)$ when direct curvature methods are not available.
First we prove the following.
\begin{theorem}
\label{paper3-theorem1} 
There exist absolute constants $t_0>0$ and $c>0$ such that for any finite measure $\mu$ with $C_*$-linear
growth it holds that
\begin{equation}
\label{paper3-L^2-norms} \sup_{\varepsilon>0}\|T_{k_\infty,\varepsilon} 1\|_{L^2(\mu)}\le
t_0^{-1}\sup_{\varepsilon>0}\|T_{k_0,\varepsilon}
1\|_{L^2(\mu)}+cC_*\sqrt{\mu(\mathbb{C})}.
\end{equation}
\end{theorem}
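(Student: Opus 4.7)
The plan is to expand both squared $L^2(\mu)$ norms in \rf{paper3-L^2-norms} via symmetrisation and reduce the theorem to an integral inequality between three-point permutations. For each kernel $k\in\{k_\infty,k_0\}$,
\begin{equation*}
\|T_{k,\varepsilon}1\|^2_{L^2(\mu)} = \iiint_{|x-y|,|x-z|>\varepsilon} k(x-y)\,k(x-z)\,d\mu(x)\,d\mu(y)\,d\mu(z),
\end{equation*}
and I would first pass to the fully symmetric truncation $\Delta_\varepsilon:=\{|x-y|,|y-z|,|x-z|>\varepsilon\}$; the discrepancy, concentrated on $\{|y-z|\le\varepsilon\}$ with the remaining pair at distance $>\varepsilon$, is controlled by a dyadic Schur estimate using $\mu(B(w,r))\le C_* r$ and contributes $O(C_*^2\mu(\C))$. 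Averaging the integrand over cyclic permutations of $(x,y,z)$ and applying the Melnikov-type identities ($\Re(a)\Re(b)=\tfrac12\Re(ab)+\tfrac12\Re(a\bar b)$ and $\sum_{\mathrm{cyc}}1/[(x-y)(x-z)]=0$) produces
\begin{equation*}
\|T_{k_\infty,\varepsilon}1\|^2_{L^2(\mu)} = \tfrac{1}{12}\iiint_{\Delta_\varepsilon} c(x,y,z)^2\,d\mu^3 + O(C_*^2\mu(\C)),
\end{equation*}
with $c(\cdot)$ the Menger curvature, and analogously $\|T_{k_0,\varepsilon}1\|^2_{L^2(\mu)} = \tfrac{1}{3}\iiint_{\Delta_\varepsilon} p_{k_0}\,d\mu^3 + O(C_*^2\mu(\C))$, where $p_{k_0}(x,y,z)$ is the three-point permutation of $k_0$. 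Thus \rf{paper3-L^2-norms}, after squaring, reduces to an integral inequality of the form $\iiint_{\Delta_\varepsilon} c^2\,d\mu^3 \le 4t_0^{-2}\iiint_{\Delta_\varepsilon} p_{k_0}\,d\mu^3 + O(C_*^2\mu(\C))$.

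The central obstacle is the absence of any pointwise bound $c^2 \lesssim p_{k_0}$. Using the elementary identity $k_0=\tfrac34 k_\infty+\tfrac14 g$ with $g(z):=\Re(z^3/|z|^4)=\cos(3\arg z)/|z|$ expands $p_{k_0}$ as
\begin{equation*}
p_{k_0} = \tfrac{9}{64}\,c^2 + \tfrac{3}{16}\,P_{k_\infty,g} + \tfrac{1}{16}\,p_g,
\end{equation*}
where $P_{k_\infty,g}$ is the mixed cross-permutation and $p_g$ the permutation of $g$; and a direct calculation for a thin nearly vertical triple $(x,y,z)=(0,\delta,i)$ with $\delta\downarrow 0$ gives $c^2\to 4$, $P_{k_\infty,g}\to -2$ and $p_g\to -3$, so the three contributions cancel and $p_{k_0}\to 0$. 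Such triples carry full curvature but are essentially ``invisible'' to $T_{k_0}$, so any valid comparison must live at the integral level.

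To produce the integral inequality I would use a corona/stopping-time decomposition of $\mu$ at dyadic scales, adapted both to the size of $T_{k_0,\varepsilon}1$ and to the local orientation of $\spt\mu$ with respect to the real axis. On cells where $\mu$ is close to a Lipschitz graph of small slope, the pointwise bound $p_{k_0}\gtrsim c^2$ holds and the local inequality is immediate. On cells with a strongly non-horizontal preferred direction, the bad triples concentrate near a vertical segment inside the cell, and their contribution to $\iiint c^2\,d\mu^3$ at that scale is absorbed by an $L^2$ estimate for the third-harmonic SIO $T_g$ together with a Cauchy--Schwarz bound on the cross-permutation, using only the linear growth $C_*$. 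Summing the local estimates over cells and scales, with the Carleson/packing summation controlled by $\mu(\C)$, gives the integral inequality with an absolute constant $t_0>0$. Taking square roots and using $\sqrt{a+b}\le\sqrt a+\sqrt b$ then yields \rf{paper3-L^2-norms}. The main technical difficulty is the corona analysis, in particular choosing the local orientation consistently across scales so that the third-harmonic cancellation in $T_g$ can be harnessed.
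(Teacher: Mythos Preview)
Your reduction is correct and matches the paper exactly: symmetrisation gives the Melnikov--Verdera identity, and the theorem reduces to the integral inequality $p_\infty(\mu)\le t_0^{-2}p_0(\mu)+cC_*^2\mu(\C)$ (the paper's Main Lemma). Your diagnosis of the obstacle --- no pointwise bound $c^2\lesssim p_{k_0}$, with thin near-vertical triples as the enemy --- is also the paper's starting point, and a corona decomposition sensitive to the orientation of the local best-approximating line is indeed the right tool.

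The gap is in your handling of the ``strongly non-horizontal'' cells. You propose to absorb their curvature contribution by an $L^2$ estimate for $T_g$ (with $g(z)=\Re(z^3)/|z|^4$) plus Cauchy--Schwarz on the cross-permutation. But no $L^2(\mu)$ bound on $T_g$ is available from linear growth alone, and even if it were, the associated permutation $p_g$ is not sign-definite, so a Melnikov--Verdera identity for $T_g$ does not produce a usable one-sided inequality. This step, as written, does not close.

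The paper's mechanism for vertical cells is geometric rather than operator-theoretic. A stopping condition $\mathsf{BS}(R)$ (``big slope'') is triggered when the best approximating line $L_Q$ makes a large angle with $L_R$. The key observation is that if $R$ is near-vertical (i.e.\ $\theta_V(L_R)$ is small), then any $Q\in\mathsf{BS}(R)$ automatically has $\theta_V(L_Q)$ bounded below by a fixed multiple of the angular threshold --- so $Q$ is far from vertical. These $Q$ become new top cubes in the corona hierarchy, and on the trees below them the pointwise comparison $p_\infty\lesssim_{\theta_0} p_0$ (your ``small slope'' case, via the $\mathrm{V}_{\sf Far}$ condition) applies. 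The packing of the $\mathsf{BS}$ family for far-from-vertical $R$ is then controlled by transferring the problem to a Lipschitz graph $\Gamma_R$ and using that $\|F'\|_2^2\approx p_\infty(\mathcal H^1_{\Gamma_R})$ is small when the tree permutations are small. So the vertical case is handled not by a direct estimate but by a one-step orientation flip that feeds back into the horizontal case.
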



Note that, by \cite[Lemma 3]{ChMT2016} under the same assumptions
on $\mu$,
\begin{equation}
\label{L^2-norms-opposite}  \|T_{k_0,\varepsilon} 1\|_{L^2(\mu)}\le 
\sqrt{2}  \|T_{k_\infty,\varepsilon} 1\|_{L^2(\mu)}
+cC_*\sqrt{\mu(\mathbb{C})},
\end{equation}
where $\varepsilon>0$ and $c>0$ is independent of $\varepsilon$.
With respect to the proof of (\ref{L^2-norms-opposite}) in
\cite{ChMT2016}, the proof of (\ref{paper3-L^2-norms}) is more difficult as we will see in this paper.

Denote by $C_\mu$ the Cauchy transform with respect $\mu$. That is,
$$C_\mu f(z) = \int \frac1{z-\xi}\,f(\xi)\,d\mu(\xi).$$ 
From Theorem~\ref{paper3-theorem1} and a perturbation argument, using the same $t_0$, we will
show the next result.

\begin{theorem}
\label{teonou} 
Let $\mu$ be a measure
with linear growth and  $t\in  (-t_0,0)$. If the SIO $T_{k_t}$  is $L^2(\mu)$-bounded, then
the Cauchy transform $C_\mu$ is also $L^2(\mu)$-bounded.
\end{theorem}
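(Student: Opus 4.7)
The strategy is a perturbation argument based on the algebraic identity $k_t=k_0+t\,k_\infty$, together with Theorem~\ref{paper3-theorem1} applied to the restriction $\mu_B:=\mu|_B$ of $\mu$ to an arbitrary ball $B$, followed by Melnikov--Verdera symmetrization to pass from $L^2(\mu)$-control of $T_{k_\infty}$ to $L^2(\mu)$-control of $C_\mu$.

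Fix a ball $B\subset\mathbb{C}$. Since $\mu_B$ is a finite measure with the same linear growth constant $C_*$ as $\mu$, Theorem~\ref{paper3-theorem1} applied to $\mu_B$ yields
\begin{equation*}
\sup_{\varepsilon>0}\|T_{k_\infty,\varepsilon}1_B\|_{L^2(\mu_B)}\le t_0^{-1}\sup_{\varepsilon>0}\|T_{k_0,\varepsilon}1_B\|_{L^2(\mu_B)}+cC_*\mu(B)^{1/2}.
\end{equation*}
Combining the identity $T_{k_0,\varepsilon}=T_{k_t,\varepsilon}-t\,T_{k_\infty,\varepsilon}$ with the uniform $L^2(\mu)$-boundedness of $T_{k_t}$, which gives $\|T_{k_t,\varepsilon}1_B\|_{L^2(\mu_B)}\le K\mu(B)^{1/2}$, and using $|t|/t_0<1$ to absorb the arising $\|T_{k_\infty,\varepsilon}1_B\|_{L^2(\mu_B)}$-term on the left-hand side, I obtain the uniform localized estimate
\begin{equation*}
\sup_{\varepsilon>0}\|T_{k_\infty,\varepsilon}1_B\|_{L^2(\mu_B)}\le C\,\mu(B)^{1/2}\qquad\text{for every ball }B.
\end{equation*}
Since $k_\infty$ is antisymmetric and $\mu$ has linear growth, a non-homogeneous $T(1)$ theorem then upgrades this localized bound to the $L^2(\mu)$-boundedness of $T_{k_\infty}$.

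To pass from $T_{k_\infty}$ to $C_\mu$, I invoke Melnikov--Verdera symmetrization. A direct computation using $\Re(a)\Re(b)=\tfrac12(\Re(ab)+\Re(a\bar b))$ together with the classical MMV identity \cite{MMV} shows that the permutation
\begin{equation*}
p_{k_\infty}(z_1,z_2,z_3):=\sum_{\sigma\in S_3}k_\infty(z_{\sigma(1)}-z_{\sigma(2)})\,k_\infty(z_{\sigma(1)}-z_{\sigma(3)})=\tfrac12\,c(z_1,z_2,z_3)^2,
\end{equation*}
where $c$ is the Menger curvature. Symmetrizing both $\|T_{k_\infty,\varepsilon}1_B\|_{L^2(\mu_B)}^2$ and $\|C_{\mu,\varepsilon}1_B\|_{L^2(\mu_B)}^2$, modulo boundary terms of size $O(\mu(B))$ controlled by linear growth, yields $\|C_{\mu,\varepsilon}1_B\|_{L^2(\mu_B)}^2\lesssim\|T_{k_\infty,\varepsilon}1_B\|_{L^2(\mu_B)}^2+\mu(B)$, uniformly in $\varepsilon$ and $B$; a final application of non-homogeneous $T(1)$ concludes. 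The principal technical point is the absorption step: one must strictly have $|t|<t_0$ with the same $t_0$ as in Theorem~\ref{paper3-theorem1} so that the coefficient $1-|t|/t_0$ is positive; the remaining steps (symmetrization and $T(1)$) are standard machinery.
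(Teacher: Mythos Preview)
Your proof is correct and follows essentially the same perturbation argument as the paper: apply Theorem~\ref{paper3-theorem1} to the restricted measure, use $k_t=k_0+t\,k_\infty$ together with the $L^2(\mu)$-boundedness of $T_{k_t}$ to absorb the $T_{k_\infty}$ term (which is possible precisely when $|t|<t_0$), and then invoke a non-homogeneous $T(1)$ theorem. The paper restricts to cubes rather than balls and cites \cite[Theorem 9.40]{Tolsa_book} for the $T(1)$ step, but this is purely cosmetic; your additional explicit Melnikov--Verdera symmetrization argument for passing from $T_{k_\infty}$ to $C_\mu$ spells out what the paper leaves implicit in the final line of its proof.
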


See also Corollary \ref{paper3-corollary2}  below for a more
general statement.

As an immediate consequence of Theorem \ref{teonou} and the statement $(A)$, we obtain the following.

\begin{corollary}
\label{paper3-theorem2} 
Let $t\in (-t_0,0)$. 
If $\|T_{k_t}\|_{L^2(\mathcal{H}^1\lfloor E)}<\infty$, then $E$ is rectifiable.
\end{corollary}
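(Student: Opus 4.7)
The corollary is a short chain of implications combining Theorem~\ref{teonou} with the David--L\'eger result~$(A)$; no new analytic machinery is needed, and the plan is purely to assemble the pieces. First I would set $\mu:=\mathcal{H}^1\lfloor E$ and verify that $\mu$ has linear growth. This is a standard consequence of the $L^2(\mu)$-boundedness of a sufficiently non-degenerate singular integral operator such as $T_{k_t}$; alternatively, since $E$ is a $1$-set, one may restrict to a large subset on which linear growth holds by a Tchebychev-type selection.

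Second, because $t\in(-t_0,0)$, Theorem~\ref{teonou} applied to $\mu$ yields directly that the Cauchy transform $C_\mu$ is $L^2(\mu)$-bounded.

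Third, I would pass from $C_\mu$ to $T_{k_\infty}$ by taking real parts. Since $k_\infty(z)=\Re(1/z)$, for every real-valued $f\in L^2(\mu)$ and every $\varepsilon>0$ the truncated operator $T_{k_\infty,\varepsilon}f$ equals the real part of the analogously truncated Cauchy transform of $f$; hence it is dominated pointwise, and therefore in $L^2(\mu)$, by the truncated $C_\mu$, uniformly in $\varepsilon$. The extension to complex-valued $f$ via the decomposition $f=\Re f+i\,\Im f$ is immediate, so $T_{k_\infty}$ inherits $L^2(\mu)$-boundedness from $C_\mu$.

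Finally, the David--L\'eger theorem~$(A)$ applied to $T_{k_\infty}$ delivers the rectifiability of $E$. The only step that requires any care is the verification of the linear-growth hypothesis of Theorem~\ref{teonou}; everything else is a formal composition, the substantive content being already packaged in Theorem~\ref{teonou} and in~$(A)$. Accordingly, I do not anticipate a genuine obstacle: the difficulty of the corollary has been entirely absorbed into the proof of Theorem~\ref{paper3-theorem1} (and hence of Theorem~\ref{teonou}) carried out elsewhere in the paper.
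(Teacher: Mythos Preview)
Your proposal is correct and matches the paper's approach: the paper states the corollary as ``an immediate consequence of Theorem~\ref{teonou} and the statement $(A)$'' without further elaboration, and you have simply spelled out the implicit steps (linear growth of $\mathcal{H}^1\lfloor E$, then Theorem~\ref{teonou}, then $(A)$). Your step~3 could even be omitted, since the David--L\'eger theorem was originally proved for the Cauchy kernel itself, but following the paper's formulation of $(A)$ via $T_{k_\infty}$ is equally valid.
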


This corollary complements the assertions $(A)-(E)$ so that we have
the overall picture as in Figure~\ref{FIG1}. It is clear from $(D)$ that
necessarily $t_0\in (0,3/4)$. What is more, it is very important
here that the pointwise curvature-like expressions (\textit{permutations}), corresponding to $t\in (-t_0,0)$, also
change sign as in $(E)$ so that the curvature method cannot be
applied directly but  $L^2$-boundedness
still implies rectifiability.

\begin{figure}
  \includegraphics[width=15cm]{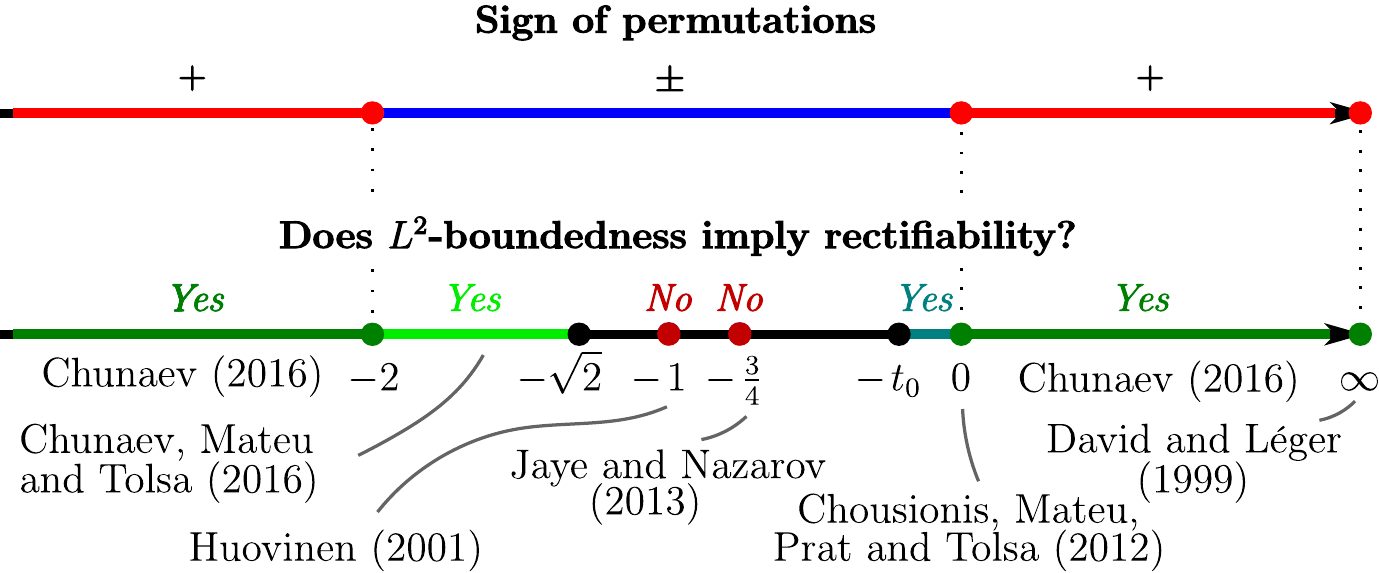}\\
 \caption{The overall picture for SIO associated with the
 kernels~$k_t$}
  \label{FIG1}
\end{figure}

\begin{remark}
By simple analysis one can show that the kernel $k_t$ has
 $$
 \left\{
 \begin{array}{l}
   \text{one zero line if } t\in (-\infty,-1)\cup [0,\infty], \\
   \text{two zero lines if } t=-1, \\
   \text{three zero lines if } t\in (-1,0).
 \end{array}
 \right.
 $$
By a zero line we mean a straight line $L\subset \mathbb{C}$ such
that $k_t(z)=0$ for $z\in L$.

In this sense, it is interesting to compare Corollary~\ref{paper3-theorem2} with the part of $(D)$ deduced from
\cite{JN}. Observing Figure~\ref{FIG1}, one can see that the number of zero lines along is not determinant.
\end{remark}
\begin{remark}
Let $t_1$ and $t_2$ be such that $-\sqrt{2}\le t_1<t_2\le -t_0$. If there exist finite \textit{purely
unrectifiable} (i.e. concentrated on purely
unrectifiable sets) measures $\mu_1$ and $\mu_2$ with linear growth such that  $T_{k_{t_1}}$ is
$L^2(\mu_1)$-bounded and  $T_{k_{t_2}}$ is
$L^2(\mu_2)$-bounded, then $\mu_1$ is different from $\mu_2$.

Indeed, let $\mu$ be a finite purely unrectifiable measure with linear growth such that
$T_{k_{\tilde{t}}}$ is $L^2(\mu)$-bounded for a fixed $\tilde{t}\in
[-\sqrt{2},-t_0]$. By the triangle inequality for any real $t$,
$$
\|T_{k_t}1\|_{L^2(\mu)}=\|(T_{k_0}+(t-\tilde{t})\cdot T_{k_\infty} +\tilde{t} \cdot T_{k_\infty})1\|_{L^2(\mu)}\ge
|t-\tilde{t}|\|T_{k_\infty}1\|_{L^2(\mu)}-\|T_{k_{\tilde{t}}}1\|_{L^2(\mu)}.
$$
Consequently,
$\|T_{k_t}1\|_{L^2(\mu)}=\infty$ for all $t\neq \tilde{t}$ as
$\|T_{k_\infty}1\|_{L^2(\mu)}=\infty$ since $\mu$ is purely unrectifiable.
Thus an example of a purely unrectifiable measure $\mu$ such that
$T_{k_{\tilde{t}}}$ is $L^2(\mu)$-bounded for a fixed
$\tilde{t}\in[-\sqrt{2},-t_0]$ does not work for $t\neq \tilde{t}$.
\end{remark}

\section{Notation and definitions}

\subsection{Constants} We use the letters $c$ and $C$ to denote
constants which may change their values at different occurrences. On
the other hand, constants with subscripts such as $A_0$ or ${\sf c}_1$ do not
change their values throughout the paper. In a majority of cases
constants depend on some parameters which are usually indicated explicitly and will be fixed at the very
end so that the constants become absolute.

If there is a constant $C$ such that $A\le C\,B$, we write
$A\lesssim B$. Furthermore, $A\approx B$ is equivalent to saying
that $A\lesssim B\lesssim A$, possible with different implicit
constants. If the implicit constant in expressions with
``$\lesssim$'' or ``$\approx$'' depends on some positive parameter,
say, $\alpha$, we write  $A\lesssim_{\alpha} B$ or $A\approx_\alpha
B$.

\subsection{Curvature and permutations of measure}
For an odd and  real-valued kernel
$K$, consider the following \textit{permutations}:
\begin{equation}
\label{paper1-permutation_Main}
\begin{split}
&p_K(z_1,z_2,z_3)\\
&:=K(z_{1}-z_{2})K(z_{1}-z_{3})+K(z_{2}-z_{1})K(z_{2}-z_{3})+K(z_{3}-z_{1})K(z_{3}-z_{2}).
\end{split}
\end{equation}
Supposing that $\mu_1$, $\mu_2$ and $\mu_3$ are measures, set
\begin{equation}
\label{p_K(mu)} p_K(\mu_1,\mu_2,\mu_3):=\iiint
p_K(z_1,z_2,z_3)\;d\mu_1(z_1)\,d\mu_2(z_2)\,d\mu_3(z_3).
\end{equation}
We write $p_K(\mu):=p_K(\mu,\mu,\mu)$ for short and call it
\textit{permutation of the measure $\mu$}. Moreover, in what follows $p_{K,\varepsilon}(\mu_1,\mu_2,\mu_3)$  stands  for the integral in the right hand side of (\ref{p_K(mu)}) defined over the set
\begin{equation*}
\{(z_1,z_2,z_3)\in \mathbb{C}^3:
|z_k-z_j|\ge \varepsilon>0, \quad 1\le k,j\le 3, \quad j\neq k\},
\end{equation*}
and $p_{K,\varepsilon}(\mu):=p_{K,\varepsilon}(\mu,\mu,\mu)$.

Identities similar to (\ref{paper1-permutation_Main}) and (\ref{p_K(mu)}) were first considered by Melnikov \cite{M} in the case of the Cauchy kernel $K(z)=1/z$. It can be easily seen that in the related case of $K(z)=\Re (1/z)=k_\infty(z)$ one has
\begin{equation}
\label{curvature_pointwise}
p_{k_\infty}(z_1,z_2,z_3)=\tfrac{1}{4}c(z_1,z_2,z_3)^2,
\end{equation}
where 
$$
c(z_1,z_2,z_3) =\frac{1}{R(z_1,z_2,z_3)}
$$
is the so called {\it Menger curvature} and $R(z_1,z_2,z_3)$ stands for the radius of the circle passing through $z_1$, $z_2$ and $z_3$. Clearly, $c(z_1,z_2,z_3) \ge 0$ for any  $(z_1,z_2,z_3)\in \mathbb{C}^3$ which is very important in applications. In what follows, $c^2(\mu):=4p_{k_\infty}(\mu)$ and $c^2_{\varepsilon}(\mu):=4p_{k_\infty,\varepsilon}(\mu)$ for a measure $\mu$.

The permutations (\ref{paper1-permutation_Main}) and (\ref{p_K(mu)})  for more general kernels $K$ were considered later by Chousionis, Mateu, Prat and Tolsa in \cite{CMPT} (see also \cite{CMPTcap}).

Now let $K$ be an odd real-valued Calder\'{o}n-Zygmund (i.e. satisfying well-known growth and smoothness conditions) kernel with permutations (\ref{paper1-permutation_Main}), being non-negative for any  $(z_1,z_2,z_3)\in \mathbb{C}^3$. If $\mu$ has \textit{$C_*$-linear growth}, i.e. there exists a
constant $C_*>0$ such that
\begin{equation}
\label{linear_growth}
\mu(B(z,r))\le C_*r\qquad \text{for all }  r>0 \text{ and }z\in \spt \mu,
\end{equation}
then the following relation between $p_K(\mu)$ and the $L^2(\mu)$-norm of $T_K1$ holds:
\begin{equation}
\label{MV-paper3}
\|T_{K,\varepsilon}1\|_{L^2(\mu)}^2=\tfrac{1}{3}p_{K,\varepsilon}(\mu)+\mathcal{R}_{K,\varepsilon}(\mu),\qquad
|\mathcal{R}_{K,\varepsilon}(\mu)|\lesssim C_{*}^{2}\mu(\complex).
\end{equation}
An analogous relation was first proved for the Cauchy kernel $K(z)=1/z$. It was done in the seminal paper \cite{MV} by Melnikov and Verdera. It turns out that one can follow  Melnikov-Verdera's proof to obtain the more general identity (\ref{MV-paper3})  (see, for example, \cite[Lemma 3.3]{CMPT}).

The formulas (\ref{curvature_pointwise}) and (\ref{MV-paper3}),
generating \textit{the curvature  method} (also knows as \textit{the symmetrization method}), are remarkable in the sense that
they relate an analytic notion (the operator $T_K$, in particular, the Cauchy transform) with a
metric-geometric one (permutations, in particular, curvature).

Note that the  $L^2(\mathcal{H}^1\lfloor E)$-boundedness of the
Cauchy transform and the identities   (\ref{curvature_pointwise}) and~(\ref{MV-paper3}) imply
that $c^2(\mathcal{H}^1\lfloor E)<\infty$. Consequently, it is
enough to show that $c^2(\mathcal{H}^1\lfloor E)<\infty$ implies rectifiability. This is actually how it was done in \cite{L}.

Take into account that  we usually write $p_t$    instead of $p_{k_t}$ in what follows, in order to simplify notation.  

What is more, recall that it is shown in \cite[Theorem 1 and Remark
1]{Chunaev2016} that
$$
  \left\{
  \begin{array}{l}
    p_{t}(z_1,z_2,z_3)\ge 0 \text{ for any } (z_1,z_2,z_3)\in \mathbb{C}^3,\text{ if } t\notin (-2,0), \\
    p_{t}(z_1,z_2,z_3) \text{ may change sign for some }(z_1,z_2,z_3)\in
\mathbb{C}^3, \text{ if } t\in(-2,0).
  \end{array}
  \right.
$$
These facts are illustrated in Figure~\ref{FIG1}. Moreover, by
\cite[Lemma~2]{ChMT2016},
\begin{equation}
\label{paper2-Main_inequality} p_0(z_1,z_2,z_3)\le
2p_\infty(z_1,z_2,z_3)\text{ for any } (z_1,z_2,z_3)\in
\mathbb{C}^3.
\end{equation}

\subsection{Beta numbers and densities}

For any closed ball $B=B(x,r)$ with center $x\in \mathbb{C}$ and radius $r>0$ and $1\le p <\infty$, let
\begin{equation}
\label{paper3-beta_ball}
\beta_{\mu,p}(B) := \inf_L \left(\frac1{r} \int_{B} \left(\frac{\dist(y,L)}{r}\right)^p\,d\mu(y)\right)^{1/p},
\end{equation}
where the infimum is taken over all affine lines $L\subset \mathbb{C}$.
The $\beta_{\mu,p}$ coefficients were introduced by David and Semmes \cite{DS1} and  are the generalization of the well-known Jones $\beta$-numbers  \cite{J-TSP}.

We will mostly deal with $\beta_{\mu,2}(2B_Q)$ and so by $L_Q$ we   denote a corresponding best approximating line, i.e. a line where the infimum is reached in (\ref{paper3-beta_ball}) for $B=2B_Q$ (see the definition of $B_Q$ below) and $p=2$ .

Throughout the paper we also use the following densities:
$$
\Theta_\mu(B):=\Theta_\mu(x,r)=\dfrac{\mu(B(x,r))}{r}, \quad \text{where}\quad B=B(x,r),\qquad
x\in\C,\qquad r>0.
$$

\section{Main Lemma and proofs of Theorems}
Theorem~\ref{paper3-theorem1} is implied by the following lemma.
\begin{mainlemma*}
\label{Main_Lemma_paper3}
There exist 
absolute constants $t_0>0$ and $c>0$ such that for any finite 
 measure $\mu$ with $C_*$-linear growth it holds that 
\begin{equation}
\label{paper3-eq_main_lemma} p_{\infty}(\mu)\le
t_0^{-2}p_{0}(\mu)+cC_*^2\mu(\mathbb{C}).
\end{equation}
\end{mainlemma*}
The proof of this result is long and technical and actually
takes the biggest part of this paper. Note that
(\ref{paper3-eq_main_lemma}) is a counterpart to the inequality
$p_{0}(\mu)\le 2p_{\infty}(\mu)$ that follows from
(\ref{paper2-Main_inequality}).

\subsection{Proof of Theorem~\ref{paper3-theorem1}} Suppose that Main Lemma holds. Then  the  identity~(\ref{MV-paper3}) and inequality (\ref{paper3-eq_main_lemma})  yield
\begin{align*}
\sup_{\varepsilon>0}\|T_{k_{\infty},\varepsilon}1\|_{L^2(\mu)}^2&\le\tfrac{1}{3}p_{{\infty}}(\mu)+cC_*^2\mu(\mathbb{C})\\
&\le \tfrac{1}{3}\,t_0^{-2}\,p_{{0}}(\mu)+cC_*^2\mu(\mathbb{C})\\
&\le t_0^{-2}\sup_{\varepsilon>0}\|T_{k_{0},\varepsilon}1\|_{L^2(\mu)}^2+cC_*^2\mu(\mathbb{C}),
\end{align*}
where $c>0$ is an absolute constant.
Applying the inequality $\sqrt{ax^2+b}\le\sqrt{a}x+\sqrt{b}$ that is
valid for $a,b,x\ge 0$, gives Theorem~\ref{paper3-theorem1}.

\subsection{Proof of Theorem~\ref{teonou}}
We now apply the perturbation method from \cite{ChMT2016}.
By the triangle inequality and Theorem~\ref{paper3-theorem1},
\begin{align*}
\sup_{\varepsilon>0}\|T_{k_t,\varepsilon}1\|_{L^2(\mu)}&=
\sup_{\varepsilon>0}\|(T_{k_0,\varepsilon}+t\cdot
T_{k_\infty,\varepsilon})1\|_{L^2(\mu)}\\
&\ge\sup_{\varepsilon>0} \|T_{k_0,\varepsilon}1\|_{L^2(\mu)}-|t|\sup_{\varepsilon>0}\|T_{k_\infty,\varepsilon}1\|_{L^2(\mu)}\\
&\ge
(t_0-|t|)\sup_{\varepsilon>0}\|T_{k_\infty,\varepsilon}1\|_{L^2(\mu)}-cC_*\sqrt{\mu(\complex)}.
\end{align*}
Consequently,
$$ \sup_{\varepsilon>0}\|T_{k_\infty,\varepsilon}1\|_{L^2(\mu)}\le
\frac{\sup_{\varepsilon>0}\|T_{k_t,\varepsilon}1\|_{L^2(\mu)}+cC_*\sqrt{\mu(\complex)}}{t_0-|t|},\qquad
|t|<t_0.
$$
Therefore, given any cube $Q \subset \mathbb{C}$,  applying this estimate to the measure $\mu\lfloor Q$,
 we get
\begin{equation}
\label{paper3-norms} \sup_{\varepsilon>0}\|T_{k_\infty,\varepsilon}\chi_Q\|_{L^2(\mu\lfloor Q)}\le
\frac{\sup_{\varepsilon>0}\|T_{k_t,\varepsilon}\chi_Q\|_{L^2(\mu\lfloor
Q)}+cC_*\sqrt{\mu(Q)}}{t_0-|t|},\qquad |t|<t_0.
\end{equation}
By a variant of the $T1$ Theorem of Nazarov, Treil and Volberg
from \cite[Theorem 9.40]{Tolsa_book}, we infer from (\ref{paper3-norms})
 that the $L^2(\mu)$-boundedness of $T_{k_t}$ with a fixed $t$ such that  $|t|<t_0$ implies that of $T_{k_\infty}$, and thus the Cauchy transform is
$L^2(\mu)$-bounded.

\section{Other corollaries}

Recall that a measure $\mu$
is \textit{Ahlfors-David regular} (\textit{AD-regular})
if 
$$
C^{-1}r\le \mu(B(z,r))\le
Cr,\qquad\text{where}\qquad z\in \spt \mu,\qquad 0<r<\diam (\spt
\mu),
$$
and $C>1$ is some fixed constant. A measure $\mu$ is called
\textit{uniformly rectifiable} if it is AD-regular and $\spt \mu$ is
contained in an AD-regular curve. One can summarise all up-to-date
results characterising uniformly rectifiable measures via
$L^2(\mu)$-bounded SIOs $T_{k_t}$ as follows.
\begin{corollary}
\label{paper3-corollary1} Let $\mu$ be an AD-regular
measure and $t\in(-\infty, -\sqrt{2})\cup (-t_0,\infty]$. The measure $\mu$ is uniformly rectifiable
if and only if the SIO $T_{k_t}$ is $L^2(\mu)$-bounded.
\end{corollary}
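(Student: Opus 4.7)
The plan is to prove the two directions separately, using the Mattila-Melnikov-Verdera theorem as the bridge between the $L^2(\mu)$-boundedness of the Cauchy transform and uniform rectifiability of an AD-regular $\mu$. For the direction from uniform rectifiability to $L^2$-boundedness of $T_{k_t}$, each $k_t$ with $t \in \R \cup \{\infty\}$ is a standard odd Calder\'on-Zygmund kernel of the form $P(z)/|z|^{k+1}$ with $P$ a homogeneous polynomial of degree $k\in\{1,3\}$ (the bounds $|k_t(z)|\lesssim_t 1/|z|$ and $|\nabla k_t(z)|\lesssim_t 1/|z|^2$ follow by direct computation), so the David-Semmes theorem guarantees $L^2(\mu)$-boundedness of $T_{k_t}$ on any uniformly rectifiable $\mu$.

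For the reverse direction I would split into subcases according to the range of $t$, and in each case reduce to the $L^2(\mu)$-boundedness of $T_{k_\infty}$, which for AD-regular $\mu$ implies $c^2(\mu\lfloor B)\lesssim \mu(B)$ on balls $B$ via (\ref{MV-paper3}) and (\ref{curvature_pointwise}), hence uniform rectifiability by the Mattila-Melnikov-Verdera theorem. The case $t=\infty$ is immediate since $T_{k_\infty}=\Re C_\mu$. For $t\in(-t_0,0)$, Theorem~\ref{teonou} directly supplies $L^2(\mu)$-boundedness of $C_\mu$, and hence of $T_{k_\infty}$. For $t=0$, the argument runs exactly as in the proof of Theorem~\ref{teonou}: apply Theorem~\ref{paper3-theorem1} to $\mu\lfloor Q$ for each cube $Q$ to obtain $\sup_{\varepsilon}\|T_{k_\infty,\varepsilon}\chi_Q\|_{L^2(\mu\lfloor Q)}\lesssim \sqrt{\mu(Q)}$, then conclude via the Nazarov-Treil-Volberg $T1$ theorem. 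For $t\in(0,\infty)$ and $t\le -2$, the permutation $p_t$ is pointwise non-negative and the curvature-type arguments of \cite{Chunaev2016} yield the reduction to $T_{k_\infty}$; for $t\in(-2,-\sqrt{2})$, the same reduction is the main result of \cite{ChMT2016}.

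The substantive obstacle is the new range $t\in(-t_0,0]$, which rests on Theorem~\ref{paper3-theorem1} and ultimately on the Main Lemma of this paper; the long and delicate proof of the Main Lemma is the bulk of the paper and supplies the non-trivial comparison $p_\infty(\mu)\le t_0^{-2}p_0(\mu)+cC_*^2\mu(\C)$ that closes the previously inaccessible range. Once the Main Lemma and Theorem~\ref{teonou} are in hand, the corollary is obtained by piecing these together with the cited results and the standard AD-regular passage from finite Menger curvature to uniform rectifiability; no deeper ingredient is needed beyond bookkeeping over the ranges of $t$.
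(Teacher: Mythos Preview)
Your proposal is correct and matches the paper's approach: the paper itself does not give a self-contained proof of this corollary but simply assembles the previously known cases ($t=\infty$ from \cite{MMV}, $t=0$ from \cite{CMPT}, $t\in(-\infty,-\sqrt{2})\cup(0,\infty)$ from \cite{Chunaev2016,ChMT2016}) and observes that the new range $t\in(-t_0,0)$ follows from Theorem~\ref{teonou}, with the forward direction being standard David--Semmes theory. Your only minor deviation is handling $t=0$ via Theorem~\ref{paper3-theorem1} rather than citing \cite{CMPT} directly, which is equally valid.
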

The part of Corollary~\ref{paper3-corollary1} for $t=\infty$, i.e. for the Cauchy transform, was
proved in \cite{MMV}; for $t=0$ in \cite{CMPT}; and for $t \in (-\infty, -\sqrt{2})\cup (0,\infty)$ in \cite{Chunaev2016,ChMT2016}.

\medskip

Furthermore, one can
formulate the following general result.
\begin{corollary}
\label{paper3-corollary2} 
Let $\mu$ be a measure
with linear growth and  $t\in (-\infty, -\sqrt{2})\cup (-t_0,\infty]$. If the SIO $T_{k_t}$  is $L^2(\mu)$-bounded, then
so are all $1$-dimensional SIOs associated with a wide class of sufficiently smooth kernels
kernels.
\end{corollary}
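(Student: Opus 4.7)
The plan is to factor Corollary~\ref{paper3-corollary2} through the $L^2(\mu)$-boundedness of the Cauchy transform $C_\mu$: once $C_\mu$ is $L^2(\mu)$-bounded for a measure $\mu$ with linear growth, a deep result from non-homogeneous Calder\'on--Zygmund theory (proved via Tolsa's corona-type decomposition, see \cite{Tolsa_book}) gives $L^2(\mu)$-boundedness of every $1$-dimensional SIO associated with an odd, sufficiently smooth Calder\'on--Zygmund kernel. Accordingly, the task reduces to checking that, for every $t$ in the stated range, the $L^2(\mu)$-boundedness of $T_{k_t}$ forces that of $C_\mu$.

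The range $t=\infty$ is immediate, and the range $t\in(-t_0,0)$ is exactly Theorem~\ref{teonou}. For $t\in(-\infty,-\sqrt{2})$, I would combine the identity $T_{k_t,\varepsilon}=T_{k_0,\varepsilon}+t\,T_{k_\infty,\varepsilon}$ with the triangle inequality and the estimate \eqref{L^2-norms-opposite} to derive
\begin{equation*}
\sup_{\varepsilon>0}\|T_{k_t,\varepsilon}1\|_{L^2(\mu)}\ge (|t|-\sqrt{2})\sup_{\varepsilon>0}\|T_{k_\infty,\varepsilon}1\|_{L^2(\mu)}-cC_*\sqrt{\mu(\C)},
\end{equation*}
which is the exact analogue of the inequality used in the proof of Theorem~\ref{teonou}. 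Applying it to the restrictions $\mu\lfloor Q$ over cubes $Q\subset\C$ and then invoking the non-homogeneous $T1$ theorem of Nazarov--Treil--Volberg in the form \cite[Theorem~9.40]{Tolsa_book} promotes the resulting pointwise estimate to $L^2(\mu)$-boundedness of $T_{k_\infty}$, and hence of $C_\mu$. For $t\in(0,\infty)$ the same scheme should work: expanding $p_t$ as a quadratic polynomial in $t$, using \eqref{paper2-Main_inequality} and the Main Lemma to dominate the mixed term, and invoking \eqref{MV-paper3}, one obtains an analogous bound of $T_{k_\infty}$ by $T_{k_t}$, from which the $T1$-theorem step goes through unchanged.

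The hard part is really the second step above, which is invoked as a black box. The passage from $L^2(\mu)$-boundedness of $C_\mu$ to that of general smooth $1$-dimensional SIOs is highly nontrivial: one constructs a corona-type decomposition of $\mu$ adapted to $C_\mu$, on whose trees $\mu$ becomes comparable to arc-length on a Lipschitz graph --- where boundedness of any smooth CZ operator reduces to the classical results of \cite{CMM,David} --- and then assembles the local pieces into a global $L^2(\mu)$-bound. Since this machinery is already developed in \cite{Tolsa_book}, I would import it rather than redevelop it here.
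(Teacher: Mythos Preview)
Your proposal is essentially correct and aligned with the paper's treatment: the paper, like you, regards this corollary as a consequence of reducing to $L^2(\mu)$-boundedness of the Cauchy transform and then invoking the black-box result of \cite{Tolsa2004} (see also \cite{Girela}) that Cauchy boundedness implies boundedness of all sufficiently smooth $1$-dimensional SIOs. For $t\in(-t_0,0)$ the paper uses Theorem~\ref{teonou}, exactly as you do; for the remaining ranges the paper simply cites \cite{Chunaev2016,ChMT2016} rather than reproducing the arguments.

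Two small remarks on your sketches for the cited ranges. For $t\in(-\infty,-\sqrt{2})$ your perturbation argument via \eqref{L^2-norms-opposite} is precisely the one in \cite{ChMT2016}, so this is fine. For $t\in(0,\infty)$, however, your outlined route through a quadratic expansion of $p_t$ and the Main Lemma is unnecessarily indirect and not quite complete as stated (the mixed term needs a separate bound). The argument actually used in \cite{Chunaev2016} is simpler: for $t>0$ the pointwise permutations $p_t$ are nonnegative and in fact satisfy $p_t(z_1,z_2,z_3)\gtrsim_t p_\infty(z_1,z_2,z_3)$, so \eqref{MV-paper3} gives the comparison of $L^2$-norms directly, and the $T1$ step proceeds as in your other cases. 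The case $t=0$ (which lies in the stated range but is not singled out in your sketch) follows either from Theorem~\ref{paper3-theorem1} plus the $T1$ argument of Theorem~\ref{teonou}, or from \cite{CMPT}.
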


We refer the reader to \cite[Sections 1~and~12]{Tolsa2004} and
\cite[Theorem A]{Girela} for the more precise description of what is
meant by ``sufficiently smooth kernels''.

The part of
Corollary~\ref{paper3-corollary2} for $t=\infty$, i.e. for the
Cauchy transform, was proved in
\cite{Tolsa2004} (see also
\cite{Girela}) and for $t \in (-\infty, -\sqrt{2})\cup
(0,\infty)$ in \cite{Chunaev2016,ChMT2016}.

\section{Plan of the proof of Main Lemma}
\label{chap3-plan}

To prove Main Lemma, we will use a corona decomposition that is
similar, for example, to the ones from \cite{Tolsa-delta} and
\cite{AT}: it splits the David-Mattila dyadic lattice into some
collections of cubes, which we will call ``trees'', where the
density of $\mu$ does not oscillate too much and most of the measure
is concentrated close to a graph of a Lipschitz function. To
construct this function we will use a variant of the Whitney extension
theorem adapted to the David-Mattila dyadic lattice. Further, we
will show that the family of trees of the corona decomposition
satisfies a packing condition by arguments inspired by some of the techniques used in
\cite{AT} and  earlier in
\cite{Tolsa-bilip} to prove the bilipschitz ``invariance'' of
analytic capacity. More precisely, we will deduce Main Lemma from the two-sided estimate
\begin{equation}
\label{paper3-top-ineq}
p_{k_\infty}(\mu)\lesssim \sum_{R\in {\sf Top}}\Theta_\mu(2B_R)^2\mu(R)\lesssim p_{k_0}(\mu)+C_*^2\mu(\mathbb{C}),
\end{equation}
where ${\sf Top}$ is the family of top cubes for the above-mentioned trees. Note that the left hand side inequality in (\ref{paper3-top-ineq}) in essentially contained in \cite{Tolsa-delta} and verifying the right hand side inequality is actually the main objective in the proof.

It is worth mentioning that the structure of our trees is more
complicated than in \cite{AT}. This
is because we deal with permutations which are not comparable to
curvature in some cases and this leads to additional technical
difficulties. What is more, we are not able to use a nice
 theorem by David and Toro \cite{DT1} which  shortens the proof in \cite{AT} considerably. Indeed, this theorem would be useful to construct a chordal curve such that most of the measure $\mu$ is concentrated close to it. However, in our situation we need to control slope and therefore we have to deal with and to construct a graph of a Lipschitz function with well-controlled Lipschitz constant instead.

The plan of the proof of Main Lemma is the following. In Section
\ref{paper3-secdyad} we recall the properties of the David-Mattila dyadic
lattice.  We construct the trees and establish their properties in
Sections~\ref{paper3-secbalan}--\ref{paper3-sec_small_BS}. The main properties are
summarized in Section~\ref{paper3-sec_pack_Top}, where they are further used
 for constructing the corona type decomposition. The end of the proof of
Main Lemma is given in Section \ref{paper3-sec-the-end}.

Finally, in Section~\ref{paper3-sec_pack_Tree}  we show how one can
slightly change the proof of Main Lemma in order to give another
proof of a certain result from \cite{AT}  and obtain an alternative proof of the bi-Lipschitz invariance of the $L^2$-boundedness of the Cauchy transform.

\begin{remark}
The measure $\mu$ considered below is under assumptions of Main Lemma, i.e. $\mu$ is a finite measure with $C_*$-linear growth. Moreover, without loss of generality we additionally suppose that $\mu$ has \textit{compact support}.
\end{remark}

\section{The David-Mattila lattice}
\label{paper3-secdyad}

We use the dyadic lattice of cubes with small boundaries
constructed by David and Mattila \cite{David-Mattila}. The
properties of this lattice are summarized in the next lemma (for the case of $\mathbb{C}$).

\begin{lemma}[Theorem 3.2 in \cite{David-Mattila}]
\label{paper3-lemcubs} Let $\mu$ be a measure,
$E=\spt\mu$, and consider two constants $C_0>1$ and
$A_0>5000\,C_0$. Then there exists a sequence of partitions of $E$
into Borel subsets $Q$, $Q\in \mathcal{D}_k$, with the following properties:
\begin{itemize}
\item For each integer $k\ge0$, $E$ is the disjoint union of the ``cubes'' $Q$, $Q\in\mathcal{D}_k$, and
if $k<l$, $Q\in\mathcal{D}_l$, and $R\in\mathcal{D}_k$, then either $Q\cap R=\varnothing$ or else $Q\subset R$.

\item The general position of the cubes $Q$ can be described as follows. For each $k\ge0$ and each cube $Q\in\mathcal{D}_k$, there is a ball $B(Q)=B(z_Q,r(Q))$ such that
$$z_Q\in Q, \qquad A_0^{-k}\le r(Q)\le C_0\,A_0^{-k},$$
$$E\cap B(Q)\subset Q\subset E\cap 28\,B(Q)=E \cap B(z_Q,28r(Q)),$$
and
$$\mbox{the balls $5B(Q)$, $Q\in\mathcal{D}_k$, are disjoint.}$$

\item The cubes $Q\in\mathcal{D}_k$ have small boundaries. That is, for each $Q\in\mathcal{D}_k$ and each
integer $l\ge0$, set
$$N_l^{ext}(Q)= \{x\in E\setminus Q:\,\dist(x,Q)< A_0^{-k-l}\},$$
$$N_l^{int}(Q)= \{x\in Q:\,\dist(x,E\setminus Q)< A_0^{-k-l}\},$$
and
$$N_l(Q)= N_l^{ext}(Q) \cup N_l^{int}(Q).$$
Then
$$
\mu(N_l(Q))\le (C^{-1}C_0^{-7}A_0)^{-l}\,\mu(90B(Q)).
$$

\item Denote by $\mathcal{D}_k^{db}$ the family of cubes $Q\in\mathcal{D}_k$ for which
\begin{equation}\label{paper3-eqdob22}
\mu(100B(Q))\le C_0\,\mu(B(Q)).
\end{equation}
If $Q\in\mathcal{D}_k\setminus \mathcal{D}_k^{db}$, then $r(Q)=A_0^{-k}$ and
$$
\mu(100B(Q))\le C_0^{-l}\,\mu(100^{l+1}B(Q))\quad
\mbox{for all $l\ge1$ such that $100^l\le C_0$.}
$$
\end{itemize}
\end{lemma}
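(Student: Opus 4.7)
The statement is taken verbatim from \cite{David-Mattila}, so the ``proof'' in this paper is really just a reference there. Nevertheless, let me sketch how I would carry out the construction. The overall plan is to build a nested family of point nets, attach tentative Voronoi-type cells to these nets, and then adjust the cell boundaries to gain the small-boundary property; the doubling dichotomy comes out as a by-product.

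First, for each integer $k \ge 0$, I would choose a maximal $A_0^{-k}$-separated subset $Z_k \subset E$ by a greedy/Zorn argument, arranged hierarchically so that $Z_k \subset Z_{k+1}$. Maximality and separation immediately yield, for each $z_Q \in Z_k$, a ball $B(z_Q, r(Q))$ with $A_0^{-k}\le r(Q) \le C_0 A_0^{-k}$ such that the balls $5B(Q)$, $Q\in\mathcal{D}_k$, are pairwise disjoint. Attaching to each $z_Q$ a tentative Voronoi cell $Q$ (with a deterministic tie-breaking rule) already gives the two-sided containment $E\cap B(Q)\subset Q\subset E\cap 28 B(Q)$ and produces a partition at each scale that is automatically nested across scales.

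The most delicate step is enforcing the small-boundary property. The Voronoi cells constructed above may have boundaries carrying substantial $\mu$-mass; I would fix this by a pigeonhole argument at each scale. Inside a controlled thin shell around $\partial Q$ at level $k$, subdivide into $\sim A_0^{l/2}$ concentric annular sub-shells of radial thickness $A_0^{-k-l}$; since their total mass is at most $\mu(90B(Q))$, at least one carries $\mu$-mass of order $A_0^{-l/2}\mu(90B(Q))$, and I would redirect the cell boundary into that sub-shell. Doing this simultaneously for all cubes at every scale while preserving the nesting $Q\subset R$ for $Q\in \mathcal{D}_l$, $R\in\mathcal{D}_k$, $k<l$, is the delicate part, and gives the stated geometric decay of $\mu(N_l(Q))$.

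Finally, the doubling dichotomy is obtained by analysing non-doubling cubes directly: if $Q\in\mathcal{D}_k\setminus \mathcal{D}_k^{db}$, set $r(Q)=A_0^{-k}$ at the minimal admissible value and iterate the non-doubling inequality on the concentric dilated balls $100^{l+1}B(Q)$, using the small-boundary estimate to absorb boundary contributions. The main obstacle throughout is the coordination of boundary trimming across scales: the adjustments at successive levels must remain compatible with the strict nesting of the partitions, so the per-scale pigeonhole choices cannot be made independently but must be woven into a single inductive construction. This is precisely where David and Mattila's argument becomes technically heavy, and why I would ultimately rely on their proof rather than redo it in the present paper.
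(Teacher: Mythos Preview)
Your proposal is correct: the paper does not prove this lemma at all but simply cites Theorem~3.2 of \cite{David-Mattila}, exactly as you anticipated in your first sentence. Your sketch of the David--Mattila construction (hierarchical nets, Voronoi-type cells, pigeonhole boundary trimming, doubling dichotomy) is a reasonable outline of their argument, though of course the paper itself contains none of this and relies entirely on the reference.
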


We use the notation $\mathcal{D}=\bigcup_{k\ge0}\mathcal{D}_k$. For $Q\in\mathcal{D}$, we set $\mathcal{D}(Q) =
\{P\in\mathcal{D}:P\subset Q\}$.
Observe that
 \begin{equation*}
 \label{paper3-comp_r_l}
 r(Q)\approx\diam(Q).
 \end{equation*}
 Also we call $z_Q$
the center of $Q$.  We set
$B_Q=28 \,B(Q)=B(z_Q,28\,r(Q))$, so that
$$E\cap \tfrac1{28}B_Q\subset Q\subset B_Q.$$

We denote $\mathcal{D}^{db}=\bigcup_{k\ge0}\mathcal{D}_k^{db}$ and
$\mathcal{D}^{db}(Q) = \mathcal{D}^{db}\cap \mathcal{D}(Q)$. Note
that, in particular, from (\ref{paper3-eqdob22}) it follows that
\begin{equation}
\label{paper3-doubling_property}
 \mu(100B(Q))\le
C_0\,\mu(2B_Q)\qquad\text{if } Q\in\mathcal{D}^{db}.
\end{equation}
For this reason we will call the cubes
from $\mathcal{D}^{db}$ \textit{doubling}.

As shown in \cite{David-Mattila}, any cube $Q\in\mathcal{D}$ can be
covered $\mu$-a.e. by doubling cubes.
\begin{lemma}[Lemma 5.28 in \cite{David-Mattila}]\label{paper3-lemcobdob}
Let $Q\in\mathcal{D}$. Suppose that the constants $A_0$ and $C_0$ in Lemma
\ref{paper3-lemcubs} are chosen suitably. Then there exists a family of
doubling cubes $\{Q_i\}_{i\in I}\subset \mathcal{D}^{db}$, with $Q_i\subset
Q$ for all $i$, such that their union covers $\mu$-almost all $Q$.
\end{lemma}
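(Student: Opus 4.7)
\emph{Plan.} The plan is to establish the stronger claim that
$$
N := Q \setminus \bigcup_{P \in \mathcal{D}^{db}(Q)} P
$$
has $\mu$-measure zero. Once that is known, the desired family $\{Q_i\}_{i\in I}$ can simply be taken to be the collection of \emph{maximal} doubling cubes contained in $Q$, and they cover $\mu$-almost all of $Q$ by construction.

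First, I make a combinatorial observation. If $k_0$ denotes the level of $Q$, then for every $x\in N$ and every $k>k_0$ the unique cube $Q^{(k)}(x)\in\mathcal{D}_k$ containing $x$ must lie in $\mathcal{D}_k\setminus\mathcal{D}_k^{db}$; otherwise $x$ would be covered by a doubling descendant of $Q$, contradicting $x\in N$. In particular $r(Q^{(k)}(x))=A_0^{-k}$ exactly, and the non-doubling decay from the last item of Lemma~\ref{paper3-lemcubs} is available along the whole chain $Q\supset Q^{(k_0+1)}(x)\supset Q^{(k_0+2)}(x)\supset\cdots$.

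The main step is to iterate that decay along the chain. Setting $l_0:=\lfloor\log_{100}C_0\rfloor$, non-doublingness gives
$$
\mu\bigl(100\,B(Q^{(k)}(x))\bigr)\le C_0^{-l_0}\,\mu\bigl(100^{l_0+1}\,B(Q^{(k)}(x))\bigr).
$$
A short geometric check, using $r(Q^{(k)})=A_0^{-k}$, $r(Q^{(k-1)})\ge A_0^{1-k}$, and $|z_{Q^{(k)}}-z_{Q^{(k-1)}}|\le 28\,r(Q^{(k-1)})$, shows that $100^{l_0+1}B(Q^{(k)}(x))\subset 100\,B(Q^{(k-1)}(x))$ as soon as $100^{l_0+1}\le 72A_0$, which is automatic from $A_0>5000C_0$. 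Telescoping therefore yields
$$
\mu\bigl(100\,B(Q^{(k)}(x))\bigr)\le C_0^{-l_0(k-k_0)}\,\mu\bigl(100\,B(Q)\bigr)\qquad\text{for every }k\ge k_0.
$$

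To conclude, I cover $N$ at level $k$ by the (pairwise disjoint) non-doubling cubes of $\mathcal{D}_k$ that meet it; by the disjointness of the balls $5B(P)$ and a planar area count, the total number of $\mathcal{D}_k$-cubes contained in $Q$ is at most $O\bigl(C_0^2 A_0^{2(k-k_0)}\bigr)$. Combining this with the telescoping bound gives
$$
\mu(N)\lesssim C_0^2\,\bigl(A_0^{2}\,C_0^{-l_0}\bigr)^{k-k_0}\,\mu\bigl(100\,B(Q)\bigr).
$$
The hard part — and essentially the only obstacle — is the quantitative balance of constants: since $l_0\approx \log_{100}C_0$ while $A_0^2\asymp C_0^2$, the ratio $A_0^{2}C_0^{-l_0}$ becomes strictly less than $1$ once $C_0$ (and hence $A_0>5000C_0$) is chosen sufficiently large. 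This is precisely the ``$A_0$ and $C_0$ chosen suitably'' clause in the statement, and letting $k\to\infty$ then forces $\mu(N)=0$.
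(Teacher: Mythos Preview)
The paper does not prove this lemma; it is quoted verbatim as Lemma~5.28 from \cite{David-Mattila}, so there is no in-paper proof to compare against. Your argument is correct and is essentially the original David--Mattila proof: telescope the non-doubling decay $\mu(100B(Q^{(k)}))\le C_0^{-l_0}\mu(100B(Q^{(k-1)}))$ along the chain above a point of $N$, then play it against a volume count of $\mathcal{D}_k$-cubes inside $Q$.

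One small clarification on the final balance of constants: the relation ``$A_0^2\asymp C_0^2$'' is a \emph{choice}, not a consequence of Lemma~\ref{paper3-lemcubs} (which only imposes $A_0>5000\,C_0$, with no upper bound). What you really do is fix, say, $A_0=6000\,C_0$, and then take $C_0$ large enough that $l_0=\lfloor\log_{100}C_0\rfloor\ge 3$, so that $A_0^2 C_0^{-l_0}\lesssim C_0^{2-l_0}<1$. This is exactly the ``$A_0$ and $C_0$ chosen suitably'' clause, and with that fixed your conclusion $\mu(N)=0$ follows.
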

We denote by $J(Q)$ the number $k$ such that $Q\in \mathcal{D}_k$.
\begin{lemma}[Lemma 5.31 in \cite{David-Mattila}]
\label{paper3-lemcad22} Let $P\in\mathcal{D}$ and let $Q\subsetneq P$ be a
cube such that all the intermediate cubes $S$, $Q\subsetneq
S\subsetneq P$, are non-doubling $($i.e.  not in
$\mathcal{D}^{db}$$)$. Then
\begin{equation*}
\label{paper3-eqdk88}
\mu(100B(Q))\le A_0^{-20(J(Q)-J(P)-1)}\mu(100B(P)).
\end{equation*}
\end{lemma}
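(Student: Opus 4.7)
The plan is to iterate the non-doubling decay from the last bullet of Lemma~\ref{paper3-lemcubs} along the chain of ancestors of $Q$ up to $P$. First I would set $k_P = J(P)$, $k_Q = J(Q)$, $N := k_Q - k_P - 1$, and for each $k\in\{k_P,\ldots,k_Q\}$ let $P_k\in\mathcal{D}_k$ be the unique cube at level $k$ containing $Q$; so $P_{k_P}=P$, $P_{k_Q}=Q$, and by hypothesis the intermediate cubes $P_{k_P+1},\ldots,P_{k_Q-1}$ all lie in $\mathcal{D}\setminus\mathcal{D}^{db}$. I would fix the largest integer $l_0\ge 1$ with $100^{l_0}\le C_0$, so that for each such intermediate $P_k$ Lemma~\ref{paper3-lemcubs} gives $r(P_k)=A_0^{-k}$ together with
$$\mu(100B(P_k))\le C_0^{-l_0}\,\mu(100^{l_0+1}B(P_k)).$$

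The key geometric step will be to verify the inclusion $100^{l_0+1}B(P_k)\subset 100\,B(P_{k-1})$ for every intermediate $k$. Since $z_{P_k}\in P_k\subset 28\,B(P_{k-1})$, the centers satisfy $|z_{P_k}-z_{P_{k-1}}|\le 28\,r(P_{k-1})$; combined with $r(P_k)=A_0^{-k}$, $r(P_{k-1})\ge A_0\cdot A_0^{-k}$, and $100^{l_0+1}\le 100\,C_0\ll A_0$ (using $A_0>5000\,C_0$), this yields $100^{l_0+1}\,r(P_k)\le r(P_{k-1})/50$, and a one-line triangle inequality closes the inclusion with room to spare.

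Combining the last two displays gives $\mu(100B(P_k))\le C_0^{-l_0}\,\mu(100B(P_{k-1}))$ for each $k_P<k<k_Q$. Iterating from $k=k_Q-1$ down to $k_P+1$ produces $\mu(100B(P_{k_Q-1}))\le C_0^{-l_0 N}\,\mu(100B(P))$. A simpler inclusion $100B(Q)\subset 100\,B(P_{k_Q-1})$, which follows from $r(Q)\le C_0\,A_0^{-k_Q}\ll r(P_{k_Q-1})$ and requires no non-doubling hypothesis on $Q$ itself, then handles the first step regardless of whether $Q$ is doubling, producing $\mu(100B(Q))\le C_0^{-l_0 N}\,\mu(100B(P))$.

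The main obstacle, essentially bookkeeping of constants, will be converting $C_0^{-l_0 N}$ into $A_0^{-20 N}$: I need $C_0^{l_0}\ge A_0^{20}$. Since $l_0\ge \log_{100}C_0 - 1$ grows with $C_0$ while $A_0^{20}$ is only polynomial in $C_0$ under the standing assumption $A_0>5000\,C_0$, this inequality becomes valid as soon as $C_0$ (hence $l_0$) is taken sufficiently large, a strengthening implicit in the ``chosen suitably'' clause of Lemma~\ref{paper3-lemcubs}. With this choice the iteration delivers the claimed bound $\mu(100B(Q))\le A_0^{-20(J(Q)-J(P)-1)}\,\mu(100B(P))$.
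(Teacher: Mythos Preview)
The paper does not supply its own proof of this lemma; it is quoted verbatim as Lemma~5.31 from David--Mattila \cite{David-Mattila}. Your argument is the standard one and is essentially how the result is obtained in the original source: iterate the non-doubling decay bound from the last bullet of Lemma~\ref{paper3-lemcubs} along the chain of ancestors, using the ball inclusion $100^{l_0+1}B(P_k)\subset 100\,B(P_{k-1})$ at each step.

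One point deserves tightening. The sentence ``$A_0^{20}$ is only polynomial in $C_0$ under the standing assumption $A_0>5000\,C_0$'' is not literally correct, since $A_0>5000\,C_0$ is a lower bound and places no ceiling on $A_0$. What you actually need is to \emph{choose} $A_0$ as a specific function of $C_0$ (for instance $A_0$ a fixed power of $C_0$, or simply $A_0$ slightly above $5000\,C_0$) and then take $C_0$ large enough that $C_0^{l_0}\ge A_0^{20}$; since $C_0^{l_0}\approx \exp\bigl((\log C_0)^2/\log 100\bigr)$ grows super-polynomially in $C_0$, this is achievable. You already gesture at this with the ``chosen suitably'' remark, and the paper explicitly allows it (``$C_0$ and $A_0$ are some big fixed constants so that the results stated in the lemmas below hold''), so the argument goes through once the phrasing is cleaned up.
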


Recall that $\Theta_\mu(B)=\mu(B(x,r))/r$.
From Lemma~\ref{paper3-lemcad22} one can easily deduce\footnote{Note that there is an inaccuracy with constants in the original Lemma 2.4 in \cite{AT}.}
\begin{lemma}[Lemma 2.4 in \cite{AT}]\label{paper3-lemcad23}
Let $Q,P\in\mathcal{D}$ be as in Lemma \ref{paper3-lemcad22}. Then
$$
\Theta_\mu(100B(Q))\le
C_0 A_0^{-19(J(Q)-J(P)-1)+1}\,\Theta_\mu(100B(P))\le
C_0 A_0 \,\Theta_\mu(100B(P))
$$
and
$$
\sum_{S\in\mathcal{D}:Q\subset S\subset P}\Theta_\mu(100B(S))\le c\,\Theta_\mu(100B(P)),\qquad c=c(C_0,A_0).
$$
\end{lemma}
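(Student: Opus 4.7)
The plan is to derive both estimates directly from Lemma~\ref{paper3-lemcad22} by converting the bound on $\mu(100B(Q))$ into a bound on $\Theta_\mu(100B(Q))=\mu(100B(Q))/(100\,r(Q))$, using the fact that consecutive generations of the David--Mattila lattice have radii comparable up to factor $A_0$. Specifically, from Lemma~\ref{paper3-lemcubs} we have $r(Q)\ge A_0^{-J(Q)}$ and $r(P)\le C_0\,A_0^{-J(P)}$, so
$$
\frac{r(P)}{r(Q)}\le C_0\,A_0^{J(Q)-J(P)}.
$$

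For the first estimate, I write
$$
\Theta_\mu(100B(Q))=\frac{\mu(100B(Q))}{100\,r(Q)}=\frac{r(P)}{r(Q)}\cdot\frac{\mu(100B(Q))}{\mu(100B(P))}\cdot\Theta_\mu(100B(P)).
$$
Plugging in Lemma~\ref{paper3-lemcad22} and the radius comparison yields
$$
\Theta_\mu(100B(Q))\le C_0\,A_0^{J(Q)-J(P)}\cdot A_0^{-20(J(Q)-J(P)-1)}\,\Theta_\mu(100B(P))
=C_0\,A_0^{-19(J(Q)-J(P)-1)+1}\,\Theta_\mu(100B(P)),
$$
after collecting exponents. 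Since $J(Q)-J(P)-1\ge 0$, the exponent $-19(J(Q)-J(P)-1)+1$ is maximized at $1$ when $J(Q)=J(P)+1$, giving the second inequality $\Theta_\mu(100B(Q))\le C_0A_0\,\Theta_\mu(100B(P))$.

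For the summation, I note that the chain $\{S\in\mathcal{D}:Q\subset S\subset P\}$ consists of exactly one cube $S_j$ at each level $J(P)+j$ for $j=0,1,\dots,N$ with $N=J(Q)-J(P)$. For any such $S_j$ with $j\ge 1$, the intermediate cubes between $S_j$ and $P$ are also intermediate between $Q$ and $P$, hence non-doubling, so the hypothesis of Lemma~\ref{paper3-lemcad22} is satisfied for the pair $(S_j,P)$ and the already-proved first inequality applies:
$$
\Theta_\mu(100B(S_j))\le C_0\,A_0^{-19(j-1)+1}\,\Theta_\mu(100B(P)),\qquad j\ge 1,
$$
while for $j=0$ we trivially have $\Theta_\mu(100B(S_0))=\Theta_\mu(100B(P))$. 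Summing the geometric series,
$$
\sum_{S:Q\subset S\subset P}\Theta_\mu(100B(S))\le\Big(1+C_0A_0\sum_{j\ge 1}A_0^{-19(j-1)}\Big)\,\Theta_\mu(100B(P))\le c(C_0,A_0)\,\Theta_\mu(100B(P)),
$$
since $A_0>5000\,C_0>1$ ensures convergence.

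There is no real obstacle here: the proof is a bookkeeping exercise once one realizes that the factor $A_0^{-20}$ from Lemma~\ref{paper3-lemcad22} is large enough to absorb the $A_0$-loss incurred when passing from measures to densities, leaving a healthy geometric decay factor $A_0^{-19}$. The only point requiring a moment of care is checking that the hypothesis of Lemma~\ref{paper3-lemcad22} transfers from the pair $(Q,P)$ to every pair $(S_j,P)$ in the chain, which follows immediately from the fact that the non-doubling assumption is about \emph{all} strictly intermediate cubes.
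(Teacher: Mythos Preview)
Your proof is correct and follows exactly the approach the paper indicates: the paper does not give an explicit argument but states that the lemma ``can easily deduce[d]'' from Lemma~\ref{paper3-lemcad22}, which is precisely what you do by combining the measure decay $A_0^{-20(J(Q)-J(P)-1)}$ with the radius ratio $r(P)/r(Q)\le C_0A_0^{J(Q)-J(P)}$ and then summing the resulting geometric series.
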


We will assume that all implicit constants in the inequalities that follow may depend on $C_{0}$ and $A_{0}$. Moreover, we will assume that $C_0$ and $A_0$ are some big \textit{fixed} constants so that the
results stated in the lemmas below hold.

\section{Balanced cubes and control on beta numbers through permutations}
\label{paper3-secbalan}

We first recall the properties of the so called balanced balls
introduced in \cite{AT}.

\begin{lemma}[Lemma 3.3 and Remark 3.2 in \cite{AT}]
\label{paper3-lemma_balanced_balls} Let $\mu$ be a measure 
and consider the dyadic lattice $\mathcal{D}$ associated with $\mu$
from Lemma \ref{paper3-lemcubs}. Let ${0<\gamma<1}$ be small enough $($with
respect to some absolute constant$)$, then there exist
$\rho'=\rho'(\gamma)>0$ and $\rho''=\rho''(\gamma)>0$ such that one
of the following alternatives holds for every
$Q\in\mathcal{D}^{db}$:
\begin{itemize}
\item[$(a)$] There are balls $B_k=B(\xi_k,\rho'\,r(Q))$, $k=1,2$, where $\xi_1,\xi_2\in B(Q)$, such that
\begin{equation*}
\label{paper3-bal_balls_mu}
 \mu\left(B_k\cap
B(Q)\right)\ge \rho''\,\mu(Q), \qquad k=1,2,
\end{equation*}
and for any $y_k\in B_k\cap Q$, $k=1,2$,
\begin{equation*}
\label{paper3-bal_balls_d} \dist(y_1,y_2) \ge \gamma\,r(B_Q).
\end{equation*}

\item[$(b)$] There exists a family of pairwise disjoint cubes $\{P\}_{P\in I_Q}\subset\mathcal{D}^{db}(Q)$ so that $\diam(P) \gtrsim\gamma\diam(Q)$  and
$\Theta_\mu(2B_{P})\gtrsim \gamma^{-1}\,\Theta_\mu(2B_Q)$ for each $P\in I_Q$,
and
\begin{equation}
\label{paper3-eqsgk32}
\sum_{P\in I_Q} \Theta_\mu(2B_{P})^2\,\mu(P)\gtrsim
\gamma^{-2}\,\Theta_\mu(2B_Q)^2\,\mu(Q).
\end{equation}
\end{itemize}
\end{lemma}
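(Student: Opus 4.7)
The dichotomy is naturally established by testing whether the support of $\mu\lfloor Q$ contains two well-separated heavy balls; if not, the mass of $\mu\lfloor Q$ concentrates in a small region, and the doubling descendants covering that region must have enhanced density. I will carry out this program in three steps, following the standard template for such ``balanced versus concentrated'' alternatives.

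\textbf{Step 1 (producing one heavy ball).} Since $Q\subset B_Q$ and $B(Q)$ has radius comparable to $r(Q)$, I can cover $B(Q)$ by a bounded number $N=N(\rho')$ of balls of the form $B(\xi,\rho'r(Q))$ with $\xi\in B(Q)$, where $\rho'\ll\gamma$ is to be chosen. By pigeonholing over $\mu(Q)$ one of these balls, call it $B_1=B(\xi_1,\rho'r(Q))$, satisfies $\mu(B_1\cap B(Q))\ge N^{-1}\mu(Q)$. I then set $\rho''=\tfrac12 N^{-1}$ and test the following alternative: does there exist $\xi_2\in B(Q)$ with $\dist(\xi_1,\xi_2)\ge (\gamma+2\rho')r(B_Q)$ such that $\mu(B(\xi_2,\rho'r(Q))\cap B(Q))\ge \rho''\mu(Q)$? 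If yes, alternative $(a)$ holds, because $\rho'\ll\gamma$ forces $\dist(y_1,y_2)\ge\gamma r(B_Q)$ for all $y_k$ in the respective balls.

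\textbf{Step 2 (concentration if $(a)$ fails).} If no such $\xi_2$ exists, every ball $B(\xi,\rho'r(Q))$ with $\xi\in B(Q)$ outside a fixed enlargement $B^\ast:=B(\xi_1,c\gamma r(B_Q))$ has mass strictly less than $\rho''\mu(Q)$. Covering $B(Q)\setminus B^\ast$ by at most $N$ such balls gives
\[
\mu\bigl(Q\setminus B^\ast\bigr)< N\rho''\mu(Q)=\tfrac12\mu(Q),
\]
so $\mu(Q\cap B^\ast)\ge \tfrac12\mu(Q)$, i.e.\ at least half the mass of $Q$ sits in a ball of radius $\sim\gamma r(Q)$.

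\textbf{Step 3 (extracting the sub-cubes $I_Q$).} Applying Lemma~\ref{paper3-lemcobdob}, I cover $\mu$-almost all of $Q\cap B^\ast$ by doubling descendants of $Q$ and then select the maximal ones with $r(P)\le \gamma r(Q)$ (equivalently at the first generation $k$ with $A_0^{-k}\le\gamma r(Q)$). By the doubling property and Lemma~\ref{paper3-lemcad23}, each maximal doubling stopping cube $P$ has $r(P)\asymp\gamma r(Q)$, since if the descent had to pass through a long chain of non-doubling cubes before reaching the scale $\gamma r(Q)$, the telescoping estimate in Lemma~\ref{paper3-lemcad23} would already force the density on that ancestor to satisfy $\Theta_\mu(2B_P)\gtrsim\gamma^{-1}\Theta_\mu(2B_Q)$. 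In either case, the chosen family $\{P\}_{P\in I_Q}\subset\mathcal{D}^{db}(Q)$ lies in (a mild enlargement of) $B^\ast$, has bounded cardinality in terms of $\gamma$, satisfies $\diam(P)\gtrsim\gamma\diam(Q)$, and $\sum_{P\in I_Q}\mu(P)\gtrsim\mu(Q)$. Since $r(P)\asymp\gamma r(Q)$ and $\mu(P)\gtrsim\gamma\mu(Q)/\#I_Q$, the density lower bound $\Theta_\mu(2B_P)\gtrsim \gamma^{-1}\Theta_\mu(2B_Q)$ follows, and \rf{paper3-eqsgk32} is obtained by multiplying by $\Theta_\mu(2B_P)\mu(P)$ and summing.

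\textbf{Main obstacle.} The delicate point is guaranteeing simultaneously that the selected cubes $P$ are doubling, have diameter comparable to (not much less than) $\gamma\diam(Q)$, and collectively carry a definite fraction of $\mu(Q)$. Naively stopping at the first David–Mattila generation below $\gamma r(Q)$ could yield non-doubling cubes; the remedy is to stop at the nearest \emph{doubling} ancestor and use Lemma~\ref{paper3-lemcad23} to transfer the density lower bound across intermediate non-doubling scales. This is precisely the step where the specific properties of the David–Mattila lattice are used in an essential way.
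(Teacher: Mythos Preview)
The paper does not prove this lemma; it is quoted from \cite{AT}. Your Steps~1 and~2 are correct and standard. Step~3, however, is where the argument breaks down.

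You propose to cover $Q\cap B^\ast$ by doubling \emph{descendants} and then select the maximal ones at scale $\le\gamma r(Q)$, invoking Lemma~\ref{paper3-lemcad23} to argue that these cannot be much smaller than $\gamma r(Q)$. But Lemma~\ref{paper3-lemcad23} (and the underlying Lemma~\ref{paper3-lemcad22}) gives an \emph{upper} bound on the density of the smaller cube in a non-doubling chain in terms of the larger one; it says that mass \emph{decays} as one descends through non-doubling cubes. This is the wrong direction for your claim: the first doubling descendant below scale $\gamma r(Q)$ may in fact be arbitrarily small and carry negligible mass, so neither $\diam(P)\gtrsim\gamma\diam(Q)$ nor the density lower bound follows from a descent.

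The correct move is to \emph{ascend}. Among the $O(1)$ cubes of the fixed generation $k_0$ with $A_0^{-k_0}\approx\gamma r(Q)$ that meet $B^\ast$, pigeonhole gives one cube $P_0$ (not necessarily doubling) with $\mu(P_0)\ge c\,\mu(Q)$. Let $P'\subset Q$ be its first doubling ancestor; such $P'$ exists since $Q\in\mathcal D^{db}$. Lemma~\ref{paper3-lemcad22} yields
\[
c\,\mu(Q)\le\mu(P_0)\le\mu(100B(P_0))\le A_0^{-20(J(P_0)-J(P')-1)}\mu(100B(P'))\lesssim A_0^{-20(J(P_0)-J(P')-1)}\mu(Q),
\]
the last step using $P'\subset Q$ and $Q\in\mathcal D^{db}$. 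Hence $J(P_0)-J(P')$ is bounded absolutely, so $r(P')\approx\gamma r(Q)$ (in particular $P'\subsetneq Q$ for $\gamma$ small). Then $P'\in\mathcal D^{db}(Q)$, $\diam(P')\gtrsim\gamma\diam(Q)$, $\mu(P')\ge\mu(P_0)\gtrsim\mu(Q)$, and consequently $\Theta_\mu(2B_{P'})\gtrsim\gamma^{-1}\Theta_\mu(2B_Q)$. The singleton $I_Q=\{P'\}$ already satisfies \eqref{paper3-eqsgk32}. Your ``main obstacle'' paragraph mentions passing to a doubling \emph{ancestor}, which is indeed the right idea, but the body of Step~3 is written as a descent and cites the density lemma in the wrong direction; once this is reversed and Lemma~\ref{paper3-lemcad22} is used as above, the proof goes through.
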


Let us mention that the densities in the latter inequality in the original Lemma~3.3  in \cite{AT} are not squared. However, a slight variation of the proof of  \cite[Lemma~3.3]{AT} gives (\ref{paper3-eqsgk32}) as stated.

Moreover, notice that in Lemma~\ref{paper3-lemma_balanced_balls} the cubes $Q$ and $P$, with $P\in
I_Q$, are doubling. If the alternative $(a)$ holds for a doubling cube
$Q$ with some $\gamma$, $\rho'(\gamma)$ and $\rho''(\gamma)$, then the corresponding
ball $B(Q)$ is called \textit{$\gamma$-balanced}. Otherwise, it is
called \textit{$\gamma$-unbalanced}. If $B(Q)$ is $\gamma$-balanced, then the cube $Q$ is also called \textit{$\gamma$-balanced}.

\bigskip

We are going to show now that the beta numbers $\beta_{\mu,2}(2B_Q)$
(see (\ref{paper3-beta_ball})) for $\gamma$-balanced cubes $Q$ are
controlled by a truncated version of the permutations
$p_0(\mu\lfloor 2B_Q)$. To do so, we introduce some additional
notation.

Given two distinct points $z,w\in \mathbb{C}$, we denote by
$L_{z,w}$ the line passing through $z$ and $w$. Given three pairwise
distinct points $z_1,z_2,z_3\in \mathbb{C}$, we denote by
$\measuredangle(z_1,z_2,z_3)$ the smallest angle formed by the lines
$L_{z_1,z_2}$ and $L_{z_1,z_3}$ and belonging to $[0,\pi/2]$. If $L$
and $L'$ are lines, let $\measuredangle(L,L')$ be the smallest angle
between them. This angle belongs to $[0,\pi/2]$, too. Also, we set
$\theta_V(L)=\measuredangle(L,V)$, where $V$ is the vertical line.

First we recall the following result of Chousionis and Prat
\cite{Chousionis-Prat}. We say that a triple $(z_1,z_2,z_3)\in
\mathbb{C}^3$ is \textit{in the class ${\rm V}_{\sf Far}(\theta)$}
if it satisfies
\begin{equation}
\label{paper3-far_from_vert}
\theta_V(L_{z_1,z_2})+\theta_V(L_{z_1,z_3})+\theta_V(L_{z_2,z_3})\ge
\theta>0.
\end{equation}
\begin{lemma}[Proposition 3.3 in \cite{Chousionis-Prat}]
\label{paper3-lemma_Chous_Prat} If $(z_1,z_2,z_3)\in
{\rm V}_{\sf Far}(\theta)$, then
\begin{equation}
\label{paper3-perm-curv} p_0(z_1,z_2,z_3)\ge {\sf c_1}(\theta)\cdot
p_\infty(z_1,z_2,z_3),\quad\text{where}\quad 0<{\sf c_1}(\theta)\le 2.
\end{equation}
\end{lemma}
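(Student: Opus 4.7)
The proof rests on the pointwise identity $k_0(w)=\cos^2(\arg w)\cdot k_\infty(w)$ for $w\in\C\setminus\{0\}$, which follows immediately from $\Re w=|w|\cos(\arg w)$. Writing $\alpha_{ij}:=\arg(z_i-z_j)$, $\theta_{ij}:=\theta_V(L_{z_i,z_j})$, $s_{ij}:=\cos^2\alpha_{ij}=\sin^2\theta_{ij}\in[0,1]$, $d_{ij}:=|z_i-z_j|$, and setting $T_i:=k_\infty(z_i-z_j)\,k_\infty(z_i-z_k)$ for $\{i,j,k\}=\{1,2,3\}$, one obtains the factorisation
\begin{equation*}
p_0(z_1,z_2,z_3)=s_{12}s_{13}\,T_1+s_{12}s_{23}\,T_2+s_{13}s_{23}\,T_3,\qquad p_\infty(z_1,z_2,z_3)=T_1+T_2+T_3.
\end{equation*}
A crucial feature is that each $T_i$ contains both $k_\infty(z_i-z_j)$ and $k_\infty(z_i-z_k)$ as factors, and in fact $|T_i|=\sqrt{s_{ij}s_{ik}}/(d_{ij}d_{ik})$; in particular $T_i$ vanishes as soon as either edge meeting at $z_i$ is vertical.

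My plan is to dispose of the boundary of the constraint first. If $s_{12}=0$, then $T_1=T_2=0$ automatically (both contain $k_\infty(z_1-z_2)$), giving $p_\infty=T_3$ and $p_0=s_{13}s_{23}\,T_3=s_{13}s_{23}\,p_\infty$, so the lemma reduces to the elementary claim that $s_{13}s_{23}\ge c(\theta)>0$ whenever $\theta_{13}+\theta_{23}\ge\theta$. Normalising $z_1=0$, $z_2=i$ and $z_3=x+iy$, one has $s_{13}=x^2/(x^2+y^2)$ and $s_{23}=x^2/(x^2+(y-1)^2)$, and a direct Lagrange-multiplier computation under the constraint $\arctan(|x|/|y|)+\arctan(|x|/|y-1|)\ge\theta$ yields $c(\theta)\sim\sin^4(\theta/2)$. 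For the general case one extends by perturbation: from $|T_i|=\sqrt{s_{ij}s_{ik}}/(d_{ij}d_{ik})$ one deduces $|s_{ij}s_{ik}T_i|=(s_{ij}s_{ik})^{3/2}/(d_{ij}d_{ik})$, so the summands $s_{12}s_{13}T_1$ and $s_{12}s_{23}T_2$ inside $p_0$ are controlled by $s_{12}^{3/2}$ while the leading summand $s_{13}s_{23}T_3$ remains of order $(s_{13}s_{23})^{3/2}\gtrsim c(\theta)^{3/2}$. After invoking the scale invariance of the ratio $p_0/p_\infty$ to normalise the diameter, a compactness argument on the configuration space of triangles in ${\rm V}_{\sf Far}(\theta)$ yields a uniform ${\sf c_1}(\theta)>0$.

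The bound ${\sf c_1}(\theta)\le 2$ is immediate from the universal pointwise inequality $p_0\le 2\,p_\infty$ of \eqref{paper2-Main_inequality}. The principal obstacle is the sign indeterminacy of the $T_i$: since any $T_i$ may be negative and there is no a priori sign correlation among them, one cannot simply bound $p_0\ge (\min_i s_{ij}s_{ik})\,p_\infty$. The care must be taken in linking each potentially negative $T_i$ with the weight $s_{ij}s_{ik}$ that multiplies it in $p_0$, so that the cancellations responsible for $p_\infty\ge 0$ do not propagate unfavourably into $p_0$.
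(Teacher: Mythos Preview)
The paper does not prove this lemma; it cites Proposition~3.3 of \cite{Chousionis-Prat} and only remarks that the upper bound ${\sf c}_1(\theta)\le 2$ follows from \eqref{paper2-Main_inequality}. So there is no argument in the paper to compare your attempt against.

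Your attempted proof has a genuine gap. In the boundary case $s_{12}=0$ you assert that the constraint $\theta_{13}+\theta_{23}\ge\theta$ forces $s_{13}s_{23}\ge c(\theta)>0$. This is false. Take $z_1=0$, $z_2=i$, $z_3=x$ with $x>0$ small: then $\theta_{12}=0$, $\theta_{13}=\pi/2$, $\theta_{23}=\arctan x$, so $\theta_{12}+\theta_{13}+\theta_{23}>\pi/2$ exceeds any fixed $\theta\le\pi/2$; yet $s_{13}=1$ while $s_{23}=x^2/(x^2+1)\to 0$. Computing directly, one finds $p_\infty(0,i,x)=1/(x^2+1)$ and $p_0(0,i,x)=x^2/(x^2+1)^2$, hence $p_0/p_\infty=x^2/(x^2+1)\to 0$ as $x\to 0$. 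The same one-parameter family defeats your compactness step: after normalising $|z_1-z_2|=1$ the triple stays inside ${\rm V}_{\sf Far}(\theta)$ while $z_3\to z_1$, and the ratio $p_0/p_\infty$ does not extend continuously across this coalescence, so the configuration space you appeal to is not compact in the relevant sense.

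This example in fact appears to contradict the inequality \eqref{paper3-perm-curv} under the sum condition \eqref{paper3-far_from_vert} as literally recorded here; the hypothesis in the original Chousionis--Prat proposition is presumably stronger than the transcription in the present paper, and you should consult that source directly before attempting a proof.
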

Note that the inequality  ${\sf c_1}(\theta)\le 2$ follows from
(\ref{paper2-Main_inequality}) that was proved in \cite{ChMT2016}.

For measures $\mu_1$, $\mu_2$ and $\mu_3$ and a cube $Q$ we
set
\begin{equation*}
\label{paper3-perm_delta}
p^{[\delta,Q]}_0(\mu_1,\mu_2,\mu_3):=\iiint\limits_{\delta r(Q)\le |z_1-z_2|\le
\delta^{-1} r(Q)} p_0(z_1,z_2,z_3)\;d\mu_1(z_1)d\mu_2(z_2)d\mu_3(z_3).
\end{equation*}
The parameter $\delta>0$ will be chosen later to be small enough for
our purposes. If $\mu_1=\mu_2=\mu_3=\mu$, then we write
$p^{[\delta,Q]}_0(\mu)$ instead of $p^{[\delta,Q]}_0(\mu,\mu,\mu)$, for short.

Now we are ready to state the above mentioned estimate of
$\beta_{\mu,2}(2B_Q)$  for $\gamma$-balanced cubes $Q$ via the
truncated version of  $p_0(\mu\lfloor 2B_Q)$. Pay attention that the
first
 term in the estimate is a ``non-summable'' part which makes a big difference with the
case of curvature or $p_\infty$ (see
Section~\ref{paper3-sec_pack_Tree}).

\begin{lemma}
\label{paper3-lemma_beta_perm_initial} If $Q$ is $\gamma$-balanced, then
for any $\varepsilon\in(0,1)$,
\begin{equation}
\label{paper3-beta-perm-initial} \beta_{\mu,2}(2B_Q)^2\Theta_\mu(2B_Q)\le
4\varepsilon^2
\Theta_\mu(2B_Q)^2+C(\varepsilon,\gamma)\frac{p^{[\delta,Q]}_0(\mu\lfloor
2B_Q)}{\mu(Q)},\qquad 0<\delta\le\gamma.
\end{equation}
Moreover, for any $\varepsilon_0>0$, there exist
$\varepsilon=\varepsilon(\varepsilon_0)>0$ and
$\tilde{\varepsilon}=\tilde{\varepsilon}(\varepsilon_0,\gamma)>0$
such that if
\begin{equation}
\label{paper3-beta_cond}
\frac{p^{[\delta,Q]}_0(\mu\lfloor 2B_Q)}{\Theta_\mu(2B_Q)^2\mu(Q)}\le
\tilde{\varepsilon},
\end{equation}
then
$$
\beta_{\mu,2}(2B_Q)^2\le \varepsilon_0^2\Theta_\mu(2B_Q).
$$
\end{lemma}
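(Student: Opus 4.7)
The plan is to exploit the $\gamma$-balanced hypothesis to produce two well-separated sub-balls carrying substantial mass, use the line through their centers as a competitor for the infimum defining $\beta_{\mu,2}(2B_Q)$, and then transfer the resulting Menger-curvature-type quantities to the permutations $p_0$ via Lemma~\ref{paper3-lemma_Chous_Prat}. First, I invoke Lemma~\ref{paper3-lemma_balanced_balls}(a) to obtain $B_1,B_2$, and then, via a Vitali-type covering, replace them by concentric sub-balls $\tilde B_1,\tilde B_2$ of radius $\varepsilon\gamma\,r(Q)$ retaining a mass bound $\mu(\tilde B_k)\ge\tilde\rho''(\varepsilon,\gamma)\,\mu(Q)$ at the cost of worsening constants. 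Writing $L^*$ for the line through the centers of $\tilde B_1,\tilde B_2$, an elementary planar estimate yields
\begin{equation*}
\sup_{z\in 2B_Q}\bigl|\dist(z,L^*)-\dist(z,L_{z_1,z_2})\bigr|\lesssim \varepsilon\,r(Q)\qquad\text{for all }z_k\in\tilde B_k,\ k=1,2,
\end{equation*}
since the defining endpoints differ by $\le\varepsilon\gamma\,r(Q)$ across a baseline of length $\gtrsim\gamma\,r(Q)$.

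Using $L^*$ as a candidate in the infimum for $\beta_{\mu,2}(2B_Q)^2$, applying $(a+b)^2\le 2a^2+2b^2$, and averaging over $(z_1,z_2)\in\tilde B_1\times\tilde B_2$ gives
\begin{equation*}
\beta_{\mu,2}(2B_Q)^2\,r(2B_Q)^3\le \frac{2}{\mu(\tilde B_1)\mu(\tilde B_2)}\iiint \dist(z_3,L_{z_1,z_2})^2\,d\mu^3+C\,\varepsilon^2\,r(Q)^2\,\mu(2B_Q).
\end{equation*}
I then split the triple integral at the threshold $h=\varepsilon\,r(Q)$. The part $\{\dist(z_3,L_{z_1,z_2})\le h\}$ contributes at most $h^2\mu(\tilde B_1)\mu(\tilde B_2)\mu(2B_Q)$. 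For the part $\{\dist>h\}$, the key elementary claim is that such triples lie in ${\rm V}_{\sf Far}(c\varepsilon)$: if every side of the triangle $z_1z_2z_3$ made angle less than $\theta$ with the vertical, then the pairwise angles between the sides would be $\le 2\theta$, forcing $\dist(z_3,L_{z_1,z_2})\lesssim \theta\,r(Q)$, which contradicts $\dist>h$ for $\theta=c\varepsilon$. Lemma~\ref{paper3-lemma_Chous_Prat} then gives $p_\infty\le{\sf c}_1(c\varepsilon)^{-1}p_0$ on this set, and the Menger-curvature identity $\dist(z_3,L_{z_1,z_2})^2=p_\infty(z_1,z_2,z_3)|z_1-z_3|^2|z_2-z_3|^2$ combined with $|z_i-z_3|\lesssim r(Q)$ yields
\begin{equation*}
\iiint_{\dist>h}\dist(z_3,L_{z_1,z_2})^2\,d\mu^3\le C(\varepsilon,\gamma)\,r(Q)^4\,p_0^{[\delta,Q]}(\mu\lfloor 2B_Q),
\end{equation*}
where the truncation is justified since $|z_1-z_2|\in[c\gamma\,r(Q),2r(2B_Q)]\subset[\delta r(Q),\delta^{-1}r(Q)]$ as soon as $\delta\le\gamma$.

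Collecting these bounds, using $\mu(\tilde B_k)\gtrsim\tilde\rho''(\varepsilon,\gamma)\mu(Q)$ together with the doubling relation $\mu(2B_Q)\approx\mu(Q)\approx r(Q)\,\Theta_\mu(2B_Q)$ (recall that $\gamma$-balanced cubes are doubling), and multiplying through by $\Theta_\mu(2B_Q)/r(2B_Q)^3$, I recover~\eqref{paper3-beta-perm-initial} after absorbing dimensional constants into the $4\varepsilon^2\Theta^2$ coefficient on one side and into $C(\varepsilon,\gamma)$ on the other. The second assertion follows immediately: choose $\varepsilon=\varepsilon_0/(2\sqrt{2})$ so that $4\varepsilon^2\le\varepsilon_0^2/2$, and then pick $\tilde\varepsilon$ in~\eqref{paper3-beta_cond} so that $C(\varepsilon,\gamma)\tilde\varepsilon\le\varepsilon_0^2/2$.

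The main obstacle is the shrinking step at the start: because $\rho'$ in Lemma~\ref{paper3-lemma_balanced_balls} depends only on $\gamma$ and not on the free parameter $\varepsilon$, a naive application leaves an error term of size $(\rho'/\gamma)^2\Theta^2$ in~\eqref{paper3-beta-perm-initial} that cannot be absorbed into $4\varepsilon^2\Theta^2$ when $\varepsilon$ is smaller than $\rho'$. The Vitali-covering trick rescues this by producing sub-balls of the correct radius, at the price of pushing the dependence on $\varepsilon$ into $\tilde\rho''(\varepsilon,\gamma)^{-2}$; since that factor is eventually hidden inside $C(\varepsilon,\gamma)$, this is harmless for the conclusion.
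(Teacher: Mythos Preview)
Your argument is correct and reaches the same conclusion, but it is more circuitous than the paper's. The paper never fixes a competitor line $L^*$ and never shrinks the balanced balls: it simply takes $L_{y_1,y_2}$ as the competitor for \emph{each} pair $y_1\in B_1\cap Q$, $y_2\in B_2\cap Q$, and splits the integral $\int_{2B_Q}\dist(w,L_{y_1,y_2})^2\,d\mu(w)$ according to whether $(w,y_1,y_2)\in{\rm V}_{\sf Far}(\varepsilon)$ or not. On the ``not far'' part one has the pointwise bound $\dist(w,L_{y_1,y_2})\le 4\varepsilon\,r(B_Q)$ directly (since the three sides make total angle $<\varepsilon$ with the vertical), yielding exactly the $4\varepsilon^2\Theta_\mu(2B_Q)^2$ term with no perturbation error. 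Only at the very end does the paper select specific $y_1,y_2$ by a pigeonhole/Chebyshev argument so that $\int_{2B_Q}p_0(w,y_1,y_2)\,d\mu(w)$ is at most the average $p_0^{[\delta,Q]}(\mu\lfloor 2B_Q)/(\rho''\mu(Q))^2$.

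The ``main obstacle'' you flag --- the error $(\rho'/\gamma)^2\Theta^2$ --- is therefore an artifact of your decision to perturb from a fixed line $L^*$ before averaging; the paper's approach sidesteps it entirely. Your Vitali-covering rescue works, but it costs you an extra pigeonhole step and a dependence of the mass lower bound on $\varepsilon$, whereas the paper keeps the original $\rho''(\gamma)$ throughout and absorbs the $\varepsilon$-dependence only through ${\sf c}_1(\varepsilon)$. Your height-threshold split $\{\dist>h\}$ versus $\{\dist\le h\}$ is equivalent (via contrapositive) to the paper's ${\rm V}_{\sf Far}(\varepsilon)$ split, so the geometric core is the same. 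The second assertion is handled identically in both proofs.
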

\begin{proof}
By Lemma~\ref{paper3-lemma_balanced_balls}, there exist  balls
$B_k=B(\xi_k,\rho'\,r(Q))$, $k=1,2$, where $\xi_k\in B(Q)$, such that $\mu(B_k\cap B(Q))\ge
\rho''\mu(Q)$ and $\dist(y_1,y_2)\ge \gamma r(B_Q)$ for any
$y_k\in B_k\cap Q$, $k=1,2$. From (\ref{paper3-beta_ball}) it follows that
\begin{align*}
\beta_{\mu,2}(2B_Q)^2
&\le    \frac{1}{2r(B_Q)}\int_{2B_Q}\bigg(\frac{\dist(w,L_{y_1,y_2})}{2r(B_Q)}\bigg)^2d\mu(w).
\end{align*}
We separate triples $(w,y_1,y_2)$ that are in and not in ${\rm V}_{\sf Far}(\varepsilon)$. Clearly,
$$
\dist(w,L_{y_1,y_2})\le \diam(2B_Q) \sin\varepsilon \le 4\,\varepsilon
\,r(B_Q) \qquad \text{if } (w,y_1,y_2)\notin
{\rm V}_{\sf Far}(\varepsilon).
$$
Thus
\begin{align*}
&\beta_{\mu,2}(2B_Q)^2\\
&\quad\le    \frac{4\varepsilon^2}{2r(B_Q)}\int_{2B_Q} d\mu(w)+\frac{1}{2r(B_Q)}\int_{2B_Q, \;(w,y_1,y_2)\in {\rm V}_{\sf Far}(\varepsilon)}\bigg(\frac{\dist(w,L_{y_1,y_2})}{2r(B_Q)}\bigg)^2d\mu(w)\\
&\quad\le   4\varepsilon^2 \Theta_\mu(2B_Q)+  8r(B_Q)\int_{2B_Q, \;(w,y_1,y_2)\in {\rm V}_{\sf Far}(\varepsilon)}\bigg(\frac{2\dist(w,L_{y_1,y_2})}{|w-y_1||w-y_2|}\bigg)^2d\mu(w)\\
&\quad=   4\varepsilon^2 \Theta_\mu(2B_Q)+ 8r(B_Q)\int_{\,2B_Q, \;(w,y_1,y_2)\in {\rm V}_{\sf Far}(\varepsilon)}c(w,y_1,y_2)^2d\mu(w).
\end{align*}
We used that $|w-y_k|\le \diam(2B_Q)=4r(B_Q)$ as $w,y_1,y_2\in 2B_Q$ and that
$$
c(w,y_1,y_2)=\frac{2\dist(w,L_{y_1,y_2})}{|w-y_1||w-y_2|}.
$$
Recall that $r(B_Q)=28r(Q)$ by definition.
By (\ref{curvature_pointwise}) and (\ref{paper3-perm-curv}),
\begin{align*}
&\int_{\,2B_Q, \;(w,y_1,y_2)\in {\rm V}_{\sf Far}(\varepsilon)}c(w,y_1,y_2)^2d\mu(w) \\
& \qquad\qquad\qquad\le \frac{2}{{\sf c_1}(\varepsilon)}
                     \int_{\,2B_Q, \;(w,y_1,y_2)\in {\rm V}_{\sf Far}(\varepsilon)}p_0(w,y_1,y_2)d\mu(w).
\end{align*}

Recall  that $|y_1-y_2|\ge \gamma r(Q)$ for any
$y_k\in B_k\cap Q$, $k=1,2$. Furthermore, for any $\delta$ such that $0<\delta\le \gamma$ we can find $y_1\in B_1$ and $y_2\in B_2$ so that
$$
\int_{2B_Q}p_0(w,y_1,y_2)d\mu(w)
\le \frac{p^{[\delta,Q]}_0(\mu\lfloor 2B_Q)}{\mu(B_1)\mu(B_2)} \le
\frac{p^{[\delta,Q]}_0(\mu\lfloor 2B_Q)}{(\rho'')^2\mu(Q)^2}.
$$
By (\ref{paper3-eqdob22}) and the fact that $E\cap B(Q)\subset Q$, we
deduce that
\begin{equation*}
\label{paper3-muQ_mu2BQ} \mu(Q)\ge C_0^{-1}\mu(100B(Q))\ge
C_0^{-1}\mu(56B(Q))=C_0^{-1}\mu(2B_Q).
\end{equation*}
Consequently,
\begin{align*}
\beta_{\mu,2}(2B_Q)^2&\le 4\varepsilon^2 \Theta_\mu(2B_Q)+\frac{16r(B_Q)p^{[\delta,Q]}_0(\mu\lfloor
2B_Q)}{{\sf
c_1}(\varepsilon)(\rho'')^2\mu(Q)C_0^{-1}\mu(2B_Q)}\\
&=4\varepsilon^2 \Theta_\mu(2B_Q)+C(\varepsilon,\gamma)\frac{p^{[\delta,Q]}_0(\mu\lfloor
2B_Q)}{\Theta_\mu(2B_Q)\mu(Q)}.
\end{align*}
Multiplying both sides by $\Theta_\mu(2B_Q)$ finishes the proof of (\ref{paper3-beta-perm-initial}).
Note that $\rho''=\rho''(\gamma)$.

Let us prove the second statement. By the assumption (\ref{paper3-beta_cond}),
$$
\beta_{\mu,2}(2B_Q)^2\le
(4\varepsilon^2+C(\varepsilon,\gamma)\tilde{\varepsilon})\Theta_\mu(2B_Q).
$$
For any $\varepsilon_0>0$, we put $\varepsilon=\tfrac{\sqrt{2}} {4}
\varepsilon_0 $ and choose $\tilde{\varepsilon}$ so that
$\tilde{\varepsilon}\le
\tfrac{1}{2}\varepsilon_0^2/C(\varepsilon,\gamma)$.
\end{proof}

\section{Parameters and thresholds}
\label{paper3-sec_param}

Recall that we work everywhere with the David-Mattila dyadic lattice $\mathcal{D}$ associated with the measure $\mu$.

In what follows we will use many parameters and thresholds. Some of them depend on each other, some are independent. Let us give a list of the parameters:
\begin{itemize}
  \item $\tau$ is the threshold for cubes with low density:
  $$
  0<\tau\ll 1.
  $$
  \item $A$ is the threshold for cubes with high density:
  $$
  0<A^{-1}\le \tau^2\ll 1,\quad \text{i.e.} \quad  A\gg 1.
  $$
  \item $\theta_0$ is the threshold for the angle between best approximating lines associated to some cubes:
  $$
  0<\theta_0 \ll 1.
  $$
  \item $\gamma$ is the parameter controlling unbalanced cubes:
  $$
  0<\gamma\le \tau^3\ll 1.
  $$
    \item $\varepsilon_0$ is the threshold controlling the $\beta_{2,\mu}$-numbers:
    $$
    0<\varepsilon_0=\varepsilon_0(\gamma, \tau,A, \theta_0)\ll 1.
    $$
    \item $\alpha$ is the threshold controlling permutations of intermediate cubes:
    $$
    0<\alpha=\alpha(\tau,A,\varepsilon_0,\gamma,\theta_0)\ll 1.
    $$
    \item $\delta$ is the parameter controlling the truncation of permutations:
    $$
   0<\delta=\delta(\gamma,\varepsilon_0,\tau,A)\ll 1.
    $$
\end{itemize}

All the parameters and thresholds are supposed to be chosen (and fixed at the very end) so that the forthcoming results hold true. In what follows, we will again indicate step by step how the choice should be made.

\section{Stopping cubes and trees}
\label{paper3-sec_stopping}
\subsection{Stopping cubes}
\label{paper3-sec_stopping_cubes}
Let $R\in\mathcal{D}^{db}$. We  use the parameters and
thresholds given in Section~\ref{paper3-sec_param}. We denote by ${\sf
Stop}(R)$ the family of the maximal cubes $Q\subset R$  for which
one of the following holds:
\begin{enumerate}
\item[(\textbf{S1})] $Q\in \HD(R)\cup \LD(R)\cup {\sf UB}(R)$, where
\begin{itemize}
\item $\HD(R)$ is the family of \textit{high density} doubling cubes $Q\in \mathcal{D}^{db}$ satisfying
$$
\Theta_\mu(2B_Q)> A\, \Theta_\mu(2B_R);
$$

\item $\LD(R)$ is the family of \textit{low density} cubes $Q$ satisfying
$$
\Theta_\mu(2B_Q)< \tau\, \Theta_\mu(2B_R);
$$

\item ${\sf UB}(R)$ is the family of \textit{unbalanced} cubes $Q\in\mathcal{D}^{db} \setminus (\HD(R)\cup\LD(R))
$ such that $Q$ is $\gamma$-unbalanced;
\end{itemize}
\item[(\textbf{S2})] $Q\in \mathsf{BP}(R)$ (``\textit{big permutations}''), meaning $Q\notin  \HD(R)\cup \LD(R)\cup {\sf UB}(R) $ and
\begin{equation*}
\label{paper3-big_curv_stop}
 \sum_{Q\subset \tilde{Q}\subset R}
\textsf{perm}(\tilde{Q})^2>\alpha^{2},\qquad
\textsf{perm}(\tilde{Q})^2:=\frac{p_0^{[\delta,\tilde{Q}]}(\mu\lfloor 2
B_{\tilde{Q}},\mu\lfloor 2B_R,\mu\lfloor 2B_R)}{\Theta_\mu(2B_{R})^2\mu(\tilde{Q})}.
\end{equation*}
\item[(\textbf{S3})] $Q\in \mathsf{BS}(R)$ (``\textit{big slope}''), meaning $Q\notin \HD(R)\cup \LD(R)\cup {\sf UB}(R)\cup \mathsf{BP}(R)$
 and $Q\in \mathcal{D}^{db}$ so that
$$
\measuredangle(L_Q,L_R)>\theta(R),
$$
where $\theta(R)$ depends on some geometric properties of $R$ and is comparable with the parameter $\theta_0>0$ mentioned in  Section~\ref{paper3-sec_param}. The more precise description will be given in Section~\ref{paper3-sec_small_BS}.
\item[(\textbf{S4})] $Q\in \textsf{F}(R)$ (``\textit{big part of $Q$ is far from best approximating lines for the doubling ancestors of $Q$}''),
meaning $Q\notin  \HD(R)\cup
\LD(R)\cup {\sf UB}(R)\cup \mathsf{BP}(R)\cup \mathsf{BS}(R)$ and
    $$
    \mu(Q\setminus 2B_Q^{\sf Cl})>\sqrt{\alpha}\,\mu(Q),
    $$
where
\begin{equation}
\label{sup_new}
\begin{split}
2B_Q^{\sf Cl}:=\{&x\in R\cap 2B_Q: \;\dist(x,L_{\tilde{Q}})\le 5\sqrt{\varepsilon_0} \,r(B_{\tilde{Q}})\quad \forall
\tilde{Q}\in \mathcal{D}^{db}(R)\,: \\
& 2B_Q\subset 2B_{\tilde{Q}} \text{ and } \tilde{Q}\text{ is not contained in any cube
from }\\
& \HD(R)\cup
\LD(R)\cup {\sf UB}(R)\cup \mathsf{BP}(R)\cup \mathsf{BS}(R) \}.
\end{split}
\end{equation}
\end{enumerate}

Let ${\sf Tree}(R)$ be the subfamily of the cubes from $\mathcal{D}(R)$ which are not \textit{strictly} contained in any cube
from ${\sf Stop}(R)$. We also set
\begin{equation*}
\label{paper3-DbTree}
{\sf DbTree}(R):=\mathcal{D}^{db}\cap({\sf Tree}(R)\setminus {\sf Stop}(R)).
\end{equation*}
Note that all cubes in ${\sf Stop}(R)$ are disjoint.
\begin{remark}
\label{paper3-remark3}
It may happen that ${\sf Stop}(R)$ is empty. In this case there is no need to estimate the measure of stopping cubes and we may immediately go to Section~\ref{section_LIP}. In the lemmas below related to estimating the measure of stopping cubes we naturally suppose that ${\sf Stop}(R)$ is not empty.

Generally speaking it is possible that $R \in {\sf Stop}(R)$ (and then ${\sf DbTree}(R)$ is empty). Clearly, $R\notin \textsf{HD}(R) \cup \textsf{LD}(R)\cup {\sf BS}(R)$ by definition but it may occur that $R \in \textsf{UB}(R)\cup \textsf{BP}(R)\cup \textsf{F}(R)$. Firstly, we will not work with the family ${\sf UB}(R)$ before Section~\ref{paper3-sec_pack_Top} so we may assume before that section that $R\notin {\sf UB}(R)$. Secondly, if $R\in {\sf BP}(R)$, then we may directly go to Lemma~\ref{paper3-measure_BP_F_stop_cubes} and use the same estimate for the measure of stopping cubes from ${\sf BP}(R)$. Thirdly, it will follow from Lemmas~\ref{paper3-lemma_R_far_1} and~\ref{paper3-lemma_R_far_2} (see Remark~\ref{p3_remrk_R_F}) that if $R\notin {\sf UB}(R)\cup {\sf BP}(R)$, then $R\notin {\sf F}(R)$, i.e. the case $R\in {\sf F}(R)$ may be skipped.

It is also worth mentioning that if $R\in {\sf Stop}(R)$, then the  Lipschitz function mentioned in Section~\ref{chap3-plan} may be chosen identically zero and its graph is just $L_R$.
\end{remark}

\subsection{Properties of cubes in trees}
\label{section_R_Far} Below, we will collect main properties of cubes from ${\sf
Tree}(R)$ that readily follow from the stopping conditions. Before it
we prove an additional result.
\begin{lemma}
\label{paper3-density_of_cubes_hd} For any $Q\in {\sf Tree}(R)$, we have
$$
\Theta_\mu(2B_Q) \lesssim A\,\Theta_\mu(2B_R).
$$
The implicit constant depends only on $C_0$ and $A_0$.
\begin{proof}
Let $Q\in {\sf Tree}(R)$. If $Q\in \mathcal{D}^{db}$, then there is
nothing to prove. If not, then denote by $\tilde{Q}\in
\mathcal{D}^{db}$ the first doubling ancestor of $Q$. Such a cube
$\tilde{Q}$ exists and $\tilde{Q}\subset R$ because
$R\in\mathcal{D}^{db}$ by construction. Since the intermediate cubes
$P$, $Q\subsetneq P\subsetneq\tilde{Q}$, do not belong to
$\mathcal{D}^{db}$, by Lemma \ref{paper3-lemcad23} we have
$$
\Theta_\mu(2B_Q)\lesssim \Theta_\mu(100B(Q))\lesssim
C_0A_0\Theta_\mu(100B(\tilde{Q})).
$$
Using that $\tilde{Q}\in \mathcal{D}^{db}$, namely, the inequality
(\ref{paper3-doubling_property}), we get
$$
\Theta_\mu(2B_Q) \lesssim C_0^2A_0\,\Theta_\mu(2B_{\tilde{Q}})
\lesssim  C_0^2A_0\, A\,\Theta_\mu(2B_{R}),
$$
and we are done.
\end{proof}
\end{lemma}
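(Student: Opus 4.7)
The plan is to split on whether the cube $Q$ is itself doubling, and when it is not, to bridge to the nearest doubling ancestor via the non-doubling density estimate of Lemma \ref{paper3-lemcad23}.

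In the doubling case $Q\in\mathcal{D}^{db}$, the conclusion is essentially built into the stopping definition: provided $Q$ is not a stopping cube we have $Q\notin\HD(R)$, whence $\Theta_\mu(2B_Q)\le A\,\Theta_\mu(2B_R)$ tautologically by the definition of $\HD(R)$. The one subtle subcase is $Q\in\HD(R)\cap{\sf Stop}(R)$; here I would pass to the dyadic parent of $Q$, which by maximality must lie in ${\sf Tree}(R)\setminus{\sf Stop}(R)$, apply the non-doubling argument below to that parent, and then transfer the resulting bound back to $Q$ at the cost of a factor depending only on $C_0$ and $A_0$.

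In the non-doubling case $Q\notin\mathcal{D}^{db}$, I would let $\tilde Q\subset R$ denote the smallest doubling cube of $\mathcal{D}$ strictly containing $Q$; such a $\tilde Q$ exists because $R$ itself is doubling by construction of the top cubes. The key structural remark is that $\tilde Q$ cannot be a stopping cube: if it were, then $Q$ would be strictly contained in an element of ${\sf Stop}(R)$, contradicting $Q\in{\sf Tree}(R)$. Therefore $\tilde Q\in{\sf DbTree}(R)$ and in particular $\tilde Q\notin\HD(R)$, which gives $\Theta_\mu(2B_{\tilde Q})\le A\,\Theta_\mu(2B_R)$.

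It then remains to transport this density estimate from $\tilde Q$ down to $Q$. Every dyadic cube strictly between $Q$ and $\tilde Q$ is non-doubling by minimality of $\tilde Q$, so Lemma \ref{paper3-lemcad23} applies and gives $\Theta_\mu(100B(Q))\lesssim\Theta_\mu(100B(\tilde Q))$ with constants depending only on $C_0,A_0$. Chaining this with the trivial inclusion $2B_Q\subset 100B(Q)$ on the lower end, and with the doubling property \eqref{paper3-doubling_property} for $\tilde Q\in\mathcal{D}^{db}$ on the upper end, produces $\Theta_\mu(2B_Q)\lesssim\Theta_\mu(2B_{\tilde Q})\lesssim A\,\Theta_\mu(2B_R)$, as required. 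The main potential obstacle is really only bookkeeping: ensuring that the $\HD(R)\cap{\sf Stop}(R)$ subcase of $Q\in\mathcal{D}^{db}$ is disposed of cleanly, since the defining inequality of $\HD(R)$ there points the wrong way and must be compensated by the same ancestor-chasing argument applied at the parent.
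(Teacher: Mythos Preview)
Your proof is correct and follows essentially the same approach as the paper's: split into doubling and non-doubling cases, and in the latter bridge to the nearest doubling ancestor via Lemma~\ref{paper3-lemcad23} together with the doubling property~\eqref{paper3-doubling_property}. You are in fact more careful than the paper on the subcase $Q\in\HD(R)\cap\mathcal{D}^{db}$, which the paper dismisses with ``there is nothing to prove'' but which, as you observe, genuinely requires passing to the parent and re-running the ancestor argument.
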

\begin{lemma}
\label{paper3-DbTree_properties} The following properties hold:
\begin{equation} \label{paper3-theta_Q_Tree}
\tau\,\Theta_\mu(2B_R)\le \Theta_\mu(2B_Q) \lesssim
A\,\Theta_\mu(2B_R), \quad \forall Q\in {\sf Tree}(R)\setminus ({\sf
LD}(R)\cup {\sf
HD}(R)).
\end{equation}
\begin{equation} \label{paper3-Q_Tree_bal} Q\in \mathcal{D}^{db}\cap ({\sf Tree}(R)\setminus ({\sf HD}(R)\cup {\sf LD}(R)\cup  {\sf UB}(R)))\quad \Longrightarrow \quad \text{$Q$
is $\gamma$-balanced}.
\end{equation}
\begin{equation}
\label{paper3-big_perm_prop}
\sum_{Q\subset \tilde{Q}\subset R}
{\sf perm}(\tilde{Q})^2< \alpha^{2}
\quad \forall Q\in {\sf Tree}(R)\setminus ({\sf HD}(R)\cup {\sf LD}(R)\cup  {\sf UB}(R)\cup {\sf BP}(R)).
\end{equation}
\begin{equation}
\label{paper3-DbTree_beta}
\begin{split}
&\beta_{\mu,2}(2B_Q)^2\le
\varepsilon_0^2\Theta_\mu(2B_Q)\quad \text{ if } \alpha=\alpha(\gamma,\tau,\varepsilon_0)\text{ is small enough and}\\
&Q\in \mathcal{D}^{db}\cap ({\sf Tree}(R)\setminus ({\sf HD}(R)\cup {\sf LD}(R)\cup  {\sf UB}(R)\cup {\sf BP}(R))).
\end{split}
\end{equation}
\begin{equation}
\label{paper3-DBTree_BS} \measuredangle(L_Q,L_R)\le \theta(R)\qquad \forall
Q\in{\sf DbTree}(R).
\end{equation}
\begin{equation}\label{paper3-DbTree_Far}
\mu(Q\setminus 2B_Q^{\sf Cl})\le\sqrt{\alpha}\,\mu(Q)\qquad \forall
Q\in {\sf Tree}(R)\setminus {\sf Stop}(R).
\end{equation}
\end{lemma}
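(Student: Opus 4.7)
The plan is to verify the six properties one by one, in each case by unfolding the stopping rules (\textbf{S1})--(\textbf{S4}) together with the definitions of ${\sf Tree}(R)$, ${\sf Stop}(R)$ and ${\sf DbTree}(R)$. Of the six estimates, five will follow essentially by contrapositive arguments; the real work is concentrated in \eqref{paper3-DbTree_beta}, where I would invoke the previously established Lemma~\ref{paper3-lemma_beta_perm_initial}.

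For the routine items I would proceed as follows. For \eqref{paper3-theta_Q_Tree} the lower bound is the direct negation of the $\LD(R)$ stopping condition, whereas the upper bound is already contained in Lemma~\ref{paper3-density_of_cubes_hd} (which handles the nondoubling case via the first doubling ancestor of $Q$ and Lemma~\ref{paper3-lemcad23}). Property \eqref{paper3-Q_Tree_bal} I would read off directly from the definition of ${\sf UB}(R)$, since a doubling cube outside $\HD(R)\cup\LD(R)\cup{\sf UB}(R)$ is $\gamma$-balanced by construction. For \eqref{paper3-big_perm_prop} the assumption forces $Q\notin\HD(R)\cup\LD(R)\cup{\sf UB}(R)\cup{\sf BP}(R)$, so the defining sum of $\textsf{perm}^2$ cannot exceed $\alpha^2$; otherwise $Q$ or one of its ancestors would sit in ${\sf BP}(R)$, contradicting $Q\in {\sf Tree}(R)$. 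For \eqref{paper3-DBTree_BS} one observes that a cube in ${\sf DbTree}(R)$ is doubling and lies in ${\sf Tree}(R)\setminus {\sf Stop}(R)$, hence avoids ${\sf BS}(R)$, which gives the slope bound. Finally, \eqref{paper3-DbTree_Far} is the contrapositive of (\textbf{S4}), obtained from $Q\notin \textsf{F}(R)$ whenever $Q\in {\sf Tree}(R)\setminus {\sf Stop}(R)$.

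The main step is the verification of \eqref{paper3-DbTree_beta}. Here the plan is to apply the second part of Lemma~\ref{paper3-lemma_beta_perm_initial}: the required $\gamma$-balancedness of $Q$ is already supplied by \eqref{paper3-Q_Tree_bal}, so what remains is to secure the threshold hypothesis \eqref{paper3-beta_cond}. From \eqref{paper3-big_perm_prop} applied to $Q$ itself (one summand of the sum) one gets $\textsf{perm}(Q)^2\le\alpha^2$. Using that $2B_Q\subset 2B_R$ (for $Q\subsetneq R$ this follows from $r(Q)/r(R)\le C_0/A_0<1/2$ combined with $z_Q\in R\subset B_R$; the case $Q=R$ is trivial) and that $p_0\ge 0$ pointwise, as proved in \cite{CMPT}, one passes from the mixed permutation defining $\textsf{perm}(Q)^2$ to the diagonal one:
$$
p_0^{[\delta,Q]}(\mu\lfloor 2B_Q)\le p_0^{[\delta,Q]}(\mu\lfloor 2B_Q,\mu\lfloor 2B_R,\mu\lfloor 2B_R).
$$
Combining this with the lower density bound from \eqref{paper3-theta_Q_Tree} yields
$$
\frac{p_0^{[\delta,Q]}(\mu\lfloor 2B_Q)}{\Theta_\mu(2B_Q)^2\,\mu(Q)}\le \frac{\Theta_\mu(2B_R)^2}{\Theta_\mu(2B_Q)^2}\,\textsf{perm}(Q)^2\le\frac{\alpha^2}{\tau^2}.
$$
It then suffices to fix $\alpha\le\tau\sqrt{\tilde\varepsilon(\varepsilon_0,\gamma)}$, where $\tilde\varepsilon$ is the threshold supplied by Lemma~\ref{paper3-lemma_beta_perm_initial}. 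This is exactly the dependence $\alpha=\alpha(\gamma,\tau,\varepsilon_0)$ announced in the statement, and under this choice \eqref{paper3-beta_cond} holds, so Lemma~\ref{paper3-lemma_beta_perm_initial} delivers $\beta_{\mu,2}(2B_Q)^2\le\varepsilon_0^2\Theta_\mu(2B_Q)$.

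The main obstacle I anticipate is the parameter bookkeeping: one needs the new constraint $\alpha\le\tau\sqrt{\tilde\varepsilon(\varepsilon_0,\gamma)}$ to be compatible with all the earlier restrictions listed in Section~\ref{paper3-sec_param} (in particular $0<\gamma\le\tau^3$, $0<\delta\le\gamma$, and the smallness of $\varepsilon_0$ in terms of $\gamma,\tau,A,\theta_0$). Beyond this careful ordering of smallness conditions, the argument amounts to a careful reading of the stopping definitions.
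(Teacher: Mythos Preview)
Your proposal is correct and follows essentially the same route as the paper: the five routine items are handled by unfolding the stopping rules and Lemma~\ref{paper3-density_of_cubes_hd}, exactly as the paper indicates, and for \eqref{paper3-DbTree_beta} you invoke Lemma~\ref{paper3-lemma_beta_perm_initial} together with the stopping conditions (\textbf{S1}) and (\textbf{S2}), which is precisely what the paper's one-line proof says. Your write-up is simply more explicit in verifying the hypothesis \eqref{paper3-beta_cond}---the passage from $\textsf{perm}(Q)^2<\alpha^2$ to the diagonal ratio via $p_0\ge 0$, $2B_Q\subset 2B_R$, and the lower density bound $\Theta_\mu(2B_Q)\ge\tau\,\Theta_\mu(2B_R)$---but this is exactly the intended mechanism behind the paper's terse reference to (\textbf{S1}) and (\textbf{S2}).
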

\begin{proof}
The statement (\ref{paper3-theta_Q_Tree}) follows from
Lemma~\ref{paper3-density_of_cubes_hd} and the stopping condition
\textbf{(S1)}. The statements (\ref{paper3-Q_Tree_bal}), (\ref{paper3-big_perm_prop}), (\ref{paper3-DBTree_BS}) and
(\ref{paper3-DbTree_Far}) immediately follow from
the construction of ${\sf Stop}(R)$ and ${\sf Tree}(R)$, while
(\ref{paper3-DbTree_beta}) is implied by
Lemma~\ref{paper3-lemma_beta_perm_initial} and the stopping conditions
\textbf{(S1)} and \textbf{(S2)}.
\end{proof}

The following property of $\gamma$-balanced cubes will be used many times below.
\begin{lemma}
\label{paper3-lemma_gamma_cubes_Hausdorf_dist}
Let $\varepsilon_0=\varepsilon_0(\gamma)$ be chosen small enough. Then for any $Q\in \mathcal{D}^{db}\cap ({\sf Tree}(R)\setminus ({\sf HD}(R)\cup {\sf LD}(R)\cup  {\sf UB}(R)\cup {\sf BP}(R)))$ there exist  two sets $\mathcal{Z}_k\subset Q$, $k=1,2$, such that
$$
\mu(Q)\lesssim_{\gamma}\mu(\mathcal{Z}_k)\le \mu(Q)\quad\text{and}\quad\dist(\mathcal{Z}_1,\mathcal{Z}_2)\ge \gamma r(B_Q),
$$
and moreover for any  $z_1\in \mathcal{Z}_1$ and $z_2\in \mathcal{Z}_2 $ we have
$$
\dist_H(L_{z_1,z_2}\cap 2B_Q,L_Q\cap 2B_Q)\le \sqrt{\varepsilon_0} \,r(B_Q).
$$
\end{lemma}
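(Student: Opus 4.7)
The plan is to combine the $\gamma$-balanced alternative from Lemma~\ref{paper3-lemma_balanced_balls} with the quantitative flatness provided by the $\beta_{\mu,2}$ bound~\rf{paper3-DbTree_beta}. Since $Q\in \mathcal{D}^{db}\cap({\sf Tree}(R)\setminus({\sf HD}(R)\cup {\sf LD}(R)\cup {\sf UB}(R)\cup {\sf BP}(R)))$, property~\rf{paper3-Q_Tree_bal} says $Q$ is $\gamma$-balanced, so one gets balls $B_k=B(\xi_k,\rho'r(Q))$ with $\xi_k\in B(Q)$ satisfying $\mu(B_k\cap B(Q))\ge \rho''(\gamma)\,\mu(Q)$ and $\dist(y_1,y_2)\ge \gamma r(B_Q)$ whenever $y_k\in B_k\cap Q$. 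Simultaneously, \rf{paper3-DbTree_beta} together with the definition~\rf{paper3-beta_ball} gives
$$
\int_{2B_Q}\dist(y,L_Q)^2\,d\mu(y)\le \varepsilon_0^2\,\mu(2B_Q)\,r(B_Q)^2.
$$

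Setting $h:=C(\gamma)\,\varepsilon_0\, r(B_Q)$ with $C(\gamma)$ to be chosen, I would define
$$
\mathcal{Z}_k:=\{y\in B_k\cap Q\colon \dist(y,L_Q)\le h\}.
$$
Chebyshev's inequality yields $\mu(\{y\in 2B_Q\colon \dist(y,L_Q)>h\})\le \mu(2B_Q)/C(\gamma)^2$, and combining with $\mu(2B_Q)\le C_0\,\mu(Q)$ (doubling) one can pick $C(\gamma)$ large enough so that $\mu((B_k\cap Q)\setminus\mathcal{Z}_k)\le \rho''(\gamma)\mu(Q)/2$. Hence $\mu(\mathcal{Z}_k)\ge \rho''(\gamma)\mu(Q)/2\gtrsim_{\gamma}\mu(Q)$, and $\dist(\mathcal{Z}_1,\mathcal{Z}_2)\ge \gamma r(B_Q)$ is inherited from Lemma~\ref{paper3-lemma_balanced_balls}(a). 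For the Hausdorff-distance claim, pick any $z_k\in \mathcal{Z}_k$ and work in coordinates where $L_Q$ is the $x$-axis, so $z_k=(x_k,y_k)$ with $|y_k|\le h$. Assuming $\varepsilon_0=\varepsilon_0(\gamma)$ small enough that $h\le \gamma r(B_Q)/4$, the orthogonality identity $|x_2-x_1|^2=|z_1-z_2|^2-(y_1-y_2)^2$ forces $|x_2-x_1|\ge \gamma r(B_Q)/\sqrt{2}$, and so the slope of $L_{z_1,z_2}$ satisfies $|y_2-y_1|/|x_2-x_1|\le 2\sqrt{2}\,h/(\gamma r(B_Q))$. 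For any $w=(x_w,0)\in L_Q\cap 2B_Q$ the intersection of the vertical line $x=x_w$ with $L_{z_1,z_2}$ has $y$-coordinate bounded by $h(1+6\sqrt{2}/\gamma)$ because $|x_w-x_k|\le 3r(B_Q)$; a symmetric bound holds when the roles are reversed. Thus
$$
\dist_H(L_{z_1,z_2}\cap 2B_Q,\,L_Q\cap 2B_Q)\le C'(\gamma)\,h=C'(\gamma)C(\gamma)\,\varepsilon_0\, r(B_Q),
$$
and choosing $\varepsilon_0=\varepsilon_0(\gamma)$ so small that $C'(\gamma)C(\gamma)\sqrt{\varepsilon_0}\le 1$ delivers the stated bound $\sqrt{\varepsilon_0}\,r(B_Q)$.

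The main obstacle is the bookkeeping of smallness: the naive use of the $\beta$-number produces points within $\sqrt{\varepsilon_0}r(B_Q)$ of $L_Q$, which, after being divided by the separation $\gamma r(B_Q)$ to obtain an angle, would give a Hausdorff distance of order $\sqrt{\varepsilon_0}/\gamma$ and be too large. The fix is to exploit Chebyshev more aggressively and select $\mathcal{Z}_k$ at the much finer scale $h\sim_{\gamma}\varepsilon_0 r(B_Q)$, sacrificing only a $\gamma$-dependent fraction of the mass of $B_k\cap Q$; the resulting Hausdorff bound then loses one factor of $1/\gamma$ but still beats $\sqrt{\varepsilon_0}r(B_Q)$ once $\varepsilon_0$ is taken small in terms of $\gamma$. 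All other steps are routine geometry.
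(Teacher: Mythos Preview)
Your proof is correct and follows essentially the same approach as the paper's: use the $\gamma$-balanced structure from Lemma~\ref{paper3-lemma_balanced_balls} to obtain the balls $B_k$, apply Chebyshev's inequality with the $\beta_{\mu,2}$ bound~\rf{paper3-DbTree_beta} to extract subsets $\mathcal{Z}_k\subset B_k\cap Q$ of mass $\gtrsim_\gamma\mu(Q)$ lying within $\lesssim_\gamma\varepsilon_0\,r(B_Q)$ of $L_Q$, then use elementary geometry (the paper phrases this as an angle bound $\measuredangle(L_{z_1,z_2},L_Q)\lesssim_\gamma\varepsilon_0$) to get $\dist_H\lesssim_\gamma\varepsilon_0\,r(B_Q)$ and absorb the $\gamma$-constant by taking $\varepsilon_0$ small. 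Your coordinate computation is simply a more explicit version of the paper's one-line ``this implies'' step; note that your closing paragraph about the ``naive'' $\sqrt{\varepsilon_0}$ obstacle overstates the difficulty, since Chebyshev on the $L^2$ integral already delivers the $\varepsilon_0$ (not $\sqrt{\varepsilon_0}$) scale directly.
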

\begin{proof}
Since $Q\in \mathcal{D}^{db}\cap ({\sf Tree}(R)\setminus ({\sf HD}(R)\cup {\sf LD}(R)\cup  {\sf UB}(R)))$, $Q$ is $\gamma$-balanced by (\ref{paper3-Q_Tree_bal}). Furthermore, by Lemma~\ref{paper3-lemma_balanced_balls} there exist balls $B_k=B(\xi_k,\rho'r(Q))$, $k=1,2$, where $\xi_k\in B(Q)$, such that
$$
\mu(B_k\cap B(Q))\ge \rho''\mu(Q)\quad \text{and} \quad \dist(y_1,y_2)\ge \gamma r(B_Q)\quad \text{for any } y_k\in B_k,\; k=1,2,
$$
where $\rho'$ and $\rho''$ depend on $\gamma$. Due to the estimate
$\beta_{\mu,2}(2B_Q)^2\le \varepsilon_0^2\Theta_\mu(2B_Q)$ (see
(\ref{paper3-DbTree_beta})),
 by Chebyshev's inequality there exist $\mathcal{Z}_k\subset B_k\cap Q$ such that
$$
\mu(Q)\lesssim_{\gamma}\mu(B_k)\lesssim\mu(\mathcal{Z}_k)\le \mu(Q)\quad \text{and}\quad \sup_{z\in
\mathcal{Z}_k}\dist(z,L_Q)\lesssim_{\gamma}   \varepsilon_0 \,r(B_Q),\quad k=1,2.
$$
Thus for any $z_1\in \mathcal{Z}_1$ and $z_2\in \mathcal{Z}_2$  we
have
$$
\dist(z_k,L_Q)\lesssim_{\gamma}  \varepsilon_0 \,r(B_Q),\quad k=1,2,\qquad
\dist(z_1,z_2)\gtrsim_\gamma r(B_Q).
$$
This implies that $\measuredangle(L_{z_1,z_2},L_Q)\lesssim_\gamma
 \varepsilon_0$ and therefore the following estimate for the Hausdorff distance holds:
$$
\dist_H(L_{z_1,z_2}\cap 2B_Q,L_Q\cap 2B_Q)\lesssim_{\gamma}
 \varepsilon_0 \,r(B_Q).
$$
Choosing $\varepsilon_0$ small enough with respect to the implicit constant depending on $\gamma$, we obtain the required result.
\end{proof}

Clearly, it may happen that not all cubes in ${\sf Tree}(R)$ are
$\gamma$-balanced as there may be undoubling cubes. However, for any
cube in ${\sf Tree}(R)$, there is always an ancestor in ${\sf
DbTree}(R)$ close by. Namely, the following result holds.
\begin{lemma}[Lemma 6.3 in \cite{AT}]
\label{paper3-lemdob32} For any cube $Q\in{\sf Tree}(R)$  there exists a
cube $\tilde{Q}\supset Q$ such that $\tilde{Q}\in{\sf DbTree}(R)$
and $\diam(\tilde{Q})\le \lambda\diam(Q)$ with some
$\lambda=\lambda(A,\tau)$.
\end{lemma}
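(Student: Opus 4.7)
The plan is to take $\tilde{Q}$ to be the smallest cube in $\mathcal{D}^{db}\cap\mathcal{D}(R)$ that contains $Q$. Such a cube exists because $R\in\mathcal{D}^{db}$ already lies above $Q$; when $Q\in\mathcal{D}^{db}\setminus {\sf Stop}(R)$ we simply have $\tilde{Q}=Q$. I would first verify that $\tilde{Q}\in{\sf DbTree}(R)$. Since ${\sf Stop}(R)$ consists of pairwise disjoint maximal cubes and $Q\in{\sf Tree}(R)$ is by definition not strictly contained in any stopping cube, no strict ancestor of $Q$ in $\mathcal{D}(R)$ can lie in ${\sf Stop}(R)$ (and if $Q\in{\sf Stop}(R)$, then $\tilde{Q}$ is taken strictly above $Q$). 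Hence $\tilde{Q}\in\mathcal{D}^{db}\cap({\sf Tree}(R)\setminus {\sf Stop}(R))={\sf DbTree}(R)$.

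Next, I would quantify $k:=J(Q)-J(\tilde{Q})$. By the choice of $\tilde{Q}$, every cube strictly between $Q$ and $\tilde{Q}$ is non-doubling, so Lemma~\ref{paper3-lemcad23} gives
$$
\Theta_\mu(100B(Q))\;\le\;C_0\,A_0^{-19(k-1)+1}\,\Theta_\mu(100B(\tilde{Q})).
$$
For the upper side, the doubling property of $\tilde{Q}$ together with Lemma~\ref{paper3-density_of_cubes_hd} gives $\Theta_\mu(100B(\tilde{Q}))\lesssim \Theta_\mu(2B_{\tilde{Q}})\lesssim A\,\Theta_\mu(2B_R)$. For the lower side I would distinguish two cases. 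If $Q\notin \LD(R)$, then $\Theta_\mu(2B_Q)\ge \tau\,\Theta_\mu(2B_R)$, and since $2B_Q\subset 100B(Q)$ we obtain directly $\Theta_\mu(100B(Q))\gtrsim \tau\,\Theta_\mu(2B_R)$. If instead $Q\in \LD(R)$ (necessarily $Q\in{\sf Stop}(R)$), I would reroute the argument through the parent $P$ of $Q$, applying the same chain estimate of Lemma~\ref{paper3-lemcad23} from $P$ up to $\tilde{Q}$. The crucial point is that $P\notin{\sf Stop}(R)$ by disjointness with $Q$, and $P\notin \LD(R)$ either: otherwise $P$ would satisfy (\textbf{S1}), and since no strict ancestor of $P$ is in ${\sf Stop}(R)$ (else $Q$ would be strictly contained in a stopping cube, contradicting $Q\in{\sf Tree}(R)$), $P$ would itself be a maximal stopping cube, a contradiction. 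Thus $\Theta_\mu(2B_P)\ge \tau\,\Theta_\mu(2B_R)$, and moving from $Q$ to $P$ only costs a factor $\lesssim A_0$ in radius.

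Combining the two sides yields $A_0^{19(k-1)-1}\lesssim A/\tau$, whence $k\lesssim_{C_0,A_0} 1+\log(A/\tau)$ and hence $\diam(\tilde{Q})\lesssim A_0^{k}\diam(Q)\lesssim (A/\tau)^{c}\diam(Q)$ for some $c=c(A_0)$, which gives the desired $\lambda=\lambda(A,\tau)$. The main obstacle I expect is the low-density case: at $Q\in \LD(R)$ itself one has no usable density lower bound, so one must pass to the parent $P$ and invoke the maximality and pairwise disjointness of ${\sf Stop}(R)$ to rule out both $P\in{\sf Stop}(R)$ and $P\in \LD(R)$. Once this is done, the David--Mattila chain estimate of Lemma~\ref{paper3-lemcad23} carries out all the quantitative work, and the absence of doubling is controlled in exactly one unified way.
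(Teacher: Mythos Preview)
Your argument is correct. The paper does not supply its own proof of this lemma; it simply cites \cite[Lemma~6.3]{AT}. Your approach---take the smallest doubling ancestor $\tilde{Q}$ (strictly above $Q$ when $Q\in{\sf Stop}(R)$), verify $\tilde{Q}\in{\sf DbTree}(R)$ via the maximality of stopping cubes, and then bound the generation gap using the chain estimate of Lemma~\ref{paper3-lemcad23} together with the density bounds $\Theta_\mu(2B_{\tilde Q})\lesssim A\,\Theta_\mu(2B_R)$ and $\Theta_\mu(2B_P)\ge\tau\,\Theta_\mu(2B_R)$---is exactly the standard one and is what the cited reference does. Your handling of the $\LD(R)$ case by passing to the parent $P$ and arguing that $P\notin{\sf Stop}(R)\cup\LD(R)$ is the right way to close the gap, and the maximality/disjointness reasoning you give for it is sound.

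One cosmetic point: your opening sentence defines $\tilde{Q}$ as the smallest doubling cube containing $Q$, which would give $\tilde{Q}=Q$ when $Q\in{\sf Stop}(R)\cap\mathcal{D}^{db}$ (e.g.\ $Q\in\HD(R)$); you patch this parenthetically later, but it would read more cleanly to define $\tilde{Q}$ from the outset as the smallest doubling ancestor of $Q$ that lies in ${\sf Tree}(R)\setminus{\sf Stop}(R)$.
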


Now we want to show that the measure of the set of points from $R$
which are far from the best approximation lines for cubes in $\{R\}\cup ({\sf
Tree}(R)\setminus {\sf Stop}(R))$ is small. Set
\begin{equation*}\label{paper3-perm_delta_x}
p^{[\delta,Q]}_0(x,\mu,\mu):=\iint_{\delta r(Q)\le |x-y|\le
\delta^{-1} r(Q)}p_0(x,y,z)\;d\mu(y)d\mu(z)
\end{equation*}
and consider
\begin{align*}
R_{\sf Far}:=\{x\in R:\quad &
\frac{p^{[\delta,Q]}_0(x,\mu\lfloor 2B_R,\mu\lfloor 2B_R)}{\Theta_\mu(2B_R)^2}\ge
{\sf c_2}(\gamma,\tau,\varepsilon_0)\\
& \text{for some } Q\in \{R\}\cup ({\sf
Tree}(R)\setminus {\sf Stop}(R)) \text{ such that } x\in
2B_Q\},
\end{align*}
where ${\sf c_2}(\gamma,\tau,\varepsilon_0)>0$ will be defined precisely in
the proof of Lemma~\ref{paper3-lemma_R_far_2}.

\begin{lemma}
\label{paper3-lemma_R_far_1} If $R\notin {\sf UB}(R)\cup {\sf BP}(R)$ and $\alpha=\alpha(\gamma,\tau,\varepsilon_0)$ is
chosen small enough, then
$$
\mu(R_{\sf Far})\le \alpha\mu(R).
$$
\end{lemma}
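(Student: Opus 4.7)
The plan is a straightforward Chebyshev and Fubini argument, with stopping condition (\textbf{S2}) supplying the uniform bound at the end. For each $Q\in\{R\}\cup({\sf Tree}(R)\setminus{\sf Stop}(R))$, I would first introduce the ``bad set''
$$
R_{\sf Far}(Q):=\Bigl\{x\in 2B_Q\cap R\,:\,p_0^{[\delta,Q]}(x,\mu\lfloor 2B_R,\mu\lfloor 2B_R)\ge {\sf c_2}\,\Theta_\mu(2B_R)^2\Bigr\},
$$
so that $R_{\sf Far}\subset \bigcup_Q R_{\sf Far}(Q)$. The problem then reduces to summing $\mu(R_{\sf Far}(Q))$.

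My next step would be to apply Chebyshev's inequality cube by cube. Since $0\notin(-2,0)$, the dichotomy recalled in Section~\ref{paper3-secbalan} gives $p_0\ge 0$ pointwise, which allows me both to drop the restriction $x\in R$ after integration and to recognise the resulting integral as a permutation:
$$
\mu(R_{\sf Far}(Q))\le \frac{1}{{\sf c_2}\,\Theta_\mu(2B_R)^2}\,p_0^{[\delta,Q]}(\mu\lfloor 2B_Q,\mu\lfloor 2B_R,\mu\lfloor 2B_R)=\frac{{\sf perm}(Q)^2\,\mu(Q)}{{\sf c_2}}.
$$
Summing and swapping sum with integral by Fubini,
$$
\mu(R_{\sf Far})\le \frac{1}{{\sf c_2}}\int_R\Biggl(\,\sum_{\substack{Q\ni x\\ Q\in\{R\}\cup({\sf Tree}(R)\setminus{\sf Stop}(R))}}{\sf perm}(Q)^2\Biggr)d\mu(x).
$$

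The main step is to bound the inner sum by $\alpha^2$ uniformly in $x\in R$. For each $x$, the cubes in the collection that contain $x$ form a chain under inclusion. If $x$ lies in some $Q^*\in{\sf Stop}(R)$, the chain starts at the parent $Q_0$ of $Q^*$; if $x$ avoids every stopping cube, the chain has no minimum but is exhausted by an increasing sequence. In either case the smallest relevant cube $Q_0$ does not belong to ${\sf BP}(R)$: this follows from the maximality built into ${\sf Stop}(R)$ in the first case, and from the hypothesis $R\notin{\sf BP}(R)$ when the chain has to be taken all the way up to $R$. Applying stopping condition (\textbf{S2}) to $Q_0$ then yields $\sum_{Q_0\subset Q\subset R}{\sf perm}(Q)^2\le \alpha^2$, with a standard limiting argument for the second subcase.

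Combining everything gives $\mu(R_{\sf Far})\le \alpha^2\mu(R)/{\sf c_2}(\gamma,\tau,\varepsilon_0)$, and the claim follows by choosing $\alpha$ small enough that $\alpha\le {\sf c_2}$. I expect the only delicate point to be this chain argument: its legitimacy hinges on the pointwise positivity of $p_0$, which lets me enlarge the first measure in the Chebyshev step at no cost, and on the hypothesis excluding $R$ from ${\sf BP}(R)$, without which the chain terminating at $Q_0=R$ would not satisfy (\textbf{S2}).
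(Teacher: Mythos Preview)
Your proposal is correct and follows essentially the same route as the paper: Chebyshev cube by cube (using $p_0\ge 0$), then Fubini to rewrite the sum as $\int_R\sum_{Q\ni x}{\sf perm}(Q)^2\,d\mu(x)$, and finally the failure of (\textbf{S2}) along the chain of ancestors to bound the inner sum by $\alpha^2$ (the paper gets $2\alpha^2$ only because it splits off the term ${\sf perm}(R)^2$ separately before invoking the chain bound). The final choice $\alpha\le{\sf c_2}$ matches the paper's condition $2\alpha\le{\sf c_2}$.
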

\begin{proof}
By Chebyshev's inequality,
\begin{align*}
{\sf c_2}(\gamma,\tau,\varepsilon_0)&\,\mu(R_{\sf Far})\\
&\le \int_R\sum_{Q\in \{R\}\cup({\sf
Tree}(R)\setminus {\sf Stop}(R)):\,x\in 2B_Q}\!\!\!\!\!\frac{p^{[\delta,Q]}_0(x,\mu\lfloor 2B_R,\mu\lfloor 2B_R)}{\Theta_\mu(2B_R)^2} \;d\mu(x)\\
&\le \sum_{Q\in \{R\}\cup({\sf
Tree}(R)\setminus {\sf Stop}(R))}\frac{p^{[\delta,Q]}_0(\mu\lfloor 2 B_Q,\mu\lfloor 2B_R,\mu\lfloor 2B_R)}{\Theta_\mu(2B_R)^2} \\
&=\sum_{Q\in \{R\}\cup({\sf
Tree}(R)\setminus {\sf Stop}(R))}\frac{p^{[\delta,Q]}_0(\mu\lfloor 2
B_Q,\mu\lfloor 2B_R,\mu\lfloor
2B_R)}{\Theta_\mu(2B_R)^2\mu(Q)} \int\chi_Q(x)\,d\mu(x).
\end{align*}
Changing the order of summation yields
\begin{align*}
&{\sf c_2}(\gamma,\tau,\varepsilon_0)\,\mu(R_{\sf Far})\\ &\le \int_R  \left(\frac{p^{[\delta,R]}_0(\mu\lfloor 2B_R)}{\Theta_\mu(2B_R)^2\mu(R)}+
\sum_{Q \in {\sf
Tree}(R)\setminus {\sf Stop}(R):\,x\in Q}\frac{p^{[\delta,Q]}_0(\mu\lfloor 2B_Q,\mu\lfloor 2B_R,\mu\lfloor 2B_R)}{\Theta_\mu(2B_R)^2\mu(Q)}\right)\,d\mu(x)\\
&\le  \int_R\left(\textsf{perm}(R)^2+
\sum_{Q \in {\sf Tree}(R)\setminus  {\sf Stop}(R):\,x\in Q}\textsf{perm}(Q)^2\right)\,d\mu(x)\\
&\le 2\alpha^2\,\mu(R).
\end{align*}
Supposing that $2\alpha\le {\sf c_2}(\gamma,\tau,\varepsilon_0)$
gives the required result.
\end{proof}
Recall the definition (\ref{sup_new}).
\begin{lemma}
\label{paper3-lemma_R_far_2} Let $\delta=\delta(\varepsilon_0)$ be chosen small enough. If $x\in (R\cap 2B_{\tilde{Q}})\setminus
2B_{\tilde{Q}}^{\sf Cl}$ for~some ${\tilde{Q}\in {\sf Tree}(R)}$,
i.e. in particular there exists $Q\in \mathcal{D}^{db}(R)$ such that
$2B_Q\supset 2B_{\tilde{Q}}$ and  $Q$ is not contained in any cube
from $\HD(R)\cup
\LD(R)\cup {\sf UB}(R)\cup \mathsf{BP}(R)\cup \mathsf{BS}(R)$, then $x\in R_{{\sf Far}}$.
\end{lemma}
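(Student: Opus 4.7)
The plan is to take $P = Q$ (or a small modification thereof) as the witness for $x \in R_{\sf Far}$, and to establish a pointwise lower bound $p_0(x,z_1,z_2) \gtrsim \varepsilon_0/r(B_Q)^2$ on a large subset of pairs $(z_1,z_2)\in Q\times Q$. Since $Q\in\mathcal{D}^{db}(R)$ is not contained in $\HD(R)\cup\LD(R)\cup\mathsf{UB}(R)\cup\mathsf{BP}(R)$, $Q$ itself avoids these families, so $Q$ is $\gamma$-balanced by (\ref{paper3-Q_Tree_bal}) and satisfies $\beta_{\mu,2}(2B_Q)^2\le\varepsilon_0^2\Theta_\mu(2B_Q)$ by (\ref{paper3-DbTree_beta}) (provided $\alpha$ is small enough in terms of $\gamma,\tau,\varepsilon_0$). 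Lemma \ref{paper3-lemma_gamma_cubes_Hausdorf_dist} therefore produces subsets $\mathcal{Z}_1,\mathcal{Z}_2\subset Q$ with $\mu(\mathcal{Z}_k)\gtrsim_\gamma\mu(Q)$, $\dist(\mathcal{Z}_1,\mathcal{Z}_2)\ge\gamma r(B_Q)$, and $\dist_H(L_{z_1,z_2}\cap 2B_Q,L_Q\cap 2B_Q)\le\sqrt{\varepsilon_0}\,r(B_Q)$ for every $z_k\in\mathcal{Z}_k$.

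The hypothesis $\dist(x,L_Q)>5\sqrt{\varepsilon_0}\,r(B_Q)$ combined with the Hausdorff-closeness of $L_{z_1,z_2}$ to $L_Q$ gives $\dist(x,L_{z_1,z_2})\ge 4\sqrt{\varepsilon_0}\,r(B_Q)$, and since $x,z_1,z_2\in 2B_Q$ one has $|x-z_k|\le 4 r(B_Q)$, so
$$c(x,z_1,z_2)=\frac{2\dist(x,L_{z_1,z_2})}{|x-z_1|\,|x-z_2|}\gtrsim\frac{\sqrt{\varepsilon_0}}{r(B_Q)}.$$
I then invoke Lemma \ref{paper3-lemma_Chous_Prat} to pass from the Menger curvature to $p_0$: under the standing convention that the top cube $R$ is oriented so that $L_R$ is uniformly far from the vertical line $V$, the stopping condition $\mathsf{BS}$ together with (\ref{paper3-DBTree_BS}) keeps $L_Q$ close to $L_R$, and then $L_{z_1,z_2}$ is close to $L_Q$; if $\theta_0$ and $\sqrt{\varepsilon_0}/\gamma$ are small enough, $\measuredangle(L_{z_1,z_2},V)\ge \pi/4$, so $(x,z_1,z_2)\in V_{\sf Far}(\pi/4)$ and
$$p_0(x,z_1,z_2)\ge\tfrac14\,{\sf c_1}(\pi/4)\,c(x,z_1,z_2)^2\gtrsim\frac{\varepsilon_0}{r(B_Q)^2}.$$

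It remains to handle the truncation and integrate. For $\delta$ small enough in terms of $\varepsilon_0$, both $|x-z_k|\ge 4\sqrt{\varepsilon_0}\,r(B_Q)\ge\delta r(Q)$ and $|x-z_k|\le 4 r(B_Q)\le\delta^{-1}r(Q)$, so $\mathcal{Z}_1,\mathcal{Z}_2$ lie entirely in the admissible annulus of the truncated permutation. Using $\mu(Q)\gtrsim_{C_0}\Theta_\mu(2B_Q)\,r(Q)\ge\tau\,\Theta_\mu(2B_R)\,r(Q)$ (since $Q\in\mathcal{D}^{db}$ and $Q\notin\LD(R)$), integration over $\mathcal{Z}_1\times\mathcal{Z}_2$ yields
$$\frac{p_0^{[\delta,Q]}(x,\mu\lfloor 2B_R,\mu\lfloor 2B_R)}{\Theta_\mu(2B_R)^2}\gtrsim_\gamma \varepsilon_0\tau^2,$$
which is the required estimate once one sets ${\sf c_2}(\gamma,\tau,\varepsilon_0)$ to be a small constant multiple of $\varepsilon_0\tau^2$. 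I expect the main obstacle to be the verification of the $V_{\sf Far}$ condition, which dictates the hierarchy of parameter choices ($\theta_0$ much smaller than the absolute angle $\theta$ appearing in Lemma \ref{paper3-lemma_Chous_Prat}, then $\sqrt{\varepsilon_0}/\gamma$ much smaller than $\theta_0$, then $\alpha$ small enough for (\ref{paper3-DbTree_beta}) to apply, and finally $\delta$ chosen last). A minor bookkeeping issue is that if the particular $Q$ supplied by the hypothesis happens to lie in $\mathsf{F}(R)\subset {\sf Stop}(R)$, then $Q$ is not itself a permissible choice of $P$ in the definition of $R_{\sf Far}$; in that case one takes $P$ to be the smallest doubling ancestor of $Q$ in $\{R\}\cup({\sf Tree}(R)\setminus{\sf Stop}(R))$, which only rescales $r(P)$ by a bounded factor depending on $A_0$ and is absorbed by shrinking $\delta$ slightly.
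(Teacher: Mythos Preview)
Your argument tracks the paper's very closely---using Lemma~\ref{paper3-lemma_gamma_cubes_Hausdorf_dist} to get $\mathcal{Z}_1,\mathcal{Z}_2$, the height bound $\dist(x,L_{z_1,z_2})\ge 4\sqrt{\varepsilon_0}\,r(B_Q)$, the curvature lower bound, and the integration---but the step you flag as the main obstacle is in fact where the argument breaks.

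There is no ``standing convention that the top cube $R$ is oriented so that $L_R$ is uniformly far from the vertical line $V$''. The kernel $k_0(z)=(\Re z)^3/|z|^4$ is not rotation-invariant, so one cannot rotate coordinates to arrange this, and the paper explicitly carries \emph{both} cases $R\in{\sf T}_{VF}(\theta_0)$ and $R\notin{\sf T}_{VF}(\theta_0)$ through the packing argument (see Lemma~\ref{paper3-MainLemma}\,(c) and Section~\ref{paper3-sec_pack_Top}). Even for $R\in{\sf T}_{VF}(\theta_0)$ the guarantee is only $\theta_V(L_R)\ge(1+C_F)\theta_0$ with $\theta_0\ll 1$, nowhere near $\pi/4$; and $\theta_V(L_R)$ is a property of $R$, not something you can force by shrinking parameters. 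So your chain ``$L_R$ far from $V\Rightarrow L_Q$ far from $V\Rightarrow L_{z_1,z_2}$ far from $V$'' never starts.

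The paper's replacement is purely internal to the triangle $(x,z_1,z_2)$ and uses nothing about $L_R$: since $\dist(x,L_{z_1,z_2})\ge 4\sqrt{\varepsilon_0}\,r(B_Q)$ while all three points lie in $2B_Q$ of diameter $4r(B_Q)$, the angle at (say) $z_1$ has sine at least $\sqrt{\varepsilon_0}$, hence $\measuredangle(L_{z_1,x},L_{z_1,z_2})\ge\sqrt{\varepsilon_0}$; the triangle inequality for the angle metric then gives
\[
\theta_V(L_{z_1,x})+\theta_V(L_{z_1,z_2})\ge \measuredangle(L_{z_1,x},L_{z_1,z_2})\ge\sqrt{\varepsilon_0},
\]
so $(x,z_1,z_2)\in{\rm V}_{\sf Far}(\sqrt{\varepsilon_0})$. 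This forces ${\sf c_2}$ to carry the factor ${\sf c_1}(\sqrt{\varepsilon_0})$ rather than an absolute ${\sf c_1}(\pi/4)$, which is exactly how ${\sf c_2}(\gamma,\tau,\varepsilon_0)$ is defined in the paper and how the constraint $2\alpha\le{\sf c_2}$ in Lemma~\ref{paper3-lemma_R_far_1} is met. Once you swap in this geometric observation, the remainder of your proof (truncation, integration, density bounds) coincides with the paper's. Your bookkeeping remark about $Q\in\mathsf{F}(R)$ is a fair point and your proposed fix is reasonable; the paper does not dwell on it.
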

\begin{proof}
Clearly, $x\in 2B_Q$ and $Q\in \mathcal{D}^{db}\cap ({\sf Tree}(R)\setminus ({\sf HD}(R)\cup {\sf LD}(R)\cup  {\sf UB}(R)\cup {\sf BP}(R)))$. Therefore, by Lemma~\ref{paper3-lemma_gamma_cubes_Hausdorf_dist} we can find $\mathcal{Z}_k \subset Q$, $k=1,2$, such that
 for any $z_1\in \mathcal{Z}_1$ and $z_2\in \mathcal{Z}_2$  we have
$$
\dist_H(L_{z_1,z_2}\cap 2B_Q,L_Q\cap 2B_Q)\le
 \sqrt{\varepsilon_0} \,r(B_Q).
$$
Consider
triangle $(x,z_1,z_2)$  which is wholly contained in $2B_Q$.
It is easily seen that
\begin{equation}
\label{paper3-est_angle}
\dist(x,L_{z_1,z_2})\ge \dist(x,L_Q)-\dist_H(L_{z_1,z_2}\cap 2B_Q,L_Q\cap 2B_Q)\ge 4\sqrt{\varepsilon_0}\,r(B_Q).
\end{equation}
This implies that one of the angle of the triangle $(x,z_1,z_2)$ is at least
$$
\frac{4\sqrt{\varepsilon_0}\,r(B_Q)}{\diam(2B_Q)}= \sqrt{\varepsilon_0},
$$
 and thus
$(x,z_1,z_2)\in {\rm V}_{\sf Far}(\sqrt{\varepsilon_0})$ for any $z_1\in \mathcal{Z}_1$ and $z_2\in \mathcal{Z}_2$. Note also that (\ref{paper3-est_angle}) implies that $|x-z_1|\ge\delta r(Q)$ if $\delta=\delta(\varepsilon_0)$ is chosen small enough.
Consequently, by the identity (\ref{curvature_pointwise})
and Lemma~\ref{paper3-lemma_Chous_Prat},
\begin{align*}
p^{[\delta,Q]}_0(x,\mu\lfloor 2B_R,\mu\lfloor 2B_R)
&\ge\iint_{z_1\in \mathcal{Z}_1,\,z_2\in \mathcal{Z}_2}p_0(x,z_1,z_2)\;d\mu(z_1)d\mu(z_2)\\
&\ge \tfrac{1}{2} {\sf c_1}(\sqrt{\varepsilon_0})\iint_{z_1\in \mathcal{Z}_1,\,z_2\in \mathcal{Z}_2}c(x,z_1,z_2)^2\;d\mu(z_1)d\mu(z_2)\\
&= \tfrac{1}{2}{\sf c_1}(\sqrt{\varepsilon_0})\iint_{z_1\in
\mathcal{Z}_1,\,z_2\in \mathcal{Z}_2}\left(\frac{2\dist(x,L_{z_1,z_2})}{|x-z_1||x-z_2|}\right)^2\;d\mu(z_1)d\mu(z_2),
\end{align*}
where the constant ${\sf c_1}$ is from Lemma~\ref{paper3-lemma_Chous_Prat}.
Furthermore, we apply (\ref{paper3-est_angle}) and the fact that
$|x-z_k|\le \diam (2B_Q)=4r(B_Q)$ for $k=1,2$ to obtain the following:
$$
p^{[\delta,Q]}_0(x,\mu\lfloor 2B_R,\mu\lfloor 2B_R)
\ge  \frac{\varepsilon_0\,{\sf
c_1}(\sqrt{\varepsilon_0})}{8r(B_Q)^2}\,\mu(\mathcal{Z}_1)\mu(\mathcal{Z}_2).
$$
Since $\mu(\mathcal{Z}_k)\gtrsim_{\gamma} \mu(Q)$ by Lemma~\ref{paper3-lemma_gamma_cubes_Hausdorf_dist},
$\mu(Q)\gtrsim \mu(2B_Q)$ as
 $Q\in \mathcal{D}^{db}$ and $\Theta_\mu(2B_Q)\ge \tau \Theta_\mu(2B_R)$
 by (\ref{paper3-theta_Q_Tree}), we finally get
\begin{equation*}
\label{paper3-sigma_varepsilon}
p^{[\delta,Q]}_0(x,\mu\lfloor 2B_R,\mu\lfloor 2B_R) \gtrsim_{\gamma}
\varepsilon_0\,{\sf
c_1}(\sqrt{\varepsilon_0})\,\tau^2\Theta_\mu(2B_R)^2={\sf
c_2}(\gamma,\tau,\varepsilon_0)\,\Theta_\mu(2B_R)^2.
\end{equation*}
Consequently, $x\in R_{{\sf Far}}$ by definition.
\end{proof}

\begin{remark}
\label{p3_remrk_R_F}
Suppose that $R\in {\sf F}(R)$ and thus $\mu(R\setminus 2B_R^{\sf Cl})>\sqrt{\alpha}\,\mu(R)$ by definition. Then it is clear that $R\notin {\sf UB}(R)\cup {\sf BP}(R)$ (and furthermore $R\notin \textsf{HD}(R) \cup \textsf{LD}(R)\cup {\sf BS}(R)$, see Remark~\ref{paper3-remark3}) and so $\mu(R_{\sf Far})\le \alpha\,\mu(R)$ by Lemma~\ref{paper3-lemma_R_far_1}. Furthermore, $R\setminus 2B_R^{\sf Cl}\subset R_{\sf Far}$ by Lemma~\ref{paper3-lemma_R_far_2} (where one takes $R$ for both $Q$ and $\tilde{Q}$) and thus
$\mu(R\setminus 2B_R^{\sf Cl})\le \alpha\mu(R)$ which contradicts the fact that $R\in {\sf F}(R)$ as $\alpha\ll 1$.
\end{remark}

\section{Measure of stopping cubes from ${\sf BP}(R)$ and ${\sf F}(R)$}

\begin{lemma}
\label{paper3-measure_BP_F_stop_cubes}
It holds that
$$
\sum_{Q\in {\sf BP}(R)}\mu(Q)\le \frac{1}{\alpha^{2}\,\Theta_\mu(2B_{R})^2}\sum_{\tilde{Q}\in
{\sf Tree}(R)} p_0^{[\delta,\tilde{Q}]}(\mu\lfloor 2 B_{\tilde{Q}},\mu\lfloor 2B_R,\mu\lfloor 2B_R).
$$
 What is more, if $\alpha=\alpha(\tau)$ is small enough, then
$$
\sum_{Q\in {\sf F}(R)}\mu(Q)\le \sqrt{\alpha}\,\mu(R)\le \tfrac{1}{3}\sqrt{\tau}\,\mu(R).
$$
\end{lemma}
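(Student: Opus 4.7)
For the first bound, the plan is to unpack the stopping criterion $\mathbf{(S2)}$: every $Q\in{\sf BP}(R)$ satisfies $\alpha^{2}<\sum_{Q\subset\tilde Q\subset R}{\sf perm}(\tilde Q)^{2}$, so that $\alpha^{2}\mu(Q)\le \sum_{Q\subset\tilde Q\subset R}{\sf perm}(\tilde Q)^{2}\mu(Q)$. Summing over $Q\in{\sf BP}(R)$, swapping the order of summation, and using that the cubes in ${\sf BP}(R)\subset{\sf Stop}(R)$ are pairwise disjoint gives $\sum_{Q\in{\sf BP}(R),\,Q\subset\tilde Q}\mu(Q)\le\mu(\tilde Q)$ for each ancestor $\tilde Q$. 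Since every such $\tilde Q$ lies in ${\sf Tree}(R)$, unfolding the definition of ${\sf perm}(\tilde Q)^{2}\mu(\tilde Q)$ then produces the first claimed inequality.

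For the second bound, I would rewrite $\mathbf{(S4)}$ as $\mu(Q)<\alpha^{-1/2}\,\mu(Q\setminus 2B_{Q}^{\sf Cl})$ for $Q\in{\sf F}(R)$ and exploit the disjointness of ${\sf F}(R)\subset{\sf Stop}(R)$. The key step is to show $Q\setminus 2B_{Q}^{\sf Cl}\subset R_{\sf Far}$ for every $Q\in{\sf F}(R)$. This will follow from Lemma~\ref{paper3-lemma_R_far_2} applied with $\tilde Q=Q$: the hypothesis $\mu(Q\setminus 2B_{Q}^{\sf Cl})>0$ forces the collection of cubes used in \eqref{sup_new} to define $2B_{Q}^{\sf Cl}$ to be non-empty (otherwise the defining condition is vacuous and $2B_{Q}^{\sf Cl}=R\cap 2B_{Q}\supset Q$), and any such cube serves as the doubling-ancestor witness required by the lemma. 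Invoking Lemma~\ref{paper3-lemma_R_far_1}, which gives $\mu(R_{\sf Far})\le\alpha\,\mu(R)$, we obtain $\sum_{Q\in{\sf F}(R)}\mu(Q)\le\sqrt{\alpha}\,\mu(R)$, and the final $\tfrac{1}{3}\sqrt{\tau}\,\mu(R)$ bound follows from the parameter choice $\alpha\le\tau/9$.

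The only technical point worth pinning down is that Lemma~\ref{paper3-lemma_R_far_1} actually applies, since it requires $R\notin{\sf UB}(R)\cup{\sf BP}(R)$. If $R$ does lie in one of those families, then $R\in{\sf Stop}(R)$, so ${\sf Tree}(R)=\{R\}$ and ${\sf F}(R)=\varnothing$, and the second inequality holds trivially (cf.\ Remark~\ref{paper3-remark3}). Otherwise Lemma~\ref{paper3-lemma_R_far_1} applies directly with the prevailing value of $\alpha$. The remainder is a routine Fubini/swap-of-sums manipulation for the ${\sf BP}$ part and a disjointness argument for the ${\sf F}$ part, with no significant obstacle.
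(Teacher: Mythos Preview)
Your proposal is correct and follows essentially the same route as the paper: for $\mathsf{BP}(R)$ you invoke $\mathbf{(S2)}$, swap the order of summation, and use disjointness of $\mathsf{BP}(R)\subset\mathsf{Stop}(R)$ to bound $\sum_{Q\subset\tilde Q}\mu(Q)\le\mu(\tilde Q)$; for $\mathsf{F}(R)$ you use $\mathbf{(S4)}$ together with Lemmas~\ref{paper3-lemma_R_far_2} and~\ref{paper3-lemma_R_far_1} exactly as the paper does. Your extra care in handling the case $R\in\mathsf{UB}(R)\cup\mathsf{BP}(R)$ (where $\mathsf{F}(R)=\varnothing$) is the content of Remark~\ref{paper3-remark3}, which the paper treats separately.
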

\begin{proof}
All the cubes in ${\sf Stop}(R)$ are disjoint and so are the cubes
in $\mathsf{BP}(R)$ and ${\sf F}(R)$. From $\textbf{(S3)}$ we get
\begin{align*}
\sum_{Q\in \textsf{BP}(R)}\mu(Q) & \le \frac{1}{\alpha^{2}}
\sum_{Q\in \textsf{BP}(R)}\sum_{Q\subset \tilde{Q}\subset R} \!\!\!
\frac{p_0^{[\delta,\tilde{Q}]}(\mu\lfloor
2B_{\tilde{Q}},\mu\lfloor 2B_R,\mu\lfloor 2B_R)}{\Theta_\mu(2B_{R})^2\mu(\tilde{Q})}\,\mu(Q)\\
& =  \frac{1}{\alpha^{2}\,\Theta_\mu(2B_{R})^2}\sum_{\tilde{Q}\in
{\sf Tree}(R)} \!\!\!p_0^{[\delta,\tilde{Q}]}(\mu\lfloor 2
B_{\tilde{Q}},\mu\lfloor 2B_R,\mu\lfloor 2B_R)\!\!\!\!\sum_{Q\in
\textsf{BP}(R):\,Q\subset \tilde{Q}}\frac{\mu(Q)}{\mu(\tilde{Q})}\\
& \le  \frac{1}{\alpha^{2}\,\Theta_\mu(2B_{R})^2}\sum_{\tilde{Q}\in
{\sf Tree}(R)} \!\!\! p_0^{[\delta,\tilde{Q}]}(\mu\lfloor 2 B_{\tilde{Q}},\mu\lfloor 2B_R,\mu\lfloor 2B_R).
\end{align*}

By $\textbf{(S4)}$ and Lemmas~\ref{paper3-lemma_R_far_1} and
\ref{paper3-lemma_R_far_2}, we obtain
$$
\sum_{Q\in {\sf F}(R)}\mu(Q)\le \frac{1}{\sqrt{\alpha}}\sum_{Q\in {\sf F}(R)}\mu(Q\setminus 2B_Q^{\sf Cl})\le \frac{1}{\sqrt{\alpha}}\,\mu(R_{\sf Far})\le \sqrt{\alpha}\,\mu(R),
$$
which finishes the proof.
\end{proof}

\section{Construction of a Lipschitz function}
\label{section_LIP}
We aim to construct a Lipschitz  function $F: L_R\to L_R^\perp$  whose graph $\Gamma_R$ is
close to $R$ up to the scale of cubes from ${\sf Stop}(R)$. We will mostly use the properties mentioned in Lemma~\ref{paper3-DbTree_properties}. This task is quite technical and so we  start with a bunch of auxiliary results. Note that, although we follow some of the methods from \cite{L} and \cite[Chapter 7]{Tolsa_book} quite closely, we need to adapt the whole construction to the David-Mattila lattice used in the current chapter (instead of the balls with controlled density used in \cite{L} and \cite{Tolsa_book}). 

Let us mention again that we may suppose that $R\notin {\sf Stop}(R)$ as otherwise we choose $F\equiv 0$ and the graph $\Gamma_R$ of $F$ is just $L_R$.

\subsection{Auxiliary results} As before, we denote by $L_Q$ a best
approximating line for the ball $2B_Q$ in the
sense of the beta numbers (\ref{paper3-beta_ball}). We need now to estimate
the angles between the best approximating lines corresponding to
cubes that are near each other. This task is carried out in the next
two lemmas. The first one is a well known result from \cite[Section
5]{DS1}. We formulate it for lines in the complex plane.
\begin{lemma}[\cite{DS1}]
\label{paper3-angles} Let $L_{1},L_{2}\subset \mathbb{C}$ be lines and
$z_1,z_2\in Z\subset \mathbb{C}$ be points so that
\begin{enumerate}
\item[$(a)$] $d_1=\dist(z_1,z_2)/\diam(Z)\in (0,1)$,
\item[$(b)$] $\dist(z_{i},L_{j})<d_2\diam(Z)$ for $i=1,2$ and $j=1,2$, where $d_2<d_1/4$.
\end{enumerate}
Then for any $z\in L_2$,
\begin{equation}
\dist(z,L_{1}) \le d_2\left(\tfrac{4}{d_1}\dist(z,Z)+\diam(Z)\right).
\end{equation}
\end{lemma}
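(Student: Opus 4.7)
The plan is to reduce everything to estimating the angle $\theta = \measuredangle(L_1,L_2)\in[0,\pi/2]$ and then to propagate the distance from $L_1$ to $L_2$ linearly along $L_2$. First I would orthogonally project $z_1,z_2$ onto $L_2$, obtaining points $z_1',z_2'\in L_2$ with $|z_i-z_i'|=\dist(z_i,L_2)<d_2\diam(Z)$ by hypothesis (b). Combining with (a) and $d_2<d_1/4$, the triangle inequality gives
\[
|z_1'-z_2'|\ge |z_1-z_2|-2d_2\diam(Z)>(d_1-2d_2)\diam(Z)>\tfrac{d_1}{2}\diam(Z),
\]
and likewise $\dist(z_i',L_1)\le \dist(z_i,L_1)+|z_i-z_i'|<2d_2\diam(Z)$ for $i=1,2$.

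Next I would estimate $\sin\theta$. If $L_1$ and $L_2$ meet at a point $q$, then for every $w\in L_2$ one has $\dist(w,L_1)=|w-q|\sin\theta$. Applying this to $w=z_1'$ and $w=z_2'$ and comparing (taking signs into account according to whether $z_1',z_2'$ lie on the same side of $q$ or not), one obtains
\[
|z_1'-z_2'|\sin\theta\le \dist(z_1',L_1)+\dist(z_2',L_1)<4d_2\diam(Z),
\]
so $\sin\theta\lesssim d_2/d_1$. (If $L_1\parallel L_2$ the bound holds trivially with $\sin\theta=0$.)

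Now for $z\in L_2$, I would write
\[
\dist(z,L_1)\le \dist(z_1',L_1)+|z-z_1'|\sin\theta,
\]
which uses only that the function $w\mapsto \dist(w,L_1)$ on $L_2$ is $\sin\theta$-Lipschitz. Choosing the index $i\in\{1,2\}$ that minimises $|z-z_i'|$ (and writing $y\in Z$ for a closest point to $z$ in $Z$, so $|z-y|=\dist(z,Z)$ and $|y-z_i|\le\diam(Z)$), the triangle inequality gives
\[
|z-z_i'|\le |z-y|+|y-z_i|+|z_i-z_i'|\le \dist(z,Z)+(1+d_2)\diam(Z).
\]
Plugging this into the previous inequality together with the bounds on $\sin\theta$ and $\dist(z_i',L_1)$ yields
\[
\dist(z,L_1)\lesssim d_2\Bigl(\tfrac{1}{d_1}\dist(z,Z)+\diam(Z)\Bigr),
\]
which is the claimed estimate up to the explicit absolute constants. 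The main obstacle is bookkeeping: squeezing out the factor $4/d_1$ (rather than a larger constant) and the additive term $d_2\diam(Z)$ requires splitting into the two sub-cases (intersecting versus near-parallel lines), and using the nearer of $z_1,z_2$ to $z$ together with the optimal side of $q$, rather than a uniform estimate; no substantive new idea is needed beyond that.
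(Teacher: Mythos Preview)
The paper does not supply a proof of this lemma; it is stated with attribution to \cite[Section~5]{DS1} and used as a black box. So there is no ``paper's own proof'' to compare against.

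Your argument is correct and is the standard one: project the anchor points onto $L_2$, use the resulting two nearby points to bound $\sin\measuredangle(L_1,L_2)$, and then propagate along $L_2$ via the $\sin\theta$-Lipschitz dependence of $\dist(\,\cdot\,,L_1)$. The bounds you obtain give
\[
\dist(z,L_1)\;\le\; C\,d_2\Bigl(\tfrac{1}{d_1}\dist(z,Z)+\diam(Z)\Bigr)
\]
with an absolute constant $C$ somewhat larger than the $4$ in the statement (your estimate on $\sin\theta$ already loses a factor of $2$, and the additive term picks up an extra $1/d_1$). You flag this honestly. For every use of the lemma in the paper (Lemmas~\ref{paper3-distlqlemma} and~\ref{paper3-lemma_approx_line_Q1inQ2}) only the qualitative form $\dist(z,L_1)\lesssim d_2(\dist(z,Z)+\diam(Z))$ with $d_1$ fixed is needed, so your version is entirely adequate.
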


We will use the preceding lemma to prove the following result.
\begin{lemma}\label{paper3-distlqlemma}
Let $\varepsilon_0=\varepsilon_0(\gamma)$ be chosen small enough. If $Q_{1},Q_{2}\in {\sf DbTree}(R)$ are such that
$r(Q_1)\approx r(Q_2)$ and $\dist(Q_1,Q_2)\lesssim
r(Q_j)$ for $j=1,2$,  then
\begin{align}
\label{paper3-distlq1} \dist(w,L_{Q_2})\lesssim  \sqrt{\varepsilon_0}\,(\dist(w,Q_1)+r(Q_1)),&\qquad w\in L_{Q_1}, \\
\label{paper3-distlq2}  \dist(w,L_{Q_1})\lesssim  \sqrt{\varepsilon_0}\,(\dist(w,Q_2)+r(Q_2)),& \qquad w\in L_{Q_2},\\
\label{paper3-distlq3}\measuredangle(L_{Q_1},L_{Q_2})\lesssim
 \sqrt{\varepsilon_0}.
\end{align}
\end{lemma}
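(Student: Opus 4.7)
The plan is to apply the angle-comparison Lemma~\ref{paper3-angles} to the pair of lines $L_{Q_1}$ and $L_{Q_2}$. For this I need to locate two well-separated points, at scale $\sim r(B_{Q_1})$, that lie simultaneously close to both lines.

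First I would exploit that $Q_1\in{\sf DbTree}(R)$: by (\ref{paper3-Q_Tree_bal}) and (\ref{paper3-DbTree_beta}) it is $\gamma$-balanced and satisfies $\beta_{\mu,2}(2B_{Q_1})^2\le \varepsilon_0^2\,\Theta_\mu(2B_{Q_1})$. Lemma~\ref{paper3-lemma_gamma_cubes_Hausdorf_dist} then yields sets $\mathcal{Z}_1,\mathcal{Z}_2\subset Q_1$ with $\mu(\mathcal{Z}_k)\gtrsim_\gamma \mu(Q_1)$, $\dist(\mathcal{Z}_1,\mathcal{Z}_2)\ge \gamma\,r(B_{Q_1})$, and every point of $\mathcal{Z}_k$ within $\sqrt{\varepsilon_0}\,r(B_{Q_1})$ of $L_{Q_1}$.

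Second, I would transfer the $L_{Q_2}$-approximation onto $Q_1$. The hypotheses $r(Q_1)\approx r(Q_2)$ and $\dist(Q_1,Q_2)\lesssim r(Q_j)$ force $Q_1\subset cB_{Q_2}$ for an absolute $c$; choosing $A_0$ large enough one may assume $Q_1\subset 2B_{Q_2}$ and $r(B_{Q_1})\approx r(B_{Q_2})$. Combining $\beta_{\mu,2}(2B_{Q_2})^2\le \varepsilon_0^2\,\Theta_\mu(2B_{Q_2})$ with Chebyshev's inequality and using that $Q_2\in\mathcal{D}^{db}$ together with (\ref{paper3-theta_Q_Tree}) to relate $\mu(Q_1)$ and $\mu(2B_{Q_2})$, all but a $\mu$-set of measure $\lesssim_\tau \varepsilon_0\,\mu(Q_1)$ inside $Q_1$ lies within $\sqrt{\varepsilon_0}\,r(B_{Q_2})$ of $L_{Q_2}$. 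For $\varepsilon_0=\varepsilon_0(\gamma)$ small enough the sets $\mathcal{Z}_k$ from Step~1 (of mass $\gtrsim_\gamma \mu(Q_1)$) intersect the good set from this Chebyshev estimate, yielding $z_k\in\mathcal{Z}_k$ which are simultaneously within $\sqrt{\varepsilon_0}\,r(B_{Q_1})$ of both $L_{Q_1}$ and $L_{Q_2}$, with $|z_1-z_2|\ge \gamma\,r(B_{Q_1})$.

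Third, I would apply Lemma~\ref{paper3-angles} with $Z=Q_1\cup Q_2$, so $\diam Z\approx r(B_{Q_1})$, and parameters $d_1\gtrsim \gamma$, $d_2\lesssim \sqrt{\varepsilon_0}$. Shrinking $\varepsilon_0(\gamma)$ to enforce $d_2<d_1/4$, the lemma gives (\ref{paper3-distlq1}); interchanging the roles of $Q_1$ and $Q_2$ (rerunning Steps~1--2 with the labels swapped) gives (\ref{paper3-distlq2}). Finally (\ref{paper3-distlq3}) follows from (\ref{paper3-distlq1}) by evaluating at a $w\in L_{Q_1}$ with $\dist(w,Q_1)\sim r(Q_1)$, which gives $\sin\measuredangle(L_{Q_1},L_{Q_2})\lesssim \sqrt{\varepsilon_0}$. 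The main technical obstacle will be Step~2: securing the containment $Q_1\subset 2B_{Q_2}$ from the qualitative hypotheses on the implicit constants in $\approx$ and $\lesssim$. If direct containment fails for the given constants, one should chain through a common enclosing doubling cube at scale $\sim r(Q_1)$ and use Lemma~\ref{paper3-lemcad23} together with the doubling property (\ref{paper3-doubling_property}) to propagate the $\beta$-estimate from $2B_{Q_2}$ into a region containing $Q_1$, at a controlled loss absorbed by shrinking $\varepsilon_0$.
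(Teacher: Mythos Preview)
Your proposal is essentially correct and close in spirit to the paper's argument, but the route differs in one key respect. You attempt a direct comparison of $L_{Q_1}$ and $L_{Q_2}$ by forcing $Q_1\subset 2B_{Q_2}$; the paper instead introduces an intermediate cube $Q\in{\sf DbTree}(R)$ with $2B_Q$ containing both $B_{Q_1}$ and $B_{Q_2}$ and $r(Q)\approx r(Q_j)$, compares each $L_{Q_j}$ to $L_Q$ via Lemma~\ref{paper3-angles}, and then concludes by the triangle inequality. The intermediate-cube trick cleanly sidesteps the containment issue you flag at the end, and keeps all implicit constants depending only on $\gamma$ (and on the implicit constants in the hypotheses), matching the stated $\varepsilon_0=\varepsilon_0(\gamma)$.

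Two specific remarks. First, the claim that ``choosing $A_0$ large enough one may assume $Q_1\subset 2B_{Q_2}$'' is not right: $A_0$ governs the scale ratio between consecutive generations of the David--Mattila lattice, but the implicit constants in the hypotheses $r(Q_1)\approx r(Q_2)$ and $\dist(Q_1,Q_2)\lesssim r(Q_j)$ are given externally (in later applications they depend on $\tau,A$), so you cannot absorb them into $A_0$. Second, your fallback (``chain through a common enclosing doubling cube'') is exactly the paper's device, but note that the paper does not propagate $\beta_{\mu,2}(2B_{Q_2})$ into a larger region; rather it uses $\beta_{\mu,2}(2B_Q)$ for the common cube $Q$ directly, together with the balanced-ball structure inside each $Q_j$ and Chebyshev, to produce the two well-separated points close to both $L_{Q_j}$ and $L_Q$. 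Adopting that formulation makes the argument go through without further case analysis.
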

\begin{proof}
Let $Q\in {\sf DbTree}(R)$ be the smallest cube such that
$2B_{Q}\supset B_{Q_1}\cap B_{Q_2}$. Clearly, $r(Q)\gtrsim r(Q_j)$,
$j=1,2$. Moreover, we can also guarantee that
$$
r(Q)\lesssim \dist(Q_1,Q_2)+\sum_{j=1}^{2}r(Q_j)\lesssim r(Q_j).
$$

Now we use arguments similar to those in Lemma~\ref{paper3-lemma_gamma_cubes_Hausdorf_dist}.
Since $Q_j\in {\sf
DbTree}(R)$ for $j=1,2$, by (\ref{paper3-Q_Tree_bal}) and
Lemma~\ref{paper3-lemma_balanced_balls} there are  balls
$B_{k,j}=B(\xi_{k,j},\rho'\,r(Q_j))$, $k=1,2$, where $\xi_{k,j}\in B(Q_j)$, such that $\mu(B_{k,j}\cap B(Q_j))\ge
\rho''\mu(Q_j)$ and $\dist(y_{1,j},y_{2,j})\ge \gamma r(B_{Q_j})\ge \gamma r(Q_j)$ for
all $y_{k,j}\in B_{k,j}\cap Q_j$, where $\rho'$ and $\rho''$ depend on $\gamma$.
Consequently, by (\ref{paper3-DbTree_beta}) and the fact that $r(B_{k,j})\approx_{\gamma}r(Q_j)$ we get
$$
\frac{1}{r(B_{k,j})}
\int_{B_{k,j}}\ps{\frac{\dist(w,L_{Q_j})}{r(B_{k,j})}}^{2}d\mu(w)\lesssim_\gamma
\beta_{\mu,2}(2B_{Q_j})^2\lesssim_\gamma \varepsilon_0^2\,\Theta_\mu(2B_{Q_j}).
$$
Since  $r(Q)\approx
r(Q_{j})$, we analogously obtain
$$
\frac{1}{r(B_{k,j})}
\int_{B_{k,j}}\ps{\frac{\dist(w,L_{Q})}{r(B_{k,j})}}^{2}d\mu(w)
\lesssim_\gamma  \beta_{\mu,2}(2B_{Q})^2  \lesssim_\gamma
\varepsilon_0^2\,\Theta_\mu(2B_Q).
$$
Therefore, using Chebyshev's inequality and again the relation $r(Q)\approx
r(Q_{j})$, we can find $z_{k,j}\in B_{k,j}\cap
Q_j$ such that
$$
\max\{\dist(z_{k,j},L_{Q_j}),\dist(z_{k,j},L_{Q})\}\lesssim_\gamma
 \varepsilon_0\,r(Q).
$$
Since $\dist(z_{1,j},z_{2,j})\ge \gamma r(Q_j)\gtrsim \gamma r(Q)$, it follows by Lemma \ref{paper3-angles} that
$$
\dist(w,L_Q)\lesssim_\gamma  \varepsilon_0 (\dist(w,Q_j)+r(Q_j))
\text{ for all }w\in L_{Q_j},\qquad j=1,2,
$$
and
$$
\dist(w,L_{Q_j})\lesssim_\gamma
\varepsilon_0(\dist(w,Q_j)+r(Q_j)) \text{ for all }w\in L_Q,\qquad j=1,2.
$$
From this, by the triangle inequality, choosing $\varepsilon_0$ small enough with respect to the implicit constant depending on $\gamma$, we obtain (\ref{paper3-distlq1}) and
(\ref{paper3-distlq2}).

The inequality (\ref{paper3-distlq3}) follows from (\ref{paper3-distlq1}) and (\ref{paper3-distlq2}) by elementary geometry.
\end{proof}
\begin{lemma}
\label{paper3-lemma_approx_line_Q1inQ2}
Let $\alpha=\alpha(\gamma)$ and $\varepsilon_0=\varepsilon_0(\gamma)$
be chosen small enough. If $Q_{1},Q_{2}\in {\sf
DbTree}(R)$ are such that $2B_{Q_1}\subset 2B_{Q_2}$  and $x\in L_{Q_1}\cap 2B_{Q_1}$, then
$$
\dist(x,L_{Q_2})\lesssim   \varepsilon_0^{1/3}\,r(Q_2).
$$
\end{lemma}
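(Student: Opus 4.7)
The plan is to iterate Lemma~\ref{paper3-distlqlemma} along a chain of doubling cubes in $\mathsf{DbTree}(R)$ linking $Q_1$ to $Q_2$. Lemma~\ref{paper3-distlqlemma} compares best-approximating lines only for doubling pairs of comparable size that are close to each other, so the arbitrarily large scale gap allowed by $2B_{Q_1}\subset 2B_{Q_2}$ must be crossed in many short hops.

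First I would build a finite chain $Q_1=P_0\subsetneq P_1\subsetneq\cdots\subsetneq P_N\subset Q_2$ inside $\mathsf{DbTree}(R)$, with every $P_i$ containing $Q_1$, with the ratio $r(P_{i+1})/r(P_i)$ bounded above and below by absolute constants $\Lambda=\Lambda(A,\tau,A_0,C_0)$ (bounded below because two doubling David--Mattila cubes at different scales differ by a factor of at least $A_0/C_0>1$), and with $r(P_N)\approx r(Q_2)$. The construction is iterative: given $P_i$, take its David--Mattila parent, which still lies in $\mathsf{Tree}(R)$ because it is contained in $Q_2\subset R$ and is not strictly inside any stop cube, and apply Lemma~\ref{paper3-lemdob32} to replace it by a doubling ancestor at cost $\lambda(A,\tau)$ in scale. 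Append $Q_2$ as the final cube; since $Q_1\subset Q_2\cap P_N$, nestedness of the David--Mattila lattice forces $P_N\subset Q_2$, so the terminal pair $(P_N,Q_2)$ also satisfies the hypotheses of Lemma~\ref{paper3-distlqlemma}.

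Next, set $x_0:=x$ and inductively define $x_{i+1}$ as the orthogonal projection of $x_i$ onto $L_{P_{i+1}}$, taking the last step onto $L_{Q_2}$. The core inductive estimate I would prove is
\[
|x_i-x_0|\;\le\; C(\Lambda,\gamma)\,\sqrt{\varepsilon_0}\,r(P_i).
\]
Because $Q_1\subset P_i$ and $x_0\in 2B_{Q_1}$, the induction hypothesis gives $\dist(x_i,P_i)\le \dist(x_0,Q_1)+|x_i-x_0|\lesssim r(Q_1)+\sqrt{\varepsilon_0}\,r(P_i)\lesssim r(P_i)$, so Lemma~\ref{paper3-distlqlemma}~(\ref{paper3-distlq1}) applied to the comparable-scale, close pair $(P_i,P_{i+1})$ yields $|x_{i+1}-x_i|=\dist(x_i,L_{P_{i+1}})\lesssim \sqrt{\varepsilon_0}\,r(P_i)$. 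The geometric growth $r(P_{i+1})\gtrsim r(P_i)$ lets the inductive constant absorb the additional term at each step. Summing telescopically and using the geometric growth gives $|x_{N+1}-x_0|\lesssim \sqrt{\varepsilon_0}\,r(Q_2)$, and since $x_{N+1}\in L_{Q_2}$, this bounds $\dist(x,L_{Q_2})$.

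The main technical obstacle is the bookkeeping: verifying at every step that $x_i$ remains close enough to $P_i$ for Lemma~\ref{paper3-distlqlemma} to apply and that the inductive constant $C(\Lambda,\gamma)$ does not grow with $i$. In fact the argument above proves the stronger bound $\dist(x,L_{Q_2})\lesssim_{\Lambda,\gamma}\sqrt{\varepsilon_0}\,r(Q_2)$; the weaker exponent $\varepsilon_0^{1/3}$ in the statement is the slack needed to absorb the chain-dependent constant $C(\Lambda,\gamma)$ after $\varepsilon_0$ is chosen small enough in terms of $A,\tau,\gamma$, yielding $C(\Lambda,\gamma)\sqrt{\varepsilon_0}\le \varepsilon_0^{1/3}$.
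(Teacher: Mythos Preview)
Your chaining argument is correct, and it is a genuinely different route from the paper's. The paper does not iterate Lemma~\ref{paper3-distlqlemma} at all. Instead it exploits the stopping condition~\textbf{(S4)} directly: since $Q_1\in{\sf DbTree}(R)$ is $\gamma$-balanced and $\mu(Q_1\setminus 2B_{Q_1}^{\sf Cl})\le\sqrt{\alpha}\,\mu(Q_1)$ (property~(\ref{paper3-DbTree_Far})), for $\alpha=\alpha(\gamma)$ small enough each of the two balanced balls $B_k\subset Q_1$ meets $2B_{Q_1}^{\sf Cl}$. This produces two points $z_1,z_2\in Q_1$, at mutual distance $\gtrsim\gamma\,r(Q_1)$, that are simultaneously $\sqrt{\varepsilon_0}\,r(Q_j)$-close to $L_{Q_j}$ for \emph{both} $j=1,2$, simply by the definition of $2B_{Q_1}^{\sf Cl}$. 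Projecting $z_1,z_2$ onto $L_{Q_1}$ gives two well-separated points of $L_{Q_1}\cap 2B_{Q_1}$ that are $\sqrt{\varepsilon_0}\,r(Q_2)$-close to $L_{Q_2}$, and elementary geometry then bounds $\dist(x,L_{Q_2})$ for any $x\in L_{Q_1}\cap 2B_{Q_1}$.

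The trade-off: the paper's argument is shorter and one-shot, but it needs the $\textbf{(S4)}$/$\alpha$ machinery (which is why the hypothesis carries ``$\alpha=\alpha(\gamma)$ small enough''). Your argument bypasses~\textbf{(S4)} entirely, using only Lemmas~\ref{paper3-distlqlemma} and~\ref{paper3-lemdob32}; the price is the geometric-series bookkeeping and the need for $\varepsilon_0$ to be small in terms of $\tau,A$ as well (through $\lambda(A,\tau)$), which is harmless since the parameter hierarchy in Section~\ref{paper3-sec_param} already allows that.
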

\begin{proof}
By Lemma~\ref{paper3-lemma_balanced_balls} there exists a family of balls
$B_k=B(\xi_k,\rho'\,r(Q_1))$, where $\xi_k\in B(Q_1)$, such that $\mu(B_k\cap B(Q_1))\ge
\rho''\mu(Q_1)$ and $\dist(y_1,y_2)\ge \gamma r(B_{Q_1})\ge \gamma r(Q_1)$
for any $y_k\in B_k\cap Q_1$, $k=1,2$. Recall that $\rho'$ and $\rho''$
depend on $\gamma$. Furthermore, we can choose
$\alpha=\alpha(\gamma)$ in (\ref{paper3-DbTree_Far}) small enough to guarantee that $B_k\cap B(Q_1)\cap 2B_{Q_1}^{\sf Cl} \neq \varnothing$. This and the definition of $2B_{Q_1}^{\sf Cl}$ imply that there exist  $z_k\in B_k\cap B(Q_1)\cap 2B_{Q_1}^{\sf Cl} $, $k=1,2$, such that
$$
\dist(z_k,L_{Q_j})\lesssim \sqrt{\varepsilon_0}\,r(Q_j),\quad
k=1,2,\quad j=1,2.
$$
Let $z_k'$ be the orthogonal projection of $z_k$ onto $L_{Q_1}$. We easily get from the previous inequality that
\begin{equation}
\label{paper3-eq_dist} \dist(z_k',L_{Q_2})\lesssim
\sqrt{\varepsilon_0}\,r(Q_2),\quad k=1,2.
\end{equation}

Moreover, $\dist(z_1,z_2)\gtrsim_\gamma r(Q_1)$ implies that
$\dist(z_1',z_2')\gtrsim_\gamma r(Q_1)$ and $z_k'\in 2B_{Q_1}$, if
$\varepsilon_0=\varepsilon_0(\gamma)$ is small enough. Having this
and (\ref{paper3-eq_dist}) in mind and taking into account that $x\in
L_{Q_1}\cap 2B_{Q_1}$, by elementary geometry we get the required
estimate for $\dist(x,L_{Q_2})$, assuming again that $\varepsilon_0=\varepsilon_0(\gamma)$ is small enough.
\end{proof}

\subsection{Lipschitz function $F$ for the good part of $R$}

For each given $R\in \mathcal{D}^{db}$, we first construct the required
function $F$ on the projection of the ``good part'' of $R$ onto $L_R$ and then extend it onto the whole $L_R$. In what follows, we will work a lot with the function
\begin{equation}
\label{paper3-def_d} d(z):=\inf_{Q\in {\sf DbTree}(R)}\{\dist(z,Q)+\diam(Q)\},\qquad z\in \mathbb{C}.
\end{equation}

Let us mention that $\theta(R)$ is supposed to be comparable with the parameter $\theta_0$, i.e. ${\theta(R)\approx \theta_0}$, where the implicit constants  will be defined in Section~\ref{paper3-sec_small_BS}.
\begin{lemma}
\label{paper3-Lip_func_good_R}
Let $\varepsilon_0=\varepsilon_0(\tau,A,\theta_0)$ and $\theta_0$ be small enough. For any $z_1,z_2\in cB_R$ we have
\begin{equation*}
\label{paper3-Lip_2} |\Pi^\perp(z_1)-\Pi^\perp(z_2)|\lesssim
\theta(R)|\Pi(z_1)-\Pi(z_2)|+c(\tau,A)\left(d(z_1)+d(z_2)\right),
\end{equation*}
where $\Pi(z)$ and $\Pi^\perp(z)$ are the projections of $z$ onto $L_R$ and $L_R^\perp$, correspondingly, and $c(\tau,A)>0$.
\end{lemma}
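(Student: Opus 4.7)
The strategy is to find, for any two points $z_1, z_2 \in cB_R$, a common doubling ancestor $Q_0 \in {\sf DbTree}(R)$ whose best-approximating line $L_{Q_0}$ serves as an intermediate reference between $L_R$ and the local structure of $\spt\mu$ near the $z_i$. The crucial inputs will be the angle bound $\measuredangle(L_{Q_0}, L_R) \le \theta(R)$ from \eqref{paper3-DBTree_BS} and the fact that $L_{Q_0}$ approximates $L_{Q_i}$ on $2B_{Q_i}$ via Lemma~\ref{paper3-lemma_approx_line_Q1inQ2}.

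First I would pick $Q_i \in {\sf DbTree}(R)$ nearly realizing the infimum defining $d(z_i)$, so that $\dist(z_i, Q_i) + \diam(Q_i) \le 2\,d(z_i)$. I would then take the smallest $Q' \in \mathcal{D}$ with $2B_{Q_1} \cup 2B_{Q_2} \subset 2B_{Q'}$; its radius satisfies $r(Q') \lesssim |z_1 - z_2| + d(z_1) + d(z_2)$. Since $Q'$ may not be doubling, I would invoke Lemma~\ref{paper3-lemdob32} to pass to a doubling cube $Q_0 \in {\sf DbTree}(R)$ with $Q_0 \supset Q'$ and $r(Q_0) \le \lambda(A,\tau)\, r(Q')$; this is the origin of the constant $c(\tau,A)$ appearing in the statement.

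Next, using \eqref{paper3-DbTree_beta} and Chebyshev's inequality I would find $w_i \in Q_i$ with $\dist(w_i, L_{Q_i}) \lesssim \varepsilon_0\, r(Q_i) \lesssim \varepsilon_0\, d(z_i)$, so $|z_i - w_i| \lesssim d(z_i)$. Lemma~\ref{paper3-lemma_approx_line_Q1inQ2} applied to the orthogonal projection of $w_i$ onto $L_{Q_i}$ (which lies in $L_{Q_i}\cap 2B_{Q_i}$, hence in $2B_{Q_0}$) yields $\dist(w_i, L_{Q_0}) \lesssim \varepsilon_0^{1/3}\, r(Q_0)$, so $\dist(z_i, L_{Q_0}) \lesssim d(z_i) + \varepsilon_0^{1/3}\, r(Q_0)$. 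Letting $P_i$ be the orthogonal projection of $z_i$ onto $L_{Q_0}$, I would split
\begin{equation*}
|\Pi^\perp(z_1) - \Pi^\perp(z_2)| \le |\Pi^\perp(z_1)-\Pi^\perp(P_1)| + |\Pi^\perp(P_1)-\Pi^\perp(P_2)| + |\Pi^\perp(P_2)-\Pi^\perp(z_2)|,
\end{equation*}
bound the outer terms by $|z_i - P_i| = \dist(z_i, L_{Q_0})$, and use $\measuredangle(L_{Q_0}, L_R) \le \theta(R)$ to get $|\Pi^\perp(P_1)-\Pi^\perp(P_2)| \lesssim \theta(R)\, |P_1-P_2|$, with $|P_1 - P_2| \le |\Pi(z_1)-\Pi(z_2)| + |z_1 - P_1| + |z_2 - P_2|$.

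Assembling these gives
\begin{equation*}
|\Pi^\perp(z_1)-\Pi^\perp(z_2)| \lesssim \theta(R)\,|\Pi(z_1)-\Pi(z_2)| + d(z_1) + d(z_2) + \varepsilon_0^{1/3}\, r(Q_0),
\end{equation*}
after which I substitute $r(Q_0) \lesssim_{A,\tau} |z_1-z_2| + d(z_1)+d(z_2)$ and decompose $|z_1-z_2|$ into its $L_R$ and $L_R^\perp$ components. Choosing $\varepsilon_0 = \varepsilon_0(\tau,A,\theta_0)$ small enough so that $\varepsilon_0^{1/3}\, c(\tau,A) \ll \theta_0$, the $|\Pi(z_1)-\Pi(z_2)|$-error is absorbed into $\theta(R)\,|\Pi(z_1)-\Pi(z_2)|$, the $(d(z_1)+d(z_2))$-error into $c(\tau,A)(d(z_1)+d(z_2))$, and the $|\Pi^\perp(z_1)-\Pi^\perp(z_2)|$-error into the LHS. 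The main obstacle is obtaining the common ancestor $Q_0$ \emph{inside} ${\sf DbTree}(R)$ (non-doubling intermediate cubes force the use of Lemma~\ref{paper3-lemdob32} and introduce the $(\tau,A)$-dependence), and then balancing the resulting $\varepsilon_0^{1/3}$-error from Lemma~\ref{paper3-lemma_approx_line_Q1inQ2} against the slope $\theta_0$; this is precisely why the parameter hierarchy of Section~\ref{paper3-sec_param} forces $\varepsilon_0$ to be chosen last in terms of $\tau, A, \theta_0$.
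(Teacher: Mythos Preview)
Your approach is correct and follows the same overall skeleton as the paper's proof: locate $Q_i\in{\sf DbTree}(R)$ nearly realizing $d(z_i)$, pass to a common doubling ancestor, project onto its best-approximating line, invoke the slope bound~\eqref{paper3-DBTree_BS}, and absorb the $\varepsilon_0$-errors into the two terms on the right. The execution, however, is genuinely different. The paper inserts an \emph{intermediate} scale: cubes $\tilde{Q}_k\in{\sf DbTree}(R)$ with $r(\tilde{Q}_k)\approx_{\tau,A}\varepsilon_0|z_1-z_2|+\sum_k\diam(Q_k)$, sitting between $Q_k$ and the common ancestor $\tilde{Q}$. This guarantees $\mu(B_{\tilde{Q}_k})\gtrsim_{\tau,A}\varepsilon_0\,\mu(2B_{\tilde{Q}})$, so a direct Chebyshev argument against $\beta_{\mu,2}(2B_{\tilde{Q}})$ produces points $z_k''\in B_{\tilde{Q}_k}$ close to $L_{\tilde{Q}}$; no appeal to Lemma~\ref{paper3-lemma_approx_line_Q1inQ2} is made. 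You instead skip the intermediate scale and use Lemma~\ref{paper3-lemma_approx_line_Q1inQ2} to transfer directly from $L_{Q_i}$ to $L_{Q_0}$, which is cleaner and more modular (the scale transition is packaged inside that lemma). The price is only the weaker exponent $\varepsilon_0^{1/3}$ in place of the paper's $\varepsilon_0^{3/8}$, which is harmless for the absorption. One small point to tighten: when you form $Q'$, take it as an ancestor of $Q_1$ (say) of the right size rather than ``the smallest $Q'\in\mathcal{D}$ with $2B_{Q_1}\cup 2B_{Q_2}\subset 2B_{Q'}$''; this ensures $Q'\in{\sf Tree}(R)$ so that Lemma~\ref{paper3-lemdob32} applies, and automatically gives $2B_{Q_i}\subset 2B_{Q_0}$ as needed for Lemma~\ref{paper3-lemma_approx_line_Q1inQ2}.
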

\begin{proof} Everywhere in the proof $k=1,2$.
For a fixed $h>0$ and any $z_k\in cB_R$ one can always find $Q_k\in {\sf DbTree}(R)$ such that
$$
\dist(z_k,Q_k)+\diam(Q_k)\le d(z_k)+h,\qquad k=1,2.
$$
Choose $z_k'\in Q_k$. Clearly, $|z_k-z_k'|\le d(z_k)+h$.

Let $\tilde{Q}_k\in {\sf DbTree}(R)$ be the smallest cube  such that $2B_{\tilde{Q}_k}\supset B_{Q_k}$ and
$$
r(\tilde{Q}_k)\approx_{\tau,A} \varepsilon_0|z_1-z_2|+\sum\nolimits_k\diam(Q_k).
$$
Now let  $\tilde{Q}\in {\sf DbTree}(R)$ be the smallest cube such that
$2B_{\tilde{Q}}\supset B_{\tilde{Q}_1}\cup B_{\tilde{Q}_2}$  and
$$
r(\tilde{Q})\approx_{\tau,A} |z_1-z_2|+\sum\nolimits_k\diam(Q_k).
$$
Note that $|z_1-z_2|\lesssim r(R)$ as $z_k\in cB_R$ and thus the cubes $\tilde{Q}_k$ and $\tilde{Q}$ are well defined.
Furthermore,  we easily get that $\varepsilon_0\, r(\tilde{Q})\lesssim_{\tau,A} r(\tilde{Q}_k)$. Consequently, the way how $\tilde{Q}_k$ and $\tilde{Q}$ are chosen and
the inequalities (\ref{paper3-theta_Q_Tree}) and (\ref{paper3-DbTree_beta}) in Lemma~\ref{paper3-DbTree_properties} imply that
\begin{align*}
\frac{1}{\mu(B_{\tilde{Q}_k})}
\int_{B_{\tilde{Q}_k}}\left(\frac{\dist(w,L_{\tilde{Q}})}{r(\tilde{Q})}\right)^2\,d\mu(w)&\lesssim_{\tau,A} \frac{r(\tilde{Q})\,\beta_{\mu,2}(2B_{\tilde{Q}})^2}{\mu(2B_{\tilde{Q}_k})} \lesssim_{\tau,A} \varepsilon_0^2\,\frac{r(\tilde{Q})\Theta_{\mu}(2B_{\tilde{Q}})}{\mu(2B_{\tilde{Q}_k})} \\
&\lesssim_{\tau,A} \varepsilon_0\,\frac{\Theta_\mu(2B_{\tilde{Q}})}{\Theta_\mu(2B_{\tilde{Q}_k})}\lesssim_{\tau,A} \varepsilon_0 \lesssim
\varepsilon_0^{3/4},
\end{align*}
if $\varepsilon_0=\varepsilon_0(\tau,A)$ is chosen properly. Recall again  that $r(B_Q)=28r(Q)$ by definition.

From the inequality just obtained  we deduce by Chebyshev's inequality that
there exist $z_k''\in R\cap B_{\tilde{Q}_k}$, $k=1,2$, such that
$$
|z_k''-\pi(z_k'')|\lesssim  \varepsilon_0^{3/8}\,r(\tilde{Q})\lesssim \sqrt[4]{\varepsilon_0}\left(|z_1-z_2|+\sum\nolimits_k\diam(Q_k)\right),
$$
where $\pi(z_k'')$ stands for the orthogonal projection of $z_k''$ onto $L_{\tilde{Q}}$ and  $\varepsilon_0=\varepsilon_0(\tau,A)$ is small enough.
Note also that
$$
|z_k'-z_k''| \lesssim r(\tilde{Q}_k)\lesssim \sqrt[4]{\varepsilon_0} |z_1-z_2|+c(\tau,A)\sum\nolimits_k\diam(Q_k),
$$
if $\varepsilon_0=\varepsilon_0(\tau,A)$ is small enough.
Summarizing, we obtain the inequality
$$
|z_k'-\pi(z_k'')|\le |z_k'-z_k''|+|z_k''-\pi(z_k'')|\lesssim \sqrt[4]{\varepsilon_0}|z_1-z_2|+c(\tau,A)\sum\nolimits_k\diam(Q_k).
$$

Furthermore, the triangle inequality yields
\begin{align*}
|\Pi^\perp (z_1')-\Pi^\perp (z_2')| &\le |\Pi^\perp (\pi(z_1''))-\Pi^\perp (\pi(z_2''))| +\sum\nolimits_k|\Pi^\perp (z_k')-\Pi^\perp (\pi(z_k''))|\\
& \le |\Pi^\perp (\pi(z_1''))-\Pi^\perp (\pi(z_2''))| +\sum\nolimits_k|z_k'-\pi(z_k'')|,
\end{align*}
and therefore we  immediately obtain
$$
|\Pi^\perp (z_1')-\Pi^\perp (z_2')|
\lesssim |\Pi^\perp (\pi(z_1''))-\Pi^\perp (\pi(z_2''))|+ \sqrt[4]{\varepsilon_0}|z_1-z_2|+c(\tau,A)\sum\nolimits_k\diam(Q_k).
$$
From (\ref{paper3-DBTree_BS}) in Lemma~\ref{paper3-DbTree_properties} applied to $\tilde{Q}$ and the triangle inequality we deduce that
\begin{align*}
|\Pi^\perp &(\pi(z_1''))-\Pi^\perp (\pi(z_2''))|\\
&\lesssim \theta(R) |\Pi(\pi(z_1''))-\Pi (\pi(z_2''))|\\
&\lesssim \theta(R) \left(|\Pi(z_1)-\Pi (z_2)|+\sum\nolimits_k|\Pi(z_k)-\Pi(\pi(z_k''))|\right)\\
&\lesssim \theta(R) \left(|\Pi(z_1)-\Pi (z_2)|+\sum\nolimits_k|z_k-\pi(z_k'')|\right)\\
&\lesssim \theta(R) \left(|\Pi(z_1)-\Pi (z_2)|+\sum\nolimits_k\left(|z_k-z_k'|+|z_k'-\pi(z_k'')|\right)\right).
\end{align*}
Recall the estimates for  $|z_k-z_k'|$ and $|z_k'-\pi(z_k'')|$ and take into account that $\diam(Q_k)\le d(z_k)+h$ and that $\varepsilon_0$ and $\theta_0$ (and thus $\theta(R)$) are small. Consequently,
$$|\Pi^\perp (z_1')-\Pi^\perp (z_2')|
\lesssim \theta(R) |\Pi(z_1)-\Pi (z_2)|+\sqrt[4]{\varepsilon_0}|z_1-z_2|+c(\tau,A)\sum\nolimits_k(d(z_k)+h).
$$
Additionally, the triangle inequality and the estimate for $|z_k-z_k'|$ lead to
$$
|\Pi^\perp (z_1)-\Pi^\perp (z_2)|\le |\Pi^\perp (z_1')-\Pi^\perp (z_2')|+\sum\nolimits_k(d(z_k)+h),
$$
and thus
$$
|\Pi^\perp (z_1)-\Pi^\perp (z_2)|\lesssim \theta(R) |\Pi(z_1)-\Pi (z_2)|+\sqrt[4]{\varepsilon_0}|z_1-z_2|+c(\tau,A)\sum\nolimits_k(d(z_k)+h).
$$
Take into account that $|z_1-z_2|\le |\Pi (z_1)-\Pi(z_2)|+|\Pi^\perp (z_1)-\Pi^\perp (z_2)|$ and choose $\varepsilon_0$ small enough with respect to $\theta_0$ (and thus to $\theta(R)$) and to the implicit absolute constant in the latter inequality. Finally,
$$
|\Pi^\perp (z_1)-\Pi^\perp (z_2)|\lesssim \theta(R) |\Pi (z_1)-\Pi(z_2)|+c(\tau,A)\sum\nolimits_k(d(z_k)+h).
$$
Letting $h\to 0$ finishes the proof.
\end{proof}

We will also use the following notation:
\begin{equation}
\label{paper3-good_R}
G_R=\{x\in \mathbb{C}: d(x)=0\}.
\end{equation}
Lemma~\ref{paper3-Lip_func_good_R} implies that the map $\Pi: G_R\to L_R$  is injective and we can define the function $F$ on $\Pi(G_R)$ by setting
\begin{equation}
\label{paper3-F_on_G_R}
F(\Pi(x))=\Pi^\perp(x), \qquad x\in G_R.
\end{equation}
Moreover, this $F$ is Lipschitz with constant $\lesssim\theta(R)$.

We are now aimed to extend $F$ onto the whole line $L_R$ using a
variant of the Whitney extension theorem. This approach is quite
standard and is used, for example, in \cite[Section 8]{DS1},
\cite[Section 3.2]{L} and \cite[Section 7.5]{Tolsa_book}. Therefore we will skip some details and mostly give the results related to the adaptation of the scheme to the
David-Mattila lattice that we use. These results will then
imply the extension of $F$ onto the whole $L_R$ by repeating the ``partition of unity''
arguments presented in \cite[Section
7.5]{Tolsa_book}.

Let us define the function
\begin{equation}
\label{paper3-def_D}
D(z):=\inf_{x\in\Pi^{-1}(z)}d(x)=\inf_{Q\in {\sf DbTree}(R)}\{\dist(z,\Pi(Q))+\diam(Q)\},\qquad z\in L_R.
\end{equation}

For each $z\in L_R$ such that $D(z)>0$, i.e. $z\in L_R\setminus
\Pi(G_R)$, we call $J_z$ the largest dyadic interval from $L_R$
containing $z$ such that
$$
\ell(J_z)\le \tfrac{1}{20}\inf_{u\in J_z} D(u).
$$
Let $J_i$, $i\in I$, be a relabelling of the set of all these intervals
$J_z$, $z\in L_R\setminus \Pi(G_R)$, without repetition. Some
properties of $\{J_i\}$ are summarized in the following lemma.
\begin{lemma}[Analogue of Lemma 7.20 in \cite{Tolsa_book}]
\label{paper3-Lip_3} The intervals in $\{J_i\}_{i\in I}$ have disjoint
interiors in $L_R$ and satisfy the properties:
\begin{enumerate}[label=$($\alph*$)$]
  \item If $z\in 15J_i$, then $5\ell(J_i)\le D(z)\le 50 \ell(J_i)$.
  \item There exists an absolute constant $c>1$ such that if $15J_{i}\cap 15 J_{i'}\neq \varnothing$, then
  $$
  c^{-1}\ell(J_i)\le  \ell(J_{i'})\le  c \,\ell(J_i).
  $$
  \item For each $i\in I$, there are at most $N$ intervals $J_{i'}$ such that $15J_{i}\cap 15 J_{i'}\neq \varnothing$, where $N$ is some absolute constant.
  \item $L_R\setminus \Pi(G_R)=\bigcup_{i\in I}J_i=\bigcup_{i\in I}15 J_i$.
\end{enumerate}
\end{lemma}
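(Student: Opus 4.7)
The plan is to follow the standard Whitney-decomposition reasoning. The first observation I would establish is that $D \colon L_R \to [0,\infty)$ is $1$-Lipschitz: this is immediate from the definition \eqref{paper3-def_D}, since each map $z \mapsto \dist(z,\Pi(Q)) + \diam(Q)$ is $1$-Lipschitz and the infimum of $1$-Lipschitz functions is $1$-Lipschitz. From this I would check that for every $z \in L_R$ with $D(z) > 0$ the interval $J_z$ is well defined: $D$ is bounded below by $D(z)/2$ on a neighborhood of $z$, so the condition $\ell(J) \le \tfrac{1}{20}\inf_{u \in J} D(u)$ holds for all sufficiently small dyadic intervals $J \ni z$, and a largest such interval exists. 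Disjoint interiors then follow immediately from maximality: if $J_{z_1} \subsetneq J_{z_2}$, then $J_{z_2}$ is a strictly larger dyadic interval containing $z_1$ satisfying the defining inequality, contradicting the maximality of $J_{z_1}$.

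The key step is (a). I would compare $J_i$ with its dyadic parent $\widehat{J}_i$ of length $2\ell(J_i)$: by the maximality of $J_i$, the parent fails the defining condition, so there exists $u_0 \in \widehat{J}_i$ with $D(u_0) < 40\ell(J_i)$. For any $z \in 15J_i$ one has $\dist(z,u_0) \lesssim \ell(J_i)$, so the $1$-Lipschitz property of $D$ gives the upper bound $D(z) \le 50\ell(J_i)$ after tracking constants. The lower bound is even easier: the defining condition yields $\inf_{u \in J_i} D(u) \ge 20\ell(J_i)$, and for any $z \in 15J_i$ there is a point of $J_i$ at distance at most $7\ell(J_i)$ from $z$, so by $1$-Lipschitzness $D(z) \ge 20\ell(J_i) - 7\ell(J_i) \ge 5\ell(J_i)$.

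With (a) in hand, the remaining properties are soft. For (b): if $z \in 15J_i \cap 15J_{i'}$, apply (a) twice to obtain $5\ell(J_i) \le D(z) \le 50\ell(J_{i'})$ and the symmetric inequality, which gives $c = 10$. For (c): every $J_{i'}$ with $15J_{i'} \cap 15J_i \ne \varnothing$ has length comparable to $\ell(J_i)$ by (b) and lies within a bounded neighborhood of $J_i$, so disjointness of interiors together with a volume count in $L_R$ bounds their cardinality by an absolute constant. For (d): the inclusion $L_R \setminus \Pi(G_R) \subset \bigcup_i J_i$ holds by construction, while the lower bound in (a) gives $D(z) \ge 5\ell(J_i) > 0$ for any $z \in 15J_i$, so $\bigcup_i 15J_i \subset L_R \setminus \Pi(G_R)$ and equality follows. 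I do not foresee any substantial obstacle here: the argument is classical once one has the Lipschitz property of $D$, and the adaptation to the David-Mattila lattice enters only through the definition of $D$ itself and plays no further role in the Whitney step.
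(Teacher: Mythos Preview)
Your proposal is correct and follows precisely the standard Whitney-decomposition argument that the paper defers to (the paper gives no proof of its own, citing Lemma~7.20 in \cite{Tolsa_book}); the $1$-Lipschitz property of $D$, the parent-interval contradiction for the upper bound in (a), and the cascading deductions of (b)--(d) from (a) are exactly the classical steps. The only point you gloss over slightly is in (d): the identification $\{z\in L_R: D(z)>0\}=L_R\setminus\Pi(G_R)$ requires a short compactness argument in one direction (if $D(z)=0$, extract a limit point $x\in G_R$ with $\Pi(x)=z$ from a minimizing sequence of cubes), but the paper itself takes this identification for granted in the setup preceding the lemma, so this is not a gap relative to the source.
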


Now we construct the function $F$ on
$$
U_0=L_R\cap B_0,\qquad B_0=B(\Pi(x_0),10\diam(R)),
$$
where $x_0\in R$ is such that
$$
\dist(x_0,\Pi(x_0))=\dist(x_0,L_R)\le\diam(R).
$$
This $x_0$ exists due to the inequality (\ref{paper3-DbTree_beta}) in Lemma~\ref{paper3-DbTree_properties}. Note that by construction
\begin{equation}
\label{paper3-RinB}
R\subset
B(\Pi(x_0),2\diam(R)) \quad \text{and}\quad \Pi(R)\subset L_R\cap B(\Pi(x_0),2\diam(R)).
\end{equation}
We also define the following set of indexes:
$$
I_0=\{i\in I: J_i\cap U_0\neq \varnothing\}.
$$
\begin{lemma}
\label{paper3-lemma_J_0}
The following holds.

$(a)$ If $i\in I_0$, then $\ell(J_i)\le \diam(R)$ and $3J_i\subset L_R\cap B(\Pi(x_0),12\diam(R))$.

$(b)$ If $J_i\cap B(\Pi(x_0),3\diam(R))=\varnothing$ $($in particular if $i\notin I_0$$)$, then
      $$
      \ell(J_i)\approx \dist(\Pi(x_0),J_i)\approx |\Pi(x_0)-z|\quad \text{for all}\quad z\in J_i.
      $$
\end{lemma}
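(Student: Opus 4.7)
The plan is to exploit two key facts about $D$: the fact that $R$ itself lies in ${\sf DbTree}(R)$ under our standing assumption $R \notin {\sf Stop}(R)$ (recall Remark~\ref{paper3-remark3}), which gives an upper bound on $D$; and the containment $\Pi(Q) \subset B(\Pi(x_0), 2\diam(R))$ for every $Q \in {\sf DbTree}(R)$ coming from $Q \subset R$ and \eqref{paper3-RinB}, which gives a lower bound. Since $\Pi(x_0) \in \Pi(R)$, specializing \eqref{paper3-def_D} to $Q = R$ yields the basic estimate
\begin{equation*}
D(z) \;\le\; \dist(z,\Pi(R)) + \diam(R) \;\le\; |z - \Pi(x_0)| + \diam(R) \qquad \text{for all } z \in L_R.
\end{equation*}

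For (a), fix $i \in I_0$ and pick $z \in J_i \cap U_0$. Then $|z - \Pi(x_0)| \le 10\diam(R)$, so the display above gives $D(z) \le 11\diam(R)$. Combining this with the very definition of $J_i$ as the largest dyadic interval with $\ell(J_i) \le \tfrac{1}{20}\inf_{u\in J_i} D(u)$ yields
\begin{equation*}
\ell(J_i) \;\le\; \tfrac{1}{20}\, D(z) \;\le\; \tfrac{11}{20}\diam(R) \;<\; \diam(R).
\end{equation*}
For the inclusion, any $w \in 3J_i$ satisfies $|w - z| \le 3\ell(J_i) < 2\diam(R)$, and the triangle inequality gives $|w - \Pi(x_0)| \le 12\diam(R)$, as desired.

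For (b), assume $J_i \cap B(\Pi(x_0), 3\diam(R)) = \varnothing$, so every $z \in J_i$ satisfies $|z - \Pi(x_0)| \ge 3\diam(R)$; in particular $\diam(R) \le \tfrac{1}{3}|z - \Pi(x_0)|$. The upper bound above reads $D(z) \le \tfrac{4}{3}|z - \Pi(x_0)|$. For the lower bound, since every $Q \in {\sf DbTree}(R)$ lies in $R \subset B(\Pi(x_0), 2\diam(R))$, one has $\dist(z,\Pi(Q)) \ge |z - \Pi(x_0)| - 2\diam(R) \ge \tfrac{1}{3}|z - \Pi(x_0)|$, and taking infimum yields $D(z) \ge \tfrac{1}{3}|z - \Pi(x_0)|$. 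Hence $D(z) \approx |z - \Pi(x_0)|$. Applying Lemma~\ref{paper3-Lip_3}(a) (valid because $z \in J_i \subset 15J_i$) gives $\ell(J_i) \approx D(z) \approx |z - \Pi(x_0)|$. Since also $\ell(J_i) \le \tfrac{4}{15}|z - \Pi(x_0)|$, the bound $|z - \Pi(x_0)| - \ell(J_i) \le \dist(\Pi(x_0), J_i) \le |z - \Pi(x_0)|$ shows $\dist(\Pi(x_0), J_i) \approx |z - \Pi(x_0)|$, completing the chain of comparabilities. The parenthetical "in particular if $i \notin I_0$" is immediate, since $i \notin I_0$ means $J_i \cap B(\Pi(x_0), 10\diam(R)) = \varnothing$, which certainly implies $J_i \cap B(\Pi(x_0), 3\diam(R)) = \varnothing$.

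There is no real obstacle here: the whole lemma is a bookkeeping exercise whose only subtlety is keeping straight which side of the $D(z) \approx |z - \Pi(x_0)|$ comparison uses $R \in {\sf DbTree}(R)$ (upper bound) and which uses the uniform containment of all doubling descendants in a controlled multiple of $B(\Pi(x_0),\diam(R))$ (lower bound). Once that dichotomy is set up, both parts follow from the defining size relation between $\ell(J_i)$ and $D$ on $15J_i$ furnished by Lemma~\ref{paper3-Lip_3}.
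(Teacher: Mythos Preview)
Your proof is correct and follows essentially the same approach as the paper's: both bound $D(z)$ from above using $R\in{\sf DbTree}(R)$ and from below using $\Pi(Q)\subset\Pi(R)\subset B(\Pi(x_0),2\diam(R))$, then feed these bounds into the size relation $\ell(J_i)\approx D(z)$ from Lemma~\ref{paper3-Lip_3}(a). The only differences are cosmetic---you work directly with $D(z)\le|z-\Pi(x_0)|+\diam(R)$ via $\Pi(x_0)\in\Pi(R)$, while the paper detours through $d(x)$ on $B_0$, yielding marginally different constants ($\tfrac{11}{20}$ versus $\tfrac{13}{20}$ for $\ell(J_i)$).
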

\begin{proof}
For $(a)$, take $J_i$ with $i\in I_0$ so that $J_i\cap U_0\neq \varnothing$. Then we have
$$
3J_i\subset L_R\cap B(\Pi(x_0),10\diam(R)+2\ell(J_i)).
$$
It is necessary to estimate $\ell(J_i)$. Recall that
$$
\ell(J_i)\le \tfrac{1}{20}\inf_{u\in J_i}D(u).
$$
Definitely, $\inf_{u\in J_i}D(u)\le \max_{u\in U_0} D(u)$ in our case so we will estimate this maximum instead.
To do so, we first notice that the definition (\ref{paper3-def_d}) of $d$ and the inequality (\ref{paper3-RinB}) give
$$
d(x)\le  \dist(x,R)+\diam(R) \le
13\diam(R),\qquad x\in B_0.
$$
This yields
$$
\max_{u\in U_0} D(u)\le \max_{x\in B_0}d(x)\le 13\diam(R),
$$
if we take into account the connection between $d$ and $D$ in (\ref{paper3-def_D}). Thus
$$
\ell(J_i)\le \tfrac{13}{20}\diam(R)
$$
and therefore
$$
3J_i\subset L_R\cap B(\Pi(x_0),(10+\tfrac{13}{10})\diam(R)).
$$

Now let us prove $(b)$. Let $z\in J_i$. Clearly, $\diam(R)\le \frac{1}{3}|\Pi(x_0)-z|$. Furthermore, we infer from this and the definition (\ref{paper3-def_D}) that
$$
D(z)\le(|\Pi(x_0)-z|+2\diam(R))+\diam(R)\le 2|\Pi(x_0)-z|.
$$
From another side, by (\ref{paper3-def_D}) and (\ref{paper3-RinB}),
$$
  D(z) \ge \dist(z,\Pi(R))\ge |\Pi(x_0)-z|-2\diam(R) \ge
  \tfrac{1}{3}|\Pi(x_0)-z|.
$$
Thus
$$
\tfrac{1}{3}|\Pi(x_0)-z|\le D(z)\le 2|\Pi(x_0)-z|,\qquad z\in J_i.
$$
Together with Lemma~\ref{paper3-Lip_3}$(a)$ this gives
$$
\tfrac{5}{2}\ell(J_i)\le|\Pi(x_0)-z|\le 150\ell(J_i).
$$
Moreover, since
$$
|\Pi(x_0)-z|-\ell(J_i)\le\dist(\Pi(x_0),J_i)\le |\Pi(x_0)-z|,\qquad z\in J_i,
$$
we get
$$
\tfrac{3}{2}\ell(J_i)\le\dist(\Pi(x_0),J_i)\le 150\ell(J_i),
$$
which finishes the proof.
\end{proof}
\begin{lemma}
\label{paper3-lemma_J_1} Given $i\in I_0$, there exists a cube $Q_i\in
{\sf DbTree}(R)$  such that
\begin{enumerate}[label=$($\alph*$)$]
  \item $\ell(J_i)\lesssim \diam(Q_i)\lesssim_{\tau,A} \ell (J_i)$;
  \item $\dist(J_i,\Pi(Q_i))\lesssim \,\ell(J_i)$.
\end{enumerate}
\end{lemma}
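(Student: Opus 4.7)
The plan is to produce $Q_i$ in two steps: first, use the fact that $D(z)\approx \ell(J_i)$ for $z\in J_i$ to find a cube of ${\sf DbTree}(R)$ appropriately close to $J_i$; second, if this cube turns out to be too small, replace it by a suitable doubling ancestor whose diameter is comparable to $\ell(J_i)$.

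Fix any $z\in J_i$. By Lemma~\ref{paper3-Lip_3}$(a)$ we have $D(z)\le 50\,\ell(J_i)$, so the definition~(\ref{paper3-def_D}) of $D$ allows us to pick $Q\in{\sf DbTree}(R)$ with
\[
\dist(z,\Pi(Q))+\diam(Q)\le D(z)+\ell(J_i)\lesssim \ell(J_i).
\]
In particular $\diam(Q)\lesssim\ell(J_i)$ and $\dist(J_i,\Pi(Q))\le\dist(z,\Pi(Q))\lesssim\ell(J_i)$, which gives $(b)$ and the upper half of $(a)$ right away.

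The main obstacle is the lower bound $\diam(Q_i)\gtrsim \ell(J_i)$: it could well happen that $\diam(Q)\ll\ell(J_i)$ because the estimate for $D(z)$ is absorbed entirely into the $\dist(z,\Pi(Q))$ term. To remedy this, I would pass to a larger ancestor. Using that consecutive generations of the David--Mattila lattice differ in scale by the fixed factor $A_0$, together with the fact that $\ell(J_i)\le \diam(R)$ by Lemma~\ref{paper3-lemma_J_0}$(a)$, one can select an ancestor $S\supset Q$ of $Q$ in $\mathcal{D}$ with $\diam(S)\approx \ell(J_i)$; by nestedness of the lattice this forces $S\subset R$. Since $Q\subset S$ and $Q$ is not strictly contained in any cube of ${\sf Stop}(R)$, neither is $S$, hence $S\in{\sf Tree}(R)$. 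Applying Lemma~\ref{paper3-lemdob32} to $S$ now yields a doubling cube $Q_i\supset S$ with $Q_i\in{\sf DbTree}(R)$ and
\[
\ell(J_i)\approx \diam(S)\le \diam(Q_i)\le \lambda(A,\tau)\,\diam(S)\lesssim_{\tau,A}\ell(J_i),
\]
which is $(a)$. The location estimate is preserved automatically because $\Pi(Q)\subset\Pi(Q_i)$ gives $\dist(J_i,\Pi(Q_i))\le\dist(J_i,\Pi(Q))\lesssim\ell(J_i)$, so $(b)$ still holds for $Q_i$.
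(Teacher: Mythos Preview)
Your proof is correct and follows essentially the same approach as the paper's: both first find a cube $Q\in{\sf DbTree}(R)$ close to $J_i$ via the definition of $D$ and Lemma~\ref{paper3-Lip_3}$(a)$, then pass to a doubling ancestor of the right size using Lemma~\ref{paper3-lemdob32}. You simply make the intermediate step through the (possibly non-doubling) ancestor $S$ explicit, whereas the paper compresses this into one sentence by directly taking ``the smallest doubling ancestor $Q'$ satisfying $\diam(Q')\gtrsim\ell(J_i)$''.
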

\begin{proof}
From the definition (\ref{paper3-def_D}) of $D$ it follows that there exists a cube $Q\in
{\sf DbTree}(R)$ such that
$$
\dist(z,\Pi(Q))+\diam(Q)\le 2D(z)\approx \ell(J_i),\qquad z\in J_i,
$$
where the comparability is due to Lemma~\ref{paper3-Lip_3}$(a)$. This
immediately gives $(b)$ and the right hand side inequality in $(a)$
for $Q_i=Q$. If the left hand side inequality in $(a)$ does not
hold, we can replace $Q$ by its smallest doubling ancestor $Q'$
satisfying $\diam(Q')\gtrsim \ell(J_i)$ so that all other inequalities
are valid (recall Lemma~\ref{paper3-lemdob32}). We rename $Q'$ by $Q_i$ then.
\end{proof}

For $i\in I_0$, let $F_i$ be the affine function $L_R\to L_R^\perp$ whose graph
is the line $L_{Q_i}$. Moreover, $F_i$ are Lipschitz functions with constant $\le \theta(R)$ as $\measuredangle(L_{Q_i},L_R)\le \theta(R)$ by (\ref{paper3-DBTree_BS}) in Lemma~\ref{paper3-DbTree_properties} taking into account that all $Q_i \in {\sf DbTree}(R)$. On the other hand, for
$i\notin I_0$, we set $F_i\equiv 0$, i.e. the graph of $F_i$ is just
 $L_R$ in this case.

\begin{lemma}
\label{paper3-lemma_J_3} If ${10J_i\cap 10J_{i'}}\neq\varnothing$ for some $i,i'\in I$, then
\begin{enumerate}[label=$($\alph*$)$]
  \item $\dist (Q_i,Q_{i'})\lesssim_{\tau,A} \ell (J_i)$ if moreover $i,i'\in
  I_0$;
  \item $|F_i(z)-F_{i'}(z)|\lesssim   \varepsilon_0^{1/3}\,\ell(J_i)$ for
  $z\in 100 J_i$;
  \item $|F'_i-F'_{i'}|\lesssim  \varepsilon_0^{1/3}$.
\end{enumerate}
\end{lemma}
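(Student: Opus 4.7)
The plan is to establish (a), (b), (c) in this order: (a) unlocks the hypothesis of Lemma~\ref{paper3-distlqlemma}, which then delivers (b); and (c) follows from (b) by an elementary calculation on affine functions.

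For (a), first observe that $10J_i \cap 10J_{i'} \neq \varnothing$ together with Lemma~\ref{paper3-Lip_3}(b) gives $\ell(J_i) \approx \ell(J_{i'})$, and then Lemma~\ref{paper3-lemma_J_1} yields $\diam(Q_i) \approx_{\tau,A} \diam(Q_{i'}) \approx_{\tau,A} \ell(J_i)$ together with $\dist(J_j, \Pi(Q_j)) \lesssim \ell(J_j)$ for $j \in \{i, i'\}$. These readily give $\dist(\Pi(Q_i), \Pi(Q_{i'})) \lesssim_{\tau,A} \ell(J_i)$ along $L_R$. To lift this to a plane bound, I would extract witness points $x_j \in Q_j$ with $\dist(x_j, L_{Q_j}) \lesssim \varepsilon_0\, r(Q_j)$ via Chebyshev applied to \eqref{paper3-DbTree_beta}, and locate a common doubling ancestor $Q^* \in {\sf DbTree}(R)$ of scale $\approx_{\tau,A} \ell(J_i)$ containing both $2B_{Q_i}$ and $2B_{Q_{i'}}$ (using Lemma~\ref{paper3-lemdob32} to upgrade to a doubling cube if necessary). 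Lemma~\ref{paper3-lemma_approx_line_Q1inQ2} applied to the pairs $(Q_j, Q^*)$ turns the vertical deviation into $\lesssim \varepsilon_0^{1/3} r(Q^*) \lesssim_{\tau,A} \ell(J_i)$, and combining with the horizontal separation finishes (a).

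Given (a), the hypotheses of Lemma~\ref{paper3-distlqlemma} are met whenever $i, i' \in I_0$, so for $z \in 100J_i$ and $w = (z, F_i(z)) \in L_{Q_i}$ the inequality \eqref{paper3-distlq1} yields $\dist(w, L_{Q_{i'}}) \lesssim \sqrt{\varepsilon_0}(\dist(w, Q_i) + r(Q_i)) \lesssim_{\tau,A} \sqrt{\varepsilon_0}\,\ell(J_i)$; since $|F'_j| \le \tan\theta(R) \ll 1$ for $j \in \{i, i'\}$, this line-to-line distance and the vertical separation $|F_i(z) - F_{i'}(z)|$ differ by a factor of order one, giving $|F_i(z) - F_{i'}(z)| \lesssim \sqrt{\varepsilon_0}\,\ell(J_i) \le \varepsilon_0^{1/3}\,\ell(J_i)$. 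In the mixed case where, say, $i' \notin I_0$ and $F_{i'} \equiv 0$, Lemma~\ref{paper3-lemma_J_0}(b) forces $\ell(J_i) \approx \ell(J_{i'}) \approx \diam(R)$, and Lemma~\ref{paper3-lemma_approx_line_Q1inQ2} applied to $(Q_i, R)$ gives $|F_i(z_0)| \lesssim \varepsilon_0^{1/3}r(R) \approx \varepsilon_0^{1/3}\ell(J_i)$ at a suitable base point $z_0$, which is then propagated to the whole of $100J_i$ via the Lipschitz bound $|F'_i| \le \theta(R)$. Part (c) follows immediately: since $F_i - F_{i'}$ is affine, $|F'_i - F'_{i'}| = |(F_i - F_{i'})(z_1) - (F_i - F_{i'})(z_2)|/|z_1 - z_2|$, and choosing $z_1, z_2 \in 100J_i$ with $|z_1 - z_2| \approx \ell(J_i)$ and applying (b) twice yields $|F'_i - F'_{i'}| \lesssim \varepsilon_0^{1/3}$.

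The main obstacle is (a): a naive application of Lemma~\ref{paper3-lemma_approx_line_Q1inQ2} with $Q_2 = R$ produces an error of order $\varepsilon_0^{1/3} r(R)$, which is much too coarse when $Q_i$ sits deep in the tree. The key technical step is therefore the careful choice of the intermediate doubling cube $Q^*$ of scale $\approx \ell(J_i)$ (rather than of scale $r(R)$), ensuring that the error term from Lemma~\ref{paper3-lemma_approx_line_Q1inQ2} scales correctly with $\ell(J_i)$.
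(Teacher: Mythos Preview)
Your treatment of (b) and (c) when $i,i'\in I_0$, and the affine-difference argument for (c), are essentially the paper's argument. But there are two genuine gaps.

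\textbf{Part (a) is circular.} To locate a common doubling ancestor $Q^*\in{\sf DbTree}(R)$ with $2B_{Q_i}\cup 2B_{Q_{i'}}\subset 2B_{Q^*}$ and $\diam(Q^*)\approx_{\tau,A}\ell(J_i)$, you already need $\dist(Q_i,Q_{i'})\lesssim_{\tau,A}\ell(J_i)$ in the plane --- which is precisely (a). Knowing only that $\dist(\Pi(Q_i),\Pi(Q_{i'}))\lesssim\ell(J_i)$ does not a priori bound the vertical separation, so the ancestor you need may have diameter much larger than $\ell(J_i)$, and then Lemma~\ref{paper3-lemma_approx_line_Q1inQ2} returns an error $\varepsilon_0^{1/3}r(Q^*)$ that is useless. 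The paper avoids this by invoking Lemma~\ref{paper3-Lip_func_good_R} directly: for $z_1\in Q_i$, $z_2\in Q_{i'}$ one has
\[
|\Pi^\perp(z_1)-\Pi^\perp(z_2)|\lesssim \theta(R)\,|\Pi(z_1)-\Pi(z_2)|+c(\tau,A)\bigl(d(z_1)+d(z_2)\bigr),
\]
and since $d(z_j)\le\diam(Q_j)\lesssim_{\tau,A}\ell(J_i)$ this immediately yields (a). You should use that lemma; it encodes exactly the vertical control you are trying to manufacture.

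\textbf{Propagation in the mixed case of (b) fails.} With $i\in I_0$, $i'\notin I_0$, you get $|F_i(z_0)|\lesssim\varepsilon_0^{1/3}\ell(J_i)$ at one base point and then propagate over $100J_i$ using $|F_i'|\le\theta(R)$. That introduces an error $\theta(R)\,\ell(J_i)$, and since $\varepsilon_0$ is chosen small \emph{relative to} $\theta_0$ (see Section~\ref{paper3-sec_param}), this term is far larger than $\varepsilon_0^{1/3}\ell(J_i)$. The remedy is to note that in this case $\diam(Q_i)\approx_{\tau,A}\diam(R)$, so Lemma~\ref{paper3-distlqlemma} applies directly to the pair $(Q_i,R)$ and gives both $\dist(w,L_R)\lesssim\sqrt{\varepsilon_0}\,r(R)$ for all $w\in L_{Q_i}\cap cB_R$ \emph{and} $\measuredangle(L_{Q_i},L_R)\lesssim\sqrt{\varepsilon_0}$. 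This controls $|F_i|$ uniformly on $100J_i$ without any propagation step; this is what the paper does.
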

\begin{proof}
For $i,i'\in I_0$, Lemmas~\ref{paper3-Lip_3}$(b)$ and~\ref{paper3-lemma_J_1}$(b)$ ensure that $\ell(Q_i)\approx \ell(Q_{i'})$ and
\begin{align*}
\dist(\Pi(Q_i),&\Pi(Q_{i'}))\\
 &\le \dist(\Pi(Q_i),J_{i})+\ell(J_i)
+\dist(J_i,J_{i'})+\ell(J_{i'})+\dist(J_{i'},\Pi(Q_{i'}))\\
&\lesssim \ell(J_i).
\end{align*}

Keeping this in mind, we continue. For any $z_1\in Q_i$ and $z_2\in Q_{i'}$ by the triangle inequality and Lemma~\ref{paper3-Lip_func_good_R} we have
\begin{align*}
  \dist(Q_i,Q_{i'}) & \le \dist(z_1,z_2) \le |\Pi^\perp(z_1)-\Pi^\perp(z_2)|+|\Pi(z_1)-\Pi(z_2)| \\
   & \lesssim  |\Pi(z_1)-\Pi(z_2)|+c(\tau,A)(d(z_1)+d(z_2)).
\end{align*}
Since $z_1\in Q_i$ and $z_2\in Q_{i'}$, we have $d(z_1)\le \diam(Q_i)$ and $d(z_2)\le \diam(Q_{i'})$. Moreover,
if $z_1$ and $z_2$ are chosen so that
$$
|\Pi(z_1)-\Pi(z_2)|\le 2\dist(\Pi(Q_i),\Pi(Q_{i'})),
$$
then
$\dist(Q_i,Q_{i'}) \lesssim  \dist(\Pi(Q_i),\Pi(Q_{i'}))+\diam(Q_{i})+\diam(Q_{i'})\lesssim_{\tau,A} \ell(J_i)$ as in
$(a)$.

For $i,i'\in I_0$ the properties $(b)$ and $(c)$ follow from $(a)$ and Lemma~\ref{paper3-distlqlemma}. Indeed, in this case
$$
\diam(Q_i)\approx \diam(Q_{i'})\approx_{\tau,A} \ell(J_i)\approx \ell(J_{i'})\quad\text{and}\quad \dist(Q_i,Q_{i'}) \lesssim_{\tau,A} \ell(J_i).
$$
Taking into account that $L_{Q_i}$ and $L_{Q_{i'}}$ are the graphs of $F_i$ and $F_{i'}$, correspondingly, by Lemma~\ref{paper3-distlqlemma} we have
$$
|F_i(z)-F_{i'}(z)|\lesssim_{\tau,A} \sqrt{\varepsilon_0}\,\ell(J_i)\lesssim \varepsilon_0^{1/3}\,\ell(J_i),\qquad z\in 100J_i,
$$
if $\varepsilon_0=\varepsilon_0(\tau,A)$ is chosen small enough.
Moreover, by the same lemma we have $\measuredangle(L_{Q_i},L_{Q_{i'}})\lesssim_{\tau,A}  \sqrt{\varepsilon_0}$ and thus
\begin{align*}
  |F_i'-F_{i'}'| & =|\arctan\measuredangle(L_{Q_i},L_{R})-\arctan \measuredangle(L_{Q_{i'}},L_R)| \\
    & =
|\arctan\measuredangle(L_{Q_i},L_{R})-\arctan (\measuredangle(L_{Q_i},L_{R})\pm\measuredangle(L_{Q_i},L_{Q_{i'}}))|\\
& \lesssim |\arctan\measuredangle(L_{Q_i},L_{Q_{i'}})|\\
&\lesssim_{\tau,A} \sqrt{\varepsilon_0}\\
&\lesssim \varepsilon_0^{1/3},
\end{align*}
if $\varepsilon_0=\varepsilon_0(\tau,A)$ is small enough.

For $i,i'\notin I_0$, $F_i\equiv F_{i'}\equiv 0$, and so $(b)$ and $(c)$ are trivial.

Finally, let $i\in I_0$ and $i'\notin I_0$.
From the assumption ${10J_i\cap 10J_{i'}}\neq\varnothing$ and Lemma~\ref{paper3-Lip_3}$(b)$ we know that $\ell(J_i)\approx\ell(J_{i'})$.
Moreover, by Lemma~\ref{paper3-lemma_J_0}$(a)$ we have $\ell(J_i)\le \diam(R)$ as $i\in I_0$. From another side, by Lemma~\ref{paper3-lemma_J_0}$(b)$
$$
\ell(J_{i'})\approx\dist(\Pi(x_0),J_{i'})
$$
and additionally $\dist(\Pi(x_0),J_{i'})\ge 10\diam(R)$ as $i'\notin I_0$, i.e. $J_{i'}\cap U_0= \varnothing$.
From all these facts we conclude that
$$
\ell(J_i)\approx\ell(J_{i'})\approx_{\tau,A} \diam(R)\quad\text{and}\quad \dist(J_i,J_{i'})\lesssim_{\tau,A} \diam(R).
$$
Recall that $F_{i'}\equiv 0$ and $J_{i'}\subset L_R$.
Then, using Lemma~\ref{paper3-lemma_J_1} and arguments close to those in the proof of Lemmas~\ref{paper3-lemma_gamma_cubes_Hausdorf_dist} and~\ref{paper3-distlqlemma}, one can show that $L_{Q_i}$ is very close to $L_R$ in $cB_0$, which yields $(b)$ and $(c)$ in this case if $\varepsilon_0=\varepsilon_0(\tau,A)$ is chosen small enough.
\end{proof}

\subsection{Extension of  $F$ to the whole $L_R$}

We are now ready to finish the definition of $F$ on the whole $L_R$.
Recall that $F$ has already been defined on $\Pi(G_R)$ (see (\ref{paper3-F_on_G_R})).
So it remains to define it only on $L_R\setminus\Pi(G_R)$. To this end, we first introduce a
partition of unity on $L_R\setminus \Pi(G_R)$. For each $i\in I$, we can find a function $\tilde{\varphi}_i\in
C^\infty(L_R)$ such that $\chi_{2J_i}\le\tilde{\varphi}_i\le\chi_{3J_i}$, with
$$
|\tilde{\varphi}_i \!\!'|\le\frac{c}{\ell(J_i)}\qquad\text{and} \qquad |\tilde{\varphi}_i \!\!''|\le\frac{c}{\ell(J_i)^2}.
$$
Then, for each $i\in I$, we set
\begin{equation}\label{paper3-varphi_i}
 \varphi_i = \frac{\tilde{\varphi}_i}{\sum_{j\in I}\tilde{\varphi}_j}.
\end{equation}
It is clear that the family $\{\varphi_i\}_{i\in I}$ is a partition of unity subordinated to the sets
$\{3J_i\}_{i\in I}$, and each function $\varphi_i$ satisfies
$$
|\varphi_i\,\!\!'|\le\frac{c}{\ell(J_i)}\qquad\text{and} \qquad |\varphi_i\,\!\!''|\le\frac{c}{\ell(J_i)^2},
$$
taking into account Lemma~\ref{paper3-Lip_3}.

Recall that $L_R\setminus\Pi(G_R)=\bigcup_{i\in I} J_i = \bigcup_{i\in I} 3J_i$.
For $z\in L_R\setminus\Pi(G_R)$, we define
$$
F(z) := \sum_{i\in I_0}\varphi_i(z)F_i(z).
$$
Observe that in the preceding sum we can replace $I_0$ by $I$ as $F_i\equiv 0$ for $i\in I\setminus I_0$.

We denote by $\Gamma_R$ the graph $\{(z,F(z)):z\in L_R\}$.

Using the lemmas proved above, one can undeviatingly follow the
``partition of unity'' arguments  in \cite[Section 7.5]{Tolsa_book} to prove the following.
\begin{lemma}
\label{paper3-lemma_Lip_A}
The function $F:L_R\to L_R^\perp$ is supported on  $L_R\cap B(\Pi(x_0),12\diam(R))$ and is $C_F\,\theta(R)$-Lipschitz, where $C_F>0$ is absolute. Also, if $z\in 15J_i$, $i\in I$, then
\begin{equation*}\label{paper3-estimate_A''}
|F''(z)|\lesssim \frac{\sqrt[4]{\varepsilon_0}}{\ell(J_i)}.
\end{equation*}
\end{lemma}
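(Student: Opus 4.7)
The plan is to mimic the classical Whitney extension argument as presented in \cite[Section 7.5]{Tolsa_book}, feeding in the quantitative estimates established in Lemmas~\ref{paper3-Lip_3}, \ref{paper3-lemma_J_0}, \ref{paper3-lemma_J_1} and \ref{paper3-lemma_J_3}. The statement splits into three claims: the support property, the Lipschitz bound with constant $\lesssim\theta(R)$, and the pointwise $C^2$ control $|F''(z)|\lesssim\varepsilon_0^{1/4}/\ell(J_i)$.

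First I would handle the support. On $L_R\setminus\Pi(G_R)$ only indices $i\in I_0$ contribute to $F=\sum_{i\in I_0}\varphi_iF_i$, and by Lemma~\ref{paper3-lemma_J_0}$(a)$ each $3J_i\subset L_R\cap B(\Pi(x_0),12\diam(R))$; on $\Pi(G_R)$ the value $F(\Pi(x))=\Pi^\perp(x)$ is nonzero only for $x\in\overline{\bigcup_{Q\in{\sf DbTree}(R)}Q}\subset R\subset B(\Pi(x_0),2\diam(R))$ thanks to \eqref{paper3-RinB}. This takes care of claim one.

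For the Lipschitz bound, on $\Pi(G_R)$ Lemma~\ref{paper3-Lip_func_good_R} with $d(x_1)=d(x_2)=0$ directly yields $|F(\Pi(x_1))-F(\Pi(x_2))|\lesssim\theta(R)|\Pi(x_1)-\Pi(x_2)|$. On the open set $L_R\setminus\Pi(G_R)$ I would compute
\begin{equation*}
F'(z)=\sum_{i\in I}\varphi_i'(z)F_i(z)+\sum_{i\in I}\varphi_i(z)F_i'(z),
\end{equation*}
fix one index $i_0$ with $z\in 3J_{i_0}$, and use the standard trick $\sum_j\varphi_j'\equiv 0$ to rewrite the first sum as $\sum_i\varphi_i'(z)(F_i(z)-F_{i_0}(z))$. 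By Lemma~\ref{paper3-lemma_J_3}$(b)$ this is $\lesssim\varepsilon_0^{1/3}$ after invoking the bounded overlap from Lemma~\ref{paper3-Lip_3}$(c)$ and $|\varphi_i'|\lesssim1/\ell(J_i)$, while the second sum is $\lesssim\theta(R)$ because each $|F_i'|\le\theta(R)$. Summing gives $|F'|\lesssim\theta(R)+\varepsilon_0^{1/3}\lesssim\theta(R)$ once $\varepsilon_0=\varepsilon_0(\theta_0)$ is taken small. To promote these two pointwise pictures to a single global Lipschitz constant I would use the fact that $F$ is continuous across $\partial\Pi(G_R)$: for any $z_1\in\Pi(G_R)$ and $z_2\in L_R\setminus\Pi(G_R)$, pick $z_2'\in\Pi(G_R)$ realizing $|z_2-z_2'|\le D(z_2)\lesssim\ell(J_{i(z_2)})$ and apply Lemma~\ref{paper3-lemma_J_3}$(b)$ together with the Lipschitz-on-$\Pi(G_R)$ estimate.

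For the $C^2$ bound I would differentiate once more. Since each $F_i$ is affine,
\begin{equation*}
F''(z)=\sum_{i\in I}\varphi_i''(z)F_i(z)+2\sum_{i\in I}\varphi_i'(z)F_i'(z).
\end{equation*}
Using $\sum_j\varphi_j''\equiv0$ and $\sum_j\varphi_j'\equiv0$ and again fixing $i_0$ with $z\in 3J_{i_0}\cap 15J_i$, I rewrite
\begin{equation*}
F''(z)=\sum_{i}\varphi_i''(z)\bigl(F_i(z)-F_{i_0}(z)\bigr)+2\sum_{i}\varphi_i'(z)\bigl(F_i'(z)-F_{i_0}'(z)\bigr).
\end{equation*}
Feeding in $|\varphi_i''|\lesssim\ell(J_i)^{-2}$, $|\varphi_i'|\lesssim\ell(J_i)^{-1}$, Lemma~\ref{paper3-lemma_J_3}$(b)$--$(c)$ (which give $|F_i-F_{i_0}|\lesssim\varepsilon_0^{1/3}\ell(J_i)$ on $100J_i$ and $|F_i'-F_{i_0}'|\lesssim\varepsilon_0^{1/3}$), together with the bounded overlap of the $\{15J_i\}$, yields $|F''(z)|\lesssim\varepsilon_0^{1/3}/\ell(J_i)\le\varepsilon_0^{1/4}/\ell(J_i)$, as required.

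The main obstacle is the first step of the second claim: gluing the two independent descriptions of $F$ on $\Pi(G_R)$ and on $L_R\setminus\Pi(G_R)$ into one Lipschitz function. The tools are already in place --- Lemmas~\ref{paper3-Lip_func_good_R}, \ref{paper3-lemma_J_1} and \ref{paper3-lemma_J_3} were designed precisely for this --- but organizing the telescoping so that one simultaneously controls the jumps produced by the partition of unity, the slopes $F_i'$ of the local approximating lines, and the residual distances $d(\cdot)$ to ${\sf DbTree}(R)$ requires a careful bookkeeping of the parameters, and it is at this point that the cascade of smallness assumptions $\alpha(\gamma)$, $\varepsilon_0(\tau,A,\theta_0)$ must be calibrated once and for all.
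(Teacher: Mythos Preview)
Your proposal is correct and follows essentially the same approach as the paper, which simply states that one can ``undeviatingly follow the `partition of unity' arguments in \cite[Section 7.5]{Tolsa_book}'' using the preparatory Lemmas~\ref{paper3-Lip_3}, \ref{paper3-lemma_J_0}, \ref{paper3-lemma_J_1} and \ref{paper3-lemma_J_3}. You have correctly identified the three claims, the standard telescoping trick $\sum_j\varphi_j'=\sum_j\varphi_j''=0$, and the relevant inputs from those lemmas; the only cosmetic difference is that you record the sharper bound $\varepsilon_0^{1/3}$ before weakening to $\varepsilon_0^{1/4}$.
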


Recall that we suppose of course that the parameters and thresholds mentioned in Section~\ref{paper3-sec_param} are chosen properly.

\subsection{$\Gamma_R$ and $R$ are close to each other}

\begin{lemma}
\label{paper3-lemma_gamma_R_1}
 There exists a constant ${\sf
c_3}(\tau,A)>0$ such
that
\begin{equation}
\label{paper3-lemma_gamma_R_1.1} \dist(x,\Gamma_R)\le {\sf
c_3}(\tau,A)\cdot d(x)\quad
\text{for any }x\in B_0.
\end{equation}
\end{lemma}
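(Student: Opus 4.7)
The plan is to bound $\dist(x,\Gamma_R)$ by the vertical distance from $x$ to the graph, namely $|\Pi^\perp(x)-F(\Pi(x))|$, and to split the argument according to the position of $z:=\Pi(x)$ relative to $\Pi(G_R)$. If $d(x)=0$, then $x\in G_R$ and by \rf{paper3-F_on_G_R} the point $x$ itself lies on $\Gamma_R$, so the estimate is trivial. Otherwise we examine the two subcases $z\in\Pi(G_R)$ and $z\in L_R\setminus\Pi(G_R)$ separately; note that $x\in B_0\subset cB_R$ (for some absolute $c$), so Lemma~\ref{paper3-Lip_func_good_R} is available throughout.

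In the first subcase, let $x_*\in G_R$ be the unique preimage of $z$ under $\Pi$, so that $F(z)=\Pi^\perp(x_*)$. Applying Lemma~\ref{paper3-Lip_func_good_R} to the pair $(x,x_*)$ and using $\Pi(x)=\Pi(x_*)$, $d(x_*)=0$, one obtains $|\Pi^\perp(x)-\Pi^\perp(x_*)|\lesssim_{\tau,A}d(x)$, which is the desired bound.

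The main subcase is $z\in J_i$ for some $i$; since $x\in B_0$ forces $z\in U_0$, we have $i\in I_0$. By Lemma~\ref{paper3-lemma_J_1} the associated cube $Q_i\in{\sf DbTree}(R)$ satisfies $\diam(Q_i)\approx_{\tau,A}\ell(J_i)\approx D(z)$; since $\Pi$ is $1$-Lipschitz we have $D(z)\le d(x)$, and hence $\diam(Q_i)\lesssim_{\tau,A}d(x)$. Because $Q_i\in{\sf DbTree}(R)$ is not a stopping cube of type ${\sf F}(R)$, the condition $(\textbf{S4})$ gives $\mu(Q_i\cap 2B_{Q_i}^{\sf Cl})\ge(1-\sqrt{\alpha})\mu(Q_i)>0$, and we pick $y\in Q_i\cap 2B_{Q_i}^{\sf Cl}$. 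The estimate $|\Pi(x)-\Pi(y)|\le\dist(z,\Pi(Q_i))+\diam(Q_i)\lesssim_{\tau,A}d(x)$ from Lemma~\ref{paper3-lemma_J_1}(b), combined with Lemma~\ref{paper3-Lip_func_good_R} applied to $(x,y)$ and the bound $d(y)\le\diam(Q_i)\lesssim_{\tau,A}d(x)$, yields $|x-y|\lesssim_{\tau,A}d(x)$. Taking $\tilde{Q}=Q_i$ in the definition \rf{sup_new} of $2B_{Q_i}^{\sf Cl}$ gives $\dist(y,L_{Q_i})\le 5\sqrt{\varepsilon_0}\,r(B_{Q_i})$, and the triangle inequality then produces $\dist(x,L_{Q_i})\lesssim_{\tau,A}d(x)$.

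The conclusion follows by splitting $|\Pi^\perp(x)-F(z)|\le|\Pi^\perp(x)-F_i(z)|+|F_i(z)-F(z)|$. The first summand is comparable to $\dist(x,L_{Q_i})$ (since $L_{Q_i}$ is the graph of $F_i$ over $L_R$ with slope $\lesssim\theta(R)\ll 1$), hence $\lesssim_{\tau,A}d(x)$. For the second summand, expanding $F(z)=\sum_j\varphi_j(z)F_j(z)$ via \rf{paper3-varphi_i} and applying Lemma~\ref{paper3-lemma_J_3}(b) to each $j$ with $z\in 3J_j$ (which forces $10J_i\cap 10J_{j}\ni z$) yields $|F_i(z)-F(z)|\lesssim\varepsilon_0^{1/3}\ell(J_i)\lesssim_{\tau,A}d(x)$. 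The main obstacle is the interplay between a generic point $x\in B_0$ (possibly off $\spt\mu$) and the $\mu$-rich approximating cube $Q_i$: stopping condition $(\textbf{S4})$ is essential to locate a single point $y\in Q_i$ that is simultaneously close to $x$ (through Lemma~\ref{paper3-Lip_func_good_R}) and close to $L_{Q_i}$ (through membership in $2B_{Q_i}^{\sf Cl}$), so that the good approximation of $\mu\lfloor Q_i$ by $L_{Q_i}$ can be transferred to~$x$.
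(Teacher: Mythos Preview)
Your proof is correct and follows essentially the same approach as the paper: both bound $\dist(x,\Gamma_R)$ by the vertical distance $|\Pi^\perp(x)-F(\Pi(x))|$, invoke Lemma~\ref{paper3-Lip_func_good_R}, and pass through the approximating cube $Q_i$ and its best-fit line $L_{Q_i}$ (the graph of $F_i$), together with the bound $|F-F_i|\lesssim \varepsilon_0^{1/3}\ell(J_i)$ near $J_i$. The only difference is the order of the steps: the paper applies Lemma~\ref{paper3-Lip_func_good_R} to the pair $(x,y)$ with $y=(\Pi(x),F(\Pi(x)))$ on the graph and then estimates $d(y)$ by building a chain $y\to y'\to\zeta'\to\zeta\in Q_i$, whereas you apply it to $(x,y)$ with $y\in Q_i\cap 2B_{Q_i}^{\sf Cl}$ (so that $d(y)\le\diam(Q_i)$ is immediate) and then reach the graph via $L_{Q_i}$ and Lemma~\ref{paper3-lemma_J_3}(b).
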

\begin{proof}
Let $y=(\Pi(x),F(\Pi(x)))$. By Lemma~\ref{paper3-Lip_func_good_R},
\begin{equation}
\label{paper3-dist(x,Gamma)}
\dist(x,\Gamma_R)\le |x-y|=|\Pi^\perp(x)-\Pi^\perp(y)|\lesssim_{\tau,A} d(x)+d(y).
\end{equation}

If $\Pi(x)\in \Pi(G_R)$, then  $y\in G_R$ and thus $d(y)\equiv 0$, which proves the lemma.

If $\Pi(x)\notin \Pi(G_R)$, let $J_i$, $i\in I$, be such that $\Pi(x)\in J_i$. Since $\Pi(x)\in J_i\cap B_0\neq \varnothing$, $i\in I_0$ and therefore there exists a cube $Q_i\in {\sf DbTree}(R)$ described in Lemma~\ref{paper3-lemma_J_1}. This gives
$$
d(y)\le \dist(y,Q_i)+\diam(Q_i)\lesssim_{\tau,A} \dist(y,Q_i)+\ell(J_i).
$$

Let us estimate $\dist(y,Q_i)$. One can deduce from the definition of $F$ that there exist $y'\in L_{Q_i}$ such that $\Pi(y')=\Pi(y)$ and $\dist(y,y')\lesssim \ell(J_i)$ (recall that $L_{Q_i}$ is the graph of $F_i$ and $\Pi(y)\in J_i$, see some details in \cite[Proof of Lemma~7.24]{Tolsa_book}). Moreover, it follows in a similar way as in the proof of  Lemmas~\ref{paper3-lemma_gamma_cubes_Hausdorf_dist} and~\ref{paper3-distlqlemma} that there exist  $\zeta\in Q_i$ and $\zeta'\in L_{Q_i}$ such that $\dist(\zeta,\zeta')\lesssim \sqrt{\varepsilon_0}\diam(Q_i)$. We know from Lemma~\ref{paper3-lemma_J_1} that $\dist(\Pi(y'),\Pi(\zeta))\lesssim \ell(J_i)$. Furthermore,  it holds that    $\measuredangle(L_{Q_i},L_R)\le \theta(R)$  by (\ref{paper3-DBTree_BS}) in Lemma~\ref{paper3-DbTree_properties} taking into account that all $Q_i \in {\sf DbTree}(R)$. These facts imply  that $\dist(y',\zeta')\lesssim \ell(J_i)$. Summarizing, we obtain
$$
\dist(y,Q_i)\le \dist(y,y')+\dist(y',\zeta')+\dist(\zeta',\zeta)\lesssim \ell(J_i).
$$
From this by Lemma~\ref{paper3-Lip_3}$(a)$ and the definition of $D$ (see (\ref{paper3-def_D})), we conclude that
$$
d(y)\lesssim_{\tau,A} \ell(J_i)\lesssim_{\tau,A} D(\Pi(x))\lesssim_{\tau,A} d(x).
$$
This fact together with (\ref{paper3-dist(x,Gamma)}) proves the lemma.
\end{proof}
\begin{lemma}
\label{paper3-lemma_gamma_R_2}
Let $\varepsilon_0=\varepsilon_0(A,\tau)$ be small enough. If $Q\in {\sf DbTree}(R)$ and $z\in \Gamma_R\cap 2B_Q$, then
\begin{equation}
\label{paper3-lemma_gamma_R_2.1}
\dist(z,L_Q)\lesssim \sqrt[4]{\varepsilon_0}\,r(Q).
\end{equation}
\end{lemma}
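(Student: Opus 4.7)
Write $z=(u,F(u))$ with $u=\Pi(z)\in L_R$. We split according to whether $u\in\Pi(G_R)$ (so $z\in G_R$) or $u\in L_R\setminus\Pi(G_R)$.

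\emph{Case A: $z\in G_R$.} By the definition \eqref{paper3-good_R} of $G_R$, $d(z)=0$, so we can choose cubes $\tilde Q_n\in{\sf DbTree}(R)$ with $\dist(z,\tilde Q_n)+\diam(\tilde Q_n)\to 0$. For $n$ large enough, $\tilde Q_n\subsetneq Q$ and $r(\tilde Q_n)$ is so small that $2B_{\tilde Q_n}\subset 2B_Q$ (the borderline case, when $z$ sits near $\partial (2B_Q)$, will be reduced to this by the ancestor argument used in Case~B below). The beta-number bound \eqref{paper3-DbTree_beta} applied to $\tilde Q_n$ and Chebyshev's inequality give a point $w_n\in\tilde Q_n$ with $\dist(w_n,L_{\tilde Q_n})\lesssim \sqrt{\varepsilon_0}\,r(\tilde Q_n)$. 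Letting $w_n'$ denote its orthogonal projection on $L_{\tilde Q_n}$, we have $w_n'\in L_{\tilde Q_n}\cap 2B_{\tilde Q_n}$, and by Lemma~\ref{paper3-lemma_approx_line_Q1inQ2}, $\dist(w_n',L_Q)\lesssim \varepsilon_0^{1/3}\,r(Q)$. Triangle inequality and $|z-w_n|\to 0$ yield $\dist(z,L_Q)\lesssim \varepsilon_0^{1/3}r(Q)\le \sqrt[4]{\varepsilon_0}\,r(Q)$.

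\emph{Case B: $u\in J_i$ for some $i\in I$.} Using the partition of unity $\{\varphi_j\}$ we write
\[
z=(u,F(u))=\sum_{j:\,u\in 3J_j}\varphi_j(u)\,(u,F_j(u)),
\]
so $z$ is a convex combination of points $y_j:=(u,F_j(u))\in L_{Q_j}$. By Lemma~\ref{paper3-Lip_3}(b), all these $J_j$ satisfy $\ell(J_j)\approx \ell(J_i)$, hence by Lemma~\ref{paper3-lemma_J_1}(a), $\diam(Q_j)\approx_{\tau,A}\ell(J_i)$. Since $z\in 2B_Q$ we have $d(z)\lesssim r(Q)$, and then by \eqref{paper3-def_D} and Lemma~\ref{paper3-Lip_3}(a), $\ell(J_i)\approx D(u)\lesssim r(Q)$. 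Thus $\diam(Q_j)\lesssim_{\tau,A} r(Q)$ in all relevant terms of the sum, and it suffices to prove $\dist(y_j,L_Q)\lesssim \sqrt[4]{\varepsilon_0}\,r(Q)$ for each such~$j$.

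Fix such a $j$. If $Q_j$ is small compared to $Q$ and lies well inside $2B_Q$, then $2B_{Q_j}\subset 2B_Q$; arguing as in Case~A (use the beta-condition on $Q_j$ to produce a point $y_j'\in L_{Q_j}\cap 2B_{Q_j}$ close to $y_j$, then invoke Lemma~\ref{paper3-lemma_approx_line_Q1inQ2}) one gets $\dist(y_j,L_Q)\lesssim \varepsilon_0^{1/3}\,r(Q)$. In the remaining case ($\diam(Q_j)\approx_{\tau,A} r(Q)$ or $Q_j$ near the boundary of $2B_Q$), pick a smallest doubling ancestor $\widehat Q_j\in{\sf DbTree}(R)$ of $Q_j$ with $r(\widehat Q_j)\approx r(Q)$ and $\dist(\widehat Q_j,Q)\lesssim r(Q)$. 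Lemma~\ref{paper3-distlqlemma} gives $\measuredangle(L_{\widehat Q_j},L_Q)\lesssim \sqrt{\varepsilon_0}$ and the appropriate distance bounds, and Lemma~\ref{paper3-lemma_approx_line_Q1inQ2} bridges $L_{Q_j}$ and $L_{\widehat Q_j}$. Combined with the convex-combination expansion and Lemma~\ref{paper3-lemma_J_3}(b) (which controls $|y_j-z|$ by $\varepsilon_0^{1/3}\ell(J_j)$), we obtain $\dist(y_j,L_Q)\lesssim \sqrt[4]{\varepsilon_0}\,r(Q)$. Summing against $\sum_j\varphi_j(u)=1$ finishes Case~B.

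\textbf{Main obstacle.} The delicate point is the interplay between the scales of $Q$ and of the cubes $Q_j$ (or $\tilde Q_n$) that come into the construction of $F$ near $u$: when these cubes are comparable in size to $Q$ or when $z$ lies close to $\partial(2B_Q)$, Lemma~\ref{paper3-lemma_approx_line_Q1inQ2} does not apply directly, and we must go through a suitably chosen doubling ancestor and use Lemma~\ref{paper3-distlqlemma} to bound the angle between the best-approximating lines. The exponent $\sqrt[4]{\varepsilon_0}$ (as opposed to the sharper $\varepsilon_0^{1/3}$ appearing in Lemma~\ref{paper3-lemma_approx_line_Q1inQ2}) is forced by this bridging and by the $C^{1,1}$-type second-derivative control $|F''|\lesssim \sqrt[4]{\varepsilon_0}/\ell(J_i)$ from Lemma~\ref{paper3-lemma_Lip_A}, which controls the deviation of $\Gamma_R$ from the affine pieces whose graphs are the $L_{Q_j}$.
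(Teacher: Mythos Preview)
Your overall strategy matches the paper's: split into $z\in G_R$ versus $z\notin G_R$, in the second case express $z$ as a convex combination of the affine pieces $(u,F_j(u))$, and then transfer from $L_{Q_j}$ to $L_Q$ via Lemmas~\ref{paper3-distlqlemma} and~\ref{paper3-lemma_approx_line_Q1inQ2}. However, there is a genuine gap in Case~B.

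You write $z=\sum_{j:\,u\in 3J_j}\varphi_j(u)\,(u,F_j(u))$ and then declare that each $y_j=(u,F_j(u))$ lies on $L_{Q_j}$. This is only true for $j\in I_0$; for $j\in I\setminus I_0$ one has $F_j\equiv 0$ by definition, so $y_j\in L_R$ and there is no cube $Q_j$ attached to it. Your argument therefore implicitly assumes $\sum_{j\in I_0}\varphi_j(u)=1$, which can fail: when $u=\Pi(z)$ lies near the edge of $U_0$, some $3J_j$ with $j\notin I_0$ may meet $u$. The paper treats this as a separate subcase and shows that if any $j\notin I_0$ contributes then necessarily $\ell(J_i)\approx\diam(R)$, which forces $r(Q)\approx_{\tau,A}\diam(R)$; one then compares $L_Q$ directly with $L_R$ via Lemma~\ref{paper3-distlqlemma} and bounds $|F(u)|=\dist((u,F(u)),L_R)$ using the closeness of each contributing $L_{Q_m}$ to $L_R$ in $cB_R$. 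Without this subcase your convex-combination argument is incomplete.

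A secondary point: in your ``remaining case'' of Case~B you assert $\dist(\widehat Q_j,Q)\lesssim r(Q)$ without justification. This is not automatic from $z\in 2B_Q$ and $u\in 3J_j$; one controls only $\dist(\Pi(Q_j),\Pi(Q))\lesssim r(Q)$ via Lemma~\ref{paper3-lemma_J_1}(b), and the vertical separation must be bounded using Lemma~\ref{paper3-Lip_func_good_R}. The paper does exactly this, and then (rather than taking an ancestor of $Q_j$) takes a doubling ancestor $Q'$ of $Q$ with $cB_Q\subset 2B_{Q'}$ and $\diam(Q')\approx_{\tau,A}\diam(Q)$, applies Lemma~\ref{paper3-lemma_approx_line_Q1inQ2} with the pair $(Q_i,Q')$, and transfers back to $L_Q$ via Lemma~\ref{paper3-distlqlemma}. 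Your variant (ancestor of $Q_j$) can be made to work, but you still need the $\dist(Q_j,Q)$ bound to invoke Lemma~\ref{paper3-distlqlemma}.
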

\begin{proof}
Let $z\in G_R$. Then there exists $Q'\in {\sf DbTree}(R)$ such that $z\in Q'$, $Q'\subset Q$ and $r(Q')\le  \varepsilon_0^{1/3}\,r(Q)$. By Lemma~\ref{paper3-lemma_gamma_cubes_Hausdorf_dist} there is $z'\in Q'$ such that $\dist (z',z'')\lesssim \sqrt{\varepsilon_0}\,r(Q')$, where $z''\in L_{Q'}\cap 2B_{Q'}$. Furthermore, it is clear that $\dist(z,z')\lesssim r(Q')\lesssim  \varepsilon_0^{1/3}\,r(Q)$. Using that $Q'\subset Q$, by Lemma~\ref{paper3-lemma_approx_line_Q1inQ2} we get $\dist(z'',L_Q)\lesssim  \varepsilon_0^{1/3}\,r(Q)$. Consequently,
$$
\dist(z,L_Q)\le \dist (z,z')+\dist(z',z'')+
\dist(z'',L_Q)\lesssim \varepsilon_0^{1/3}\,r(Q).
$$

Now let $z\notin G_R$ and $\zeta=\Pi(z)$. In this case
$$
F(\zeta)=\sum_{i\in I_0}\varphi_i(\zeta)F_i(\zeta).
$$

Now take into account (\ref{paper3-varphi_i}) and distinguish two cases. Suppose first that
$$
\sum_{i\in I_0}\varphi_i(\zeta)=1.
$$
In this case $(\zeta,F(\zeta))$ is a convex combination of the points $(\zeta,F_i(\zeta))$ for $i$ such that $\varphi_i(\zeta)\neq 0$ (we will write $i\in \tilde{I}_0$ for these $i$\,s, $\tilde{I}_0\subset I_0$). Therefore (\ref{paper3-lemma_gamma_R_2.1}) follows if
\begin{equation}\label{paper3-lemma_gamma_R_2.1(1)}
\dist((\zeta,F_i(\zeta)),L_Q)\lesssim
 \varepsilon_0^{1/3}\,r(Q)\qquad \text{for all } i\in \tilde{I}_0.
\end{equation}
To prove this estimate, notice that since $z\in 2B_Q$,
$$
D(\zeta)\le d(z)\lesssim r(Q).
$$
Let $J_{i'}$, where $i'\in I_0$, be the interval that contains $\zeta$. Then
\begin{equation}
\label{paper3-eq1}
\ell(J_{i'})\le \tfrac{1}{20}D(\zeta)\lesssim r(Q).
\end{equation}
Recall that $\varphi_i$ is supported on $3J_i$. Consequently, we necessarily have $3J_i\cap J_{i'}\neq\varnothing$ if $i\in \tilde{I}_0$. Therefore by Lemma~\ref{paper3-Lip_3}$(b)$ and~\ref{paper3-lemma_J_1}$(a)$,
$$
\ell(J_i)\approx_{\tau,A}\diam(Q_i)\approx_{\tau,A}\diam(Q_{i'})\approx_{\tau,A} \ell(J_{i'})\lesssim_{\tau,A} r(Q).
$$
Moreover, by Lemma~\ref{paper3-lemma_J_3}$(a)$,
$$
\dist(\Pi(Q_i),\Pi(Q_{i'}))\le \dist(Q_i,Q_{i'})\lesssim_{\tau,A} \ell(J_i).
$$
Taking into account that
\begin{align*}
\dist&(\Pi(Q_{i'}),\Pi(Q))\\
&\le \dist(\Pi(Q_{i'}),J_{i'})+\diam(J_{i'}) +\dist(J_{i'},\Pi(Q))\lesssim \ell(J_{i'})\lesssim_{\tau,A}r(Q),
\end{align*}
we get
\begin{align*}
\dist&(\Pi(Q_i),\Pi(Q))\\
& \le \dist(\Pi(Q_i),\Pi(Q_{i'}))+\diam(\Pi(Q_{i'}))+\dist(\Pi(Q_{i'}),\Pi(Q))\lesssim_{\tau,A} r(Q).
\end{align*}
From Lemma~\ref{paper3-Lip_func_good_R}, applied for $z_1\in Q_i$ and $z_2\in Q$, we deduce that
$$
\dist(Q_i,Q)\lesssim \dist(\Pi(Q_i),\Pi(Q))+\diam(Q_i)+\diam(Q)\lesssim_{\tau,A} r(Q).
$$
This means that $2B_{Q_i}\subset cB_Q$ with some $c=c(\tau,A)>1$. Consequently, by Lemmas~\ref{paper3-lemdob32} and \ref{paper3-distlqlemma}, we can find $Q'\in {\sf DbTree}(R)$ such that $cB_Q\subset 2B_{Q'}$, $\diam(Q')\approx_{A,\tau} \diam (Q)$ and
$$
\dist(w,L_{Q})\lesssim_{A,\tau} \sqrt{\varepsilon_0}(\dist(w,Q')+\diam(Q')),\qquad  w\in L_{Q'}.
$$
Choosing $\varepsilon_0=\varepsilon_0(A,\tau)$ small enough, we get
\begin{equation}\label{paper3-eq2}
\dist(w,L_{Q})\lesssim \varepsilon_0^{1/3}(\dist(w,Q')+\diam(Q')),\qquad  w\in L_{Q'}.
\end{equation}
Recall that $(\zeta,F_i(\zeta))\in L_{Q_i}\cap cB_{Q_i}$ and $2B_{Q_i}\subset 2B_{Q'}$ so Lemma~\ref{paper3-lemma_approx_line_Q1inQ2} gives
$$
\dist((\zeta,F_i(\zeta)),L_{Q'})\lesssim  \varepsilon_0^{1/3}\,r(Q').
$$
Note that the parameters and thresholds in Lemma~\ref{paper3-lemma_approx_line_Q1inQ2} are also supposed to be properly chosen.
Together with (\ref{paper3-eq2}) applied to $w=\textsf{proj}_{L_{Q'}}(\zeta,F_i(\zeta))$, this yields (\ref{paper3-lemma_gamma_R_2.1(1)}) as required.

Suppose now that
$$
\sum_{i\in I_0}\varphi_i(\zeta)<1.
$$
In this case, there exists some $J_{i'}$ with $i'\notin I_0$ such that $\zeta\in 3J_{i'}$ (as from (\ref{paper3-varphi_i}) it follows that $\sum_{i\in I\setminus I_0}\varphi_i(\zeta)>0$) and by Lemma~\ref{paper3-lemma_J_0}$(b)$,
$$
\diam(R)\lesssim\ell(J_{i'})\approx \dist (\Pi(x_0),J_{i'}).
$$
Moreover, if $J_i$ is the interval that contains $\zeta=\Pi(z)$, $z\in 2B_Q$, then
$$
\ell(J_i)\lesssim D(\Pi(z))\lesssim d(z)\lesssim \dist(z,Q)+\diam(Q)\lesssim \diam(R),
$$
where we used the definition of $D$, see (\ref{paper3-def_D}).

By Lemma~\ref{paper3-Lip_3}$(b)$, $\ell(J_i)\approx \ell(J_{i'})$ as $J_i\cap 3J_{i'}\neq\varnothing$. That is why $\ell(J_{i'})\approx \diam(R)$. This also implies that $\ell(J_m)\approx \diam(R)$ for any $m\in I_0$ such that $\zeta\in 3J_m$. By Lemma~\ref{paper3-lemma_J_1}$(a)$, it means that $\diam(Q_m)\approx_{\tau,A} \diam(R)$. Furthermore, it is clear that $\dist(Q_m,R)\equiv 0$ and so the assumptions of Lemma~\ref{paper3-distlqlemma} are satisfied for $Q_m$ and $R$. Consequently, $L_{Q_m}$ and $L_R$ are very close in $cB_R$ for some $c>1$ if the corresponding parameters are chosen properly, namely,
\begin{equation}\label{paper3-dist_H_lemma_gamma_R}
\dist_H(L_{Q_m}\cap cB_R,L_R\cap cB_R)\lesssim_{\tau,A}  \sqrt{\varepsilon_0}\diam(R).
\end{equation}
On the other hand, arguing as in (\ref{paper3-eq1}), one deduces that $\ell(J_m)\lesssim_{\tau,A} r(Q)$, and from this we conclude that $r(Q)\approx_{\tau,A} \diam(R)$. By (\ref{paper3-dist_H_lemma_gamma_R}) then we get
$$
|F_m(\zeta)|=\dist((\zeta,F_m(\zeta)),L_R)\lesssim_{\tau,A} \sqrt{\varepsilon_0}\diam(R)\lesssim_{\tau,A} \sqrt{\varepsilon_0}\,r(Q)\lesssim \varepsilon_0^{1/3}\,r(Q)
$$
for all above-mentioned $m$s is $\varepsilon_0=\varepsilon_0(\tau,A)$ is chosen small enough. Recall that we only need to sum up $i\in I_0$ such that $\zeta\in 3J_i$ and these are our $m\in I_0$. Thus
\begin{align*}
\dist((\zeta,F(\zeta)),L_R) & \le \sum_{i\in I_0}\varphi_i(\zeta)|F_i(\zeta)|= \sum_{m\in I_0}\varphi_m(\zeta)|F_m(\zeta)| \\
 & \le \max_{m\in I_0} |F_m(\zeta)|\sum_{m\in I_0}\varphi_m(\zeta) \lesssim  \varepsilon_0^{1/3}\,r(Q).
\end{align*}
Due to the fact that $r(Q)\approx\diam(R)$, by Lemma~\ref{paper3-distlqlemma} lines $L_R$ and $L_Q$ are very close to each other in $2B_Q$, and thus
$$
\dist((\zeta,F(\zeta)),L_Q)\lesssim   \varepsilon_0^{1/3}\,r(Q)
$$
as desired.
\end{proof}
\begin{lemma}
\label{paper3-lemma_gamma_R_3} For all $x\in R\setminus R_{\sf Far}$,
\begin{equation}
\label{paper3-lemma_gamma_R_3.1} \dist(x,\Gamma_R)\lesssim
\sqrt[4]{\varepsilon_0}\;d(x).
\end{equation}
\end{lemma}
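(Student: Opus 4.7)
My plan is to refine Lemma~\ref{paper3-lemma_gamma_R_1} using the additional hypothesis $x\notin R_{\sf Far}$. The idea is to pick a single cube $Q_0\in {\sf DbTree}(R)$ at scale $\approx d(x)$ whose ball $2B_{Q_0}$ contains both $x$ and the candidate projection of $x$ onto $\Gamma_R$, and then estimate the distances of both points to the best approximating line $L_{Q_0}$: the hypothesis $x\notin R_{\sf Far}$ will force $x$ to lie within $\sqrt{\varepsilon_0}\,r(Q_0)$ of $L_{Q_0}$, while Lemma~\ref{paper3-lemma_gamma_R_2} already provides $\sqrt[4]{\varepsilon_0}\,r(Q_0)$ closeness of $\Gamma_R$ to $L_{Q_0}$ inside $2B_{Q_0}$.

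I first dispose of the trivial case $d(x)=0$, in which $x\in G_R$ and $F(\Pi(x))=\Pi^\perp(x)$ by (\ref{paper3-F_on_G_R}), so $x\in\Gamma_R$. Assuming $d(x)>0$, I use (\ref{paper3-def_d}) to pick $Q'\in {\sf DbTree}(R)$ with $\dist(x,Q')+\diam(Q')\le 2d(x)$, and then Lemma~\ref{paper3-lemdob32} to replace $Q'$ by a doubling ancestor $Q_0\in {\sf DbTree}(R)$ with $r(Q_0)\approx_{\tau,A} d(x)$ chosen large enough (relative to the absolute factor $28$ in $r(B_Q)=28\,r(Q)$ and the constant ${\sf c_3}(\tau,A)$ of Lemma~\ref{paper3-lemma_gamma_R_1}) that both $x$ and the point $y:=(\Pi(x),F(\Pi(x)))\in\Gamma_R$ lie in $2B_{Q_0}$; the inclusion $y\in 2B_{Q_0}$ follows from $|x-y|\le {\sf c_3}(\tau,A)\,d(x)$ given by Lemma~\ref{paper3-lemma_gamma_R_1}. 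The contrapositive of Lemma~\ref{paper3-lemma_R_far_2}, applied with $\tilde{Q}=Q_0$ (the doubling good ancestor there being $Q_0$ itself, since $Q_0\in {\sf DbTree}(R)$), then forces $x\in 2B_{Q_0}^{\sf Cl}$. Plugging $Q_0$ into the definition (\ref{sup_new}) as the ancestor cube gives
$$
\dist(x,L_{Q_0})\le 5\sqrt{\varepsilon_0}\,r(B_{Q_0})\lesssim_{\tau,A}\sqrt{\varepsilon_0}\,d(x),
$$
while Lemma~\ref{paper3-lemma_gamma_R_2} applied at $y\in\Gamma_R\cap 2B_{Q_0}$ yields
$$
\dist(y,L_{Q_0})\lesssim \sqrt[4]{\varepsilon_0}\,r(Q_0)\lesssim_{\tau,A}\sqrt[4]{\varepsilon_0}\,d(x).
$$

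To finish, observe that $\Pi(x)=\Pi(y)$, so the segment joining $x$ and $y$ is orthogonal to $L_R$; by (\ref{paper3-DBTree_BS}) the angle $\measuredangle(L_{Q_0},L_R)\le \theta(R)\approx\theta_0\ll 1$, so $L_{Q_0}$ is nearly horizontal in coordinates aligned with $L_R$. Writing both distances above in these coordinates, an elementary trigonometric computation gives
$$
|x-y|\le \frac{\dist(x,L_{Q_0})+\dist(y,L_{Q_0})}{\cos\theta(R)}\lesssim_{\tau,A} \sqrt[4]{\varepsilon_0}\,d(x),
$$
and hence $\dist(x,\Gamma_R)\le |x-y|\lesssim \sqrt[4]{\varepsilon_0}\,d(x)$ after absorbing the $\tau,A$-dependent constants by choosing $\varepsilon_0=\varepsilon_0(\tau,A)$ small enough. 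The main technical obstacle I anticipate is the calibration of the scale of $Q_0$: it must simultaneously belong to ${\sf DbTree}(R)$ (so that both (\ref{sup_new}) and Lemma~\ref{paper3-lemma_gamma_R_2} apply with $\tilde{Q}=Q_0$), be large enough for $2B_{Q_0}$ to swallow both $x$ and $y$, and be no larger than a constant multiple of $d(x)$ so that the factors $\sqrt{\varepsilon_0}\,r(Q_0)$ and $\sqrt[4]{\varepsilon_0}\,r(Q_0)$ translate into the required $\sqrt[4]{\varepsilon_0}\,d(x)$. Lemma~\ref{paper3-lemdob32}, which controls the gap between consecutive doubling ancestors, is what makes this three-fold balance possible.
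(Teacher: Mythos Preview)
Your argument is correct and follows essentially the same route as the paper's: choose $Q_0\in{\sf DbTree}(R)$ at scale $\approx d(x)$ containing $x$, invoke the contrapositive of Lemma~\ref{paper3-lemma_R_far_2} to get $\dist(x,L_{Q_0})\lesssim\sqrt{\varepsilon_0}\,r(Q_0)$, and Lemma~\ref{paper3-lemma_gamma_R_2} to place $\Gamma_R$ within $\varepsilon_0^{1/4}r(Q_0)$ of $L_{Q_0}$ (the paper finishes via the projection of $x$ onto $L_{Q_0}$ and connectivity of $\Gamma_R$, you via the specific point $y=(\Pi(x),F(\Pi(x)))$ and a trigonometric bound --- both work). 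The one thing you should add is the endpoint case $d(x)\approx\diam(R)$, which the paper treats separately: there Lemma~\ref{paper3-lemdob32} may not furnish a $Q_0\in{\sf DbTree}(R)$ large enough to swallow $y$, but the estimate is immediate from Lemma~\ref{paper3-lemma_Gamma_R_to_R} and the fact that $x$ (being in $2B_R^{\sf Cl}$) lies within $5\sqrt{\varepsilon_0}\,r(B_R)$ of $L_R$.
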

\begin{proof}
Recall that if $d(x)=0$, then $x\in\Gamma_R$ and we are done.

By Lemmas~\ref{paper3-lemma_R_far_2} and~\ref{paper3-lemma_gamma_R_2} any point $x\in R\setminus R_{\sf Far}$ is very close to $L_R$ and (\ref{paper3-lemma_gamma_R_3.1}) clearly holds if $d(x)\approx \diam(R)$. Hence, we may suppose below that $d(x)$ is small with respect to $\diam(R)$, say, $d(x)\ll ({\sf
c_3}(\tau,A)+2)\diam(R)$, where ${\sf
c_3}(\tau,A)>0$ is from Lemma~\ref{paper3-lemma_gamma_R_1}. 

Given $x\in R\setminus R_{\sf Far}$ with $d(x)>0$, take a cube $Q\in {\sf DbTree}(R)$ such that
$$
\dist(x,Q)+\diam(Q)\le 2d(x).
$$
Take any $z\in Q$ (note that $\dist(z,x)\le 2d(x)$) and find $Q'\in {\sf DbTree}(R)$  such that
$$
B(z,2({\sf
c_3}(\tau,A)+2)d(x))\subset \tfrac{3}{2}B_{Q'}.
$$
 Recall that $d(x)$ is  small with respect to $\diam(R)$ and thus $Q'$ can be found. We can also guarantee that
$r(Q')\approx_{\tau,A} d(x)$.
Furthermore, it is clear that $x\in B(z,2({\sf
c_3}(\tau,A)+2)d(x))$ and thus $x\in \tfrac{3}{2}B_{Q'}$. Moreover, Lemma~\ref{paper3-lemma_gamma_R_1} gives
$$
\dist(z,\Gamma_R)\le \dist(z,x)+\dist(x,\Gamma_R)\le (2+{\sf
c_3}(\tau,A))d(x),
$$
which yields that $B(z,2({\sf
c_3}(\tau,A)+2)d(x))\cap \Gamma_R \neq\varnothing$ and therefore
$$
\tfrac{3}{2}B_{Q'}\cap \Gamma_R\neq\varnothing.
$$
Take into account that $x\in \tfrac{3}{2}B_{Q'}\cap R\setminus R_{\sf Far}\subset 2B_{Q'}^{\sf Cl}$, i.e. $\dist(x,L_{Q'})\lesssim \sqrt{\varepsilon_0}\,r(Q')$ and thus there is $x'\in L_{Q'}\cap 2B_{Q'}$ such that
$\dist(x,x')\lesssim \sqrt{\varepsilon_0}\;r(Q')$. Furthermore, Lemma~\ref{paper3-lemma_gamma_R_2} says that $\dist(y,L_{Q'})\le c {\varepsilon_0}^{1/3}\,r(Q')$ for any $y\in \Gamma_R\cap 2B_{Q'}$ and some $c>0$. In other words,
$$
\Gamma_R\cap 2B_{Q'}\subset \mathcal{U}_{c {\varepsilon_0}^{1/3}\,r(Q')}(L_{Q'}),
$$
and thus $\dist(x',\Gamma_R)\lesssim  {\varepsilon_0}^{1/3}\;r(Q')$. Summarising, we get
$$
\dist(x,\Gamma_R)\le \dist(x,x')+\dist(x',\Gamma_R)\lesssim  {\varepsilon_0}^{1/3}\;r(Q').
$$
It is left to remember that $r(Q')\approx_{\tau,A} d(x)$ by construction and to choose $\varepsilon_0=\varepsilon_0(\tau,A)$ small enough.
\end{proof}

\begin{lemma}
\label{paper3-lemma_Q_i_proj_of_J_i}
For each $i\in I_0$,
$$
\dist(Q_i,\Gamma_R\cap\Pi^{-1}(J_i))\lesssim_{\tau,A} \ell(J_i).
$$
\end{lemma}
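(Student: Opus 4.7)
The plan is to assemble three earlier ingredients: Lemma~\ref{paper3-lemma_gamma_R_1} (which controls $\dist(\cdot,\Gamma_R)$ by $d(\cdot)$), Lemma~\ref{paper3-lemma_J_1} (which says that $Q_i$ has size comparable to $\ell(J_i)$ and that $\Pi(Q_i)$ lies within $\lesssim \ell(J_i)$ of $J_i$), and the Lipschitz property of $F$ from Lemma~\ref{paper3-lemma_Lip_A}. Morally: take any point of $Q_i$, push it to the nearest point of $\Gamma_R$ using Lemma~\ref{paper3-lemma_gamma_R_1}, then slide along $\Gamma_R$ until the projection lands in $J_i$; the Lipschitz character of $F$ guarantees that the slide costs us only a bounded multiple of $\ell(J_i)$.

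In detail, fix any $x \in Q_i$. Since $Q_i \subset R \subset B(\Pi(x_0),2\diam(R)) \subset B_0$ by (\ref{paper3-RinB}), Lemma~\ref{paper3-lemma_gamma_R_1} is applicable at $x$. The definition~(\ref{paper3-def_d}) of $d$ combined with Lemma~\ref{paper3-lemma_J_1}(a) gives
$$
d(x) \le \dist(x,Q_i) + \diam(Q_i) = \diam(Q_i) \lesssim_{\tau,A} \ell(J_i),
$$
so there exists $y \in \Gamma_R$ (the graph of the continuous function $F$, hence closed) realizing
$$
|y-x| \le {\sf c_3}(\tau,A)\, d(x) \lesssim_{\tau,A} \ell(J_i).
$$

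Now I nudge $y$ so that its projection lies in $J_i$. The contraction property of $\Pi$ yields $|\Pi(y)-\Pi(x)| \le |y-x| \lesssim_{\tau,A} \ell(J_i)$; since $\Pi(x) \in \Pi(Q_i)$ and $\dist(J_i,\Pi(Q_i)) \lesssim \ell(J_i)$ by Lemma~\ref{paper3-lemma_J_1}(b), the triangle inequality gives $\dist(\Pi(y),J_i) \lesssim_{\tau,A} \ell(J_i)$. Let $z \in J_i$ be a closest point to $\Pi(y)$ and set $p := (z,F(z))$, which belongs to $\Gamma_R \cap \Pi^{-1}(J_i)$. By Lemma~\ref{paper3-lemma_Lip_A}, $F$ is $C_F\theta(R)$-Lipschitz with $\theta(R) \lesssim \theta_0 \ll 1$, so
$$
|y-p| \le \bigl(1+C_F\theta(R)\bigr)\,|\Pi(y)-z| \lesssim_{\tau,A} \ell(J_i).
$$
The triangle inequality $\dist(x,\Gamma_R\cap \Pi^{-1}(J_i)) \le |x-y|+|y-p|$, followed by taking the infimum over $x \in Q_i$, yields the claim.

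I do not expect any genuine obstacle here: the statement is essentially bookkeeping on top of the machinery already built. The only points requiring attention are tracking the dependence of constants (the factor ${\sf c_3}(\tau,A)$ from Lemma~\ref{paper3-lemma_gamma_R_1} and the comparability in Lemma~\ref{paper3-lemma_J_1}(a) are what produce the $\tau,A$-dependence, whereas the Lipschitz constant $C_F\theta(R)$ is absorbed as an absolute factor) and verifying at the outset that $Q_i$ actually lies in $B_0$, which is immediate from (\ref{paper3-RinB}).
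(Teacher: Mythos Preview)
Your proof is correct and follows essentially the same approach as the paper: use Lemma~\ref{paper3-lemma_gamma_R_1} together with Lemma~\ref{paper3-lemma_J_1}(a) to land within $\lesssim_{\tau,A}\ell(J_i)$ of $\Gamma_R$, use Lemma~\ref{paper3-lemma_J_1}(b) to control the horizontal displacement to $J_i$, and then invoke the Lipschitz bound from Lemma~\ref{paper3-lemma_Lip_A} to convert the horizontal slide into a bound on the slide along $\Gamma_R$. The paper's proof simply states these three ingredients and declares the result, whereas you have (correctly) written out the slide-along-$\Gamma_R$ step explicitly.
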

\begin{proof}
Let $x \in Q_i\subset B_0$. Then by Lemmas~\ref{paper3-lemma_gamma_R_1} and~\ref{paper3-lemma_J_1}$(a)$ we have
$$
\dist(Q_i,\Gamma_R)\le \dist(x,\Gamma_R)\lesssim_{\tau,A} d(x)\lesssim_{\tau,A} \diam(Q_i)\approx_{\tau,A} \ell(J_i).
$$
Moreover, $\dist(J_i,\Pi(Q_i))\lesssim \ell(J_i)$ by Lemma~\ref{paper3-lemma_J_1}$(b)$. From these two inequalities and Lemma~\ref{paper3-lemma_Lip_A}, the required result follows.
\end{proof}

We finish this section with one more result which can be easily deduced from Lemmas~\ref{paper3-lemma_Lip_A} (look at $\spt F$) and~\ref{paper3-lemma_gamma_R_2}.
\begin{lemma}
\label{paper3-lemma_Gamma_R_to_R}
For any $z\in \Gamma_R$, it holds that
$$
\dist(z,L_R)\lesssim \sqrt[4]{\varepsilon_0}\,r(R).
$$
\end{lemma}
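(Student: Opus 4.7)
The plan is to reduce the statement to Lemma~\ref{paper3-lemma_gamma_R_2} via the support condition from Lemma~\ref{paper3-lemma_Lip_A}. Given an arbitrary $w\in\Gamma_R$, write $w=(z,F(z))$ with $z\in L_R$; since $L_R$ corresponds to $F\equiv 0$ in these coordinates, we have $\dist(w,L_R)=|F(z)|$, so the task becomes bounding $|F(z)|$ in terms of $r(R)$.

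First I would observe the trivial case: if $z\notin\spt F$, then Lemma~\ref{paper3-lemma_Lip_A} gives $F(z)=0$, and hence $w\in L_R$, so $\dist(w,L_R)=0$. This handles all points of $\Gamma_R$ lying over $L_R\setminus B(\Pi(x_0),12\diam(R))$.

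The remaining case is $z\in\spt F$, i.e.\ $z\in L_R\cap B(\Pi(x_0),12\diam(R))$. Here I would use the two pieces of information from Lemma~\ref{paper3-lemma_Lip_A}: the Lipschitz bound $|F(z)|\le C_F\theta(R)\cdot 12\diam(R)\lesssim r(R)$, together with $|z-\Pi(x_0)|\le 12\diam(R)\lesssim r(R)$ and $|\Pi(x_0)-z_R|\le \dist(x_0,L_R)+|x_0-z_R|\lesssim r(R)$ (since $x_0\in R\subset B_R$ and $\diam(R)\approx r(R)$). Combining these, $w$ lies in a ball $cB_R$ for some absolute $c>1$. By the standing assumption in Remark~\ref{paper3-remark3} we may take $R\notin{\sf Stop}(R)$, so $R\in{\sf DbTree}(R)$, and Lemma~\ref{paper3-lemma_gamma_R_2} applies with $Q=R$. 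Although this lemma is stated for points in $2B_R$ rather than $cB_R$, its proof is insensitive to the size of the enveloping ball: in the subcase $w\in G_R$ one descends to a tiny $Q'\in{\sf DbTree}(R)$ containing $w$, while in the subcase $w\notin G_R$ the partition-of-unity argument depends only on the bound $D(\Pi(w))\lesssim r(R)$, which is preserved once $w$ sits in $cB_R$. Either way the conclusion $\dist(w,L_R)\lesssim \sqrt[4]{\varepsilon_0}\,r(R)$ goes through.

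The only mild obstacle is the cosmetic one of rescaling the ball constant in Lemma~\ref{paper3-lemma_gamma_R_2}; once that is noted, both cases are immediate and the proof is effectively a one-paragraph deduction.
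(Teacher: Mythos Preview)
Your approach is correct and follows exactly the path the paper indicates: the paper states that the lemma ``can be easily deduced from Lemmas~\ref{paper3-lemma_Lip_A} (look at $\spt F$) and~\ref{paper3-lemma_gamma_R_2}'', which is precisely your reduction. Your observation that the point $w$ may lie in $cB_R$ rather than $2B_R$, and that the proof of Lemma~\ref{paper3-lemma_gamma_R_2} goes through with this larger constant (since only the bound $D(\Pi(w))\lesssim r(R)$ is needed, and for $Q=R$ the auxiliary cube $Q'$ can be taken to be $R$ itself), is a valid way to handle the cosmetic rescaling the paper leaves implicit.
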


\section{Small measure of the cubes from $\LD(R)$}

In what follows we show that the measure of the low-density cubes is small.
\begin{lemma}
\label{paper3-lemma_ld_small}
If $\varepsilon_0=\varepsilon_0(\tau,A)$ and $\tau$ are  small enough, then
\begin{equation}\label{paper3-sumld}
\sum_{Q\in \LD(R)}\mu(Q)\le\tfrac{1}{3}\sqrt{\tau}\,\mu(R).
\end{equation}
\end{lemma}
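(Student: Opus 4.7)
The plan is to split $\LD(R)$ into two subfamilies according to whether the bulk of each cube sits in $R_{\sf Far}$ or not, and treat them separately. Set
$$
\LD^{\sf far}(R) := \{Q \in \LD(R) : \mu(Q \cap R_{\sf Far}) \ge \tfrac{1}{2} \mu(Q)\}, \qquad \LD^{\sf cl}(R) := \LD(R) \setminus \LD^{\sf far}(R).
$$
We may assume $R \notin {\sf UB}(R) \cup {\sf BP}(R)$ (otherwise we fall into one of the trivial cases pointed out in Remark~\ref{paper3-remark3}), so that Lemma~\ref{paper3-lemma_R_far_1} applies. The disjointness of stopping cubes and that lemma immediately give
$$
\sum_{Q \in \LD^{\sf far}(R)} \mu(Q) \le 2 \sum_Q \mu(Q \cap R_{\sf Far}) \le 2\mu(R_{\sf Far}) \le 2\alpha\,\mu(R).
$$

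For the remaining, more delicate family $\LD^{\sf cl}(R)$, the elementary low-density estimate, using $Q \subset 2B_Q$ and $\Theta_\mu(2B_Q) < \tau\,\Theta_\mu(2B_R)$, gives
$$
\mu(Q) \le \mu(2B_Q) = 56\,\Theta_\mu(2B_Q)\,r(Q) < 56\tau\,\Theta_\mu(2B_R)\,r(Q).
$$
Since $R \in \mathcal{D}^{db}$ implies $\mu(R) \gtrsim \Theta_\mu(2B_R)\,r(R)$ via \rf{paper3-eqdob22}, the task reduces to the radius estimate
$$
\sum_{Q \in \LD^{\sf cl}(R)} r(Q) \lesssim r(R)
$$
with an \emph{absolute} implicit constant. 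For each such $Q$ I would pick $x_Q \in Q \setminus R_{\sf Far}$ (which exists because $\mu(Q \setminus R_{\sf Far}) > \tfrac{1}{2}\mu(Q) > 0$). By Lemma~\ref{paper3-lemdob32} there is a doubling ancestor $\tilde Q \supset Q$ in ${\sf DbTree}(R)$ with $\diam(\tilde Q) \lesssim_{A,\tau} r(Q)$, whence $d(x_Q) \le \diam(\tilde Q) \lesssim_{A,\tau} r(Q)$, and Lemma~\ref{paper3-lemma_gamma_R_3} then yields $\dist(x_Q, \Gamma_R) \lesssim_{A,\tau} \varepsilon_0^{1/4}\,r(Q)$. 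Choosing $\varepsilon_0 = \varepsilon_0(\tau, A)$ small enough, this turns into $\dist(x_Q, \Gamma_R) \le \eta\, r(Q)$ with $\eta$ an arbitrarily small \emph{absolute} constant. I would then attach to each $Q$ the interval $J_Q := \Pi(B(x_Q, 3r(Q)))$ on $L_R$, which has length $\approx r(Q)$, and use the disjointness of the cubes in $\C$ together with their $\eta r(Q)$-proximity to the low-slope (Lipschitz constant $\le C_F \theta(R) \approx \theta_0 \ll 1$) graph $\Gamma_R$ to extract, via a Vitali-type selection refined across dyadic scales with the David--Mattila packing property ($5B(Q)$ disjoint within a generation), the absolute bound $\sum r(Q) \lesssim r(R)$. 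Consequently $\sum_{Q \in \LD^{\sf cl}(R)} \mu(Q) \le C\tau\,\mu(R)$ with an absolute $C$.

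Combining the two subfamilies gives $\sum_{Q \in \LD(R)} \mu(Q) \le 2\alpha\,\mu(R) + C\tau\,\mu(R)$. With $\tau$ chosen small enough that $C\tau \le \tfrac{1}{6}\sqrt{\tau}$ (i.e.\ $\sqrt{\tau} \le 1/(6C)$) and $\alpha = \alpha(\tau)$ small enough that $2\alpha \le \tfrac{1}{6}\sqrt{\tau}$ (consistent with the ordering of parameters laid out in Section~\ref{paper3-sec_param}), the claimed bound $\sum \mu(Q) \le \tfrac{1}{3}\sqrt{\tau}\,\mu(R)$ follows. The main obstacle in executing this plan is proving the overlap of $\{J_Q\}_{Q \in \LD^{\sf cl}(R)}$ with an \emph{absolute} constant: a priori, stopping cubes from $\LD^{\sf cl}(R)$ can accumulate at many dyadic scales near a single point of $\Gamma_R$, and one must combine the per-scale packing coming from the David--Mattila disjointness with the extreme narrowness of the tube around $\Gamma_R$ (absolute, since $\varepsilon_0^{1/4}$ can absorb any $A,\tau$-dependent prefactor) to rule out multi-scale blow-up of the overlap count.
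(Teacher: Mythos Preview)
Your overall strategy matches the paper's: separate the contribution of $R_{\sf Far}$ using Lemma~\ref{paper3-lemma_R_far_1}, and for the rest exploit low density together with the fact that a point of each remaining cube lies very close to the Lipschitz graph $\Gamma_R$. The estimates $\mu(Q) \lesssim \tau\,\Theta_\mu(2B_R)\,r(Q)$ and $\dist(x_Q,\Gamma_R)\lesssim_{\tau,A} \varepsilon_0^{1/4} r(Q)$ are exactly what the paper uses.

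The genuine gap is the step you yourself flag as the ``main obstacle'': controlling $\sum_{Q\in\LD^{\sf cl}(R)} r(Q)$ by an absolute multiple of $r(R)$. You propose a Vitali-type selection combined with the per-generation disjointness of the $5B(Q)$, but this does not obviously prevent multi-scale pile-up: disjoint David--Mattila stopping cubes at many different generations can all have their chosen points $x_Q$ within distance $\eta\,r(Q)$ of the same arc of $\Gamma_R$, so the projected intervals $J_Q$ need not have bounded overlap with an absolute constant. No argument in the proposal actually establishes this, and it is not clear the claim is true as stated.

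The paper sidesteps this entirely. Rather than summing over all of $\LD^{\sf cl}(R)$, it bounds the \emph{measure of the set} $\mathcal{S}_{\sf LD}:=\bigcup_{Q\in\LD(R)}Q\setminus R_{\sf Far}$. For each $x\in\mathcal{S}_{\sf LD}$ take the ball $B(x,r(Q_x))$ with $Q_x\in\LD(R)$ containing $x$, and apply the Besicovitch covering theorem to extract a countable subfamily $\{B(x_i,r(Q_i))\}$ that still covers $\mathcal{S}_{\sf LD}$ and satisfies $\sum_i\chi_{B(x_i,r(Q_i))}\le N$ with an absolute $N$. This bounded overlap is now free; together with $r(Q_i)\lesssim \mathcal{H}^1(\Gamma_R\cap B(x_i,r(Q_i)))$ (from $\Gamma_R$ passing through the ball) and $\sum_i \mathcal{H}^1(\Gamma_R\cap B(x_i,r(Q_i)))\lesssim \mathcal{H}^1(\Gamma_R\cap 2B_R)\lesssim r(R)$, one gets $\mu(\mathcal{S}_{\sf LD})\lesssim \tau\,\mu(R)$ directly. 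So the missing idea is: do not try to control $\sum_Q r(Q)$ over all low-density cubes; cover the set and let Besicovitch hand you the bounded overlap.
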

\begin{proof}
Recall that the the parameters and thresholds from Section~\ref{paper3-sec_param} are supposed to be chosen  so that all above-stated results hold true. Taking this into account, note that by Lemma~\ref{paper3-lemma_R_far_1} with $\alpha=\alpha(\tau)$, being small enough,  we have
$$
\mu(R_{\sf Far})\le\tfrac{1}{6}\,\sqrt{\tau}\mu(R),
$$
thus  for obtaining (\ref{paper3-sumld}) it suffices to show that
\begin{equation}
\label{paper3-ldnotfar_new}
\mu(\mathcal{S}_{\sf LD})\le\tfrac{1}{6}\,\sqrt{\tau}\mu(R), \quad \text{where } \mathcal{S}_{\sf LD}:=\bigcup_{Q\in \textsf{LD}(R)}Q \setminus R_{\sf Far}.
\end{equation}

By the Besicovitch covering theorem, there exist a countable collection of points $x_i\in \mathcal{S}_{\sf LD}$ such that
$$
\mathcal{S}_{\sf LD}\subset \bigcup\nolimits_i B(x_i,r(Q_i)) \quad \text{and}\quad \sum\nolimits_i \chi_{B(x_i,r(Q_i))}\le N,
$$
where $Q_i\in \textsf{LD}(R)$ is such that $x_i\in Q_i$, and $N$ is some fixed constant. Note that $B(x_i,r(Q_i))\subset 2B_{Q_i}$. From this it follows that
$$
\mu(\mathcal{S}_{\sf LD}) \le\sum\nolimits_i \mu(B(x_i,r(Q_i)))\\
\le \sum\nolimits_i \mu(2B_{Q_i})\lesssim \sum\nolimits_i \Theta_\mu(2B_{Q_i})r(Q_i).
$$
Since $Q_i\in \textsf{LD}(R)$, we have $\Theta_\mu(2B_{Q_i})<\tau \Theta_\mu(2B_R)$ by definition. Furthermore,
each $x_i\in \mathcal{S}_{\sf LD}$ satisfies Lemma~\ref{paper3-lemma_gamma_R_3} and moreover $d(x_i)\lesssim_{\tau,A} \diam(Q_i)$ (as $x_i$ also belongs to the first doubling ancestor of $Q_i$ with a comparable diameter with comparability constant $\lambda=\lambda(\tau,A)$, see Lemma~\ref{paper3-lemdob32}) so that
$$
\dist(x_i,\Gamma_R)\lesssim_{\tau,A}  \sqrt[4]{\varepsilon_0}\;r(Q_i)\lesssim \sqrt[8]{\varepsilon_0}\;r(Q_i),
$$
if $\varepsilon_0=\varepsilon_0(\tau,A)$ is small enough. This means that $\Gamma_R$ passes very close to the center of $B(x_i,r(Q_i))$ in terms of $r(Q_i)$.
Consequently,
$$
r(Q_i)\lesssim \mathcal{H}^1(\Gamma_R\cap B(x_i,r(Q_i)))
$$
as $\Gamma_R$ is a connected graph of a Lipschitz function. Thus we get
$$
\mu(\mathcal{S}_{\sf LD})\lesssim \tau \Theta_\mu(2B_R) \sum\nolimits_i \mathcal{H}^1(\Gamma_R\cap B(x_i,r(Q_i))).
$$
Since $\sum_i \chi_{B(x_i,r(Q_i))}\le N$ with an absolute constant $N$, we get by Lemma~\ref{paper3-lemma_Lip_A} that
$$
\sum\nolimits_i \mathcal{H}^1(\Gamma_R\cap B(x_i,r(Q_i)))\lesssim \mathcal{H}^1\left(\Gamma_R\cap \bigcup\nolimits_iB(x_i,r(Q_i))\right)\lesssim  \mathcal{H}^1(\Gamma_R\cap 2B_R)\lesssim r(B_R).
$$
From this we deduce that
$$
\mu(\mathcal{S}_{\sf LD})
\lesssim \tau \Theta_\mu(2B_R) r(B_R)\lesssim \tau \mu(2B_R)  \lesssim\tau \mu(R),
$$
where the latter inequality is due to the fact that $R\in \mathcal{D}^{db}$ by construction.
Finally, we obtain (\ref{paper3-ldnotfar_new}) if $\tau$ is chosen small enough.
\end{proof}

\section{Small measure of the cubes from ${\sf BS}(R)$ for $R$ whose best approximation line is far from the vertical}

\label{paper3-sec_small_BS}
\subsection{Auxiliaries and the key estimate for the measure of cubes from ${\sf BS}(R)$}
\label{paper3-sec_small_BS_1}
Given some $\theta_0>0$, we say that
\begin{eqnarray*}
    \nonumber & R \in  {\sf T}_{VF}(\theta_0) \text{ and } \theta(R)=\theta_0, \quad &\text{ if } \quad \theta_V(L_R)\ge (1+C_F)\,\theta_0; \\
    \nonumber & R \notin  {\sf T}_{VF}(\theta_0) \text{ and } \theta(R)=2(1+C_F)\,\theta_0, \quad &\text{ if } \quad \theta_V(L_R)< (1+C_F)\,\theta_0.
  \end{eqnarray*}
Note that $C_F>0$ is an absolute constant from Lemma~\ref{paper3-lemma_Lip_A} where it is stated that the function $F$ is $C_F\theta(R)$-Lipschitz. Recall that $\theta_0$ and $\theta(R)$ were first introduced and used in Sections~\ref{paper3-sec_param} and~\ref{paper3-sec_stopping_cubes}.

Let $R\in {\sf T}_{VF}(\theta_0)$. From the definition of the family ${\sf BS}(R)$ it follows that in this case we have
\begin{equation}
\label{paper3-BS1}
\measuredangle(L_Q,L_R)> \theta_0\qquad \forall Q\in {\sf BS}(R).
\end{equation}
On the other hand, if $Q\in {\sf DbTree}(R)$, then $\measuredangle(L_Q,L_R)\le\theta_0$ and thus
\begin{equation*}
\label{paper3-BS2}
\theta_V(L_Q)\ge (1+C_F)\,\theta_0-\measuredangle(L_Q,L_R)\ge C_F\,\theta_0\qquad \forall Q\in {\sf DbTree}(R).
\end{equation*}

In this section we are going to deal with $R\in {\sf T}_{VF}(\theta_0)$ only. Our  aim  is to prove the following assertion.
\begin{lemma}
\label{paper3-lemma_bs_small}
For any $R\in {\sf T}_{VF}(\theta_0)$, if $\varepsilon_0=\varepsilon_0(\tau)$ is chosen small enough, then
\begin{equation*}
\label{paper3-BS3}
\sum_{Q\in {\sf BS}(R)}\mu(Q) \le \tfrac{1}{3}\sqrt{\tau} \mu(R).
\end{equation*}
\end{lemma}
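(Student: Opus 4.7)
The plan is to show that for every $Q \in {\sf BS}(R)$ a definite $\gamma$-fraction of $\mu(Q)$ is forced to lie inside $R_{\sf Far}$, after which the disjointness of ${\sf BS}(R)$ combined with Lemma~\ref{paper3-lemma_R_far_1} closes the estimate.

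First I verify that every $Q \in {\sf BS}(R)$ inherits the same structural properties as cubes in ${\sf DbTree}(R)$: by its definition $Q \in \mathcal{D}^{db}$, $Q \notin \UB(R)$ (so $Q$ is $\gamma$-balanced by Lemma~\ref{paper3-lemma_balanced_balls}), $\Theta_\mu(2B_Q)$ is bounded below and above by $\tau \Theta_\mu(2B_R)$ and $A\,\Theta_\mu(2B_R)$ (since $Q \notin \LD(R) \cup \HD(R)$), and $Q \notin {\sf BP}(R)$ so $\sum_{Q \subset \tilde Q \subset R}\textsf{perm}(\tilde Q)^2 \le \alpha^2$. Choosing $\alpha$ small relative to $\varepsilon_0$ and applying Lemma~\ref{paper3-lemma_beta_perm_initial} yields $\beta_{\mu,2}(2B_Q)^2 \le \varepsilon_0^2\,\Theta_\mu(2B_Q)$. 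Hence Lemma~\ref{paper3-lemma_gamma_cubes_Hausdorf_dist} furnishes two clusters $\mathcal{Z}_1(Q), \mathcal{Z}_2(Q) \subset Q$ with $\mu(\mathcal{Z}_k(Q)) \gtrsim_\gamma \mu(Q)$, mutual distance $\ge \gamma\,r(B_Q)$, and such that any line through a pair $(z_1, z_2) \in \mathcal{Z}_1 \times \mathcal{Z}_2$ lies within Hausdorff distance $\sqrt{\varepsilon_0}\,r(B_Q)$ of $L_Q$ inside $2B_Q$.

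The geometric heart of the argument is to show that at least one cluster $\mathcal{Z}_{k(Q)}(Q)$ has $\mu(\mathcal{Z}_{k(Q)}(Q) \cap R_{\sf Far}) \gtrsim \mu(\mathcal{Z}_{k(Q)}(Q))$. Supposing the contrary, pick $z_k \in \mathcal{Z}_k(Q) \setminus R_{\sf Far}$ with $|z_1 - z_2| \ge \gamma\, r(B_Q)$. Each $z_k$ lies within $\sqrt{\varepsilon_0}\,r(B_Q)$ of $L_Q$, and by Lemma~\ref{paper3-lemma_gamma_R_3} applied with $d(z_k) \lesssim r(Q)$ (via Lemma~\ref{paper3-lemdob32} and the existence of a doubling ancestor of $Q$ in ${\sf DbTree}(R)$ of comparable size, whose existence uses $R \notin {\sf Stop}(R)$), also within $\sqrt[4]{\varepsilon_0}\,r(Q)$ of $\Gamma_R$. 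Let $\tilde z_k \in \Gamma_R \cap 2B_Q$ be a nearest point. On the one hand Lemma~\ref{paper3-lemma_Lip_A} gives $|\Pi^\perp(\tilde z_1) - \Pi^\perp(\tilde z_2)| \le C_F\,\theta_0\,|\tilde z_1 - \tilde z_2|$; on the other hand the proximity of the $\tilde z_k$ to $L_Q$ together with $\measuredangle(L_Q, L_R) > \theta_0$ forces
\[
|\Pi^\perp(\tilde z_1) - \Pi^\perp(\tilde z_2)| \ge |\tilde z_1 - \tilde z_2| \sin\theta_0 - c\sqrt[4]{\varepsilon_0}\,r(Q).
\]
The hypothesis $R \in {\sf T}_{VF}(\theta_0)$, namely $\theta_V(L_R) \ge (1+C_F)\theta_0$, is what guarantees that the Lipschitz tangent direction of $\Gamma_R$ is quantitatively transversal to $L_Q$ by an angle of order $\theta_0$; matching this against the two displayed bounds produces the required incompatibility once $\varepsilon_0 = \varepsilon_0(\tau, \gamma, \theta_0)$ is small enough. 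This contradicts the assumption and yields the desired $k(Q)$.

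Using the disjointness of ${\sf BS}(R)$ and Lemma~\ref{paper3-lemma_R_far_1},
\[
\sum_{Q \in {\sf BS}(R)} \mu(Q) \lesssim_\gamma \sum_{Q \in {\sf BS}(R)} \mu(\mathcal{Z}_{k(Q)}(Q) \cap R_{\sf Far}) \le \mu(R_{\sf Far}) \le \alpha\,\mu(R),
\]
so picking $\alpha = \alpha(\tau, \gamma)$ small enough gives the stated bound $\tfrac{1}{3}\sqrt{\tau}\,\mu(R)$. The main obstacle is the third-paragraph estimate: although each $F_i$ is only $\theta(R)$-Lipschitz, the partition-of-unity blending enlarges the Lipschitz constant of $F$ to $C_F\,\theta(R)$ with $C_F$ possibly exceeding $1$, so the bare gap $\measuredangle(L_Q, L_R) > \theta_0$ need not dominate the Lipschitz slope. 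It is precisely the $(1+C_F)\theta_0$ threshold built into the definition of ${\sf T}_{VF}(\theta_0)$ that produces the genuine transversality, and balancing it against the $\sqrt{\varepsilon_0}$ and $\sqrt[4]{\varepsilon_0}$ error terms coming from Lemma~\ref{paper3-lemma_gamma_cubes_Hausdorf_dist} and Lemma~\ref{paper3-lemma_gamma_R_3} is the delicate bookkeeping step.
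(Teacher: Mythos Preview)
Your argument has a genuine gap in the third paragraph, and it stems from a misreading of what the hypothesis $R\in{\sf T}_{VF}(\theta_0)$ actually says.

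You want to derive a contradiction from the two inequalities
\[
|\Pi^\perp(\tilde z_1)-\Pi^\perp(\tilde z_2)|\le C_F\theta_0\,|\Pi(\tilde z_1)-\Pi(\tilde z_2)|
\quad\text{and}\quad
|\Pi^\perp(\tilde z_1)-\Pi^\perp(\tilde z_2)|\ge \sin\theta_0\,|\tilde z_1-\tilde z_2|-c\sqrt[4]{\varepsilon_0}\,r(Q).
\]
For small $\theta_0$ this collapses to $\theta_0\lesssim C_F\theta_0$, i.e.\ $1\lesssim C_F$, which is no contradiction at all once $C_F>1$ (and for a Whitney-type blended function one expects exactly that). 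You attempt to rescue this by invoking ${\sf T}_{VF}(\theta_0)$, but that hypothesis asserts $\theta_V(L_R)\ge(1+C_F)\theta_0$, a statement about the angle of $L_R$ with the \emph{vertical}. Your entire comparison takes place in the $(L_R,L_R^\perp)$ frame; the vertical direction never enters, so the $(1+C_F)\theta_0$ margin buys you nothing here. The claim that ${\sf T}_{VF}(\theta_0)$ ``guarantees that the Lipschitz tangent direction of $\Gamma_R$ is quantitatively transversal to $L_Q$'' is simply false: tangents to $\Gamma_R$ lie within $C_F\theta_0$ of $L_R$, $L_Q$ lies at angle $>\theta_0$ from $L_R$, and these two cones can overlap regardless of where the vertical sits.

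The paper does not attempt a pointwise contradiction. Instead (Lemma~\ref{paper3-bs_lemma_1}) it uses the angle $\measuredangle(L_Q,L_R)>\theta_0$ only to extract the integral lower bound $\int_{\Pi(B_i)}|F'|\gtrsim\theta_0\,r(Q_i)$, hence $r(Q_i)\lesssim\theta_0^{-2}\|F'\|_{2,\Pi(B_i)}^2$; summing over disjoint projections gives $\mu(\mathcal{S}_{\sf BS})\lesssim_A\theta_0^{-2}\Theta_\mu(2B_R)\|F'\|_2^2\approx\theta_0^{-2}\Theta_\mu(2B_R)^{-2}p_\infty(\Theta_\mu(2B_R)\mathcal{H}^1_{\Gamma_R})$. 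The hypothesis $R\in{\sf T}_{VF}(\theta_0)$ then enters in a completely different place: it forces every chord of $\Gamma_R$ to satisfy $\theta_V(L_{xy})\ge\theta_0$, so that $p_\infty(x,y,z)\lesssim_{\theta_0}p_0(x,y,z)$ by Lemma~\ref{paper3-lemma_Chous_Prat}. This comparison, together with a lengthy transfer from $\mathcal{H}^1_{\Gamma_R}$ back to $\mu$ (Lemmas~\ref{paper3-lemma_Ext_B_0}--\ref{paper3-lemma_35}), shows the curvature of the graph is $\lesssim\varepsilon_0^{1/40}\Theta_\mu(2B_R)^2\mu(R)$, which closes the estimate. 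The point is that $\|F'\|_2^2$ is globally small even though $|F'|$ may pointwise reach $C_F\theta_0$; your argument needs the latter to be small, which it is not.
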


The rest of this section is devoted to the proof of this lemma.

\begin{remark}
\label{remark_BS_R}
It is natural to suppose in this section that ${\sf BS}(R)$ is not empty. This and Remark~\ref{paper3-remark3} imply that $R\notin {\sf Stop}(R)$ and thus ${\sf Tree}(R)\setminus {\sf Stop}(R)$ is not empty.
\end{remark}

\subsection{The measure of cubes from ${\sf BS}(R)$ is controlled by the permutations of the Hausdorff measure restricted to $\Gamma_R$}

Recall that the the parameters and thresholds from Section~\ref{paper3-sec_param} are supposed to be chosen  so that all above-stated results hold. Taking this into account, note that by Lemma~\ref{paper3-lemma_R_far_1} with $\alpha=\alpha(\tau)$, being small enough,  we have
$$
\mu(R_{\sf Far})\le\tfrac{1}{6}\,\sqrt{\tau}\mu(R),
$$
thus to prove Lemma~\ref{paper3-lemma_bs_small}, it suffices to show that
\begin{equation}
\label{paper3-bsnotfar_new}
\mu(\mathcal{S}_{{\sf BS}})\le\tfrac{1}{6}\,\sqrt{\tau}\mu(R),\qquad \text{where }  \mathcal{S}_{{\sf BS}}:=\bigcup_{Q\in \textsf{BS}(R)}Q \setminus R_{\sf Far}.
\end{equation}

The following results is the first step in proving (\ref{paper3-bsnotfar_new}). (Recall the identity (\ref{curvature_pointwise}).)

\begin{lemma}
\label{paper3-bs_lemma_1}
If $\theta_0$ and $\varepsilon_0=\varepsilon_0(\theta_0,\tau,A)$ are chosen small enough, then
\begin{equation*}
\label{paper3-eqBS} \mu(\mathcal{S}_{{\sf BS}})\lesssim_A
\frac{\,p_\infty(\Theta_{\mu}(2B_R)\,\mathcal{H}^1_{\Gamma_R})}{\theta_0^2\,\Theta_\mu(2B_R)^2}.
\end{equation*}
\end{lemma}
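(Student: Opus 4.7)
The strategy is to associate to each $Q\in {\sf BS}(R)\setminus R_{\sf Far}$ a family of triples on $\Gamma_R$ contributing curvature of order $\theta_0/r(Q)$, so that the Menger-curvature identity $p_\infty=\tfrac14 c^2$ yields the lower bound $p_\infty(\mathcal{H}^1_{\Gamma_R})\gtrsim \theta_0^2\sum_Q r(Q)$; combined with the density control $\mu(Q)\lesssim A\,\Theta_\mu(2B_R)\,r(Q)$ (coming from $Q\notin {\sf HD}(R)$ via Lemma~\ref{paper3-density_of_cubes_hd}), this will deliver the required inequality. Note that the hypothesis $R\in{\sf T}_{VF}(\theta_0)$ enters crucially: it both fixes $\theta(R)=\theta_0$ and guarantees that $L_R$ is far from vertical, so that $\Gamma_R$ really is a $C_F\theta_0$-Lipschitz graph over $L_R$.

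To produce the first two vertices, note that $Q\in\mathcal{D}^{db}\cap{\sf Tree}(R)$ escapes ${\sf HD}(R)\cup{\sf LD}(R)\cup{\sf UB}(R)\cup{\sf BP}(R)$, so Lemma~\ref{paper3-lemma_gamma_cubes_Hausdorf_dist} provides subsets $\mathcal{Z}_1^Q,\mathcal{Z}_2^Q\subset Q$ with $\mu$-mass $\gtrsim_\gamma \mu(Q)$, separation $\gtrsim\gamma\,r(B_Q)$, and the property that any chord through representatives stays within $\sqrt{\varepsilon_0}\,r(B_Q)$ of $L_Q$. Since $\mu(R_{\sf Far})\le \tfrac16\sqrt{\tau}\mu(R)$ by Lemma~\ref{paper3-lemma_R_far_1}, one may intersect with $Q\setminus R_{\sf Far}$ without losing comparable mass, and Lemma~\ref{paper3-lemma_gamma_R_3} places each surviving point within $\sqrt[4]{\varepsilon_0}\,r(Q)$ of $\Gamma_R$. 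Pushing forward by closest-point projection yields $\widetilde{\mathcal{Z}}_1^Q,\widetilde{\mathcal{Z}}_2^Q\subset \Gamma_R\cap 2B_Q$ of $\mathcal{H}^1$-length $\gtrsim r(Q)$, and any chord $L_{p_1,p_2}$ with $p_i\in\widetilde{\mathcal{Z}}_i^Q$ lies within $O(\sqrt[4]{\varepsilon_0})\,r(Q)$ of $L_Q$.

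For the third vertex, let $p_3$ range over $\Gamma_R$ restricted to an annular region at horizontal distance $\gtrsim r(Q)$ from $Q$, of $\mathcal{H}^1$-length $\gtrsim r(Q)$ (possible because $\Gamma_R$ is a $C_F\theta_0$-Lipschitz graph covering $L_R\cap B(\Pi(x_0),12\diam(R))$ and stays within $\sqrt[4]{\varepsilon_0}\,r(R)$ of $L_R$ by Lemma~\ref{paper3-lemma_Gamma_R_to_R}). The stopping condition $\measuredangle(L_Q,L_R)>\theta_0$ defining ${\sf BS}(R)$, together with the bounded slope of $\Gamma_R$ and the fact $R\in{\sf T}_{VF}(\theta_0)$, forces $\dist(p_3,L_Q)\gtrsim \theta_0\,r(Q)$ on this annulus as soon as $\varepsilon_0=\varepsilon_0(\theta_0,A,\tau)$ is small enough. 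Combining with the $O(\sqrt[4]{\varepsilon_0})\,r(Q)$-closeness of $L_{p_1,p_2}$ to $L_Q$ and $|p_i-p_3|\lesssim r(Q)$, this gives
\[
c(p_1,p_2,p_3)=\frac{2\dist(p_3,L_{p_1,p_2})}{|p_1-p_3|\,|p_2-p_3|}\gtrsim \frac{\theta_0}{r(Q)},
\]
so that integration over $\widetilde{\mathcal{Z}}_1^Q\times\widetilde{\mathcal{Z}}_2^Q\times(\text{annulus})$ contributes $\gtrsim \theta_0^2\,r(Q)$ to $\iiint c^2\,d\mathcal{H}^1_{\Gamma_R}^3$.

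Summing over the pairwise disjoint cubes $Q\in{\sf BS}(R)$, with the bounded multiplicity of the enlarged balls handled via the David--Mattila structure (Lemma~\ref{paper3-lemcubs}, processing by generation if necessary), yields $p_\infty(\mathcal{H}^1_{\Gamma_R})\gtrsim \theta_0^2\sum_{Q\in{\sf BS}(R)} r(Q)\gtrsim \theta_0^2\,\mu(\mathcal{S}_{\sf BS})/(A\,\Theta_\mu(2B_R))$. The trilinearity $p_\infty(\Theta_\mu(2B_R)\mathcal{H}^1_{\Gamma_R})=\Theta_\mu(2B_R)^3\, p_\infty(\mathcal{H}^1_{\Gamma_R})$ then rearranges this to the claimed estimate. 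The main obstacle will be the quantitative verification that the chosen triples really achieve curvature of order $\theta_0/r(Q)$ in the face of the cumulative $\sqrt[4]{\varepsilon_0}$-errors stemming from the mutual approximation of $L_Q$, $L_R$ and $\Gamma_R$; this will force the ordering $\varepsilon_0\ll \theta_0^4$, which is consistent with the dependence $\varepsilon_0=\varepsilon_0(\theta_0,A,\tau)$ allowed in the statement.
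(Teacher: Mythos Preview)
Your localization strategy has a genuine gap at the step where you claim $\dist(p_3,L_Q)\gtrsim \theta_0\,r(Q)$ for $p_3\in\Gamma_R$ in the annular region. This is not implied by $\measuredangle(L_Q,L_R)>\theta_0$ together with the $C_F\theta_0$-Lipschitz bound on $\Gamma_R$. Indeed, since $C_F>1$, the graph $\Gamma_R$ may locally be an affine segment of slope exactly $\measuredangle(L_Q,L_R)$ over an interval containing both $\Pi(2B_Q)$ and your annulus; Lemma~\ref{paper3-lemma_gamma_R_2} (applied to the doubling ancestor $\tilde Q$) forces $\Gamma_R\cap 2B_Q$ to lie within $\sqrt[4]{\varepsilon_0}\,r(Q)$ of $L_Q$, so in this scenario $\Gamma_R$ coincides with $L_Q$ to within $\sqrt[4]{\varepsilon_0}\,r(Q)$ on the whole region, and every triple $(p_1,p_2,p_3)$ you select has $c(p_1,p_2,p_3)=O(\sqrt[4]{\varepsilon_0}/r(Q))$, not $\gtrsim\theta_0/r(Q)$. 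The ${\sf BS}$ condition tells you only that $\Gamma_R$ has \emph{large slope} near $Q$, not that it is \emph{curved} near $Q$; any bending of $\Gamma_R$ back toward $L_R$ may occur far from $Q$ and cannot be captured by triples localized at scale $r(Q)$.

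The paper's proof avoids this by not localizing the curvature at all. It shows instead that on each $\Pi(B_i)$ the function $F$ varies by $\gtrsim\theta_0\,r(Q_i)$ (because $\Gamma_R\cap B_i$ is close to $L_{Q_i}$, which tilts at angle $>\theta_0$), hence $\|F'\|_{L^2(\Pi(B_i))}^2\gtrsim\theta_0^2\,r(Q_i)$; summing over the disjoint projected intervals gives $\mu(\mathcal{S}_{\sf BS})\lesssim_A \theta_0^{-2}\,\Theta_\mu(2B_R)\,\|F'\|_2^2$. The passage from $\|F'\|_2^2$ to $p_\infty(\mathcal H^1_{\Gamma_R})$ is then the \emph{global} equivalence $\|F'\|_2^2\approx c^2(\mathcal H^1_{\Gamma_R})$ from \cite[Lemma~3.9]{Tolsa_book}, valid for Lipschitz graphs with small constant. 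This global step is precisely what absorbs the possible non-locality of the curvature relative to the ${\sf BS}$ cubes, and it cannot be replaced by the triple-by-triple lower bound you propose.
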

\begin{proof}
For every $x\in \mathcal{S}_{{\sf BS}}$ take the ball $B(x,r(Q_x))$, where $Q_x\in {\sf BS}(R)$ and is such that $x\in Q_x$. By the $5r$-covering theorem there exists a subfamily of pairwise disjoint balls $\{B(x_i,r(Q_i))\}_{i\in \hat{I}}$, where $Q_i=Q_{x_i}$, such that
$$
\mathcal{S}_{{\sf BS}}\subset R\cap \bigcup\nolimits_{i\in \hat{I}}B(x_i,5r(Q_i)).
$$

Let $B_i=B(x_i,\tfrac{1}{2}r(Q_i))$, $i\in \hat{I}$. Clearly, $B_i\subset B_{Q_i}$. Moreover, take into
account that  $Q_i \in \mathcal{D}^{db}$   by the
stopping condition \textbf{(S3)} and that $\mathcal{S}_{{\sf BS}}\cap R_{\sf Far}=\varnothing$ by definition.
Therefore, by Lemma~\ref{paper3-lemma_gamma_R_3},
$$
\dist(x_i, \Gamma_R)\lesssim \sqrt[4]{\varepsilon_0}\,d(x_i)\lesssim \sqrt[4]{\varepsilon_0}\,r(Q_i) <\tfrac{1}{4}r(Q_i),
$$
if $\varepsilon_0$ is small enough.  Thus $\Gamma_R\cap \tfrac{1}{2} B_i\neq \varnothing$ and therefore there exist $y_1,y_2\in \Gamma_R\cap B_i$ such that
$$
cr(Q_i)\le |y_1-y_2|\lesssim |\Pi(y_1)-\Pi(y_2)|
$$
with some small fixed constant $c>0$, where in the latter inequality we took into account that $\Gamma_R$ is a graph of a Lipschitz function $F$ (see Lemma~\ref{paper3-lemma_Lip_A}).

Now, by Lemma~\ref{paper3-lemdob32}, there exists $\tilde{Q}_i\in {\sf DbTree}(R)$ such that $Q_i\subset \tilde{Q}_i$ and moreover $\diam(Q_i)\approx_{\tau,A} \diam(\tilde{Q}_i)$. By Lemma~\ref{paper3-lemma_gamma_R_2},
$$
\dist(y_k,L_{\tilde{Q}_i})\lesssim  {\varepsilon_0}^{1/3}\,r(\tilde{Q}_i),\qquad y_k\in \Gamma_R\cap B_i,\quad k=1,2.
$$
At the same time, $\angle(L_{Q_i},L_{\tilde{Q}_i})\lesssim_{\tau,A} \sqrt[4]{\varepsilon_0}$ by arguments similar to those in the proof of Lemma~\ref{paper3-distlqlemma} (this lemma cannot be applied directly as $Q_i\notin {\sf DbTree}(R)$ but the arguments can still be adapted if one of the cubes is in ${\sf BS}(R)$). Therefore, if $\varepsilon_0=\varepsilon_0(\tau,A)$ is small enough, then one can show that
$$
\dist(y_k,L_{Q_i})\lesssim \sqrt[8]{\varepsilon_0}\,r(Q_i),\qquad y_k\in \Gamma_R\cap B_i\subset \Gamma_R\cap 2B_{Q_i},\quad k=1,2.
$$
Consequently, denoting by $y_k'$ the orthogonal projections of $y_k$ onto $L_{Q_i}$, we get
$$
|y_k-y_k'|\lesssim \sqrt[8]{\varepsilon_0}\,r(Q_i),\qquad k=1,2.
$$
Since $\measuredangle(L_{Q_i},L_R)>\theta_0$ by (\ref{paper3-BS1}) and $\varepsilon_0=\varepsilon_0(\theta_0)$ is small enough, it holds that
\begin{align*}
&|F(\Pi(y_1))-F(\Pi(y_2))|\\
&\quad =|\Pi^\perp(y_1)-\Pi^\perp(y_2)|  \ge |\Pi^\perp(y_1')-\Pi^\perp(y_2')|-\sum\nolimits_k|y_k-y_k'|\\
   & \quad \gtrsim \theta_0|\Pi(y_1')-\Pi(y_2')|-\sum\nolimits_k|y_k-y_k'| \gtrsim \theta_0|\Pi(y_1)-\Pi(y_2)|-2\sum\nolimits_k|y_k-y_k'| \\
   & \quad \gtrsim \theta_0 r(Q_i)-\sqrt[8]{\varepsilon_0} r(Q_i)  \gtrsim \theta_0 r(Q_i),
\end{align*}
where $k=1,2$. Thus,
$$
\int_{\Pi(B_i)}|F'(z)|dz\ge \left|\int_{\Pi(y_1)}^{\Pi(y_2)}F'(z)dz\right|=|F(\Pi(y_1))-F(\Pi(y_2))|\gtrsim \theta_0 r(Q_i).
$$
This and H\"{o}lder's inequality yield
$$
\theta_0 r(Q_i)\lesssim \sqrt{r(B_i)}\|F'\|_{2,\Pi(B_i)}\approx \sqrt{r(Q_i)}\|F'\|_{2,\Pi(B_i)},
$$
and finally
$$
r(Q_i) \lesssim \theta_0^{-2}\|F'\|_{2,\Pi(B_i)}^2.
$$

Since the balls $2B_i$, $i\in \hat{I}$, are pairwise disjoint by construction, so are the intervals $\Pi(B_i)\subset L_R$, $i\in \hat{I}$, if $\theta_0$ is chosen small enough. 
This is a consequence of the fact that $x_i$, the centres of $B_i$, lie very close to $\Gamma_R$, namely, $\dist(x_i, \Gamma_R) \lesssim \sqrt[4]{\varepsilon_0}\,r(B_i)$, and moreover $\measuredangle(L_{x_i,x_j},L_R)\lesssim \theta_0$ for all $i,j\in \hat{I}$ as $\Gamma_R$ is Lipschits with constant $\lesssim\theta_0$, see Lemma~\ref{paper3-lemma_Lip_A}. By this reason we have
\begin{align*}
   \mu(\mathcal{S}_{\sf BS}) &\le \sum\nolimits_{i\in\hat{I}}\mu(B(x_i,5r(Q_i)))\lesssim \sum\nolimits_{i\in\hat{I}}\Theta_\mu(2B_{{Q}_i}) r(Q_i)\\
   & \lesssim_A \theta_0^{-2}\Theta_\mu(2B_R)\sum\nolimits_{i\in\hat{I}}\|F'\|_{2,\Pi(B_i)}^2 \lesssim_A \theta_0^{-2}\Theta_\mu(2B_R)\|F'\|_{2}^2.
 \end{align*}

Now take into account that under the assumption that $\|F'\|_\infty\le 1/10$ (which is satisfied if $\theta_0$ is sufficiently small) by \cite[Lemma 3.9]{Tolsa_book} we have
$$
\|F'\|_{2}^2 \approx p_\infty(\mathcal{H}^1_{\Gamma_R})\approx \Theta_\mu(2B_R)^{-3}\,p_\infty(\Theta_\mu(2B_R)\,\mathcal{H}^1_{\Gamma_R})
$$
with some absolute constants.
\end{proof}

We claim that $p_\infty(x,y,z)$ is well controlled by $p_0(x,y,z)$ for any $x,y\in \Gamma_R$ if $R\in {\sf T}_{VF}(\theta_0)$.
\begin{lemma}
\label{paper3-lemma_compar_p_0_p_infty}
If $R\in {\sf T}_{VF}(\theta_0)$, then
$$
p_\infty(x,y,z)\lesssim_{\theta_0}p_0(x,y,z)\qquad \text{for any}\quad x,y\in \Gamma_R.
$$
\end{lemma}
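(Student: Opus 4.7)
The plan is to reduce the claim to Lemma~\ref{paper3-lemma_Chous_Prat} by showing that every triple $(x,y,z)$ with $x,y\in\Gamma_R$ automatically lies in the class $\mathrm{V}_{\sf Far}(\theta_0)$, just because the line through $x$ and $y$ is forced to stay uniformly far from vertical.

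First, I would unpack what the hypothesis $R\in\mathsf{T}_{VF}(\theta_0)$ gives. By the definition introduced in Section~\ref{paper3-sec_small_BS_1}, we have $\theta_V(L_R)\ge(1+C_F)\theta_0$ and $\theta(R)=\theta_0$. Combined with Lemma~\ref{paper3-lemma_Lip_A}, this means $F\colon L_R\to L_R^\perp$ is $C_F\theta_0$-Lipschitz, i.e. the graph $\Gamma_R$ has slope (relative to $L_R$) bounded by $C_F\theta_0$.

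Next, I would estimate $\theta_V(L_{x,y})$ for $x,y\in\Gamma_R$. Writing $x=(s_1,F(s_1))$ and $y=(s_2,F(s_2))$ in the orthonormal frame attached to $L_R$, the Lipschitz bound on $F$ yields
\[
\measuredangle(L_{x,y},L_R)\le \arctan(C_F\theta_0)\le C_F\theta_0.
\]
By the triangle inequality for angles between lines,
\[
\theta_V(L_{x,y})\ge \theta_V(L_R)-\measuredangle(L_{x,y},L_R)\ge (1+C_F)\theta_0-C_F\theta_0=\theta_0.
\]
Since the two remaining summands $\theta_V(L_{x,z})$ and $\theta_V(L_{y,z})$ are non-negative, condition \eqref{paper3-far_from_vert} is satisfied with threshold $\theta_0$, so $(x,y,z)\in\mathrm{V}_{\sf Far}(\theta_0)$ for any $z$.

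Finally, I would invoke Lemma~\ref{paper3-lemma_Chous_Prat} directly:
\[
p_\infty(x,y,z)\le \mathsf{c}_1(\theta_0)^{-1}p_0(x,y,z),
\]
which is exactly the stated bound, with implicit constant $\mathsf{c}_1(\theta_0)^{-1}$ depending only on $\theta_0$. I do not anticipate a real obstacle here — the only mildly delicate point is the elementary angle inequality converting the Lipschitz slope into a bound on $\measuredangle(L_{x,y},L_R)$, but the factor $(1+C_F)$ in the definition of $\mathsf{T}_{VF}(\theta_0)$ is precisely tailored to absorb it.
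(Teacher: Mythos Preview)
Your proof is correct and follows essentially the same approach as the paper: both show that the Lipschitz bound on $F$ forces $\measuredangle(L_{x,y},L_R)\le C_F\theta_0$, then use the triangle inequality for angles together with $\theta_V(L_R)\ge(1+C_F)\theta_0$ to conclude $\theta_V(L_{x,y})\ge\theta_0$, and finish by invoking Lemma~\ref{paper3-lemma_Chous_Prat}.
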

\begin{proof}
The fact that the function $F$ (whose graph is $\Gamma_R$) is $C_F\theta(R)$-Lipschitz
by Lemma~\ref{paper3-lemma_Lip_A} and the definitions at the beginning of Subsection~\ref{paper3-sec_small_BS_1} yield
$$
\measuredangle(L_{xy},L_R)\le C_F\,\theta_0\qquad \text{ and } \qquad \theta_V(L_R)\ge (1+C_F)\,\theta_0.
$$
Consequently,
$$
\theta_V(L_{xy})\ge \theta_V(L_R)-\measuredangle(L_{xy},L_R)\ge (1+C_F)\,\theta_0-C_F\,\theta_0=\theta_0.
$$
Therefore $(x,y,z)\in {\rm V}_{\sf Far}(\theta_0)$ and it is left to use Lemma~\ref{paper3-lemma_Chous_Prat}.
\end{proof}
For $x\in \mathbb{C}$ such that $\Pi(x)\notin \Pi(G_R)$, set
$$
J_x=J_i,\quad i\in I,\quad\text{such that } \Pi(x)\in J_i,
$$
and
$$
\ell_x=\ell(J_x).
$$
If $\Pi(x)\in \Pi(G_R)$, we write
$$
J_x=\Pi(x) \qquad\text{and}\qquad \ell_x=0,
$$
i.e. one should think that in this case the point $\Pi(x)$ is a degenerate interval $J_x$ with zero side length.
To simplify notation, throughout this section we also write
$$
x_1=\Pi(x)\qquad \text{and}\qquad x_2=\Pi^\perp(x).
$$

Recall that the intervals $\{J_i\}$, $i\in I_0$, are the ones from $\{J_i\}$, $i\in I$, that intersect the ball $B_0=B(\Pi(x_0),10\diam(R))$, where $x_0\in R$ is such that $\dist(x_0,L_R)\lesssim r(R)$ (see (\ref{paper3-RinB})).
Observe that if $z\in U_0=L_R\cap B_0$, then $D(z)\lesssim r(R)$. Thus $\ell(J_i)\lesssim r(R)$ for all $i\in I_0$. Thus, setting
$$
\Gamma_{B_0}=G_R\cup \bigcup_{i\in I_0}\Gamma_R\cap \Pi^{-1}(J_i),
$$
we deduce that $\Gamma_{B_0}\subset c'B_0$ with some fixed $c'>0$. It is also true that $B_0\subset c''B_R$ with some $c''>0$ and thus
$$
\Gamma_{B_0}\subset cB_R\quad \text{with some } \quad c>0.
$$
One can actually tune constants to guarantee that
$$
\Gamma_{B_0}\subset \bigcup_{Q\in {\sf Tree}(R)}2B_Q\subset 2B_R,
$$
so we will suppose this in what follows.

 Clearly, $\Pi(\Gamma_{B_0})$ is an interval on $L_R$ and therefore $\Gamma_{B_0}$ is a connected subset of $\Gamma_R$. We also set
$$
\Gamma_{\mathbf{Ext}(B_0)}=\Gamma_R\setminus \Gamma_{B_0}.
$$

First we will show that the part of the permutations of $\mathcal{H}^1_{\Gamma_R}$ that involves $\Gamma_{\mathbf{Ext}(B_0)}$ is very small.
\begin{lemma}
\label{paper3-lemma_Ext_B_0}
  We have
  $$
 p_\infty(\Theta_\mu(2B_R)\,\mathcal{H}^1_{\Gamma_{\mathbf{Ext}(B_0)}},\Theta_\mu(2B_R)\,\mathcal{H}^1_{\Gamma_R},\Theta_\mu(2B_R)\,\mathcal{H}^1_{\Gamma_R})\lesssim \sqrt[8]{\varepsilon_0}\,\Theta_\mu(2B_R)^2\,\mu(R).
  $$
\end{lemma}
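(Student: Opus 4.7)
After dividing the claimed inequality by $\Theta_\mu(2B_R)^3$ and using $\mu(R)\approx\Theta_\mu(2B_R)\,r(R)$ (which holds since $R\in\mathcal{D}^{db}$), the goal reduces to
$$
\iiint \tfrac14 c(x,y,z)^2\,d\mathcal{H}^1_{\Gamma_{\mathbf{Ext}(B_0)}}(x)\,d\mathcal{H}^1_{\Gamma_R}(y)\,d\mathcal{H}^1_{\Gamma_R}(z)\;\lesssim\;\sqrt[8]{\varepsilon_0}\,r(R),
$$
using $p_\infty=\tfrac14 c^2$ from (\ref{curvature_pointwise}). The decisive structural input is Lemma~\ref{paper3-lemma_Lip_A}: the Lipschitz function $F$ is supported in $L_R\cap B_F$, where $B_F:=B(\Pi(x_0),12\diam R)$, so outside $B_F$ the graph $\Gamma_R$ coincides with the line $L_R$. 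I would therefore split $\Gamma_{\mathbf{Ext}(B_0)}=\Gamma_1\sqcup\Gamma_2$, where $\Gamma_1$ is the subset lying on the flat part $L_R\setminus B_F$ and $\Gamma_2\subset\Pi^{-1}(B_F\setminus B_0)$ is the bounded annular graph piece with $\mathcal{H}^1(\Gamma_2)\lesssim r(R)$, and likewise split $\Gamma_R$ into its flat and bumpy pieces.

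For the $\Gamma_1$ contribution, $x\in L_R$, and the integrand is non-zero only when at least one of $y,z$ projects into $B_F$ (i.e.~lies in the bumpy piece of $\Gamma_R$, whose graph measure is $\lesssim r(R)$). Using the explicit identity $c(x,y,z)^2=4\dist(z,L_{xy})^2/(|x-z|^2|y-z|^2)$ and writing points as $(s,F(s))$, the numerator can be rearranged as
$$
F(z_1)(y_1-x_1)-F(y_1)(z_1-x_1)=(F(z_1)-F(y_1))(y_1-x_1)+F(y_1)(y_1-z_1),
$$
which, combined with $\|F\|_\infty\lesssim\sqrt[4]{\varepsilon_0}\,r(R)$ from Lemma~\ref{paper3-lemma_Gamma_R_to_R} and the $\lesssim\theta_0$-Lipschitz bound on $F$, yields a pointwise estimate $c(x,y,z)^2\lesssim\theta_0^2/d_x^2+\sqrt{\varepsilon_0}\,r(R)^2/d_x^4$ with $d_x:=\dist(\Pi(x),B_F)$. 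The radial integral in $d_x\in[r(R),\infty)$ converges, and after integrating the bumpy $y,z$ over their $\lesssim r(R)$-long support, this part contributes $\lesssim(\theta_0^2+\sqrt{\varepsilon_0})\,r(R)$.

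For the $\Gamma_2$ contribution, all three points lie in the $\sqrt[4]{\varepsilon_0}\,r(R)$-neighborhood of $L_R$ by Lemma~\ref{paper3-lemma_Gamma_R_to_R}, so the triangle they span is trapped in a thin strip, forcing $\dist(z,L_{xy})\lesssim\sqrt[4]{\varepsilon_0}\,r(R)$ pointwise. Since $\mathcal{H}^1(\Gamma_2)\lesssim r(R)$ and the Menger-curvature integral $\iint c(x,y,z)^2\,d\mathcal{H}^1_{\Gamma_R}(y)\,d\mathcal{H}^1_{\Gamma_R}(z)$ admits a uniform-in-$x$ bound on a Lipschitz graph (an offshoot of the identity $p_\infty(\mathcal{H}^1_{\Gamma_R})\approx\|F'\|_2^2$ from \cite[Lemma~3.9]{Tolsa_book}), the thin-strip refinement of this estimate delivers the extra $\sqrt[4]{\varepsilon_0}$-factor, so that the $\Gamma_2$ part contributes $\lesssim\sqrt[4]{\varepsilon_0}\,r(R)$. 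Combining the two bounds and invoking the hierarchy of Section~\ref{paper3-sec_param} (where $\theta_0$ can be tied to $\varepsilon_0$ so that $\theta_0^2\le\sqrt[8]{\varepsilon_0}$) yields the claim.

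The principal technical obstacle is the $\Gamma_2$ step: the bare application of $p_\infty(\mathcal{H}^1_{\Gamma_R})\approx\|F'\|_2^2\lesssim\theta_0^2\,r(R)$ gives only $\theta_0^2\,r(R)$, which is insufficient unless one also exploits the $L^\infty$-smallness $\|F\|_\infty\lesssim\sqrt[4]{\varepsilon_0}\,r(R)$. One must therefore refine the standard Menger-curvature estimate on a Lipschitz graph by splitting into dyadic scales in $|y-z|$ (or applying Chebyshev to the set where the curvature is large) to convert the thin-strip hypothesis into a quantitative decay; this is the sole delicate piece, the remainder being bookkeeping with the projection $\Pi$ and the explicit formulae for $c$.
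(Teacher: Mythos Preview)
Your parameter accounting breaks the argument. In the $\Gamma_1$ step you obtain a contribution $\lesssim(\theta_0^2+\sqrt{\varepsilon_0})\,r(R)$ and then claim that ``$\theta_0$ can be tied to $\varepsilon_0$ so that $\theta_0^2\le\sqrt[8]{\varepsilon_0}$''. This is incompatible with the hierarchy fixed in Section~\ref{paper3-sec_param}: there $\varepsilon_0=\varepsilon_0(\gamma,\tau,A,\theta_0)$ is chosen \emph{after} $\theta_0$ and must be small relative to it. In fact the proof of Lemma~\ref{paper3-bs_lemma_1} explicitly requires $\sqrt[8]{\varepsilon_0}\ll\theta_0$ (to get $\theta_0\,r(Q_i)-\sqrt[8]{\varepsilon_0}\,r(Q_i)\gtrsim\theta_0\,r(Q_i)$), so $\theta_0^2\le\sqrt[8]{\varepsilon_0}$ is impossible. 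Any bound containing a bare $\theta_0^2$ is therefore useless here.

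The source of the unwanted $\theta_0$ is your use of the Lipschitz bound $|F(z_1)-F(y_1)|\lesssim\theta_0|z_1-y_1|$ on the first term of the rearranged numerator. The paper's reference to \cite[Lemma~7.36]{Tolsa_book} points to the correct route: use \emph{only} the two inputs it names, Lemma~\ref{paper3-lemma_Lip_A} (the $|F''|\lesssim\sqrt[4]{\varepsilon_0}/\ell(J_i)$ estimate) and Lemma~\ref{paper3-lemma_Gamma_R_to_R} (the bound $\|F\|_\infty\lesssim\sqrt[4]{\varepsilon_0}\,r(R)$), so that every constant is a power of $\varepsilon_0$ and $\theta_0$ never appears. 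Concretely, for $x\in\Gamma_1\subset L_R$ at distance $D_x\gtrsim r(R)$ from $B_F$, one observes that $c(x,y,z)=0$ unless (say) $\Pi(y)\in B_F$; then $|x-y|\approx D_x$ and, since both $\dist(y,L_R)$ and the angle $\measuredangle(L_{xz},L_R)$ are $\lesssim\sqrt[4]{\varepsilon_0}\,r(R)/D_x$-controlled, one gets $\dist(y,L_{xz})\lesssim\sqrt[4]{\varepsilon_0}\,r(R)$ and hence $c(x,y,z)\lesssim\sqrt[4]{\varepsilon_0}\,r(R)/(D_x|y-z|)$. Combining this with the trivial bound $c\lesssim 1/D_x$ when $|y-z|\le\sqrt[4]{\varepsilon_0}\,r(R)$ and integrating gives $\iint c^2\,dy\,dz\lesssim\sqrt[4]{\varepsilon_0}\,r(R)^2/D_x^2$, after which the $x$-integral converges to $\lesssim\sqrt[4]{\varepsilon_0}\,r(R)$, with no $\theta_0$ in sight. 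Your $\Gamma_2$ sketch has the same defect (the ``bare application'' you flag as insufficient gives only $\theta_0^2\,r(R)$), and the same cure applies.
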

\begin{proof}
The proof is analogous (up to constants) to the proof of \cite[Lemma~7.36]{Tolsa_book}, where we should use our Lemmas~\ref{paper3-lemma_Lip_A} and~\ref{paper3-lemma_Gamma_R_to_R} instead of \cite[Lemma~7.27 and Lemma~7.32]{Tolsa_book}.
\end{proof}

What is more, it can be easily seen that
\begin{equation}
\label{add_eq1}
\begin{split}
p_\infty(\Theta_\mu(2B_R)\,\mathcal{H}^1_{\Gamma_R})\le & p_\infty(\Theta_\mu(2B_R)\,\mathcal{H}^1_{\Gamma_{B_0}})\\
&+3p_\infty(\Theta_\mu(2B_R)\,\mathcal{H}^1_{\Gamma_{\mathbf{Ext}(B_0)}},\Theta_\mu(2B_R)\,\mathcal{H}^1_{\Gamma_R},\Theta_\mu(2B_R)\,\mathcal{H}^1_{\Gamma_R}).
\end{split}
\end{equation}

Consequently, taking into account Lemmas~\ref{paper3-bs_lemma_1} and \ref{paper3-lemma_Ext_B_0}, we are now able to reduce the proof of Lemma~\ref{paper3-lemma_bs_small} to the proof of a proper estimate for $p_\infty(\Theta_\mu(2B_R)\,\mathcal{H}^1_{\Gamma_{B_0}})$, where $\Gamma_{B_0}\subset cB_R$ with some  $c>0$. For short, we will write
$$
\sigma:=\Theta_\mu(2B_R)\,\mathcal{H}^1_{\Gamma_{B_0}}.
$$
Thus, using this notation, we are aimed to prove the following lemma in the forthcoming subsections.

\begin{lemma}
\label{lemma_add_eq2}
It holds that
\begin{equation*}
\label{add_eq2}
p_\infty(\sigma)\lesssim \varepsilon_0^{1/40}\,\Theta_\mu(2B_R)^2\;\mu(R).
\end{equation*}
\end{lemma}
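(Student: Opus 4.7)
The strategy is to first convert $p_\infty(\sigma)$ to $p_0(\sigma)$ via the non-verticality hypothesis $R\in\mathsf{T}_{VF}(\theta_0)$, and then transfer the bound from the graph measure $\sigma$ back to $\mu$, where the Big Permutations stopping condition provides effective control. Since $\Gamma_R$ is the graph of a Lipschitz function $F$ with constant at most $C_F\,\theta_0$, Lemma~\ref{paper3-lemma_compar_p_0_p_infty} yields $p_\infty(x,y,z)\lesssim_{\theta_0}p_0(x,y,z)$ for every triple in $\Gamma_R^3$. Integrating against $\sigma\otimes\sigma\otimes\sigma$ gives $p_\infty(\sigma)\lesssim_{\theta_0}p_0(\sigma)$, so it suffices to show that
\begin{equation*}
p_0(\sigma)\lesssim\varepsilon_0^{1/40}\,\Theta_\mu(2B_R)^2\,\mu(R),
\end{equation*}
absorbing the $\theta_0$-dependence into the implicit constant once $\theta_0$ is fixed.

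Next, I would decompose $p_0(\sigma)$ dyadically using the David--Mattila lattice: for each triple $(x_1,x_2,x_3)\in\Gamma_{B_0}^3$, associate the smallest cube $Q\in\mathsf{Tree}(R)$ whose enlargement $2B_Q$ contains all three points with $\max_{i\neq j}|x_i-x_j|\approx r(Q)$, yielding $p_0(\sigma)\approx\sum_{Q\in\mathsf{Tree}(R)}p_0^{[\delta,Q]}(\sigma\lfloor 2B_Q,\sigma,\sigma)$ up to truncation errors handled by the linear growth $\sigma(B(x,r))\lesssim\Theta_\mu(2B_R)\,r$ on the Lipschitz graph. The main input is then to transfer each such cube contribution to $\mu$. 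For $\tilde Q\in\mathsf{DbTree}(R)$, Lemma~\ref{paper3-lemma_gamma_R_2} gives $\dist(z,L_{\tilde Q})\lesssim\sqrt[4]{\varepsilon_0}\,r(\tilde Q)$ for $z\in\Gamma_R\cap 2B_{\tilde Q}$, while the bound $\beta_{\mu,2}(2B_{\tilde Q})^2\le\varepsilon_0^2\,\Theta_\mu(2B_{\tilde Q})$ from~\rf{paper3-DbTree_beta} controls $\spt\mu\cap 2B_{\tilde Q}\setminus R_{\sf Far}$ near $L_{\tilde Q}$. Projecting both measures onto $L_{\tilde Q}$ and matching densities (using that $\sigma$ has constant density $\Theta_\mu(2B_R)$ along $\Gamma_R$ and that $\mu$ is $C_0$-doubling on $\tilde Q$), one builds a coupling which transforms $p_0^{[\delta,Q]}(\sigma\lfloor 2B_Q,\sigma,\sigma)$ into $\mathsf{perm}(\tilde Q)^2\,\Theta_\mu(2B_R)^2\,\mu(\tilde Q)$ plus an error of order $\varepsilon_0^{1/4}\,\Theta_\mu(2B_R)^2\,\mu(\tilde Q)$. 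Non-doubling cubes are reduced to their doubling ancestors via Lemma~\ref{paper3-lemdob32}.

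Summing over $Q\in\mathsf{Tree}(R)\setminus\mathsf{Stop}(R)$, the $\mathsf{perm}$-terms telescope via the Big Permutations bound~\rf{paper3-big_perm_prop} to yield at most $\alpha^2\,\Theta_\mu(2B_R)^2\,\mu(R)$, and the $\varepsilon_0^{1/4}$ errors sum with bounded overlap to $\varepsilon_0^{1/4}\,\Theta_\mu(2B_R)^2\,\mu(R)$. Contributions from triples meeting a stopping cube, or with at least one coordinate in $R_{\sf Far}$, are absorbed via Lemmas~\ref{paper3-lemma_R_far_1}, \ref{paper3-lemma_ld_small} and \ref{paper3-measure_BP_F_stop_cubes}, which give total stopping masses $\alpha\mu(R)$, $\sqrt\tau\mu(R)$ and $\sqrt\alpha\mu(R)$, each multiplied by the uniform density bound $\Theta_\mu(2B_R)^2$. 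Selecting the parameter hierarchy of Section~\ref{paper3-sec_param} so that $\alpha$, $\sqrt\tau$, $\sqrt\alpha$ and $\varepsilon_0^{1/4}$ are all $\lesssim\varepsilon_0^{1/40}$, the claim follows. The main obstacle is making the $\sigma\leftrightarrow\mu$ transfer quantitative: unlike $c^2$, the kernel $p_0$ has sign-changing terms, so the coupling must be implemented so as to preserve enough cancellation for the error to remain small; the $\varepsilon_0^{1/4}$ gain comes precisely from the quantitative Hausdorff closeness of $\Gamma_R$ to the non-$R_{\sf Far}$ part of $\spt\mu$ on each doubling cube, and the exponent $1/40$ in the conclusion reflects the cascade of small losses accumulated through these comparisons.
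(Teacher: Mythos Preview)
Your first reduction $p_\infty(\sigma)\lesssim_{\theta_0}p_0(\sigma)$ is correct but does not simplify anything: since $p_0\le 2p_\infty$ pointwise by \eqref{paper2-Main_inequality}, the two quantities are comparable, so bounding $p_0(\sigma)$ is the same problem as bounding $p_\infty(\sigma)$.

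The substantive gap is in the $\sigma\to\mu$ transfer. When you replace $x\in\Gamma_R$ by a point $x'\in\spt\mu$, the displacement is of order $\ell_x$ (the Whitney scale at $\Pi(x)$), and by Lemma~\ref{paper3-lemcompar**} the error in $c(x,y,z)$ is of order $\ell_x/(|x-y|\,|x-z|)$. This is \emph{not} small when $|x-y|\lesssim\ell_x+\ell_y$, so your claimed uniform cube error $\varepsilon_0^{1/4}\Theta_\mu(2B_R)^2\mu(\tilde Q)$ simply fails for such triples; moreover, even where it holds, summing $\mu(\tilde Q)$ over a \emph{nested} tree is not ``bounded overlap'' and diverges. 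The paper resolves this by splitting $p_\infty(\sigma)$ according to whether pairs of points are \emph{very close}, \emph{close}, or \emph{far} relative to their Whitney scales (see \eqref{add_eq3}). For the first two regimes (Lemma~\ref{paper3-lemma_34}) no transfer to $\mu$ is attempted: instead one uses that $\Gamma_R$ is essentially affine at those scales via the bound $|F''(z)|\lesssim\sqrt[4]{\varepsilon_0}/\ell(J_i)$ from Lemma~\ref{paper3-lemma_Lip_A}. Only in the \emph{far} regime (Lemma~\ref{paper3-lemma_35}) is $\sigma$ replaced by an absolutely continuous approximation $\nu=g\mu$ (Lemma~\ref{paper3-lemma_g_i}), and there the error integrals $\int_{|x-y|\ge \frac12\varepsilon_0^{-1/20}\ell_x}\ell_x|x-y|^{-2}\,d\sigma(y)$ gain $\varepsilon_0^{1/20}$ from the scale separation, not from Hausdorff closeness as you suggest. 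Finally, your requirement $\sqrt\tau\lesssim\varepsilon_0^{1/40}$ is circular in the hierarchy of Section~\ref{paper3-sec_param} (it is $\varepsilon_0$ that depends on $\tau$), and indeed no such condition is needed: the stopping masses $\sqrt\tau\mu(R)$ play no role in the proof of this lemma.
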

\subsection{Estimates for the permutations of the Hausdorff measure restricted to $\Gamma_R$ }

Recall that, for $x\in \mathbb{C}$, we set $\ell_x=\ell(J_x)$. Let $x,y\in \Gamma_R$. We say that $x$ and $y$ are
\begin{itemize}
  \item \textbf{very close} and write
$$
(x,y)\in \textbf{VC},\qquad \text{if } |x_1-y_1|\le \ell_x+\ell_y;
$$
  \item \textbf{close} and write
$$
(x,y)\in \textbf{C},\qquad \text{if } |x_1-y_1|\le \varepsilon_0^{-1/20}(\ell_x+\ell_y);
$$
  \item \textbf{far} and write
$$
(x,y)\in \textbf{F},\qquad \text{if } |x_1-y_1|> \varepsilon_0^{-1/20}(\ell_x+\ell_y).
$$
\end{itemize}
Notice that the relations are symmetric with respect to $x$ and $y$.

Given $(x,y,z)\in\Gamma_{B_0}^3$,
there are three possibilities: either two of the points in the triple are very close, or no pair of points is very close but there is at least one pair that is close, or all the pairs of points are far.
So we can split $p_\infty(\sigma)$ as follows:
\begin{equation}
\label{add_eq3}
\begin{split}
p_\infty(\sigma) &\le
3 \iiint_{(x,y)\in\mathbf{VC}}
p_\infty(x,y,z)\,d\sigma(x)\,d\sigma(y)\,d\sigma(z)\\
&\quad +
3
\iiint_{\,
\begin{subarray}{l} (x,y)\in\mathbf{C}\setminus\mathbf{VC} \\(x,z)\not\in\mathbf{VC}\\
(y,z)\not\in\mathbf{VC}
\end{subarray}}
p_\infty(x,y,z)\,d\sigma(x)\,d\sigma(y)\,d\sigma(z)\\
&\quad+
\iiint_{\,
\begin{subarray}{l} (x,y)\in \mathbf{F} \\ (x,z)\in \mathbf{F}\\
(y,z)\in \mathbf{F}
\end{subarray}} \,
p_\infty(x,y,z)\,d\sigma(x)\,d\sigma(y)\,d\sigma(z)\\
&=: p_{\infty,\mathbf{VC}}(\sigma) + p_{\infty,\mathbf{C}\setminus\mathbf{VC}}(\sigma)
+p_{\infty,\mathbf{F}}(\sigma).
\end{split}
\end{equation}

A straightforward adaptation of the arguments from \cite[Section 7.8.2, Lemmas 7.38 and 7.39]{Tolsa_book} to our settings gives the following.
\begin{lemma}
\label{paper3-lemma_34} If $\varepsilon_0=\varepsilon_0(\tau,A)$ and $\alpha=\alpha(\theta_0,\varepsilon_0,\tau,A)$ are chosen small enough, then
$$
p_{\infty,\mathbf{VC}}(\sigma)+p_{\infty,\mathbf{C}\setminus\mathbf{VC}}(\sigma)\lesssim\, \varepsilon_0^{1/40}\,\Theta_\mu(2B_R)^2\;\mu(R).
$$
\end{lemma}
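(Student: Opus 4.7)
The plan is to adapt Lemmas~7.38 and~7.39 from \cite{Tolsa_book} to our David--Mattila framework, exploiting that $\Gamma_{B_0}$ is (the portion in $B_0$ of) the graph of the $C_F\theta(R)$-Lipschitz function $F$ and that $\sigma = \Theta_\mu(2B_R)\,\mathcal{H}^1_{\Gamma_{B_0}}$ has constant density relative to length. Parametrize $\Gamma_{B_0}$ by $t \in L_R$, writing $x = (x_1, F(x_1))$, so that $d\sigma(x)$ is comparable to $\Theta_\mu(2B_R)\,dx_1$. The crucial analytic input is Lemma~\ref{paper3-lemma_Lip_A}, which gives the pointwise control $|F''(z)| \lesssim \sqrt[4]{\varepsilon_0}/\ell(J_i)$ on each $15J_i$, together with Lemma~\ref{paper3-Lip_3} governing the combinatorics of the $\{J_i\}$.

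For the very close part $p_{\infty,\mathbf{VC}}(\sigma)$, I would fix $x$ and note that the set of $y\in\Gamma_{B_0}$ with $(x,y)\in\mathbf{VC}$ projects to a set of length $\lesssim \ell_x$ (using Lemma~\ref{paper3-Lip_3}(b) to control the neighbouring $J_j$ sizes). Writing
$$
c(x,y,z) = \frac{2\,\mathrm{dist}(z,L_{x,y})}{|z-x|\,|z-y|},
$$
I split the $z$-integral into the regime where $z$ is close to the arc from $x$ to $y$ and the regime where $z$ is far. In the first, a Taylor expansion of $F$ between $x_1$ and $y_1$ yields $c(x,y,z)\lesssim |F''(\zeta)|$ for some $\zeta$ near $x_1,y_1$; using $|F''|^2 \lesssim \sqrt{\varepsilon_0}/\ell_i^2$ and summing over $J_i$ contributes at most $\sqrt{\varepsilon_0}\,\Theta_\mu(2B_R)^2\,\mu(R)$ after using $\sum_i \ell_i\lesssim r(R)$ and $\Theta_\mu(2B_R)\,r(R)\lesssim \mu(R)$ (from $R\in\mathcal{D}^{db}$). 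In the second, $|x-z|\approx |y-z|$ is comparable to a much larger scale, and $\mathrm{dist}(z,L_{x,y})$ is controlled by the oscillation of $F'$ across that large interval; one estimates $c(x,y,z)^2$ by a coarse $L^2$-type sum of $|F''|$-oscillations and loses nothing compared with the first regime.

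For $p_{\infty,\mathbf{C}\setminus\mathbf{VC}}(\sigma)$, the pair $(x,y)$ now has $|x_1-y_1|$ in a dyadic window between $\ell_x+\ell_y$ and $\varepsilon_0^{-1/20}(\ell_x+\ell_y)$, while $z$ is not very close to either. I would decompose this window into $O(\log \varepsilon_0^{-1})$ dyadic scales and, at each scale $s$, observe that the slope chord $(F(y_1)-F(x_1))/(y_1-x_1)$ differs from the tangent slope at any nearby point by at most $\sqrt[4]{\varepsilon_0}\cdot s/\ell_i$; the distance $\mathrm{dist}(z,L_{x,y})$ is then controlled by this slope defect times $|x_1-z_1|$. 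Combining with the lower bound $|x-z|,|y-z|\gtrsim s$ coming from $(x,z),(y,z)\notin \mathbf{VC}$, and carefully accounting for the three possible arrangements of $z_1$ relative to $[x_1,y_1]$, each scale contributes $\lesssim \sqrt{\varepsilon_0}\,\Theta_\mu(2B_R)^2\,\mu(R)$; pulling out the worst-case loss from the "close but not very close" window inflates this to $\varepsilon_0^{1/40}\,\Theta_\mu(2B_R)^2\,\mu(R)$, where the exponent $1/40$ is precisely the $1/4$ from $|F''|$ shaved by the $1/20$ loss built into the definition of $\mathbf{C}$.

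The main obstacle is the bookkeeping: in \cite{Tolsa_book} one works with balls of controlled density rather than David--Mattila cubes, so I would need to verify that the comparability of $\ell(J_i)\approx \ell(J_j)$ for neighbouring intervals, the doubling bound $\Theta_\mu(2B_R)\,r(R)\lesssim \mu(R)$ for $R\in\mathcal{D}^{db}$, the $C_F\theta(R)$-Lipschitz constant of $F$ from Lemma~\ref{paper3-lemma_Lip_A}, and the second-derivative bound together yield exactly the same sums as in \cite{Tolsa_book}. The stopping condition \textbf{(S2)} and the choices $\varepsilon_0=\varepsilon_0(\tau,A)$ and $\alpha=\alpha(\theta_0,\varepsilon_0,\tau,A)$ are then used to absorb the residual error and obtain the final $\varepsilon_0^{1/40}$.
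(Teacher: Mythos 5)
Your proposal takes essentially the same approach as the paper: the paper's entire proof of this lemma is the single sentence preceding the statement, ``A straightforward adaptation of the arguments from [\emph{Tolsa's book}, Section 7.8.2, Lemmas 7.38 and 7.39] to our settings gives the following,'' and you correctly identify Lemmas~7.38--7.39 as the source and the $|F''|\lesssim\sqrt[4]{\varepsilon_0}/\ell(J_i)$ bound from Lemma~\ref{paper3-lemma_Lip_A}, together with the interval combinatorics of Lemma~\ref{paper3-Lip_3}, as the key inputs to be transplanted to the David--Mattila setting. Since the paper offers no further detail, your sketch is, if anything, more substantive than what the paper provides, and the main adaptation issues you flag (replacing controlled-density balls by David--Mattila doubling cubes, checking $\Theta_\mu(2B_R)\,r(R)\lesssim\mu(R)$, the Lipschitz constant $C_F\theta(R)$) are the right ones.

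One place where your account is noticeably softer than it would need to be if written out in full: in the $\mathbf{VC}$ estimate, the far-$z$ regime is not as painless as ``a coarse $L^2$-type sum of $|F''|$-oscillations and loses nothing.'' The pointwise bound $c(x,y,z)\lesssim\sqrt[4]{\varepsilon_0}/\ell_x$ valid when $z$ is in the same $J_i$-scale does not decay in $|x-z|$, so integrating $z$ over all of $\Gamma_{B_0}$ and then summing over $i$ produces a sum that is not obviously controlled; one needs the chord defect $\mathrm{dist}(z,L_{x,y})$ controlled in terms of $\int|F''|$ (or equivalently the $\beta$-numbers across the intermediate scales between $\ell_x$ and $|x-z|$), which telescopes, rather than a crude supremum of $|F''|$. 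Likewise, the arithmetic producing $1/40$ (``the $1/4$ from $|F''|$ shaved by the $1/20$'') doesn't come out to $1/40$ in any obvious way; the exponent arises from balancing the $\sqrt[4]{\varepsilon_0}$ gain against the $O(\log\varepsilon_0^{-1})$ dyadic scales in the window and the $\varepsilon_0^{-1/20}$ loss in the definition of $\mathbf{C}$, and only its positivity matters. These are points you would have to tighten in a full writeup, but they are issues of detail, not of approach, and the paper itself supplies no more than the citation.
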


Now we are going to prove the following result that actually finishes the proof of Lemma~\ref{lemma_add_eq2} and therefore Lemma~\ref{paper3-lemma_bs_small} taking into account (\ref{add_eq3}) and Lemma~\ref{paper3-lemma_34}.
\begin{lemma}
\label{paper3-lemma_35}
If $\varepsilon_0=\varepsilon_0(\tau,A)$, $\alpha=\alpha(\theta_0,\varepsilon_0,\tau,A)$ and $\delta=\delta(\varepsilon_0)$ are small enough, then
\begin{equation}
\label{paper3-eqnu**23}
p_{\infty,\mathbf{F}}(\sigma)\lesssim \varepsilon_0^{1/40}\,\Theta_\mu(2B_R)^2\;\mu(R).
\end{equation}
\end{lemma}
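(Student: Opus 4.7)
The plan is to estimate $p_{\infty,\mathbf{F}}(\sigma)$ in three stages: first reduce from $p_\infty$ to $p_0$ using that $R\in {\sf T}_{VF}(\theta_0)$; next transfer the triple integral from the graph $\Gamma_{B_0}$ to the measure $\mu$ supported on tree cubes; and finally invoke the $\mathsf{BP}$ stopping condition.

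For the reduction, since $R\in {\sf T}_{VF}(\theta_0)$, the Lipschitz graph $\Gamma_R$ has slope $\le C_F\theta_0$, so every triple $(x,y,z)\in\Gamma_R^3$ lies in ${\rm V}_{\sf Far}(\theta_0)$ exactly as in the proof of Lemma~\ref{paper3-lemma_compar_p_0_p_infty}. Hence pointwise $p_\infty(x,y,z)\lesssim_{\theta_0}p_0(x,y,z)$ and consequently $p_{\infty,\mathbf{F}}(\sigma)\lesssim_{\theta_0}p_{0,\mathbf{F}}(\sigma)$, where $p_{0,\mathbf{F}}(\sigma)$ is defined analogously with $p_0$ in place of $p_\infty$.

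For the transfer, I associate to each $x\in\Gamma_{B_0}$ a cube $Q(x)\in{\sf DbTree}(R)$: if $\Pi(x)\in J_i$ for some $i\in I_0$ take $Q(x)=Q_i$ (Lemma~\ref{paper3-lemma_J_1}), and for $\Pi(x)\in\Pi(G_R)$ choose via Lemma~\ref{paper3-lemdob32} a doubling cube of scale as small as desired near $x$. In either case $r(Q(x))\approx_{\tau,A}\ell_x$ and $\dist(x,Q(x))\lesssim_{\tau,A}\ell_x$. The defining inequality $|x_1-y_1|>\varepsilon_0^{-1/20}(\ell_x+\ell_y)$ for the class $\mathbf{F}$ then forces pairwise distances between $Q(x), Q(y), Q(z)$ to exceed $\varepsilon_0^{-1/20}$ times the maximum cube radius, so the kernel $p_0$ is essentially constant on $Q(x)\times Q(y)\times Q(z)$; moreover $\mu(Q(x))\gtrsim \tau\,\Theta_\mu(2B_R)\,\ell_x$ by \eqref{paper3-theta_Q_Tree} and the doubling property. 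Combining these with the closeness of $\Gamma_R$ to $R\setminus R_{\sf Far}$ from Lemmas~\ref{paper3-lemma_gamma_R_1}--\ref{paper3-lemma_gamma_R_3}, I convert $\Theta_\mu(2B_R)\,d\mathcal{H}^1_{\Gamma_{B_0}}$ locally into $d\mu$ on the associated cube, obtaining
\begin{equation*}
p_{0,\mathbf{F}}(\sigma)\lesssim_{\tau,A,\theta_0}\sum_{\tilde Q\in{\sf Tree}(R)}p_0^{[\delta,\tilde Q]}(\mu\lfloor 2B_{\tilde Q},\mu\lfloor 2B_R,\mu\lfloor 2B_R),
\end{equation*}
provided $\delta=\delta(\varepsilon_0)$ is chosen so small that the typical scale $|x_1-y_1|\gtrsim \varepsilon_0^{-1/20}\,r(\tilde Q)$ of a far pair sits inside the truncation window $[\delta r(\tilde Q),\delta^{-1}r(\tilde Q)]$.

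For the final stage I apply the stopping rule \eqref{paper3-big_perm_prop}: for every $x\in R$ outside $\bigcup_{Q\in{\sf Stop}(R)}Q$ one has $\sum_{\tilde Q\ni x}\mathsf{perm}(\tilde Q)^2<\alpha^2$. Fubini and the definition of $\mathsf{perm}$ give
\begin{equation*}
\sum_{\tilde Q\in{\sf Tree}(R)} p_0^{[\delta,\tilde Q]}(\mu\lfloor 2B_{\tilde Q},\mu\lfloor 2B_R,\mu\lfloor 2B_R)=\Theta_\mu(2B_R)^2\sum_{\tilde Q}\mathsf{perm}(\tilde Q)^2\,\mu(\tilde Q)\lesssim \alpha^2\,\Theta_\mu(2B_R)^2\,\mu(R),
\end{equation*}
after absorbing stopping cubes into the measure bound via Lemma~\ref{paper3-measure_BP_F_stop_cubes}. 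Choosing $\alpha=\alpha(\theta_0,\tau,A,\varepsilon_0)$ small enough that $\alpha^2$ times the $\theta_0,\tau,A$-constants from Stages~1--2 is at most $\varepsilon_0^{1/40}$ yields \eqref{paper3-eqnu**23}. The main obstacle is Stage~2: carrying out the point-to-cube transfer uniformly across triples where the three scales $\ell_x,\ell_y,\ell_z$ can differ drastically, handling points of $G_R$ that have no associated Whitney interval, and controlling the error from the Hausdorff distance between $\Gamma_R$ and $R$ (supplied by the $\sqrt[4]{\varepsilon_0}$-bounds of Lemmas~\ref{paper3-lemma_gamma_R_1}--\ref{paper3-lemma_gamma_R_3}) so that it does not destroy the small factor $\varepsilon_0^{1/40}$.
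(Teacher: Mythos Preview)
Your three-stage strategy mirrors the paper's, and Stages~1 and~3 are essentially what the paper does. The real content is Stage~2, and here your argument is an outline rather than a proof; you yourself flag it as ``the main obstacle'' and do not resolve it.

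Concretely, the transfer from $\sigma$ to $\mu$ requires three ingredients that you omit. First, a perturbation estimate: ``the kernel $p_0$ is essentially constant on $Q(x)\times Q(y)\times Q(z)$'' is a heuristic, not an inequality. The paper works with $p_\infty=\tfrac14 c^2$ and uses the explicit curvature perturbation bound (Lemma~\ref{paper3-lemcompar**}), which produces error terms $T_x(y,z)=\ell_x^2/(|x-y|^2|x-z|^2)$ and permutations thereof. These errors are precisely where the $\mathbf{F}$ separation $|x_1-y_1|\ge \varepsilon_0^{-1/20}(\ell_x+\ell_y)$ is used, yielding the $\varepsilon_0^{1/10}$ gain in \eqref{paper3-eq**de3}. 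You never write down or bound any error term. Second, you need a concrete mechanism for replacing $\sigma\lfloor\widehat J_i$ by a piece of $\mu$ of the same mass supported in $Q_i$; the paper does this via the functions $g_i$ of Lemma~\ref{paper3-lemma_g_i}, with the bounded-overlap property $\sum g_i\lesssim_{\tau,A}1$ essential for summing over $i$. Third, this replacement only works for $i\in I_0'$, i.e.\ when at least three quarters of $\mu(Q_i)$ lies outside $R_{\sf Far}$; the bad indices $i\in I_0\setminus I_0'$ require a separate argument (Lemmas~\ref{new1}--\ref{new3}) exploiting the $L^2$-boundedness of the Cauchy transform on $\Gamma_R$, which you do not mention.

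A minor point: your order of operations (convert $p_\infty\to p_0$ on $\Gamma_R$, then transfer) differs from the paper's (transfer $p_\infty$ to $\nu$, then convert). Your order would require a perturbation lemma for $p_0$ itself rather than for $c$; such a lemma holds by the Calder\'on--Zygmund smoothness of $k_0$, but it is not stated anywhere and would need to be formulated and proved. The paper's order has the advantage that Lemma~\ref{paper3-lemcompar**} is already available, at the cost of re-verifying the ${\rm V}_{\sf Far}$ condition for the transferred points (done just before \eqref{add_eq5}).
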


The proof of Lemma~\ref{paper3-lemma_35} is similar to the one  of \cite[Lemma 7.40]{Tolsa_book} but necessary changes are not straightforward so we give details. First we need to approximate the measure $\sigma$ by another measure absolutely continuous with respect to $\mu$, of the form $g\mu$, with some $g\in L^\infty(\mu)$. This is carried out by the next lemma, where we say that 
\begin{equation}
\label{Q_i_Not_Far}
i\in I_0'\quad\text{if}\quad i\in I_0 \quad\text{and}\quad
\mu(Q_i\setminus R_{\sf Far})\ge \tfrac{3}{4}\mu(Q_i),
\end{equation}
for the cubes $Q_i\in {\sf DbTree}(R)$  from Lemma~\ref{paper3-lemma_J_1} associated with the intervals $J_i$, $i\in I_0$. Recall the definition of $R_{\sf Far}$ in Section~\ref{section_R_Far} and Lemmas~\ref{paper3-lemma_R_far_1} and~\ref{paper3-lemma_R_far_2}. In what follows we will also write
$$
\widehat{J}_i:= \Gamma_R\cap \Pi^{-1}(J_i).
$$
\begin{lemma}
\label{paper3-lemma_g_i}
  For each $i\in I_0'$ there exists a non-negative function $g_i\in L^\infty(\mu)$, supported on $A_i\subset Q_i\setminus R_{\sf Far}$, where $Q_i\in {\sf DbTree}(R)$ are associated with the intervals $J_i$ by Lemma~\ref{paper3-lemma_J_1}, and such that
  \begin{equation}\label{paper3-lemma_g_i_1}
  \int g_i\,d\mu=\Theta_\mu(2B_R)\mathcal{H}^1(\widehat{J}_i)=\sigma(\widehat{J}_i),
  \end{equation}
  and
  \begin{equation}\label{paper3-lemma_g_i_2}
    \sum\nolimits_{i\in I_0'}g_i\lesssim_{\tau,A} 1.
  \end{equation}
\end{lemma}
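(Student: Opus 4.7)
The plan is to build each $g_i$ as a constant multiple of the indicator of an appropriate subset $A_i \subset Q_i \setminus R_{\sf Far}$, and then verify the pointwise bound $\sum g_i\lesssim_{\tau,A} 1$ via bounded overlap of the family $\{Q_i\}_{i\in I_0'}$.

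First I would compare $\sigma(\widehat J_i)$ with $\mu(Q_i)$. Since $F$ is $C_F\,\theta(R)$-Lipschitz by Lemma~\ref{paper3-lemma_Lip_A}, $\mathcal{H}^1(\widehat J_i)\lesssim \ell(J_i)$. On the other hand, $Q_i\in{\sf DbTree}(R)$, so $Q_i\notin \LD(R)$ and (\ref{paper3-theta_Q_Tree}) gives $\Theta_\mu(2B_{Q_i})\ge \tau\,\Theta_\mu(2B_R)$; combined with the doubling property of $Q_i$ and $r(Q_i)\approx_{\tau,A}\ell(J_i)$ from Lemma~\ref{paper3-lemma_J_1}(a), this yields
$$
\sigma(\widehat J_i) \;=\; \Theta_\mu(2B_R)\,\mathcal{H}^1(\widehat J_i) \;\lesssim\; \Theta_\mu(2B_R)\,\ell(J_i) \;\lesssim_{\tau,A}\; \tau\,\Theta_\mu(2B_R)\,r(Q_i) \;\lesssim_{\tau,A}\; \mu(Q_i).
$$

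Next I would set $A_i := Q_i \setminus R_{\sf Far}$; the condition (\ref{Q_i_Not_Far}) gives $\mu(A_i)\ge \tfrac34\mu(Q_i)$, so the constant
$$
c_i\;:=\;\frac{\sigma(\widehat J_i)}{\mu(A_i)}
$$
satisfies $c_i\lesssim_{\tau,A} 1$ by the previous step. Defining $g_i := c_i\,\chi_{A_i}$, the normalization (\ref{paper3-lemma_g_i_1}) is immediate.

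The main step (and the only slightly delicate point) is to establish (\ref{paper3-lemma_g_i_2}) via bounded overlap of $\{Q_i\}_{i\in I_0'}$. I would argue: if $x\in Q_i$, then $\Pi(x)\in\Pi(Q_i)$, and Lemma~\ref{paper3-lemma_J_1}, together with $\diam\Pi(Q_i)\le \diam(Q_i)\approx_{\tau,A}\ell(J_i)$, implies $\Pi(x)\in C\,J_i$ for some $C=C(\tau,A)$. Hence any two indices $i,j\in I_0'$ with $x\in Q_i\cap Q_j$ must have $CJ_i\cap CJ_j\ne\varnothing$. Iterating Lemma~\ref{paper3-Lip_3}(b) (extending it from the factor $15$ to the factor $C$) forces $\ell(J_i)\approx_{\tau,A}\ell(J_j)$; iterating Lemma~\ref{paper3-Lip_3}(c) likewise bounds the number of such $j$ by $N(\tau,A)$. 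Therefore $\#\{i\in I_0': x\in Q_i\}\le N(\tau,A)$, and since $\|g_i\|_\infty = c_i\lesssim_{\tau,A} 1$,
$$
\sum_{i\in I_0'} g_i(x) \;\le\; N(\tau,A)\cdot \sup_i c_i \;\lesssim_{\tau,A}\; 1
$$
for every $x$. The main obstacle is purely bookkeeping in the Whitney step above; no new analytic input is required beyond the properties of $\Gamma_R$ and the doubling cubes $Q_i$ already established.
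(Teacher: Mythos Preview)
Your overall strategy is natural, but the bounded-overlap step has a genuine gap. You claim that if $x\in Q_i\cap Q_j$ then $CJ_i\cap CJ_j\ne\varnothing$ with $C=C(\tau,A)$, and then that Lemma~\ref{paper3-Lip_3}(b) ``extends from the factor $15$ to the factor $C$'' to force $\ell(J_i)\approx_{\tau,A}\ell(J_j)$. This extension fails once $C$ is large. The proof of Lemma~\ref{paper3-Lip_3}(b) rests on part (a), which gives $D(z)\in[5\ell(J_i),50\ell(J_i)]$ only for $z\in 15J_i$; for $z\in CJ_i$ with $C\ge 5$ the lower bound $D(z)\ge(5-C)\ell(J_i)$ is vacuous. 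Concretely, if $z_0\in\Pi(G_R)$ (so $D(z_0)=0$) and $C$ is large, then $z_0\in CJ_k$ for a whole sequence of Whitney intervals $J_k$ accumulating at $z_0$ with $\ell(J_k)\to 0$, so $CJ_{k_1}\cap CJ_{k_2}\ne\varnothing$ does \emph{not} imply comparable lengths. In the present setting $C$ depends on $\tau,A$ through $\diam(Q_i)\lesssim_{\tau,A}\ell(J_i)$ and can certainly exceed any fixed absolute threshold, so the cardinality bound $\#\{i:x\in Q_i\}\le N(\tau,A)$ is not justified; the cubes $Q_i$ (being David--Mattila cubes) containing a fixed $x$ form a nested chain that may be long.

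The paper circumvents this by \emph{not} relying on bounded overlap. It orders the intervals by size and constructs $g_k=\alpha_k\chi_{A_k}$ inductively: when $Q_k$ meets previously used cubes $Q_{s_1},\dots,Q_{s_m}$, one has $\ell(J_{s_j})\le\ell(J_k)$, hence $J_{s_j}\subset c'J_k$ and $\sum_j\int g_{s_j}\,d\mu\le\sigma(\Pi^{-1}(c'J_k))\lesssim_{\tau,A}\mu(Q_k)$, \emph{even though $m$ itself may be unbounded}. Chebyshev then yields a set $A_k\subset Q_k\setminus R_{\sf Far}$ of measure $\ge\tfrac14\mu(Q_k)$ on which $\sum_j g_{s_j}\le 2c''$, and one places $g_k$ there. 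This gives the uniform bound on $\sum_i g_i$ without any overlap count. If you want to salvage your route, you would need an independent argument that $\sum_{i:\,x\in Q_i}c_i\lesssim_{\tau,A}1$ (an $\ell^1$-type bound rather than a cardinality bound), which ultimately amounts to the same Carleson/Chebyshev estimate the paper uses.
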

\begin{proof}
Assume first that the family $\{J_i\}_{i\in I_0'}$ is finite. Suppose also that $\ell(J_i)\le \ell(J_{i+1})$ for all $i\in I_0'$. We will construct
  $$
  g_i=\alpha_i\chi_{A_i},\qquad \text{where} \quad \alpha_i\ge 0 \quad\text{and} \quad A_i\subset Q_i\setminus R_{\sf Far}.
  $$
   We set
  $$
  \alpha_1=\frac{\sigma(\widehat{J}_1)}{\mu(A_1)}
  \qquad \text{and}\qquad A_1=Q_1\setminus R_{\sf Far},
  $$
so that $\int g_1\,d\mu=\sigma(\widehat{J}_1)$. Furthermore, by (\ref{paper3-theta_Q_Tree}) in Lemma~\ref{paper3-DbTree_properties}, Lemmas~\ref{paper3-lemma_J_1} and~\ref{paper3-lemma_Lip_A} and the condition (\ref{Q_i_Not_Far}) we get
  $$
  \|g_1\|_\infty=\alpha_1\lesssim_{\tau,A} \frac{\,\Theta_\mu(2B_R)\ell(J_1)}{\mu(Q_1)}\approx_{\tau,A} \frac{\,\Theta_\mu(2B_R)\,\diam(Q_1)}{\mu(2B_{Q_1})}\le b'
  $$
  with some $b'=b'(\tau,A)>0$.
Furthermore, we define $g_k$, $k\ge 2$, by induction. Suppose that $g_1,\ldots,g_{k-1}$ have been constructed, satisfy (\ref{paper3-lemma_g_i_1}) and the inequality $\sum_{i=1}^{k-1}g_i\le b$ with some $b=b(\tau,A)>0$ to be chosen later.

  If $Q_k$ is such that $Q_k\cap\bigcup_{i=1}^{k-1}Q_i =\varnothing$, then we set
  $$
  \alpha_k=\frac{\sigma(\widehat{J}_k)}{\mu(A_k)}
  \qquad \text{and}\qquad A_k=Q_k\setminus R_{\sf Far},
  $$
  so that $\int g_k\,d\mu=\sigma(\widehat{J}_k)$. Moreover, similarly to the case of $\alpha_1$, we have
  $$
  \|g_k\|_\infty=\alpha_k\le b',
  $$
  where $b'=b'(\tau,A)$ is obviously independent of $k$.  Since  $A_k\cap\bigcup_{i=1}^{k-1}A_i =\varnothing$, we have
  $$
  g_k + \sum\nolimits_{i=1}^{k-1} g_i \le \max\{b,b'\}.
  $$
  We choose $b=b'(\tau,A)$ in order to have (\ref{paper3-lemma_g_i_2}).

  Now suppose that $Q_k\cap\bigcup_{i=1}^{k-1}Q_i\neq \varnothing$ and let $Q_{s_1},\ldots,Q_{s_m}$ be the subfamily of $Q_1,\ldots,Q_{k-1}$ such that $Q_{s_j}\cap Q_k\neq \varnothing$. Since $\ell(J_{s_j})\le
\ell(J_k)$ (because of the non-decreasing sizes of $\ell(J_i)$, $i\in I_0'$), we deduce that
 $\dist(J_{s_j},J_k)\lesssim\ell(J_k)$, and thus
 $J_{s_j}\subset c'J_k$, for some  constant $c'>0$. Using
(\ref{paper3-lemma_g_i_1}) for $i=s_j$, we get by (\ref{paper3-theta_Q_Tree}) in Lemma~\ref{paper3-DbTree_properties}, Lemmas~\ref{paper3-lemma_J_1} and~\ref{paper3-lemma_Lip_A} that
\begin{align*}
\sum\nolimits_j \int g_{s_j}\,d\mu & =  \sum\nolimits_j
\sigma(\widehat{J}_{s_j})  \le  \sigma(\Pi^{-1}(c'J_k))\\
& \lesssim \Theta_\mu(2B_R)
 \ell(J_k)\lesssim \Theta_\mu(2B_R)\diam(Q_k) \le c''\,\mu(Q_k)
\end{align*}
with some $c''=c''(\tau,A)>0$. Therefore, by Chebyshev's inequality,
$$
\mu\left(\left\{\sum\nolimits_j g_{s_j} >
2c''\right\}\right) \le \frac{1}{2}\, \mu(Q_k).
$$
So we set
$$
A_k = \left(Q_k\cap \left\{\sum\nolimits_j g_{s_j} \le
2c''\right\}\right)\setminus R_{\sf Far},
$$
and then $\mu(A_k) \ge \tfrac{1}{4}\mu(Q_k)$.
As above, we put $\alpha_k=\sigma(\widehat{J}_k)/\mu(A_k)$ so that $g_k=\alpha_k\chi_{A_k}$ satisfies $\int
g_k\,d\mu = \sigma(\widehat{J}_k)$.
Consequently,
\begin{equation*}
\alpha_k \le  \frac{\sigma(\widehat{J}_k)}{\tfrac{1}{4}\mu(Q_k)}
\le b''\qquad \text{with some } b''=b''(\tau,A)>0,
\end{equation*}
which yields
$$
g_k + \sum\nolimits_j g_{s_j} \le b'' +
2c''.
$$
Recall that $s_j$ are such that $Q_{s_j}\cap Q_k\neq \varnothing$. The latter inequality implies that
$$
  g_k + \sum\nolimits_{i=1}^{k-1} g_i \le \max\{b,b''+2c''\}.
  $$
In this case, we choose $b=b''+2c''$ and
(\ref{paper3-lemma_g_i_2}) follows. Clearly, this bound is independent of the number of functions.

Suppose now that $\{J_i\}_{i\in I_0'}$ is not finite. For each fixed $M$ we
consider a family of intervals $\{J_i\}_{1\le i\le M}$. As above,
we construct functions $g_1^M,\ldots,g_M^M$ with
$\spt(g_i^M)\subset Q_i\setminus R_{\sf Far}$ satisfying
$$
\int g^M_i d\mu = \sigma(\widehat{J}_i) \quad \text{and}\quad
\sum\nolimits_{i=1}^M
g_i^M \le b=b(\tau,A).
$$
Then there is a subsequence $\{g_1^k\}_{k\in I_1}$ which is convergent in
the weak $*$ topology of $L^\infty(\mu)$ to some function $g_1\in
L^\infty(\mu)$. Now we take another convergent subsequence $\{g_2^k\}_{k\in
I_2}$, $I_2\subset I_1$, in the weak $*$ topology of
$L^\infty(\mu)$ to another function $g_2\in L^\infty(\mu)$, etc.
We have $\spt(g_i)\subset Q_i\setminus R_{\sf Far}$. Furthermore, (\ref{paper3-lemma_g_i_1}) and (\ref{paper3-lemma_g_i_2})
 also hold due to the weak $*$ convergence.
\end{proof}

Recall that $G_R=\{z\in \mathbb{C}:d(z)=0\}$ (see (\ref{paper3-good_R})) and clearly $G_R\subset R$. We will need the following result which can be proved analogously to \cite[Lemma 7.18]{Tolsa_book} taking into account that the density $\Theta_\mu(2B_R)$ is involved in our case.
\begin{lemma}
\label{paper3-lemma_mu_G_R}
We have
$$
\mu\lfloor G_R=\rho_{G_R}\Theta_\mu(2B_R)\,\mathcal{H}^1_{G_R}=\rho_{G_R}\sigma\lfloor G_R,
$$
where $\rho_{G_R}$ is a function such that $c\le \rho_{G_R}\le c^{-1}$ with some constant $c=c(\tau,A)>0$.
\end{lemma}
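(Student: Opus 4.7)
My plan is to reduce the statement to a two-sided density comparison for $\mu\lfloor G_R$ and then apply a differentiation-of-measures argument. First I would verify that $G_R\subset \Gamma_{B_0}$: indeed, if $x\in G_R$ then by (\ref{paper3-F_on_G_R}) we have $F(\Pi(x))=\Pi^\perp(x)$, so $x=(\Pi(x),F(\Pi(x)))\in\Gamma_R$; moreover $x\in R\subset B(\Pi(x_0),2\diam(R))$ by (\ref{paper3-RinB}), which places $\Pi(x)\in U_0$ and hence $x\in\Gamma_{B_0}$. Consequently $\mathcal{H}^1_{G_R}$ is just $\mathcal{H}^1_{\Gamma_{B_0}}\lfloor G_R$, and the identity $\sigma\lfloor G_R=\Theta_\mu(2B_R)\,\mathcal{H}^1_{G_R}$ follows from the very definition of $\sigma$. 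The content of the lemma is therefore the assertion that $\mu\lfloor G_R$ and $\Theta_\mu(2B_R)\,\mathcal{H}^1_{G_R}$ are mutually absolutely continuous on $G_R$ with a Radon--Nikodym derivative pinched between two constants depending only on $\tau$ and $A$.

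Next I would establish the density bounds
\begin{equation*}
c(\tau,A)\,\Theta_\mu(2B_R)\,r \;\le\; \mu(B(x,r))\;\le\; C(\tau,A)\,\Theta_\mu(2B_R)\,r,\qquad x\in G_R,\;0<r\lesssim\diam(R).
\end{equation*}
Since $d(x)=0$, for any preassigned constant $K>1$ we can pick $Q\in{\sf DbTree}(R)$ with $\dist(x,Q)+\diam(Q)<r/K$. Using Lemma~\ref{paper3-lemdob32} to pass to a doubling ancestor and Lemma~\ref{paper3-lemcad23} to transfer the density across the intermediate non-doubling cubes, one arranges to have a cube $Q_r\in{\sf DbTree}(R)$ with $\diam(Q_r)\approx_{\tau,A}r$ and, say, $B(Q_r)\subset B(x,r)\subset c\,B_{Q_r}$ for some $c=c(\tau,A)$. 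The lower bound then follows from the stopping condition (\textbf{S1}) together with (\ref{paper3-theta_Q_Tree}):
$$\mu(B(x,r))\;\ge\;\mu(B(Q_r))\;\gtrsim\;\Theta_\mu(2B_{Q_r})\,r\;\gtrsim\;\tau\,\Theta_\mu(2B_R)\,r,$$
while the upper bound follows from doubling and Lemma~\ref{paper3-density_of_cubes_hd}:
$$\mu(B(x,r))\;\le\;\mu(c\,B_{Q_r})\;\lesssim\;\Theta_\mu(2B_{Q_r})\,r\;\lesssim\;A\,\Theta_\mu(2B_R)\,r.$$

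With these estimates in hand, the Besicovitch differentiation theorem applied to the pair of Radon measures $\mu\lfloor G_R$ and $\Theta_\mu(2B_R)\,\mathcal{H}^1_{G_R}$ (the latter being locally finite because $G_R\subset\Gamma_R$ is contained in the graph of a Lipschitz function) yields the Radon--Nikodym derivative $\rho_{G_R}$ with $c(\tau,A)\le\rho_{G_R}\le c(\tau,A)^{-1}$ $\mu$-a.e.\ on $G_R$. The main obstacle that I expect is the scale-matching step producing the auxiliary cube $Q_r$: the tree ${\sf DbTree}(R)$ only contains doubling cubes of the David--Mattila lattice, and one must climb from an arbitrarily small cube touching $x$ to a doubling ancestor at scale $r$ without crossing a cube from ${\sf HD}(R)\cup{\sf LD}(R)\cup{\sf UB}(R)$. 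This is exactly what is guaranteed by the stopping rules together with Lemma~\ref{paper3-lemcad23}, and it is the reason why the constants in $\rho_{G_R}$ must be allowed to depend on both $\tau$ and $A$.
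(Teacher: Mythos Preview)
Your approach is correct and is precisely the argument the paper has in mind: the paper gives no proof here but refers to \cite[Lemma 7.18]{Tolsa_book}, whose proof proceeds exactly by establishing two-sided linear density bounds for $\mu$ at points of $G_R$ (via the doubling ancestors in the tree, as you do with $Q_r$) and then invoking a density-differentiation argument on the Lipschitz graph. The only refinement worth making explicit in your final step is that the density bounds you obtain are for $\mu(B(x,r))$ versus $r$, so to conclude the pointwise bounds on $\rho_{G_R}$ you should either invoke the standard density-comparison lemma (e.g.\ \cite[Theorem~6.9]{Mattila}-type statements) or note that $\mu$-a.e.\ $x\in G_R$ is a Lebesgue density point for $G_R$ with respect to both $\mu$ and $\mathcal{H}^1\lfloor\Gamma_R$, which transfers your bounds to the restricted measures.
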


Let us mention now the following technical result proved in \cite[Subsection 4.6.1]{Tolsa_book}.
\begin{lemma} \label{paper3-lemcompar**}
Let $x,y,z\in \mathbb{C}$ be pairwise distinct points, and let
$x'\in \mathbb{C}$ be such that
$$
a^{-1} |x-y| \le |x'-y| \le a |x-y|,
$$
where $a>0$ is some constant. Then
$$
|c(x,y,z) - c(x',y,z)| \le (4+2a) {\frac{|x-x'|}{|x-y| |x-z|}}.
$$
\end{lemma}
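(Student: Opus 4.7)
The plan is to exploit the asymmetric formula
$$c(x,y,z) = \frac{2\,\dist(x,L_{y,z})}{|x-y|\,|x-z|},$$
which isolates the role of $x$ from the line $L_{y,z}$, a line that is \emph{unchanged} under the substitution $x\mapsto x'$. The natural splitting is by triangle inequality:
$$|c(x,y,z)-c(x',y,z)| \le \underbrace{\left|\frac{2\dist(x,L_{y,z})}{|x-y||x-z|}-\frac{2\dist(x',L_{y,z})}{|x-y||x-z|}\right|}_{\text{numerator piece}} + \underbrace{2\dist(x',L_{y,z})\left|\frac{1}{|x-y||x-z|}-\frac{1}{|x'-y||x'-z|}\right|}_{\text{denominator piece}}.$$

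The numerator piece is the easy one: since $w\mapsto \dist(w,L_{y,z})$ is $1$-Lipschitz, it is immediately bounded by $2|x-x'|/(|x-y||x-z|)$, contributing the constant $2$ to the final bound.

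For the denominator piece, I would factor
$$\left|\frac{1}{|x-y||x-z|}-\frac{1}{|x'-y||x'-z|}\right| = \frac{\big|\,|x-y||x-z|-|x'-y||x'-z|\,\big|}{|x-y||x-z||x'-y||x'-z|},$$
telescope the numerator as $|x-y|(|x-z|-|x'-z|)+|x'-z|(|x-y|-|x'-y|)$, and apply the reverse triangle inequality to obtain the bound $(|x-y|+|x'-z|)|x-x'|$. Now the key trick is to use $\dist(x',L_{y,z})\le|x'-z|$ in the summand containing $|x-y|$ and $\dist(x',L_{y,z})\le|x'-y|$ in the summand containing $|x'-z|$: this kills two factors in the denominator and leaves, respectively, $\frac{2|x-x'|}{|x-z||x'-y|}$ and $\frac{2|x-x'|}{|x-y||x-z|}$. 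Only at this last step do I invoke the hypothesis $|x'-y|\ge a^{-1}|x-y|$ to replace $|x'-y|^{-1}$ by $a/|x-y|$, yielding a total of $(2a+2)|x-x'|/(|x-y||x-z|)$ for the denominator piece. Adding the numerator piece produces exactly $(4+2a)|x-x'|/(|x-y||x-z|)$, as claimed.

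The only mildly delicate point, which I would call the main obstacle, is the matching in the previous paragraph: one must be careful to assign the two available bounds on $\dist(x',L_{y,z})$ to opposite summands so that \emph{no} hypothesis on $|x'-z|$ versus $|x-z|$ is ever needed, and the single hypothesis on $|x'-y|/|x-y|$ suffices. Once that combinatorial choice is made correctly, the argument is purely elementary triangle-inequality bookkeeping with no geometric input beyond the $1$-Lipschitz property of distance-to-a-line.
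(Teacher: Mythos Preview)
Your proof is correct and complete; the splitting into numerator and denominator pieces, the telescoping, and the careful assignment of the two bounds $\dist(x',L_{y,z})\le|x'-y|$ and $\dist(x',L_{y,z})\le|x'-z|$ to the appropriate summands all work exactly as you describe and yield precisely the constant $4+2a$. Note that the paper itself does not give a proof of this lemma but simply cites \cite[Subsection~4.6.1]{Tolsa_book}; your argument is essentially the standard one that appears there.
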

Take into account that $p_\infty(x,y,z)=\frac{1}{2}c(x,y,z)^2$ by (\ref{curvature_pointwise}).

Recall that
$$
\Gamma_{B_0}= G_R\cup \bigcup\nolimits_{i\in I_0}\widehat{J}_i\qquad\text{and}\qquad \widehat{J}_i= \Gamma_R\cap \Pi^{-1}(J_i).
$$
In Lemma~\ref{paper3-lemma_g_i} we showed how $\sigma\lfloor \widehat{J}_i$ can be approximated by a measure supported on $Q_i\setminus R_{\sf Far}$, for each $i\in I_0'$, where $I_0'$ is defined in (\ref{Q_i_Not_Far}). Notice that, by  Lemma~\ref{paper3-lemma_Q_i_proj_of_J_i},
\begin{equation}
\label{paper3-eqd5198}
\dist(Q_i, \widehat{J}_i)\lesssim_{\tau,A} \ell(J_i),\qquad i\in I_0.
\end{equation}

Now we consider the measures
$$
\nu_i:=g_i\,\mu
, \qquad i\in I_0',
$$
 with $g_i$ as in Lemma~\ref{paper3-lemma_g_i}, and set
\begin{equation}
\label{measure_nu}
\nu: =\sigma\lfloor G_R + \sum\nolimits_{i\in I_0'} \nu_i= \rho_{G_R}^{-1}\,\mu\lfloor G_R + \sum\nolimits_{i\in I_0'} g_i\,\mu.
\end{equation}
This measure should be understood as an approximation of $\sigma=\Theta_\mu(2B_R)\mathcal{H}^1_{\Gamma_{B_0}}$, which
coincides with $\sigma$ on $G_R$ due to Lemma~\ref{paper3-lemma_mu_G_R} ($g_i\equiv 0$ in this case).

Using the measure $\nu$,  we will actually prove the inequality (\ref{paper3-eqnu**23}) in Lemma~\ref{paper3-lemma_35}. This will be done in the forthcoming subsection.

\subsection{Estimates for the permutations of the Hausdorff measure restricted to $\Gamma_R$ in the case when points are far from each other}

To proceed, we need to introduce some additional notation.
Given measures $\tau_1,\tau_2,\tau_3$,  set
$$
p_t(\tau_1,\tau_2,\tau_3):= \iiint p_t(x,y,z)\,d\tau_1(x)\,d\tau_2(y)\,d\tau_3(z),\qquad \text{where }t=0\text{ or }t=\infty.
$$
We denote by $p_{t, \mathbf{F}}(\tau_1,\tau_2,\tau_3)$ the triple integral above restricted to $(x,y,z)$ such that
\begin{equation}
\label{paper3-eqfar**4}
\begin{array}{l}
  |x_1-y_1|\ge \varepsilon_0^{-1/20}(\ell_x+\ell_y), \\
  |x_1-z_1|\ge \varepsilon_0^{-1/20} (\ell_x+\ell_z), \\
  |y_1-z_1|\ge\varepsilon_0^{-1/20}
(\ell_y+\ell_z).
\end{array}
\end{equation}
So we have
\begin{equation}
\label{paper3-eqfi**4}
\begin{array}{rl}
p_{\infty, \mathbf{F}}(\sigma) = & \phantom{+} p_{\infty, \mathbf{F}}(\sigma\lfloor {G_R}) \\
  & +p_{\infty, \mathbf{F}}(\sigma\lfloor \Gamma_{B_0}\setminus {G_R}) \\
  & +3\,p_{\infty, \mathbf{F}}(\sigma\lfloor {G_R},\sigma\lfloor \Gamma_{B_0}\setminus {G_R},\sigma\lfloor \Gamma_{B_0}\setminus {G_R}) \\
  & +3\,p_{\infty, \mathbf{F}}(\sigma\lfloor {G_R},\sigma\lfloor {G_R},\sigma\lfloor \Gamma_{B_0}\setminus {G_R}).
\end{array}
\end{equation}

\bigskip

\textbf{1.} Consider the term $p_{\infty, \mathbf{F}}(\sigma\lfloor {G_R})$. In this case $\ell_x=\ell_y=\ell_z \equiv 0$ and the subscript $\mathbf{F}$ may be skipped. Moreover, using Lemmas~\ref{paper3-lemma_compar_p_0_p_infty} and~\ref{paper3-lemma_mu_G_R}, we get
$$
p_{\infty, \mathbf{F}}(\sigma\lfloor {G_R})
\lesssim_{\theta_0} p_{0}(\sigma\lfloor {G_R})
\approx_{\theta_0,\tau,A} p_{0}(\mu\lfloor {G_R}).
$$

Now we proceed very similarly to the proof of Lemma~\ref{paper3-lemma_R_far_1}. 
For $\delta>0$ from Lemma~\ref{paper3-lemma_beta_perm_initial}  (see also Section~\ref{paper3-sec_param}), taking into account Remark~\ref{remark_BS_R},  we get
\begin{align*}
p_{0}(\mu\lfloor {G_R})
&\le \int_{G_R}\sum_{Q\in {\sf Tree}(R)\setminus {\sf Stop}(R):\,x\in 2B_Q}p^{[\delta,Q]}_0(x,\mu\lfloor {G_R},\mu\lfloor {G_R}) \;d\mu(x)\\
&\le \sum_{Q\in {\sf Tree}(R)\setminus {\sf Stop}(R)}p^{[\delta,Q]}_0(\mu\lfloor 2B_Q,\mu\lfloor 2B_R,\mu\lfloor 2B_R) \\
&= \sum_{Q\in
{\sf Tree}(R)\setminus {\sf Stop}(R)}\frac{p^{[\delta,Q]}_0(\mu\lfloor 2
B_Q,\mu\lfloor 2B_R,\mu\lfloor 2B_R)}{\mu(Q)}\int \chi_Q(x)\,d\mu(x).
\end{align*}
Changing the order of summation and the inequality (\ref{paper3-big_perm_prop}) yield
\begin{align*}
\frac{p_{0}(\mu\lfloor {G_R})}{\Theta_\mu(2B_R)^2} &\le \int_R
\sum_{Q\in
{\sf Tree}(R)\setminus {\sf Stop}(R):\,x\in Q}\frac{p^{[\delta,Q]}_0(\mu\lfloor 2B_Q,\mu\lfloor 2B_R,\mu\lfloor 2B_R)}{\Theta_\mu(2B_R)^2\mu(Q)}\,d\mu(x)\\
&= \int_R
\sum_{Q\in
{\sf Tree}(R)\setminus {\sf Stop}(R):\,x\in Q}\textsf{perm}(Q)^2\,d\mu(x).
\end{align*}
From this and the inequality (\ref{paper3-big_perm_prop}) in Lemma~\ref{paper3-DbTree_properties} we deduce that
$$
p_{0}(\mu\lfloor {G_R})
\le \alpha^2\,\Theta_\mu(2B_R)^2  \mu(R).
$$
Finally, if $\alpha=\alpha(\theta_0,\varepsilon_0,\tau,A)$ is chosen small enough, then
\begin{equation*}\label{paper3-p_infty_F_sigma_1}
p_{\infty, \mathbf{F}}(\sigma\lfloor {G_R})
\lesssim \varepsilon_0^{1/40} \,\Theta_\mu(2B_R)^2\,\mu(R).
\end{equation*}

\bigskip

\textbf{2.} Let us study $p_{\infty, \mathbf{F}}(\sigma\lfloor \Gamma_{B_0}\setminus {G_R})$. In this case $\ell_x$, $\ell_y$ and $\ell_z$ are strictly positive and so are the lengths of the associated doubling cubes from Lemma~\ref{paper3-lemma_J_1}. We set
$$
p_{\infty, \mathbf{F}}(\sigma\lfloor \Gamma_{B_0}\setminus {G_R}) = \sum_{i,j,k\in I_0}
p_{\infty, \mathbf{F}}\left(\sigma\lfloor \widehat{J}_i, \sigma\lfloor\widehat{J}_j,\sigma\lfloor \widehat{J}_k\right).
$$

First let us consider the case when at least one of the indices $i$, $j$ or $k$ is in $I_0\setminus I_0'$, i.e.
$\mu(Q_h\cap R_{\sf Far})> \tfrac{3}{4}\mu(Q_h)$ 
for $h$ being $i$, $j$ or $k$,  according to (\ref{Q_i_Not_Far}). By symmetry, we may consider just the case $i\in I_0\setminus I_0'$. Moreover, then the required estimate will follow from a proper one for
$$
p_{\infty}\left(\sigma\lfloor \widehat{J'}, \sigma\lfloor\Gamma_{B_0},\sigma\lfloor\Gamma_{B_0}\right),\qquad\text{where } \widehat{J'}:=\bigcup_{i\in I_0\setminus I_0'}\widehat{J}_i.
$$

Recall that 
$$
\sigma=\Theta_\mu(2B_R)\mathcal{H}^1_{\Gamma_{B_0}}\quad\text{and}\quad \Gamma_{B_0}= G_R\cup \bigcup\nolimits_{i\in I_0}\widehat{J}_i.
$$
\begin{lemma}
\label{new1}
We have
$$
\mathcal{H}^1(\widehat{J'})\le \sqrt \alpha\,\diam(R).
$$
\end{lemma}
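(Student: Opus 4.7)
The strategy is to transfer the length of the graph pieces $\widehat J_i$ into masses of the associated cubes $Q_i$, and then exploit that most of each $Q_i$ with $i\in I_0\setminus I_0'$ lies in $R_{\sf Far}$, whose $\mu$-mass is tiny by Lemma~\ref{paper3-lemma_R_far_1}.

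First I would use that $\Gamma_R$ is the graph of the $C_F\,\theta(R)$-Lipschitz function $F$ (Lemma~\ref{paper3-lemma_Lip_A}), with $\theta(R)\approx\theta_0\ll 1$, to bound
$$
\mathcal{H}^1(\widehat J_i)=\int_{J_i}\sqrt{1+F'(t)^2}\,dt \;\lesssim\; \ell(J_i),\qquad i\in I_0.
$$
Summing,
$\mathcal{H}^1(\widehat{J'})\le \sum_{i\in I_0\setminus I_0'}\mathcal{H}^1(\widehat J_i)\lesssim \sum_{i\in I_0\setminus I_0'}\ell(J_i).$

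Next I would bound each $\ell(J_i)$ by $\mu(Q_i)$. By Lemma~\ref{paper3-lemma_J_1}$(a)$, $\ell(J_i)\lesssim \diam(Q_i)\approx r(Q_i)$. Since $Q_i\in {\sf DbTree}(R)\subset\mathcal{D}^{db}$, the doubling property (\ref{paper3-doubling_property}) yields $\mu(2B_{Q_i})\lesssim \mu(Q_i)$, while $Q_i\notin {\sf LD}(R)\cup {\sf HD}(R)$ and (\ref{paper3-theta_Q_Tree}) give $\Theta_\mu(2B_{Q_i})\ge\tau\,\Theta_\mu(2B_R)$. Combining,
$$
\tau\,\Theta_\mu(2B_R)\,r(Q_i) \;\lesssim\; \Theta_\mu(2B_{Q_i})\,r(Q_i) \;\approx\; \mu(2B_{Q_i}) \;\lesssim\; \mu(Q_i),
$$
so $\ell(J_i)\lesssim_{\tau}\mu(Q_i)/\Theta_\mu(2B_R)$. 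For $i\in I_0\setminus I_0'$ the failure of (\ref{Q_i_Not_Far}) gives $\mu(Q_i)\le \tfrac{4}{3}\,\mu(Q_i\cap R_{\sf Far})$, hence
$$
\mathcal{H}^1(\widehat{J'}) \;\lesssim_{\tau}\; \frac{1}{\Theta_\mu(2B_R)} \sum_{i\in I_0\setminus I_0'}\mu(Q_i\cap R_{\sf Far}) \;=\; \frac{1}{\Theta_\mu(2B_R)}\int_{R_{\sf Far}}\sum_{i\in I_0\setminus I_0'}\chi_{Q_i}\,d\mu.
$$

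The remaining step is to show that the family $\{Q_i\}_{i\in I_0}$ has bounded overlap $\sum_{i\in I_0}\chi_{Q_i}\lesssim_{\tau,A}1$. If $x\in Q_i$, then Lemma~\ref{paper3-lemma_J_1}$(b)$ together with $\diam(Q_i)\approx_{\tau,A}\ell(J_i)$ forces $\Pi(x)\in c\,J_i$ for some $c=c(\tau,A)$; the Whitney-type properties summarized in Lemma~\ref{paper3-Lip_3} then bound by $N(\tau,A)$ the number of indices $i$ for which a fixed point of $L_R$ lies in $c\,J_i$. Granting this, Lemma~\ref{paper3-lemma_R_far_1} yields $\mu(R_{\sf Far})\le \alpha\,\mu(R)$, while the doubling of $R$ gives $\mu(R)\approx \mu(2B_R)=56\,\Theta_\mu(2B_R)\,r(R)$, so $\mu(R)/\Theta_\mu(2B_R)\lesssim \diam(R)$. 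Putting everything together,
$$
\mathcal{H}^1(\widehat{J'}) \;\lesssim_{\tau,A}\; \alpha\,\diam(R) \;\le\; \sqrt{\alpha}\,\diam(R),
$$
provided $\alpha=\alpha(\tau,A)$ is chosen small enough, in agreement with the calibration of parameters in Section~\ref{paper3-sec_param}.

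The only genuinely nontrivial step is the bounded-overlap claim; everything else is a direct chain from the lemmas already at hand. The subtlety is that the dilation factor $c(\tau,A)$ may exceed $15$, so one cannot invoke Lemma~\ref{paper3-Lip_3}$(c)$ verbatim and must instead iterate the comparability-of-sizes property along chains of neighbouring Whitney intervals.
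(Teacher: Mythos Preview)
Your argument is correct up to the bounded-overlap step, but that step has a genuine gap that the chain-of-neighbours idea does not repair. For dilation factors $c$ larger than about $40$, the family $\{c\,J_i\}$ need \emph{not} have bounded overlap: take the model case $D(z)=|z|$ (i.e.\ $\Pi(G_R)=\{0\}$), where the Whitney interval centred near position $r>0$ has length $\approx r/20$; its $c$-dilate contains the fixed point $z=1$ for every sufficiently large $r$, hence for infinitely many $i$. Iterating Lemma~\ref{paper3-Lip_3}(b) along the chain of adjacent intervals from $J_i$ to the interval through $\Pi(x)$ only bounds the ratio of endpoint lengths by $C^N$, and the chain length $N$ here is of order $\log\bigl(\ell(J_i)/D(\Pi(x))\bigr)$, which is unbounded. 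Since the dilation $c=c(\tau,A)$ in your argument is indeed large (it comes from the upper bound $\diam(Q_i)\lesssim_{\tau,A}\ell(J_i)$ in Lemma~\ref{paper3-lemma_J_1}(a)), the pointwise inequality $\sum_{i}\chi_{Q_i}\lesssim_{\tau,A}1$ remains unproved, and the same model shows it can fail.

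The paper avoids this issue altogether: instead of bounded overlap, it applies Vitali's $5r$-covering lemma to the balls $\{2B_{Q_i}\}_{i\in I_0\setminus I_0'}$ and extracts a pairwise disjoint subfamily indexed by $J\subset I_0\setminus I_0'$. The point is that the \emph{Whitney intervals} $J_i$ are disjoint and each $J_i$ sits in an absolute-constant dilate of $\Pi(B_{Q_i})$ (Lemma~\ref{paper3-lemma_J_1}(b)); after Vitali the $J_i$'s attached to a single selected $j$ all lie in $\Pi(c_0 B_{Q_j})$, so their total length is $\lesssim r(Q_j)$. This gives $\sum_{i\in I_0\setminus I_0'}\ell(J_i)\lesssim\sum_{j\in J} r(Q_j)$, and now the $Q_j$ are genuinely disjoint (they lie in disjoint balls), so $\sum_{j\in J}\mu(Q_j\cap R_{\sf Far})\le\mu(R_{\sf Far})$. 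Your conversions $\mathcal H^1(\widehat J_i)\lesssim\ell(J_i)$ and $\ell(J_i)\approx_{\tau,A}\mu(Q_i)/\Theta_\mu(2B_R)$ are exactly what is needed; only the reduction to a disjoint family requires this Vitali detour rather than an overlap bound.
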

\begin{proof}
Notice that for $i\in I_0\setminus I_0'$ we have
\begin{align*}
\Theta_\mu(2B_R)\mathcal{H}^1(\wh J_i)&\lesssim
\Theta_\mu(2B_R)\ell(J_i)\lesssim_{\tau,A}\Theta_\mu(2B_R)\diam(Q_i)\\
&\approx_{\tau,A}
\mu(Q_i)\lesssim_{\tau,A}\mu(Q_i\cap R_{\sf Far}),
\end{align*}
where $Q_i\in {\sf DbTree}(R)$ is the cube associated to the interval $J_i$ by Lemma 3.21.
By Vitali's covering lemma, there exists a subfamily of balls $2B_{Q_i}$, $i\in J\subset I_0\setminus I_0'$, such that 
\begin{itemize}
\item the balls $2B_{Q_i}$, $i\in J$, are disjoint,
\item $\bigcup_{i\in I_0\setminus I_0'} 2B_{Q_i}\subset \bigcup_{i\in J} 10B_{Q_i}.$
\end{itemize}
Then, taking into account that $\mu(10B_{Q_i}\cap R)\approx_{\tau,A}  \mu(2B_{Q_i})\approx \mu(Q_i)$, we get
\begin{align*}
\Theta_\mu(2B_R)\mathcal{H}^1(\widehat{J'})& \lesssim \sum_{i\in I_0\setminus I_0'}
\mu(Q_i)
\lesssim \sum_{i\in J}
\mu(10 B_{Q_i}\cap R)\\
&\lesssim_{\tau,A} \sum_{i\in J}
\mu(Q_i)\lesssim _{\tau,A}
\sum_{i\in J}\mu(Q_i\cap R_{\sf Far})\lesssim_{\tau,A} \mu(R_{\sf Far}),
\end{align*}
because the cubes $Q_i$ from the family $J$ are disjoint.
Since $\mu(R_{\sf Far})\le \alpha \mu(R)$ by Lemma~\ref{paper3-lemma_R_far_1}, the lemma follows if $\alpha=\alpha(\tau,A)$ is chosen small enough.
\end{proof}
To continue, we need the following result from \cite{Tolsa_book}.
\begin{lemma}[Lemma 3.4 in \cite{Tolsa_book}]
\label{new2}
Let $\mu_1$, $\mu_2$ and $\mu_3$ be finite measures. Then
$$
\sum_{s\in \mathfrak{S}_3} \int \mathcal{C}_{\varepsilon}(\mu_{s_2})\overline{\mathcal{C}_{\varepsilon}(\mu_{s_3})}\,d\mu_{s_1}=c^2_\varepsilon(\mu_1,\mu_2,\mu_3)+\mathcal{R},\quad \mathcal{R}\le C\, 
\sum_{s\in \mathfrak{S}_3}\int M_{\sf R}\mu_{s_2} M_{\sf R} \mu_{s_3}\,d\mu_{s_1},
$$
where $\mathfrak{S}_3$ is the group of permutations of the three
elements $\{1,2,3\}$, $\mathcal{C}_\varepsilon$ the truncated Cauchy integral, $c^2_\varepsilon$ the truncated curvature of measure $($see $(\ref{p_K(mu)})$ and below$)$ and
 $M_{\sf R}$ the
$1$-dimensional radial maximal operator.
\end{lemma}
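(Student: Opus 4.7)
The plan is to prove the identity by unfolding both sides as triple integrals on $\mathbb{C}^3$ and invoking Melnikov's pointwise algebraic identity; the remainder then captures the discrepancy between the truncation regions on the two sides and is controlled by a standard dyadic-annulus bound against the radial maximal function.

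First I would unfold each summand on the left-hand side. By Fubini, for each $s\in\mathfrak{S}_3$,
$$
\int \mathcal{C}_\varepsilon(\mu_{s_2})\,\overline{\mathcal{C}_\varepsilon(\mu_{s_3})}\,d\mu_{s_1} = \iiint_{D_s}\frac{d\mu_{s_1}(z_{s_1})\,d\mu_{s_2}(z_{s_2})\,d\mu_{s_3}(z_{s_3})}{(z_{s_1}-z_{s_2})\,\overline{(z_{s_1}-z_{s_3})}},
$$
where $D_s = \{|z_{s_1}-z_{s_2}|>\varepsilon\}\cap\{|z_{s_1}-z_{s_3}|>\varepsilon\}$. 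Relabeling the integration variables to $(z_1,z_2,z_3)$ in the canonical order $(\mu_1,\mu_2,\mu_3)$, one checks that summing over all six $s$ produces, at each triple in the fully separated set $S_\varepsilon:=\{(z_1,z_2,z_3):|z_i-z_j|\ge\varepsilon \text{ for all } i\ne j\}$, the Melnikov sum
$$
M(z_1,z_2,z_3):=\sum_{s\in\mathfrak{S}_3}\frac{1}{(z_{s_1}-z_{s_2})\,\overline{(z_{s_1}-z_{s_3})}},
$$
and the classical algebraic identity of Melnikov asserts $M(z_1,z_2,z_3)=c(z_1,z_2,z_3)^2$ whenever the three points are distinct. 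Integrating this identity over $S_\varepsilon$ against $d\mu_1\,d\mu_2\,d\mu_3$ yields exactly $c^2_\varepsilon(\mu_1,\mu_2,\mu_3)$.

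Second, I would compare the two domains. For each $s$, the set $D_s$ differs from $S_\varepsilon$ only by the absence of the constraint $|z_{s_2}-z_{s_3}|\ge\varepsilon$. Hence
$$
\mathcal{R}=\sum_{s\in\mathfrak{S}_3}\iiint_{D_s\cap\{|z_{s_2}-z_{s_3}|<\varepsilon\}}\frac{d\mu_{s_1}(z_{s_1})\,d\mu_{s_2}(z_{s_2})\,d\mu_{s_3}(z_{s_3})}{(z_{s_1}-z_{s_2})\,\overline{(z_{s_1}-z_{s_3})}}.
$$
On the discrepancy region the triangle inequality forces $|z_{s_1}-z_{s_2}|\approx|z_{s_1}-z_{s_3}|\ge\varepsilon$, so the modulus of the integrand is dominated by $C\,|z_{s_1}-z_{s_2}|^{-1}|z_{s_1}-z_{s_3}|^{-1}$.

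Finally, to bound $\mathcal{R}$ by the claimed maximal expression, I would fix $z_{s_1}$ and integrate separately in $z_{s_2}$ and $z_{s_3}$. The standard dyadic-annulus estimate
$$
\int_{|z_{s_1}-z|\ge\varepsilon}\frac{d\mu_{s_j}(z)}{|z_{s_1}-z|}\;\lesssim\; M_{\sf R}\mu_{s_j}(z_{s_1}),\qquad j=2,3,
$$
then produces the pointwise bound $C\,M_{\sf R}\mu_{s_2}(z_{s_1})\,M_{\sf R}\mu_{s_3}(z_{s_1})$, and integrating in $z_{s_1}$ against $d\mu_{s_1}$ and summing over $s\in\mathfrak{S}_3$ gives the stated estimate for $\mathcal{R}$. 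The only nontrivial step is the bookkeeping in the first paragraph—correctly matching each of the six asymmetric truncation regions $D_s$ to the corresponding slot in Melnikov's identity. Melnikov's pointwise identity is the algebraic heart of the result; once the domains are reconciled, the maximal-function control of the remainder is routine.
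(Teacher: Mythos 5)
Your first two steps are sound: unfolding the left-hand side as a triple integral over the asymmetric truncation domains $D_s$, invoking Melnikov's algebraic identity
$\sum_{s\in\mathfrak{S}_3}\bigl((z_{s_1}-z_{s_2})\overline{(z_{s_1}-z_{s_3})}\bigr)^{-1}=c(z_1,z_2,z_3)^2$
on the fully separated set $S_\varepsilon$, and identifying $\mathcal{R}$ with the contribution from $D_s\setminus S_\varepsilon=D_s\cap\{|z_{s_2}-z_{s_3}|<\varepsilon\}$. This is the right skeleton.

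The final paragraph, however, contains a genuine error. The claimed estimate
$$
\int_{|z_{s_1}-z|\ge\varepsilon}\frac{d\mu_{s_j}(z)}{|z_{s_1}-z|}\;\lesssim\; M_{\sf R}\mu_{s_j}(z_{s_1})
$$
is false: for $\mu_{s_j}=\mathcal{H}^1\lfloor[0,1]$ and $z_{s_1}=0$ the left side is $\log(1/\varepsilon)$ while $M_{\sf R}\mu_{s_j}(0)=1$. Dyadic annuli give a \emph{tail bound for a $1/|z|^2$-potential}, not a $1/|z|$-potential: each annulus contributes $O(M_{\sf R}\mu)$, but summing over $\log(1/\varepsilon)$ scales does not give a constant. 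The deeper problem is that by ``integrating separately in $z_{s_2}$ and $z_{s_3}$'' you silently discard the constraint $|z_{s_2}-z_{s_3}|<\varepsilon$, which is precisely what makes the discrepancy region small; without it the argument cannot close, and with your estimate as stated $\mathcal{R}$ would blow up as $\varepsilon\to0$, defeating the purpose of the lemma.

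The correct order is to exploit the comparability $|z_{s_1}-z_{s_2}|\approx|z_{s_1}-z_{s_3}|$ to dominate the integrand by $C|z_{s_1}-z_{s_2}|^{-2}$, then integrate in $z_{s_1}$ \emph{first} over $\{|z_{s_1}-z_{s_2}|>\varepsilon\}$. The dyadic-annulus bound for a quadratic potential gives
$$
\int_{|z_{s_1}-z_{s_2}|>\varepsilon}\frac{d\mu_{s_1}(z_{s_1})}{|z_{s_1}-z_{s_2}|^2}\;\lesssim\;\frac{M_{\sf R}\mu_{s_1}(z_{s_2})}{\varepsilon},
$$
and the constraint $|z_{s_2}-z_{s_3}|<\varepsilon$ then yields $\mu_{s_3}(B(z_{s_2},\varepsilon))\le\varepsilon\,M_{\sf R}\mu_{s_3}(z_{s_2})$; the factors of $\varepsilon$ cancel, producing $\int M_{\sf R}\mu_{s_1}\,M_{\sf R}\mu_{s_3}\,d\mu_{s_2}$, which is one of the six terms in the right-hand sum over $\mathfrak{S}_3$. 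Repeating this for every permutation recovers the stated bound. So your ``routine'' step is in fact where the substance lies, and as written it fails.
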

\begin{lemma}
\label{new3}
For $E\subset \Gamma_{B_0}$, we have
$$
c^2(\mathcal{H}^1_E,\mathcal{H}^1_{\Gamma_{B_0}},\mathcal{H}^1_{\Gamma_{B_0}})\lesssim \mathcal{H}^1(E)^{1/2}\,\diam(R)^{1/2}.
$$
\end{lemma}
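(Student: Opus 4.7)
My plan is to apply Lemma~\ref{new2} with $\mu_1=\mathcal{H}^1_E$ and $\mu_2=\mu_3=\mathcal{H}^1_{\Gamma_{B_0}}$. Writing $\eta:=\mathcal{H}^1_{\Gamma_R}$ so that $\mu_1=\chi_E\eta$ and $\mu_2=\mu_3=\chi_{\Gamma_{B_0}}\eta$, I would exploit that, by Lemma~\ref{paper3-lemma_Lip_A}, $\Gamma_R$ is a Lipschitz graph with small Lipschitz constant; hence $\eta$ is $1$-AD-regular. Classical Calder\'on--Zygmund theory then provides uniform $L^p(\eta)$-boundedness, for every $p\in(1,\infty)$, of the truncated Cauchy transform $f\mapsto \mathcal{C}_\varepsilon(f\eta)$, and likewise of the radial maximal operator $M_R$. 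The lemma reduces the estimate to bounding a sum of six Cauchy integrals and a maximal-function remainder $\mathcal R$, and in both pieces I would use Cauchy--Schwarz-type manipulations together with $\eta(\Gamma_{B_0})\lesssim\diam(R)$.

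For the Cauchy integrals I would distinguish two cases. In the four permutations where the outer measure is $\mu_{s_1}=\chi_{\Gamma_{B_0}}\eta$, Cauchy--Schwarz and $L^2(\eta)$-boundedness give
$$
\left|\int\mathcal{C}_\varepsilon(\chi_E\eta)\overline{\mathcal{C}_\varepsilon(\chi_{\Gamma_{B_0}}\eta)}\,d\eta\right|\lesssim \|\chi_E\|_{L^2(\eta)}\|\chi_{\Gamma_{B_0}}\|_{L^2(\eta)}\lesssim \mathcal{H}^1(E)^{1/2}\diam(R)^{1/2}.
$$
In the two permutations with $\mu_{s_1}=\chi_E\eta$, the naive bound $\int_E|\mathcal{C}_\varepsilon(\chi_{\Gamma_{B_0}}\eta)|^2\,d\eta\le \|\chi_{\Gamma_{B_0}}\|_{L^2(\eta)}^2\lesssim\diam(R)$ is insufficient, since it does not produce the factor $\mathcal{H}^1(E)^{1/2}$. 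I would instead apply Cauchy--Schwarz to $\chi_E\cdot|\mathcal{C}_\varepsilon|^2$ and then invoke $L^4(\eta)$-boundedness:
$$
\int_E|\mathcal{C}_\varepsilon(\chi_{\Gamma_{B_0}}\eta)|^2\,d\eta\le \mathcal{H}^1(E)^{1/2}\|\mathcal{C}_\varepsilon(\chi_{\Gamma_{B_0}}\eta)\|_{L^4(\eta)}^2\lesssim \mathcal{H}^1(E)^{1/2}\eta(\Gamma_{B_0})^{1/2}\lesssim \mathcal{H}^1(E)^{1/2}\diam(R)^{1/2}.
$$
This sharpening, which I expect to be the main conceptual point, crucially uses that the Lipschitz graph structure delivers $L^p$-boundedness for every $p\in(1,\infty)$, not just for $p=2$.

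The remainder $\mathcal R$ is handled by the same dichotomy with $M_R$ in place of $\mathcal{C}_\varepsilon$. For the four permutations with $\mu_{s_1}=\chi_{\Gamma_{B_0}}\eta$, Cauchy--Schwarz and $L^2(\eta)$-boundedness of $M_R$ yield
$$
\int_{\Gamma_{B_0}} M_R(\chi_E\eta)\,M_R(\chi_{\Gamma_{B_0}}\eta)\,d\eta\lesssim \|\chi_E\|_{L^2(\eta)}\|\chi_{\Gamma_{B_0}}\|_{L^2(\eta)}\lesssim \mathcal{H}^1(E)^{1/2}\diam(R)^{1/2}.
$$
For the two permutations with $\mu_{s_1}=\chi_E\eta$, the AD-regularity of $\eta$ gives the pointwise estimate $M_R(\chi_{\Gamma_{B_0}}\eta)\lesssim 1$, hence $\int_E M_R(\chi_{\Gamma_{B_0}}\eta)^2\,d\eta\lesssim \mathcal{H}^1(E)\le \mathcal{H}^1(E)^{1/2}\diam(R)^{1/2}$, where the last inequality follows from $\mathcal{H}^1(E)\le \eta(\Gamma_{B_0})\lesssim\diam(R)$. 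Summing the six Cauchy contributions and the remainder and letting $\varepsilon\to 0$ then delivers the claimed bound.
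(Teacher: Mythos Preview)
Your proposal is correct and follows essentially the same route as the paper: apply Lemma~\ref{new2}, handle the mixed Cauchy terms by Cauchy--Schwarz and $L^2(\mathcal{H}^1_{\Gamma_R})$-boundedness, sharpen the term $\int_E|\mathcal{C}_\varepsilon(\mathcal{H}^1_{\Gamma_{B_0}})|^2$ via Cauchy--Schwarz and $L^4$-boundedness (this is exactly the paper's $I_2$), and treat the maximal-function remainder using $M_{\sf R}(\mathcal{H}^1_{\Gamma_{B_0}})\lesssim 1$ together with the $L^2$-boundedness of $M_{\sf R}$. The only cosmetic difference is that the paper reduces $I_3$ first by the pointwise bound before applying $L^2$-boundedness, whereas you apply Cauchy--Schwarz directly; both give the same estimate.
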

\begin{proof}
By Lemma~\ref{new2}, we have
\begin{align*}
c^2(\mathcal{H}^1_E,\mathcal{H}^1_{ \Gamma_{B_0}},\mathcal{H}^1_{\Gamma_{B_0}}) & \lesssim \limsup_{\varepsilon\to0}
\int_{\Gamma_{B_0}}|\mathcal{C}_\varepsilon( \mathcal{H}^1_E)\,\mathcal{C}_\varepsilon (\mathcal{H}^1_{\Gamma_{B_0}})|\,d\mathcal{H}^1\\
& \quad + \limsup_{\varepsilon \to 0}\int_{E}|\mathcal{C}_\varepsilon (\mathcal{H}^1_{\Gamma_{B_0}})|^2\,d\mathcal{H}^1 \\
& \quad + \int_{ \Gamma_{B_0}} |M_{\sf R}(\mathcal{H}^1_E)\,M_{\sf R}(\mathcal{H}^1_{\Gamma_{B_0}})|^2\,d\mathcal{H}^1 \\
& \quad + \int_{E} |M_{\sf R}(\mathcal{H}^1_{\Gamma_{B_0}})|^2\,d\mathcal{H}^1\\
& := I_1+I_2+I_3+I_4.
\end{align*}

Regarding $I_1$, by the $L^2$-boundedness of the Cauchy transform on Lipschitz graphs (with respect to
$\mathcal{H}^1_{\Gamma_R}$) we have
\begin{equation*}
\begin{split}
I_1&\le \limsup_{\varepsilon\to0}
\|\mathcal{C}_\varepsilon( \mathcal{H}^1_E)\|_{L^2(\mathcal{H}^1_{\Gamma_R})}\|\mathcal{C}_\varepsilon( \mathcal{H}^1_{\Gamma_{B_0}})\|_{L^2(\mathcal{H}^1_{\Gamma_R})}\\
&\lesssim \mathcal{H}^1(E)^{1/2}\,\mathcal{H}^1( \Gamma_{B_0})^{1/2}\\
&\lesssim \mathcal{H}^1(E)^{1/2}\,\diam(R)^{1/2}.
\end{split}
\end{equation*}
For $I_2$ we use the $L^4$-boundedness of the Cauchy transform:
$$
I_2 \le \limsup_{\varepsilon \to 0}
\mathcal{H}^1(E)^{1/2}\,\|\mathcal{C}_\varepsilon(\mathcal{H}^1_{\Gamma_{B_0}})\|_{L^4(\mathcal{H}^1_{\Gamma_R})}^{2}\lesssim \mathcal{H}^1(E)^{1/2}\,\diam(R)^{1/2}.
$$

Using the fact that
$M_{\sf R}(\mathcal{H}^1_{\Gamma_{B_0}})\lesssim1$, we derive 
$$
I_4\le \mathcal{H}^1(E)\lesssim \mathcal{H}^1(E)^{1/2}\,\diam(R)^{1/2},
$$
and also
$$I_3\lesssim \int_{\Gamma_{B_0}} |M_{\sf R}(\mathcal{H}^1_E)|\,d\mathcal{H}^1.$$
Since the operator $M_{\sf R}( \mathcal{H}^1_{\Gamma_R})$ is bounded in $L^2(\mathcal{H}^1_{\Gamma_R})$
(as it is comparable to the Hardy-Littlewood operator with respect to the measure $\mathcal{H}^1_{\Gamma_R}$),
we deduce
$$I_3\lesssim \|M_{\sf R}(\chi_E\mathcal{H}^1_{\Gamma_R})\|_{L^2(\mathcal{H}^1_{\Gamma_R})}\mathcal{H}^1(\Gamma_{B_0})^{1/2}
\lesssim \mathcal{H}^1(E)^{1/2}\,\diam(R)^{1/2}.$$
So the lemma follows.
\end{proof}

By Lemma~\ref{new3} for 
$E =\wh J'$
and Lemma~\ref{new1} we derive that
$$
c^2(\mathcal{H}^1_{\wh J'},\mathcal{H}^1_{\Gamma_{B_0}},\mathcal{H}^1_{\Gamma_{B_0}})\lesssim\mathcal{H}^1(\wh J')^{1/2}\,\diam(R)^{1/2}
\lesssim  \alpha^{1/4}\,\diam(R).
$$
Therefore, recalling that $p_\infty(x,y,z)=\frac{1}{2}c(x,y,z)^2$ (see  (\ref{curvature_pointwise})),
$$
p_{\infty}\left(\sigma\lfloor \widehat{J'}, \sigma\lfloor\Gamma_{B_0},\sigma\lfloor\Gamma_{B_0}\right)\lesssim \alpha^{1/4}\Theta_\mu(2B_R)^3  \diam(R)
\approx \alpha^{1/4}\,\Theta_\mu(2B_R)^2 \,\mu(R).
$$
Furthermore, choosing  $\alpha=\alpha(\varepsilon_0)$ small enough, we get from the latter estimate that
\begin{equation}
\label{eqq}
\sum_{i\in I_0\setminus I_0',\,j,k\in I_0}
p_{\infty, \mathbf{F}}\left(\sigma\lfloor \widehat{J}_i, \sigma\lfloor\widehat{J}_j,\sigma\lfloor \widehat{J}_k\right)\lesssim \varepsilon_0^{1/40} \,\Theta_\mu(2B_R)^2\,\mu(R),
\end{equation}
and we are done with the case when at least one of the indices $i$, $j$ or $k$ is in $I_0\setminus I_0'$.

\medskip

Now let $(i,j,k)\in (I_0')^3$. By definition, if $p_{\infty, \mathbf{F}}\left(\sigma\lfloor\widehat{J}_i, \sigma\lfloor\widehat{J}_j,\sigma\lfloor \widehat{J}_k\right)\neq 0$, then there exist $x\in \widehat{J}_i$, $y\in \widehat{J}_j$ and $z\in \widehat{J}_k$ satisfying
(\ref{paper3-eqfar**4}). Then it follows easily that
\begin{equation}
\label{paper3-eqfar41630}
\begin{array}{l}
  \dist(\widehat{J}_i,\widehat{J}_j)\ge \frac{1}{2}\varepsilon_0^{-1/20}\left(\ell(J_i)+ \ell(J_j)\right), \\
  \dist(\widehat{J}_i,\widehat{J}_k)\ge \frac{1}{2}\varepsilon_0^{-1/20}\left(\ell(J_i)+ \ell(J_k)\right), \\
  \dist(\widehat{J}_j,\widehat{J}_k)\ge \frac{1}{2}\varepsilon_0^{-1/20}\left(\ell(J_j)+ \ell(J_k)\right).
\end{array}
\end{equation}
We denote by $J_{\bf F}$ the set of those indices $(i,j,k)\in (I_0')^3$ such that the  inequalities (\ref{paper3-eqfar41630}) hold, so that
$$
p_{\infty, \mathbf{F}}(\sigma\lfloor \Gamma_{B_0}\setminus {G_R}) \le  \sum_{(i,j,k)\in J_{\bf F}}
p_\infty\left(\sigma\lfloor \widehat{J}_i, \sigma\lfloor \widehat{J}_j,\sigma\lfloor \widehat{J}_k\right).
$$

Consider $(i,j,k)\in J_{\bf F}$ and
$$
x,x'\in \widehat{J}_i\cup Q_i, \qquad
y,y'\in \widehat{J}_j\cup Q_j \qquad \text{and} \qquad z,z'\in \widehat{J}_k\cup Q_k.
$$
Due to (\ref{paper3-eqfar41630}) and (\ref{paper3-eqd5198}),
taking into account that $\ell(J_h)\approx_{\tau,A} \diam(\widehat{J}_h)\approx_{\tau,A}\diam(Q_h)$ for each $h\in I$ by Lemma~\ref{paper3-lemma_J_1},
the sets
$\widehat{J}_i\cup Q_i$, $\widehat{J}_j\cup Q_j$ and $\widehat{J}_k\cup Q_k$ are far to each other in the sense that
\begin{equation}
\label{paper3-eqfar}
\begin{array}{l}
  \dist(\widehat{J}_i\cup Q_i,\widehat{J}_j\cup Q_j)\gtrsim \varepsilon_0^{-1/20}\left(\ell(J_i)+ \ell(J_j)\right), \\
  \dist(\widehat{J}_i\cup Q_i,\widehat{J}_k\cup Q_k) \gtrsim \varepsilon_0^{-1/20}\left(\ell(J_i)+ \ell(J_k)\right), \\
  \dist(\widehat{J}_j\cup Q_j,\widehat{J}_k\cup Q_k)\gtrsim \varepsilon_0^{-1/20}\left(\ell(J_j)+ \ell(J_k)\right),
\end{array}
\end{equation}
where $\varepsilon_0$ is chosen small enough. Furthermore, applying Lemma~\ref{paper3-lemcompar**} three times gives
\begin{align*}
p_\infty(x,y,z) & \le 2\,p_\infty(x',y',z')+c\left(T_x(y,z) +T_y(x,z)+T_z(x,y)\right),
\end{align*}
where
$$
T_{z_1}(z_2,z_3):=\frac{\ell_{z_1}^2}{|z_1-z_2|^2\,|z_1-z_3|^2}\qquad\text{for}\quad z_1,z_2,z_3 \in \mathbb{C}.
$$
Then, integrating on $x\in \widehat{J}_i$, $y\in \widehat{J}_j$, and $z\in \widehat{J}_k$ with respect to $\sigma$, we get
\begin{align*}
&p_\infty\left(\sigma\lfloor \widehat{J}_i,\sigma\lfloor\widehat{J}_j,\sigma\lfloor \widehat{J}_k\right)  \le 2\,p_\infty(x',y',z') \,\sigma(\widehat{J}_i)\,
\sigma(\widehat{J}_j)\,\sigma(\widehat{J}_k)
\\
&\qquad+
c\iiint_{\,
\begin{subarray}{l} x\in \widehat{J}_i\\y\in \widehat{J}_j\\ z\in \widehat{J}_k\end{subarray}}
\left[T_x(y,z) +T_y(x,z)+T_z(x,y)\right]\,d\sigma(x)\,d\sigma(y)\,d\sigma(z).
\end{align*}
On the other hand, by analogous arguments, we have
\begin{align*}
p_\infty(x',y',z') \,\|\nu_i\|& \,
\|\nu_j\|\,\|\nu_k\| \le 2\,p_\infty(\nu_i,\nu_j,\nu_k)
\\
&+ c\iiint
\left[
T_x(y,z) +T_y(x,z)+T_z(x,y)\right]\,d\nu_i(x)\,d\nu_j(y)\,d\nu_k(z).
\end{align*}
Thus, recalling that $\|\nu_h\|=
\sigma(\widehat{J}_h)$ for any $h\in I_0'$, from the preceding  inequalities we get
\begin{equation}
\label{add_eq4}
\begin{split}
p_\infty&\left(\sigma\lfloor \widehat{J}_i,\sigma\lfloor\widehat{J}_j,\sigma\lfloor \widehat{J}_k\right)\lesssim\,\,p_\infty(\nu_i,\nu_j,\nu_k)
\\
&+ \iiint
\left[
T_x(y,z) +T_y(x,z)+T_z(x,y)\right]\,d\nu_i(x)\,d\nu_j(y)\,d\nu_k(z)\\
&\qquad+
\iiint_{\,
\begin{subarray}{l} x\in \widehat{J}_i\\y\in \widehat{J}_j\\ z\in \widehat{J}_k\end{subarray}}
\left[T_x(y,z) +T_y(x,z)+T_z(x,y)\right]\,d\sigma(x)\,d\sigma(y)\,d\sigma(z).
\end{split}
\end{equation}

Now recall that $A_h=\spt \nu_h\subset Q_h$ for any $h\in I_0'$. This and  Lemma~\ref{paper3-lemma_gamma_R_1} imply that for each $x\in Q_i$ and $y\in Q_j$  there exist $\tilde{x}\in\Gamma_R$ and $\tilde{y}\in\Gamma_R$, correspondingly, such that $\dist(x,\tilde{x})\lesssim_{\tau,A} \ell(J_i)$ and $\dist(y,\tilde{y})\lesssim_{\tau,A}  \ell(J_j)$. Due to this fact and (\ref{paper3-eqfar}), it holds that
$$
\measuredangle (L_{x\tilde{y}},L_{xy})\lesssim \frac{|y-\tilde{y}|}{|x-y|}\lesssim_{\tau,A}  \frac{\ell(J_j)}{\varepsilon_0^{-1/20}\left(\ell(J_i)+ \ell(J_j)\right)}\lesssim_{\tau,A}  \varepsilon_0^{1/20}
$$
and
$$
\measuredangle (L_{\tilde{x}y},L_{xy})\lesssim   \frac{|x-\tilde{x}|}{|x-y|}\lesssim_{\tau,A}  \frac{\ell(J_i)}{\varepsilon_0^{-1/20}\left(\ell(J_i)+ \ell(J_j)\right)}\lesssim_{\tau,A}  \varepsilon_0^{1/20}.
$$
So it follows that $\measuredangle (L_{\tilde{x}\tilde{y}},L_{xy})\lesssim_{\tau,A} \varepsilon_0^{1/20}$.
By Lemma~\ref{paper3-lemma_Lip_A} and the definitions at the beginning of Subsection~\ref{paper3-sec_small_BS_1},
$$
\measuredangle(L_{xy},L_R)\le C_F\,\theta_0\qquad \text{ and } \qquad \theta_V(L_R)\ge (1+C_F)\,\theta_0.
$$
Consequently,
$$
\theta_V(L_{\tilde{x}\tilde{y}})\ge \theta_V(L_R)-\measuredangle(L_{xy},L_R)-\measuredangle (L_{\tilde{x}\tilde{y}},L_{xy})\ge \tfrac{1}{2}\theta_0,
$$
if $\varepsilon_0=\varepsilon_0(\theta_0,{\tau,A} )$ is chosen small enough. Now use Lemma~\ref{paper3-lemma_Chous_Prat} to conclude that
\begin{equation}
\label{add_eq5}
p_\infty(\nu_i,\nu_j,\nu_k)\lesssim_{\theta_0}p_0(\nu_i,\nu_j,\nu_k).
\end{equation}
Moreover, from (\ref{paper3-eqfar}) and the fact that $\ell(J_h)\approx_{\tau,A} \diam(Q_h)$ for any $h$ we conclude that
\begin{align*}
p_0(\nu_i,\nu_j,\nu_k)&\lesssim  \int\sum_{Q\in {\sf Tree}(R)\setminus {\sf Stop}(R):\,x\in 2B_Q}p^{[\delta,Q]}_0(x,\nu_j,\nu_k) \;d\nu_i(x)\\
&\lesssim  \sum_{Q\in {\sf Tree}(R)\setminus {\sf Stop}(R)}p^{[\delta,Q]}_0(\nu_i\lfloor 2B_Q,\nu_j,\nu_k),
\end{align*}
where $\delta=\delta(\varepsilon_0,\tau,A)$ is chosen small enough. Furthermore, using that $\nu=g\mu$ and arguing as in the case of $p_{\infty, \mathbf{F}}(\sigma\lfloor {G_R})$ we get
\begin{align*}
\sum_{(i,j,k)\in J_{\bf F}}p_0(\nu_i,\nu_j,\nu_k)
&\lesssim  \sum_{Q\in {\sf Tree}(R)\setminus {\sf Stop}(R)}p^{[\delta,Q]}_0(\nu\lfloor 2B_Q,\nu,\nu)\\
&\lesssim_{\tau,A}   \sum_{Q\in {\sf Tree}(R)\setminus {\sf Stop}(R)}p^{[\delta,Q]}_0(\mu\lfloor 2
B_Q,\mu\lfloor 2B_R,\mu\lfloor 2B_R)\\
& \lesssim_{\tau,A} \alpha^2\Theta_\mu(2B_R)^2\mu(R)\\
&\lesssim \varepsilon_0^{1/20}\Theta_\mu(2B_R)^2\mu(R),
\end{align*}
where $\alpha=\alpha(\theta_0,\varepsilon_0,\tau,A)$ is chosen small enough.
From this, (\ref{add_eq4}) and (\ref{add_eq5}) by summing on $(i,j,k)\in J_{\bf F}$ we deduce that
\begin{equation}
\label{paper3-eq**9212}
\begin{split}
&\sum_{(i,j,k)\in J_{\bf F}}p_\infty\left(\sigma\lfloor \widehat{J}_i,\sigma\lfloor\widehat{J}_j,\sigma\lfloor \widehat{J}_k\right)\\
&\quad \lesssim \varepsilon_0^{1/40}\Theta_\mu(2B_R)^2\mu(R)  \\
&
\qquad + \!\iiint_{\,
\begin{subarray}{l} |x-y|\ge \frac{1}{2}\varepsilon_0^{-1/20}(\ell_x+\ell_y) \\
|x-z|\ge \frac{1}{2}\varepsilon_0^{-1/20}(\ell_x+\ell_z)\\
|y-z|\ge \frac{1}{2}\varepsilon_0^{-1/20}(\ell_y+\ell_z)\end{subarray}}\left[
T_x(y,z) +T_y(x,z)+T_z(x,y)\right]\,d\sigma(x)\,d\sigma(y)\,d\sigma(z)\\
&
\qquad+ \!\iiint_{\,
\begin{subarray}{l} |x-y|\ge \frac{1}{2}\varepsilon_0^{-1/20}(\ell_x+\ell_y) \\
|x-z|\ge \frac{1}{2}\varepsilon_0^{-1/20}(\ell_x+\ell_z)\\
|y-z|\ge \frac{1}{2}\varepsilon_0^{-1/20}(\ell_y+\ell_z)\end{subarray}}\left[
T_x(y,z) +T_y(x,z)+T_z(x,y)\right]\,d\nu(x)\,d\nu(y)\,d\nu(z),
\end{split}
\end{equation}
where $\varepsilon_0=\varepsilon_0(\theta_0)$ was chosen small enough.
Recall the definition of $\nu$ in (\ref{measure_nu}).

To estimate the first triple integral in the right side of (\ref{paper3-eq**9212}), notice that
\begin{equation}
\label{paper3-eq**de3}
\begin{split}
&\iint_{\,
\begin{subarray}{l} |x-y|\ge \frac{1}{2}\varepsilon_0^{-1/20}(\ell_x+\ell_y) \\
|x-z|\ge \frac{1}{2}\varepsilon_0^{-1/20}(\ell_x+\ell_z)\\
\end{subarray}}
T_x(y,z)\,d\sigma(y)\,d\sigma(z)\\
&\quad \le \left(\int_{|x-y|\ge \frac{1}{2}\varepsilon_0^{-1/20}\ell_x}
 \frac{\ell_x}{|x-y|^2}\,d\sigma(y)\right)
\left(\int_{|x-z|\ge \frac{1}{2}\varepsilon_0^{-1/20}\ell_x}
 \frac{\ell_x}{|x-z|^2}\,d\sigma(z)\right)\\
 &\quad= \left(\int_{|x-y|\ge \frac{1}{2}\varepsilon_0^{-1/20}\ell_x}
 \frac{\ell_x}{|x-y|^2}\,d\sigma(y)\right)^2 \lesssim \varepsilon_0^{1/10}\,\Theta_\mu(2B_R)^2,
 \end{split}
\end{equation}
where the  last inequality follows from splitting the domain $\{y:|x-y|\ge \frac{1}{2}\varepsilon_0^{-1/20}\ell_x\}$ into annuli and the linear growth of $\sigma$ with constant $\lesssim\Theta_\mu(2B_R)$ (see (\ref{linear_growth})). Analogous estimates hold permuting $x,y,z$, and
also interchanging $\sigma$ by $\nu$ (the implicit constant in the analogue of (\ref{paper3-eq**de3}) for $\nu$ depends on $\tau$ and $A$ then). Indeed, this is a consequence of the following result. 
\begin{lemma}
It holds that
$$
\nu(B(x,r))\lesssim_{\tau,A} \Theta_\mu(2B_R)\,r,\quad \text{where } r\ge \ell_x>0\text{ and } x\in \spt\nu \subset R\setminus R_{\sf Far}.
$$
\end{lemma}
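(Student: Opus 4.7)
The strategy is to dominate $\nu(B(x,r))$ by the ``clean'' measure $\sigma=\Theta_\mu(2B_R)\,\mathcal{H}^1_{\Gamma_{B_0}}$ evaluated on a slightly enlarged ball, and then invoke the linear growth of $\sigma$ that comes from the Lipschitz structure of $\Gamma_R$. From the construction~\eqref{measure_nu} together with Lemmas~\ref{paper3-lemma_mu_G_R} and~\ref{paper3-lemma_g_i}, one has $\nu|_{G_R}=\sigma|_{G_R}$ and $\|\nu_i\|=\sigma(\wh J_i)$ for each $i\in I_0'$, with the sets $\wh J_i\subset\Gamma_R$ pairwise disjoint. Since by Lemma~\ref{paper3-lemma_Lip_A} the set $\Gamma_R$ is a Lipschitz graph over $L_R$, one has $\sigma(\Gamma_R\cap B(x,Cr))\lesssim_{\tau,A}\Theta_\mu(2B_R)\,r$ for every $C=C(\tau,A)$, so the whole estimate reduces to showing $\wh J_i\subset B(x,Cr)$ for every index $i\in I_0'$ with $A_i\cap B(x,r)\ne\varnothing$.

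To prove this inclusion, pick $y\in A_i\cap B(x,r)\subset Q_i$; Lemma~\ref{paper3-lemma_Q_i_proj_of_J_i} gives $\dist(y,\wh J_i)\lesssim_{\tau,A}\ell(J_i)$, whence $\wh J_i\subset B(x,r+c(\tau,A)\ell(J_i))$. It therefore suffices to prove $\ell(J_i)\lesssim_{\tau,A} r$. When $\ell(J_i)\le r$ this is automatic. When $\ell(J_i)>r$, the hypothesis $r\ge\ell_x$ forces $\Pi(x)$ to lie in a strictly different interval $J_x$ with $\ell(J_x)=\ell_x\le r<\ell(J_i)$, so $J_x\cap J_i=\varnothing$. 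By Lemma~\ref{paper3-Lip_3}$(b)$ either $15J_x\cap 15J_i\ne\varnothing$, in which case $\ell(J_i)\approx\ell(J_x)\le r$ with absolute constants, or $J_x$ and $J_i$ are absolutely separated, so that $\dist(J_x,J_i)\ge 7\ell(J_i)$, and the competing bounds $\dist(\Pi(x),J_i)\ge 7\ell(J_i)-\ell(J_x)$ and $\dist(\Pi(x),J_i)\le r+c(\tau,A)\ell(J_i)$ (the latter from Lemma~\ref{paper3-lemma_J_1}) combine to give $\ell(J_i)\lesssim_{\tau,A} r$.

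The main obstacle is the well-separated sub-case, because the constant $c(\tau,A)$, governed by the ratio $\diam(Q_i)/\ell(J_i)$ arising from the doubling-ancestor choice in Lemma~\ref{paper3-lemma_J_1}$(a)$, may a priori exceed the absolute separation factor coming from the $15J$-separation of Lemma~\ref{paper3-Lip_3}. The remedy is to chain Lemma~\ref{paper3-Lip_3}$(b)$ along a sequence of adjacent intervals connecting $J_x$ to $J_i$: adjacent intervals have absolutely-comparable lengths, so such a chain must be short whenever $\ell(J_i)$ exceeds $r$ by more than a $\tau,A$-dependent factor, yielding enough separation to absorb $c(\tau,A)$. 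Equivalently, one can verify by a direct overlap count (Lemma~\ref{paper3-Lip_3}$(c)$) that there are only $O_{\tau,A}(1)$ indices $i\in I_0'$ with $\ell(J_i)$ in the range $(r,C(\tau,A)r]$ whose $A_i$ meets $B(x,r)$, and each such term contributes $\sigma(\wh J_i)\lesssim_{\tau,A}\Theta_\mu(2B_R)\,r$. Once this bookkeeping is done, the global estimate follows from the Lipschitz-graph linear growth of $\sigma$, with no reliance on the global linear growth constant $C_*$ of $\mu$.
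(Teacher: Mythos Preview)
Your route (pushing $\nu$-mass onto $\sigma$ via $\|\nu_i\|=\sigma(\wh J_i)$ and then invoking the Lipschitz-graph linear growth of $\sigma$) is genuinely different from the paper's, but the argument has a gap at the step $\ell(J_i)\lesssim_{\tau,A} r$. You identify the obstacle correctly: $\diam(Q_i)$ may be as large as $\lambda(\tau,A)\,\ell(J_i)$ (from the doubling-ancestor replacement in Lemma~\ref{paper3-lemma_J_1}), so $A_i\cap B(x,r)\ne\varnothing$ yields only $\dist(\Pi(x),J_i)\le r+c(\tau,A)\,\ell(J_i)$ with $c(\tau,A)$ possibly large. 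Neither of your proposed remedies closes this. Chaining along adjacent $J$'s gives at best an \emph{absolute}-constant lower bound $\dist(J_x,J_i)\gtrsim \ell(J_i)$ (the geometric series of neighbouring lengths is dominated by its last term with an absolute ratio), so combining with the upper bound still leaves $\ell(J_i)\big(\text{abs.\ const.}-c(\tau,A)\big)\lesssim r$, which is vacuous once $c(\tau,A)$ exceeds that absolute constant. The overlap count handles only indices with $\ell(J_i)\le C(\tau,A)r$ and gives no mechanism to exclude indices with $\ell(J_i)>C(\tau,A)r$, which is exactly the regime in question.

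The paper proceeds differently and uses a piece of information your argument never invokes: that $x\notin R_{\sf Far}$. Since $\nu=g\mu$ with $\|g\|_\infty\lesssim_{\tau,A}1$, it reduces to $\mu(B(x,r))\lesssim_{\tau,A}\Theta_\mu(2B_R)\,r$. The key step is to show $d(x)\lesssim_{\tau,A}\ell_x$: set $y'=(\Pi(x),F(\Pi(x)))\in\Gamma_R$; from the proof of Lemma~\ref{paper3-lemma_gamma_R_1} one has $d(y')\lesssim_{\tau,A}\ell_x$, and Lemma~\ref{paper3-lemma_gamma_R_3} together with the Lipschitz property of $\Gamma_R$ gives $|x-y'|\lesssim\varepsilon_0^{1/4}\,d(x)$; choosing $Q'\in{\sf DbTree}(R)$ with $\dist(y',Q')+\diam(Q')\le 2d(y')$ yields $d(x)\le |x-y'|+2d(y')\le \varepsilon_0^{1/4}d(x)+c(\tau,A)\ell_x$, hence $d(x)\lesssim_{\tau,A}\ell_x\le r$. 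With this in hand one finds $P\in{\sf DbTree}(R)$ such that $B(x,r)\subset 2B_P$ and $\diam(P)\approx_{\tau,A}r$, and concludes via $\Theta_\mu(2B_P)\approx_{\tau,A}\Theta_\mu(2B_R)$. This also avoids the global linear-growth constant $C_*$, so that advantage you claim for your approach is not a distinguishing feature.
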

\begin{proof}
Recall that $\nu=g \mu$ with $g$ bounded  by a constant depending in $\tau$ and $A$, see (\ref{measure_nu}).

If $r>\diam R$, then $\spt \nu \subset B(x,r)$ and thus
$$
\nu(B(x,r))\lesssim_{\tau,A} \mu(2B_R) \approx_{\tau,A} \Theta_\mu(2B_R) \diam(R)  \lesssim_{\tau,A} \Theta_\mu(2B_R)\,r.
$$
Consequently, we may suppose below that
$\ell_x\le r\le \diam(R)$.

First let $d(x)\le C(\tau,A)\ell_x$, where $C(\tau,A)>0$ will be chosen later. Then there should exist $P\in {\sf DbTree}(R)$ such that $B(x,r)\subset 2B_P$ and $\diam(P)\approx_{\tau,A} r$ so that
$$
\nu(B(x,r))\lesssim_{\tau,A} \mu(B(x,r))\lesssim_{\tau,A} \mu(2B_P)\approx_{\tau,A} \Theta_\mu(2B_R) \diam(P)  \approx_{\tau,A} \Theta_\mu(2B_R) r.
$$

Now let $d(x)\ge C(\tau,A)\ell_x>0$. Set $y=(\Pi(x),F(\Pi(x)))\in \Gamma_R$. As shown in the proof of Lemma~\ref{paper3-lemma_gamma_R_1},
$d(y)\le c(\tau,A) \ell_x$ with some $c(\tau,A)>0$.
Choose $Q'\in {\sf DbTree}(R)$ so that
$$ 
\dist(y,Q')+\diam(Q')\le 2 d(y).
$$
Taking into account that $x\in R\setminus R_{\sf Far}$, from Lemma~\ref{paper3-lemma_gamma_R_3} and the properties of $\Gamma_R$ we deduce that
$\dist(x,y)\lesssim \sqrt[4]{\varepsilon_0}\,d(x)\le \sqrt[8]{\varepsilon_0}\,d(x) $ if $\varepsilon_0$ is chosen small enough. Thus
\begin{equation*}
\begin{split}
d(x)&\le \dist(x,Q')+\diam(Q') \le \dist(x,y)+ 2d(y) \le  \sqrt[8]{\varepsilon_0}\,d(x)+2c(\tau,A)\ell_x\\
&\le  \sqrt[8]{\varepsilon_0}\,d(x)+\frac{2c(\tau,A)}{C(\tau,A)}d(x) \le (\sqrt[8]{\varepsilon_0}+\tfrac{1}{2})d(x) <d(x),
\end{split}
\end{equation*}
if we choose $C(\tau,A)\ge 4c(\tau,A)$. Hence we get a contradiction if $d(x)\ge C(\tau,A)\ell_x>0$.
\end{proof}

By plugging the estimates obtained into (\ref{paper3-eq**9212}), choosing $\varepsilon_0=\varepsilon_0(\tau,A)$ small enough and recalling (\ref{eqq})  we get
\begin{equation*}
\label{paper3-eqd4*43}
p_{\infty, \mathbf{F}}(\sigma\lfloor \Gamma_{B_0}\setminus {G_R})  \lesssim \varepsilon_0^{1/40}\,\Theta_\mu(2B_R)^2\,\mu(R).
\end{equation*}

\medskip

Now it remains to estimate the last two terms of (\ref{paper3-eqfi**4}). The arguments are similar
to the preceding ones.

\medskip

\textbf{3.} Since $\sigma\lfloor G_R=\nu\lfloor G_R$, we have
\begin{equation*}\label{paper3-eqsigmanu01}
p_{\infty, \mathbf{F}}(\sigma\lfloor {G_R},\sigma\lfloor \Gamma_{B_0}\setminus {G_R},\sigma\lfloor \Gamma_{B_0}\setminus {G_R})
=p_{\infty, \mathbf{F}}(\nu\lfloor {G_R},\sigma\lfloor \Gamma_{B_0}\setminus {G_R},\sigma\lfloor \Gamma_{B_0}\setminus {G_R})
\end{equation*}
and
\begin{equation*}\label{paper3-eqsigmanu02}
p_{\infty, \mathbf{F}}(\sigma\lfloor {G_R},\sigma\lfloor {G_R},\sigma\lfloor \Gamma_{B_0}\setminus {G_R}) =
p_{\infty, \mathbf{F}}(\nu\lfloor {G_R},\nu\lfloor {G_R},\sigma\lfloor \Gamma_{B_0}\setminus {G_R}).
\end{equation*}
Concerning the term $p_{\infty, \mathbf{F}}(\sigma\lfloor {G_R},\sigma\lfloor \Gamma_{B_0}\setminus {G_R},\sigma\lfloor \Gamma_{B_0}\setminus {G_R})$, the main difference with respect to the estimates above for
$p_{\infty, \mathbf{F}}(\sigma\lfloor \Gamma_{B_0}\setminus {G_R})$
 is that
$T_x(y,z)$ equals zero in this case,
 and instead of integrating over $\sigma\lfloor \widehat{J}_i$
and $\nu_i$ and then summing on $i$, one integrates over $\sigma\lfloor {G_R}$. Then one obtains
\begin{align*}
\label{paper3-eq**9214}
p_{\infty, \mathbf{F}}(\sigma\lfloor {G_R},&\sigma\lfloor \Gamma_{B_0}\setminus{G_R},\sigma\lfloor \Gamma_{B_0}\setminus {G_R})\\
&\lesssim \varepsilon_0^{1/40}\Theta_\mu(2B_R)^2\,\mu(R) \\
&\quad
+ \iiint_{\,
\begin{subarray}{l} |x-y|\ge \frac{1}{2}\varepsilon_0^{-1/20}\ell_y \\
|x-z|\ge \frac{1}{2}\varepsilon_0^{-1/20}\ell_z\\
|y-z|\ge \frac{1}{2}\varepsilon_0^{-1/20}(\ell_y+\ell_z)\end{subarray}}\!\!\!\!\left[
T_y(x,z)+T_z(x,y)\right]\,d\sigma(x)\,d\sigma(y)\,d\sigma(z)\nonumber\\
&\quad
+ \iiint_{\,
\begin{subarray}{l} |x-y|\ge \frac{1}{2}\varepsilon_0^{-1/20}\ell_y \\
|x-z|\ge \frac{1}{2}\varepsilon_0^{-1/20}\ell_z\\
|y-z|\ge \frac{1}{2}\varepsilon_0^{-1/20}(\ell_y+\ell_z)\end{subarray}}\!\!\!\!\left[
T_y(x,z)+T_z(x,y)\right]\,d\nu(x)\,d\nu(y)\,d\nu(z).\nonumber
\end{align*}
The last two triple integrals are estimated as in (\ref{paper3-eq**de3}), and then it follows that
\begin{equation*}
\label{paper3-eqd4*44}
p_{\infty, \mathbf{F}}(\sigma\lfloor {G_R},\sigma\lfloor \Gamma_{B_0}\setminus{G_R},\sigma\lfloor \Gamma_{B_0}\setminus {G_R})
\lesssim \varepsilon_0^{1/40}\,\Theta_\mu(2B_R)^2\,\mu(R).
\end{equation*}

\medskip

\textbf{4.} Finally, the arguments for $p_{\infty, \mathbf{F}}(\sigma\lfloor {G_R},\sigma\lfloor {G_R},\sigma\lfloor \Gamma_{B_0}\setminus {G_R})$ are very similar. In this case, both terms $T_x(y,z)$ and $T_y(x,z)$
vanish, and analogously we also get
\begin{equation*}
\label{paper3-eqd4*45}
p_{\infty, \mathbf{F}}(\sigma\lfloor {G_R},\sigma\lfloor  {G_R},\sigma\lfloor \Gamma_{B_0}\setminus {G_R})
\lesssim  \varepsilon_0^{1/40}\,\Theta_\mu(2B_R)^2\,\mu(R).
\end{equation*}

This finishes the proof of Lemma~\ref{paper3-lemma_35}.

\medskip

\section{The packing condition for {\sf Top} cubes and the end of the proof of Main Lemma}
\label{paper3-sec_pack_Top}

\subsection{Properties of the trees} In order to prove the packing condition for ${\sf Top}$ cubes we will first extract some necessary results from
Lemmas~\ref{paper3-DbTree_properties},~\ref{paper3-measure_BP_F_stop_cubes},~\ref{paper3-lemma_ld_small},~\ref{paper3-lemma_bs_small} and~\ref{paper3-lemma_mu_G_R}. We suppose that all the parameters and thresholds from Section~\ref{paper3-sec_param} are  chosen  properly.  Recall also the definition (\ref{paper3-good_R}) of~$G_R$.
\begin{lemma}
\label{paper3-MainLemma} Let $\mu$ be a finite measure with compact support such that
$$
p_0(\mu)<\infty.
$$
Considering the David-Mattila dyadic lattice $\mathcal{D}$  associated with $\mu$, let $R\in\mathcal{D}^{db}$.
Then there exists a $C_F\theta_0$-Lipschitz function $F:L_R\to L_R^\perp$, where   $C_F>0$ is independent of $R$,
a family of pairwise disjoint cubes ${\sf Stop}(R)\subset \mathcal{D}(R)$ and a set $G_R\subset R$ such that

$\bf (a)$ $G_R$ is contained in
$\Gamma_R=F(L_R)$ and moreover $\mu\lfloor G_R$ is absolutely continuous with respect
to $\Theta_\mu(2B_R)\mathcal{H}^1_{\Gamma_R}$;

$\bf (b)$ for any $Q\in{\sf Tree}(R)$,
$$
\Theta_\mu(2B_Q)\lesssim A\,\Theta_\mu(2B_R);
$$

$\bf (c)$ if $R \in {\sf T}_{VF}(\theta_0)$, then
$$
\sum_{\begin{subarray}{c}
Q\in{\sf Stop}(R)\\
Q\notin  {\sf HD}(R)\cup{\sf UB}(R)
\end{subarray}} \!\!\!\!\!\!\!\!\!\!\!\mu(Q)\le \sqrt{\tau}\,\mu(R) +
\frac{1}{\alpha^2\Theta_\mu(2B_{R})^2}\!\!\sum_{\tilde{Q}\in{\sf Tree}(R)}\!\!\!\!
p^{[\delta,\tilde{Q}]}_0(\mu\lfloor 2B_{\tilde{Q}},\mu\lfloor 2B_R,\mu\lfloor 2B_R);
$$
if $R \notin {\sf T}_{VF}(\theta_0)$, then
$$
\sum_{\begin{subarray}{c}
Q\in{\sf Stop}(R)\\
Q\notin  {\sf HD}(R)\cup{\sf UB}(R) \cup{\sf BS}(R)
\end{subarray}}\!\!\!\!\!\!\!\!\!\!\!\!\!\!\!\mu(Q)\le \sqrt{\tau}\,\mu(R) +
\frac{1}{\alpha^2\Theta_\mu(2B_{R})^2}\!\!\sum_{\tilde{Q}\in{\sf Tree}(R)}\!\!\!\!
p^{[\delta,\tilde{Q}]}_0(\mu\lfloor 2 B_{\tilde{Q}},\mu\lfloor 2B_R,\mu\lfloor 2B_R).
$$
\end{lemma}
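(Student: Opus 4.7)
The plan is to observe that this statement is a packaging of the results assembled in Sections~\ref{section_LIP}--\ref{paper3-sec_small_BS}, together with the density control of Section~\ref{paper3-secbalan}--\ref{paper3-sec_stopping}. No new argument is required: the entire content of the lemma is to collect the pieces and verify that a single admissible choice of parameters from Section~\ref{paper3-sec_param} triggers all the previously proved estimates at once.

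First I would take $F$ to be the Lipschitz extension constructed in Section~\ref{section_LIP}, whose properties are summarized in Lemma~\ref{paper3-lemma_Lip_A}. That lemma produces a $C_F\theta(R)$-Lipschitz function, and since by the convention of Subsection~\ref{paper3-sec_small_BS_1} we have $\theta(R)\in\{\theta_0,\,2(1+C_F)\theta_0\}$, the claimed bound $C_F\theta_0$ holds after absorbing the factor $2(1+C_F)$ into a renamed absolute constant which I still call $C_F$. In the degenerate situation $R\in{\sf Stop}(R)$ we choose $F\equiv 0$, as explained in Remark~\ref{paper3-remark3}. Next I would set $G_R=\{x\in\mathbb{C}:d(x)=0\}$ with $d$ as in (\ref{paper3-def_d}); then $G_R\subset\Gamma_R$ follows from the defining identity (\ref{paper3-F_on_G_R}), while the absolute continuity in (a) is exactly the content of Lemma~\ref{paper3-lemma_mu_G_R}. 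Part (b) is then an immediate restatement of Lemma~\ref{paper3-density_of_cubes_hd}.

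For (c), I would decompose
\[
{\sf Stop}(R)\setminus({\sf HD}(R)\cup{\sf UB}(R))={\sf LD}(R)\,\cup\,{\sf BP}(R)\,\cup\,{\sf BS}(R)\,\cup\,{\sf F}(R),
\]
following the hierarchy imposed in the stopping conditions (\textbf{S1})--(\textbf{S4}). Each of the families ${\sf LD}(R)$, ${\sf F}(R)$, and (when $R\in{\sf T}_{VF}(\theta_0)$) ${\sf BS}(R)$ contributes at most $\tfrac{1}{3}\sqrt\tau\,\mu(R)$ by Lemmas~\ref{paper3-lemma_ld_small}, \ref{paper3-measure_BP_F_stop_cubes}, and \ref{paper3-lemma_bs_small} respectively, summing to $\sqrt\tau\,\mu(R)$. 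The remaining contribution from ${\sf BP}(R)$ is given directly by the first estimate in Lemma~\ref{paper3-measure_BP_F_stop_cubes}, producing the $\alpha^{-2}\Theta_\mu(2B_R)^{-2}$-weighted permutation sum on the right-hand side. The case split in (c) arises precisely because Lemma~\ref{paper3-lemma_bs_small} is only available under the hypothesis $R\in{\sf T}_{VF}(\theta_0)$; when this fails, the best one can do is drop ${\sf BS}(R)$ from the sum, which is reflected in the second displayed inequality.

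The main obstacle, such as it is, lies in the bookkeeping: one must verify that a single admissible choice of $\tau,A,\theta_0,\gamma,\varepsilon_0,\alpha,\delta$ simultaneously validates every lemma invoked above, in particular the various smallness requirements $\alpha=\alpha(\tau,A,\varepsilon_0,\gamma,\theta_0)$ needed in the bounds for ${\sf F}(R)$ and ${\sf BS}(R)$, and $\delta=\delta(\gamma,\varepsilon_0,\tau,A)$ needed in Lemma~\ref{paper3-lemma_R_far_2}. This is exactly why the parameters are organized into the hierarchy of Section~\ref{paper3-sec_param}: each new threshold is declared as a function of the earlier ones, so the smallness conditions of the individual lemmas never collide. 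With those choices fixed, (a)--(c) follow by assembling the four bounds above.
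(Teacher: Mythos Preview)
Your proposal is correct and matches the paper's approach exactly: the paper presents this lemma as a direct extraction from Lemmas~\ref{paper3-DbTree_properties}, \ref{paper3-measure_BP_F_stop_cubes}, \ref{paper3-lemma_ld_small}, \ref{paper3-lemma_bs_small} and \ref{paper3-lemma_mu_G_R}, with no additional argument, and your assembly of parts (a)--(c) from precisely these ingredients (plus Lemma~\ref{paper3-lemma_Lip_A} for the Lipschitz function) is the intended one. Your remark about absorbing the factor $2(1+C_F)$ into a renamed $C_F$ to pass from $C_F\theta(R)$ to $C_F\theta_0$ is a fair reading of the paper's convention $\theta(R)\approx\theta_0$.
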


\subsection{New families of stopping cubes}

According to Section~\ref{paper3-sec_stopping} and Lemma~\ref{paper3-MainLemma}, each $R\in\mathcal{D}^{db}$ generates several families of cubes fulfilling certain properties. In this subsection we will introduce some variants of these families. The idea is to have stopping cubes that are always different from $R$ and are in $\mathcal{D}^{db}$, cf. Remark~\ref{paper3-remark3}.

Recall that each cube in  ${\sf HD}(R)$ is in $\mathcal{D}^{db}$ and is clearly different from $R$ due to the fact that $Q\in\HD(R)$ satisfies $\Theta_\mu(2B_Q)>A\,\Theta_\mu(2B_R)$ with $A\gg 1$.

Now we turn our attention to the family ${\sf UB}(R)$. By
Lemma \ref{paper3-lemma_balanced_balls}, if $Q\in{\sf UB}(R)$, i.e. it is $\gamma$-unbalanced, there exists a family of
pairwise disjoint cubes $\{P\}_{P\in I_Q}\subset\mathcal{D}^{db}(Q)$ such that $\diam(P) \gtrsim  \gamma\diam(Q)$  and
$\Theta_\mu(2B_{P})\gtrsim \gamma^{-1}\,\Theta_\mu(2B_Q)$ for each
$P\in I_Q$, and
\begin{equation}\label{paper3-eqsgk32**}
\sum_{P\in I_Q} \Theta_\mu(2B_{P})^2\,\mu(P)\gtrsim
\gamma^{-2}\,\Theta_\mu(2B_Q)^2\,\mu(Q).
\end{equation}
Let
$I'_Q$ be a family of (not necessarily doubling) cubes contained in $Q$, with side
length comparable to $a\diam(Q)$ with some $a>0$, disjoint from the ones from
$I_Q$, so that
$$
Q=\bigcup_{P\in I_Q} P \cup \bigcup_{P\in I'_Q} P.
$$

To continue, we introduce additional notation. Given a cube $Q\in\mathcal{D}$, we denote by $\MD(Q)$ the family of maximal cubes (with respect to inclusion)
from $\mathcal{D}^{db}(Q)$. By Lemma \ref{paper3-lemcobdob}, this family covers $\mu$-almost all $Q$.
Furthermore, using the definition just given, we denote by $\widetilde{I}_Q$ the family $\bigcup_{P\in I'_Q}\mathcal{MD}(P)$. Moreover, we set
$$
\widetilde{\UB}(R) = \bigcup_{Q\in{\sf UB}(R)} (I_Q \cup \widetilde{I}_Q).
$$
One can deduce from (\ref{paper3-eqsgk32**}) that $R\not\in\wt{\UB}(R)$  for $a$ and $\gamma$ small enough.

Now consider ${\sf BS}(R)$. Each cube in this family is in $\mathcal{D}^{db}$ by construction. Moreover, $R\notin {\sf BS}(R)$ due to the condition $\measuredangle (L_Q,L_R)>\theta(R)>0$ for each $Q\in {\sf BS}(R)$.

To continue, we set
\begin{equation*}
\label{paper3-def_O}
 {\mathsf{O}}(R) = {\sf Stop}(R)\setminus
(\HD(R)\cup{\sf UB}(R)\cup {\sf BS}(R))=\textsf{LD}(R)\cup \textsf{BP}(R)\cup {\sf F}(R)
\end{equation*}
and
$$
\widetilde{\mathsf{O}}(R) = \left\{\bigcup_{Q\in\mathcal{D}}\mathcal{MD}(Q):\,\mbox{$Q$ is a son of some cube from ${\mathsf{O}}(R)$}\right\}.
$$
This guarantees that $R\notin\tilde{\mathsf{O}}(R)$ as cubes in $\tilde{\mathsf{O}}(R)$ are descendants of cubes in
${\sf Tree}(R)$.

Finally, let
\begin{equation*}
\label{paper3-def_Next}
{\sf Next}(R) =\HD(R) \cup\widetilde{\UB}(R) \cup \widetilde{\mathsf{O}}(R)\cup {\sf BS}(R).
\end{equation*}
By construction, all cubes in ${\sf Next}(R)$ are disjoint, doubling and different from $R$. Moreover,
\begin{equation}
\label{paper3-new_good}
R\setminus
\bigcup_{Q\in \nex(R)} Q=R\setminus
\bigcup_{Q\in {\sf Stop}(R)} Q.
\end{equation}
Using the small boundaries property of the David-Mattila lattice and the definition (\ref{paper3-good_R}), one can also show  that
\begin{equation}
\label{paper3-good_R_thru_stop}
\mu\left(R\setminus
\bigcup_{Q\in {\sf Stop}(R)} Q\right)=\mu(G_R).
\end{equation}
For the record, notice also that, by construction, if
$P\in\nex(R)$, then
\begin{equation}
\label{paper3-eqdens***}
\Theta_\mu(2B_S)\lesssim_{\tau,A}\,\Theta_\mu(2B_R) \quad\mbox{for all
$S\in\mathcal{D}$ such that $P\subset S\subset R$.}
\end{equation}

\subsection{The corona decomposition}

Recall that we assumed that $\mu$ has compact support.
Let
$$
R_0:=\spt \mu.
$$
Obviously we may suppose that $R_0\in \mathcal{D}^{db}$. We will
construct the family ${\sf Top}$ contained in $R_0$
inductively applying Lemma~\ref{paper3-MainLemma} so that ${\sf
Top}=\bigcup_{k\ge0}{\sf Top}_k$. Let
$${\sf Top}_0=\{R_0\}.$$
Assuming ${\sf Top}_k$ to be defined, we set
$${\sf Top}_{k+1}  = \bigcup_{R\in{\sf Top}_k}  \nex(R).$$
Note that cubes in $\nex(R)$, with $R\in{\sf Top}_k$, are pairwise
disjoint.

\subsection{The families of cubes $ID_H$, $ID_U$ and $ID$}

We distinguish two types of cubes $R\in{\sf Top}$. We write $R\in
ID_H$ (increasing density because of high density cubes) if
$$
\mu\biggl(\,\bigcup_{Q\in {\HD}(R)} Q\biggr)\ge \frac{1}{4}  \,\mu(R).
$$
Also, we write $R\in ID_U$ (increasing density because of unbalanced
cubes) if
$$
\mu\biggl(\,\bigcup_{Q\in\wt{\UB}(R)} Q\biggr)\ge \frac{1}{4}  \,\mu(R).
$$
Additionally, let
$$
ID = ID_H\cup ID_U.
$$

\begin{lemma}[Lemma 5.4 and its proof in \cite{AT}]\label{paper3-lemID}
If $R\in ID$, then
\begin{align*}
&\Theta_\mu(2B_R)^2\,\mu(R) \lesssim \frac{1}{A^2}\sum_{Q\in {\sf HD}(R)}\Theta_\mu(2B_Q)^2\,\mu(Q)\\
&\Theta_\mu(2B_R)^2\,\mu(R) \lesssim \frac{\gamma^2}{\tau^2} \sum_{Q\in \widetilde{{\sf UB}}(R)}\Theta_\mu(2B_Q)^2\,\mu(Q).
\end{align*}
Moreover, if $A$ is such that $A^{-1}\le \tau^2$ and $\gamma\le \tau^3$, then
$$
\Theta_\mu(2B_R)^2\,\mu(R)\le
c\tau^4 \sum_{Q\in
\nex(R)}\Theta_\mu(2B_Q)^2\,\mu(Q),
$$
where $c>0$ is some absolute constant.
\end{lemma}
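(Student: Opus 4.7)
The plan is to treat the two subcases $R \in ID_H$ and $R \in ID_U$ separately and then merge them using the parameter constraints. Both estimates are transfers of density information from the stopping children back up to $R$: in the first case the density jump is hard-coded into the definition of ${\sf HD}(R)$, while in the second case we must manufacture such a jump using the auxiliary cubes furnished by alternative $(b)$ of Lemma~\ref{paper3-lemma_balanced_balls}.

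For $R \in ID_H$, I would use that the cubes in ${\sf HD}(R)$ are pairwise disjoint (being contained in ${\sf Stop}(R)$) and each satisfies $\Theta_\mu(2B_Q) > A\,\Theta_\mu(2B_R)$. Squaring, summing across $Q \in {\sf HD}(R)$, and invoking the defining inequality $\mu\bigl(\bigcup_{Q\in {\sf HD}(R)} Q\bigr) \ge \tfrac{1}{4}\mu(R)$ of $ID_H$ immediately yields the first estimate.

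For $R \in ID_U$, I would apply Lemma~\ref{paper3-lemma_balanced_balls}$(b)$ to each $\gamma$-unbalanced cube $Q \in {\sf UB}(R)$, which produces a pairwise disjoint family $I_Q \subset \mathcal{D}^{db}(Q)$ with
\[
\sum_{P \in I_Q}\Theta_\mu(2B_P)^2\,\mu(P) \gtrsim \gamma^{-2}\,\Theta_\mu(2B_Q)^2\,\mu(Q).
\]
Since $Q \notin {\sf LD}(R)$ we have $\Theta_\mu(2B_Q) \ge \tau\,\Theta_\mu(2B_R)$, so the right-hand side is at least $c\,\gamma^{-2}\tau^2\,\Theta_\mu(2B_R)^2\,\mu(Q)$. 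I would then sum over $Q \in {\sf UB}(R)$, use disjointness to fold the left-hand side into a sum over the larger (still disjoint) family $\widetilde{{\sf UB}}(R) \supset \bigcup_Q I_Q$, and conclude with the defining inequality of $ID_U$ in the form $\sum_{Q \in {\sf UB}(R)} \mu(Q) \ge \mu\bigl(\bigcup_{Q\in\widetilde{{\sf UB}}(R)} Q\bigr) \ge \tfrac{1}{4}\mu(R)$.

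Finally, the combined bound follows at once: $R \in ID$ falls into at least one of the two subcases, both ${\sf HD}(R)$ and $\widetilde{{\sf UB}}(R)$ lie inside $\nex(R)$, and under the assumptions $A^{-1} \le \tau^2$ and $\gamma \le \tau^3$ the two prefactors $A^{-2}$ and $\gamma^2/\tau^2$ are each $\le \tau^4$. I do not anticipate any genuine obstacle; the only bookkeeping point is to verify that the refined cubes in $\bigcup_{Q \in {\sf UB}(R)} I_Q$ are automatically pairwise disjoint, which holds because their parents $Q \in {\sf UB}(R) \subset {\sf Stop}(R)$ are disjoint and each $I_Q \subset Q$.
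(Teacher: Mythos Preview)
Your proposal is correct and follows the natural approach. The paper does not supply its own argument here, simply citing \cite{AT}; your derivation---using the density gap in ${\sf HD}(R)$ for the $ID_H$ case and the estimate \eqref{paper3-eqsgk32} together with $\Theta_\mu(2B_Q)\ge\tau\,\Theta_\mu(2B_R)$ for the $ID_U$ case, then combining via $A^{-2}\le\tau^4$ and $\gamma^2/\tau^2\le\tau^4$---is exactly the intended proof.
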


\subsection{The packing condition}
Recall that we assume linear growth of $\mu$, i.e.
\begin{equation}\label{paper3-eqc*}
\mu(B(x,r))\le  C_*r\qquad \forall x\in \spt \mu,\quad r>0,
\end{equation}
for some constant $C_*>0$ (see (\ref{linear_growth})). Using this assumption, we will prove the following.
\begin{lemma}\label{paper3-lemkey62}
If the parameters and thresholds in Section~\ref{paper3-sec_param} are chosen properly, then
\begin{equation}\label{paper3-eqsum441}
\sum_{R\in{\sf Top}}\Theta_\mu(2B_R)^2\,\mu(R)\le
{\sf c_5}\,p_0(\mu)+c\,C_*^2\,\mu(\mathbb{C}),
\end{equation}
where ${\sf c_5}={\sf c_5}(\tau,A,\theta_0,\gamma,\varepsilon_0,\alpha,\delta)>0$ and $c>0$.
\end{lemma}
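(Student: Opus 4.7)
Our plan is to iterate the two complementary estimates provided by Lemmas~\ref{paper3-MainLemma} and~\ref{paper3-lemID}. Let $S:=\sum_{R\in{\sf Top}}\Theta_\mu(2B_R)^2\mu(R)$. Splitting ${\sf Top}=(ID\cap{\sf Top})\sqcup(ID^c\cap{\sf Top})$ and applying Lemma~\ref{paper3-lemID} to each $R\in ID$, together with the fact that every $Q\in{\sf Top}\setminus\{R_0\}$ belongs to $\nex(P)$ for a unique ${\sf Top}$-parent $P$, yields
$$\sum_{R\in ID\cap{\sf Top}}\Theta_\mu(2B_R)^2\mu(R)\le c\tau^4\,S.$$
Hence it suffices to control the contribution from $R\notin ID$.

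For $R\notin ID$ the non-$ID$ hypothesis forces $\mu\bigl(\bigcup_{Q\in\HD(R)\cup{\sf UB}(R)}Q\bigr)<\tfrac12\mu(R)$. Combining this with the decomposition $\mu(R)=\mu(G_R)+\sum_{Q\in{\sf Stop}(R)}\mu(Q)$ coming from~(\ref{paper3-new_good})--(\ref{paper3-good_R_thru_stop}) and with Lemma~\ref{paper3-MainLemma}$(c)$, after absorbing the term $\sqrt\tau\mu(R)$ (which requires $\tau$ small enough), we derive
\begin{equation*}
\mu(R)\lesssim\mu(G_R)+\chi_{\{R\notin{\sf T}_{VF}(\theta_0)\}}\!\!\!\sum_{Q\in{\sf BS}(R)}\!\!\!\mu(Q)+\frac{1}{\alpha^2\Theta_\mu(2B_R)^2}\!\!\!\sum_{\tilde Q\in{\sf Tree}(R)}\!\!\!\!\!p_0^{[\delta,\tilde Q]}(\mu\lfloor 2B_{\tilde Q},\mu\lfloor 2B_R,\mu\lfloor 2B_R).
\end{equation*}
Multiplying by $\Theta_\mu(2B_R)^2$ and summing over $R\in ID^c\cap{\sf Top}$, the first term contributes at most $\lesssim C_*^2\mu(\mathbb{C})$: indeed, $\Theta_\mu(2B_R)\le C_*$ by linear growth, and the family $\{G_R\}_{R\in{\sf Top}}$ is pairwise disjoint because $G_R\subset R\setminus\bigcup_{Q\in\nex(R)}Q$ forbids any point from being good at two distinct levels of the ${\sf Top}$-tree. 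The permutation term sums to $\lesssim\alpha^{-2}\log(\delta^{-1})\,p_0(\mu)$, since each $\tilde Q\in\mathcal{D}$ belongs to only a bounded number of families ${\sf Tree}(R)$ and the truncation restricts $\tilde Q$ to $O(\log\delta^{-1})$ admissible scales for every fixed triple.

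The main obstacle is the extra ${\sf BS}$-mass that appears only when $R\notin{\sf T}_{VF}(\theta_0)$. The key geometric observation is that every $Q\in{\sf BS}(R)$ with $R\notin{\sf T}_{VF}(\theta_0)$ automatically lies in ${\sf T}_{VF}(\theta_0)$: by definition $\theta_V(L_R)<(1+C_F)\theta_0$ while $\measuredangle(L_Q,L_R)>\theta(R)=2(1+C_F)\theta_0$, so
$$\theta_V(L_Q)\ge\measuredangle(L_Q,L_R)-\theta_V(L_R)>(1+C_F)\theta_0.$$
Moreover, $Q\in{\sf BS}(R)$ is doubling and lies in ${\sf Tree}(R)\setminus\LD(R)$, hence $\Theta_\mu(2B_Q)\ge\tau\,\Theta_\mu(2B_R)$, which gives
$$\Theta_\mu(2B_R)^2\!\!\!\sum_{Q\in{\sf BS}(R)}\!\!\!\mu(Q)\le\tau^{-2}\!\!\!\sum_{Q\in{\sf BS}(R)}\!\!\!\Theta_\mu(2B_Q)^2\mu(Q).$$
Since each $Q\in{\sf Top}$ belongs to ${\sf BS}(R)$ for at most one $R$, summing over non-$VF$, non-$ID$ cubes $R$ controls the total $BS$ contribution by $\tau^{-2}S_{VF}$, where $S_{VF}:=\sum_{R\in{\sf Top}\cap{\sf T}_{VF}(\theta_0)}\Theta_\mu(2B_R)^2\mu(R)$.

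Putting the pieces together we reach
$$S\le c\tau^4\,S+\tau^{-2}S_{VF}+C\,C_*^2\mu(\mathbb{C})+C_\alpha\,p_0(\mu),$$
while the same reasoning applied only to $R\in{\sf T}_{VF}(\theta_0)$, where no $BS$ feedback is present, yields $S_{VF}\le c\tau^4\,S+C\,C_*^2\mu(\mathbb{C})+C_\alpha\,p_0(\mu)$. Substituting the second inequality into the first and choosing the parameters of Section~\ref{paper3-sec_param} so that $c\tau^4+c\tau^2<\tfrac12$ allows us to absorb the $S$-term on the left-hand side, producing the desired packing estimate~(\ref{paper3-eqsum441}) with ${\sf c_5}$ depending on all the parameters fixed in Section~\ref{paper3-sec_param}.
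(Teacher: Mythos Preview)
Your overall strategy mirrors the paper's: split ${\sf Top}$ into $ID$ and $ID^c$, feed the $ID$ part back via Lemma~\ref{paper3-lemID}, control the $ID^c$ part through Lemma~\ref{paper3-MainLemma}$(c)$, and exploit the key observation that $Q\in{\sf BS}(R)$ with $R\notin{\sf T}_{VF}(\theta_0)$ forces $Q\in{\sf T}_{VF}(\theta_0)$. Your device of introducing the auxiliary quantity $S_{VF}$ and closing the argument via the pair of inequalities $S\le c\tau^4 S+\tau^{-2}S_{VF}+\text{good}$ and $S_{VF}\le c\tau^4 S+\text{good}$ is a clean repackaging of what the paper does with its $S_1/S_2$ decomposition (the paper instead applies Lemmas~\ref{paper3-MainLemma}$(c)$ and~\ref{paper3-lemID} once more directly to each $Q\in{\sf BS}(R)$).

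There is, however, one genuine gap: you manipulate $S=\sum_{R\in{\sf Top}}\Theta_\mu(2B_R)^2\mu(R)$ as a finite quantity, subtracting $(c\tau^4+c\tau^2)S$ from both sides at the end. Nothing you have written rules out $S=\infty$ a~priori, and in that case the absorption step is vacuous. The paper handles this by working throughout with the truncated families ${\sf Top}_0^k=\bigcup_{j\le k}{\sf Top}_j$, for which the corresponding sum is trivially finite; the feedback from $ID$ and from ${\sf BS}$ then lands in ${\sf Top}_0^{k+1}$ or ${\sf Top}_0^{k+2}$, and one uses that $\sum_{R\in{\sf Top}_{k+1}}\Theta_\mu(2B_R)^2\mu(R)\le C_*^2\mu(\mathbb{C})$ (disjointness of a single generation) to reduce back to ${\sf Top}_0^k$. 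Your argument goes through verbatim once you introduce these truncations: both of your inequalities become $S^k\le c\tau^4 S^{k+1}+\tau^{-2}S_{VF}^{k+1}+\text{good}$ and $S_{VF}^{k+1}\le c\tau^4 S^{k+2}+\text{good}$, and the single-generation bound lets you replace $S^{k+1},S^{k+2}$ by $S^k+O(C_*^2\mu(\mathbb{C}))$ before absorbing and letting $k\to\infty$.
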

\begin{proof}
For a given $k\ge 0$, we set
${\sf Top}_0^k = \bigcup_{0\le  j\le k}{\sf Top}_j$ and $ID_0^k = ID \cap {\sf Top}_0^k$.

To prove (\ref{paper3-eqsum441}), first we deal with the cubes from the $ID$
family. By Lemma \ref{paper3-lemID},
\begin{align*}
\sum_{R\in ID_0^k} \Theta_\mu(2B_R)^2\,\mu(R) &\le c\tau^2 \sum_{R\in
ID_0^k}\sum_{Q\in \nex(R)}\Theta_\mu(2B_Q)^2\,\mu(Q)\\
& \le c\tau^2\sum_{R\in {\sf Top}_0^{k+1}}\Theta_\mu(2B_R)^2\,\mu(R),
\end{align*}
because the cubes from $\nex(R)$ with $R\in{\sf Top}_0^k$ belong to
${\sf Top}_0^{k+1}$. So we have
\begin{align*}
&\sum_{R\in {\sf Top}_0^k} \Theta_\mu(2B_R)^2\,\mu(R)\\
&= \sum_{R\in
{\sf Top}_0^k\setminus ID_0^k}\!\!\! \Theta_\mu(2B_R)^2\,\mu(R)+
\sum_{R\in ID_0^k} \Theta_\mu(2B_R)^2\,\mu(R)\\
& \le \sum_{R\in {\sf Top}_0^{k}\setminus
ID_0^k}\!\!\!\Theta_\mu(2B_R)^2\,\mu(R) + c\tau^2\!\!\!\sum_{R\in
{\sf Top}_0^{k}}\!\Theta_\mu(2B_R)^2\,\mu(R) + c\tau^2\!\!\!\sum_{R\in
{\sf Top}_{k+1}}\!\Theta_\mu(2B_R)^2\,\mu(R)\\
& \le \sum_{R\in {\sf Top}_0^{k}\setminus
ID_0^k}\!\!\!\Theta_\mu(2B_R)^2\,\mu(R) + c\tau^2\!\!\!\sum_{R\in
{\sf Top}_0^{k}}\!\Theta_\mu(2B_R)^2\,\mu(R) +c\,\tau^2\,C_*^2\mu(R_0),
\end{align*}
where we took into account that $\Theta_\mu(2B_R)\lesssim C_*$
for every $R\in{\sf Top}$ (and in particular for all $R\in{\sf Top}_{k+1}$). So, having $\tau$ small enough, we deduce that
\begin{equation}
\label{paper3-R_Tree_0_k}
\sum_{R\in {\sf Top}_0^k} \Theta_\mu(2B_R)^2\,\mu(R)  \le 1.1\sum_{R\in {\sf Top}_0^{k}\setminus ID_0^k}\Theta_\mu(2B_R)^2\,\mu(R) + c\tau^2\,C_*^2\mu(R_0).
 \end{equation}

Let us estimate the first term in the right hand side of (\ref{paper3-R_Tree_0_k}).
First note that
$$
 \mu\biggl(R\setminus \bigcup_{Q\in {\HD}(R)\cup
\wt{\UB}(R)} Q\biggr)\ge \frac{1}{2}\,\mu(R)\qquad \text{for any} R\in{\sf Top}_0^{k}\setminus ID_0^k.
$$
Next, by applying the inequalities $(c)$ in  Lemma~\ref{paper3-MainLemma} and recalling (\ref{paper3-new_good}) and (\ref{paper3-good_R_thru_stop}) we get
\begin{align*}
\mu(R)& \le 2\,\mu\biggl(R\setminus
\bigcup_{Q\in \nex(R)} Q\biggr) + 2\,\mu\biggl(
\bigcup_{Q\in \wt{\textsf{O}}(R)\cup {\sf BS}(R)}Q\biggr)\\
& \le 2\,\mu(G_R) + 2
\mu\biggl(\bigcup_{Q\in \wt{\textsf{O}}(R)}Q\biggr) +2
\sum_{\begin{subarray}{c}
Q\in {\sf BS}(R)\\
(\text{if } R\in {\sf T}_{VF}(\theta_0))
\end{subarray}
}\mu(Q)
+2\sum_{\begin{subarray}{c} Q\in {\sf BS}(R)\\
(\text{if }R\notin {\sf T}_{VF}(\theta_0))
\end{subarray}}\mu(Q)
\\
& \le 2 \,\mu(G_R)+2\sqrt{\tau}\,\mu(R)
+2\sum_{\begin{subarray}{c} Q\in {\sf BS}(R)\\
(\text{if }R\notin {\sf T}_{VF}(\theta_0))
\end{subarray}}\mu(Q)\\
&\quad+\frac{2\alpha^{-2}}{\Theta_\mu(2B_{R})^2}\sum_{{Q}\in{\sf Tree}(R)}
p^{[\delta,Q]}_0(\mu\lfloor 2B_{Q},\mu\lfloor 2B_R,\mu\lfloor 2B_R).
\end{align*}
Suppose that $\tau$ is small enough to get
\begin{align*}
\mu(R)
& \le 2.1 \,\mu(G_R)+
2.1\sum_{\begin{subarray}{c} Q\in {\sf BS}(R)\\
(\text{if }R\notin {\sf T}_{VF}(\theta_0))
\end{subarray}}\mu(Q)\\
&\qquad+\frac{2.1\alpha^{-2}}{\Theta_\mu(2B_{R})^2}\sum_{{{Q}}\in{\sf Tree}(R)}
p^{[\delta,{{Q}}]}_0(\mu\lfloor 2 B_{{Q}},\mu\lfloor 2B_R,\mu\lfloor 2B_R).
\end{align*}
So we deduce from (\ref{paper3-R_Tree_0_k}) that
\begin{align}
\nonumber
\sum_{R\in {\sf Top}_0^k} \Theta_\mu(2B_R)^2\,&\mu(R) \\
&\label{paper3-eqal163}
\!\!\!\!\!\! \!\!\!\!\!\! \!\!\!\!\!\!  \le 3 \sum_{R\in {\sf Top}_0^k\setminus ID_0^k}
\Theta_\mu(2B_R)^2\,\mu(G_R)\\
&\!\!\!\!\!\! \!\!\!\!\!\! \!\!\!\!\!\! \quad+ \frac{3}{\alpha^2} \sum_{R\in {\sf Top}_0^k}\sum_{Q\in{\sf Tree}(R)}
p^{[\delta,Q]}_0(\mu\lfloor 2B_Q,\mu\lfloor 2B_R,\mu\lfloor 2B_R)\nonumber\\
&\nonumber \!\!\!\!\!\! \!\!\!\!\!\! \!\!\!\!\!\! \quad+3\sum_{R\in {\sf Top}_0^k\setminus (ID_0^k\cup {\sf T}_{VF}(\theta_0))} \Theta_\mu(2B_R)^2\sum_{Q\in {\sf BS}(R)}\mu(Q)\\
&\nonumber \!\!\!\!\!\! \!\!\!\!\!\! \!\!\!\!\!\! \quad+ c\tau^2\,C_*^2\mu(R_0).
\end{align}
In order to deal with the first sum on the right hand side we take into
account that $\Theta_\mu(2B_R)\lesssim C_*$ for all $R\in{\sf Top}$ by
\eqref{paper3-eqc*} and that the sets $G_R$ with $R\in{\sf Top}$ are pairwise disjoint. Then we get
$$
\sum_{R\in {\sf Top}_0^k\setminus ID_0^k} \Theta_\mu(2B_R)^2\,\mu(G_R)\le c\,C_*^2\,\mu(R_0).
$$
On the other hand, the double sum in (\ref{paper3-eqal163}) does not exceed
$$
2\sum_{Q\in \mathcal{D}}
 p^{[\delta,Q]}_0(\mu\lfloor 2B_Q,\mu\lfloor 2B_R,\mu\lfloor 2B_R) \le
c(\delta)\,p_0(\mu),
$$
by the finite superposition of the domains of integration. Recall that $\delta=\delta(\gamma,\varepsilon_0)$. So we obtain
\begin{align}
\label{paper3-R_Tree_0_k_TV_Far_Final}
\sum_{R\in {\sf Top}_0^k} \Theta_\mu(2B_R)^2\,\mu(R) \le &c\,C_*^2\,\mu(R_0)+ c(\tau,A,\gamma,\varepsilon_0,\alpha)\, p_0(\mu)\\
&\nonumber\quad +c\sum_{R\in {\sf Top}_0^k\setminus (ID_0^k\cup {\sf T}_{VF}(\theta_0))}\!\!\!\!\!\! \Theta_\mu(2B_R)^2\sum_{Q\in {\sf BS}(R)}\mu(Q).
\end{align}
The third term in (\ref{paper3-R_Tree_0_k_TV_Far_Final}) without the constant may be written as the sum
$$
\sum_{R\in {\sf Top}_0^k\setminus (ID_0^k\cup {\sf T}_{VF}(\theta_0))} \Theta_\mu(2B_R)^2(S_1(R)+S_2(R)),
$$
where
$$
S_1(R)=\sum_{Q\in {\sf BS}(R)\cap {\sf T}_{VF}(\theta_0)\setminus ID_0^{k+1}}\!\!\!\!\!\!\mu(Q)\quad\text{and}\quad S_2(R)=\sum_{Q\in {\sf BS}(R)\cap {\sf T}_{VF}(\theta_0)\cap  ID_0^{k+1}}\!\!\!\!\!\!\mu(Q).
$$
Note that we have the intersection with ${\sf T}_{VF}(\theta_0)$ in these sums. This is so because for any $Q\in {\sf BS}(R)$, where $R\in {\sf Top}\setminus {\sf T}_{VF}(\theta_0)$, it holds that
$$
\theta_{V}(L_Q)\ge \measuredangle(L_Q,L_R)-\theta_{V}(L_R)\ge 2(1+C_F)\theta_0-(1+C_F)\theta_0=(1+C_F)\theta_0,
$$
and thus $Q\in {\sf T}_{VF}(\theta_0)$.

Let us estimate $S_1(R)$. Since $Q\in {\sf T}_{VF}(\theta_0)\setminus ID_0^{k+1}$, we deduce from
$(c)$ in Lemma~\ref{paper3-MainLemma} that
\begin{align*}
\mu(Q)& \le 2\,\mu\biggl(Q\setminus
\bigcup_{P\in \nex(Q)} P\biggr) + 2\,\mu\biggl(
\bigcup_{P\in \wt{\textsf{O}}(Q)\cup {\sf BS}(Q)}P\biggr)\\
& \le 2\,\mu(G_Q) + 2
\mu\biggl(\bigcup_{Q\in \wt{\textsf{O}}(R)}Q\biggr)+2\sum_{\begin{subarray}{c}P\in {\sf BS}(Q)\\(\text{if }Q\in {\sf T}_{VF}(\theta_0))\end{subarray}}\mu(P)
\\
& \le 2 \,\mu(G_Q)+2\sqrt{\tau}\,\mu(Q)+\frac{2\alpha^{-2}}{\Theta_\mu(2B_{Q})^2}\sum_{P\in{\sf Tree}(Q)}
p^{[\delta,P]}_0(\mu\lfloor 2 B_P,\mu\lfloor 2B_Q,\mu\lfloor 2B_Q).
\end{align*}
If $\tau$ is small enough, then
\begin{align*}
\mu(Q)
& \le 2.1 \,\mu(G_Q)+\frac{2.1\alpha^{-2}}{\Theta_\mu(2B_{Q})^2}\sum_{P\in{\sf Tree}(Q)}
p^{[\delta,P]}_0(\mu\lfloor 2 B_P,\mu\lfloor 2B_Q,\mu\lfloor 2B_Q).
\end{align*}
Recall that  ${\sf BS}(R)\cap {\sf T}_{VF}(\theta_0)\setminus ID_0^{k+1}\subset \nex(R)$. So we deduce that
\begin{align*}
S_1(R)&\le 2.1 \!\!\!\sum_{Q\in \nex(R)}\biggl(
\mu(G_R)+\frac{\alpha^{-2}}{\Theta_\mu(2B_{R})^2}
 \sum_{P\in{\sf Tree}(Q)}
p^{[\delta,P]}_0(\mu\lfloor 2B_P,\mu\lfloor 2B_Q,\mu\lfloor 2B_Q)\biggr).
\end{align*}
Consequently, using that $\Theta_\mu(2B_R)\lesssim C_*$, we obtain
\begin{align*}
\sum_{R\in {\sf Top}_0^k\setminus (ID_0^k\cup {\sf T}_{VF}(\theta_0))} \!\!\!\!\!\!\!\!\!\!\!\!&\Theta_\mu(2B_R)^2S_1(R)\\
&\!\!\!\!\!\!\!\!\!\!\!\!\!\!\!\!\!\!\le c\,C_*^2\sum_{R\in {\sf Top}_0^k} \sum_{Q\in \nex(R)}
\mu(G_Q)\\
&\!\!\!\!\!\!\!\!\!\!\!\!\!\!\!\!\!\!\quad+ \frac{c}{\alpha^2}\sum_{R\in {\sf Top}_0^k\setminus (ID_0^k\cup {\sf T}_{VF}(\theta_0))}  \sum_{Q\in \nex(R)}
 \sum_{P\in{\sf Tree}(Q)}
p^{[\delta,P]}_0(\mu\lfloor 2B_P,\mu\lfloor 2B_Q,\mu\lfloor 2B_Q)\\
&\!\!\!\!\!\!\!\!\!\!\!\!\!\!\!\!\!\!\le c\,C_*^2\sum_{R\in {\sf Top}_0^{k+1}}
\mu(G_R)+ \frac{c}{\alpha^2}\sum_{R\in {\sf Top}_0^{k+1}}
 \sum_{P\in{\sf Tree}(R)}
p^{[\delta,P]}_0(\mu\lfloor 2B_P,\mu\lfloor 2B_R,\mu\lfloor 2B_R).
\end{align*}
Take into account that the sets $G_R$ with $R\in {\sf Top}$ are disjoint and that
the last (double) sum is controlled by $c(\delta)\,p_0(\mu)$ by the finite superposition of the domains of integration. So we have
$$
\sum_{R\in {\sf Top}_0^k\setminus (ID_0^k\cup {\sf T}_{VF}(\theta_0))} \Theta_\mu(2B_R)^2S_1(R)\le c\,C_*^2\mu(R_0)+c(\tau,A,\delta,\alpha)\,p_0(\mu).
$$

\medskip

Now we estimate $S_2(R)$. Since ${\sf BS}(R)\cap {\sf LD}(R)=\varnothing$, for each $Q \in {\sf BS}(R)$ we have $\Theta_\mu(2B_Q)\ge \tau\Theta_\mu(2B_R)$ and thus
$$
S_2(R)\le \frac{1}{\tau^2\Theta_\mu(2B_R)^2}\sum_{Q\in {\sf BS}(R)\cap {\sf T}_{VF}(\theta_0)\cap  ID_0^{k+1}}\Theta_\mu(2B_Q)^2\mu(Q).
$$
Since $Q\in ID_0^{k+1}$, by Lemma~\ref{paper3-lemID},
\begin{align*}
S_2(R)&\le \frac{1}{\tau^2\Theta_\mu(2B_R)^2}\sum_{Q\in {\sf BS}(R)\cap {\sf T}_{VF}(\theta_0)\cap  ID_0^{k+1}}c\tau^4\sum_{P\in \nex(Q)}\Theta_\mu(2B_P)^2\mu(P)\\
& \le \frac{c\tau^2}{\Theta_\mu(2B_R)^2}\sum_{Q\in {\sf BS}(R)\cap {\sf T}_{VF}(\theta_0)\cap  ID_0^{k+1}}\sum_{P\in \nex(Q)}\Theta_\mu(2B_P)^2\mu(P).
\end{align*}
Consequently, taking into account that ${\sf BS}(R)\cap {\sf T}_{VF}(\theta_0)\cap  ID_0^{k+1} \subset \nex(R)$ and ${\sf Top}_0^k\setminus (ID_0^k\cup {\sf T}_{VF}(\theta_0))\subset {\sf Top}_0^k$, we obtain
\begin{align*}
\sum_{R\in {\sf Top}_0^k\setminus (ID_0^k\cup {\sf T}_{VF}(\theta_0))} \Theta_\mu(2B_R)^2S_2(R) &\le c\tau^2 \sum_{R\in {\sf Top}_0^k}\sum_{Q\in \nex(R)}\sum_{P\in \nex(Q)}\Theta_\mu(2B_P)^2\mu(P)\\
&\le c\tau^2\sum_{R\in {\sf Top}_0^{k+2}}\Theta_\mu(2B_R)^2\mu(R)\\
&\le c\tau^2\sum_{R\in {\sf Top}_0^{k}}\Theta_\mu(2B_R)^2\mu(R)+c\tau^2 C^2_*\mu(R_0).
\end{align*}

\medskip

Coming back to (\ref{paper3-R_Tree_0_k_TV_Far_Final}), we deduce that
\begin{align*}
\sum_{R\in {\sf Top}_0^k}& \Theta_\mu(2B_R)^2\,\mu(R) \\
&\le  c\,C_*^2\,\mu(R_0)+c(\tau,A,\delta,\alpha)\,p_0(\mu)+c\tau^2\sum_{R\in {\sf Top}_0^{k}}\Theta_\mu(2B_R)^2\mu(R).
\end{align*}
Choosing $\tau$ small enough and recalling the information in Section~\ref{paper3-sec_param} yield
$$
\sum_{R\in {\sf Top}_0^k} \Theta_\mu(2B_R)^2\,\mu(R)\le  {\sf c_5} p_0(\mu)+c\,C_*^2\,\mu(R_0),
$$
where ${\sf c_5}$ actually depends on all the parameters and thresholds mentioned in Section~\ref{paper3-sec_param}.

Letting $k\to\infty$ finishes the proof of Lemma~\ref{paper3-lemkey62}.
\end{proof}

\subsection{The end of the proof of Main Lemma}
\label{paper3-sec-the-end}

We first prove an additional property. For $Q,\tilde{Q}\in \mathcal{D}$ with $Q\subset \tilde{Q}$, define
$$
\delta_\mu(Q,\tilde{Q})=\int_{2B_{\tilde{Q}}\setminus 2B_{Q}}\frac{1}{|y-z_Q|}d\mu(y),
$$
where $z_Q$ is the center of $B_Q$, see Lemma~\ref{paper3-lemcubs}. Then the following statement holds.
\begin{lemma}
\label{paper3-one_more_lemma}
For all $Q\in \nex(R)$ there exists a cube $\tilde{Q}\in {\sf DbTree}(R)$ such that $\delta_\mu(Q,\tilde{Q})\lesssim_{\tau,A}\Theta_\mu(2B_R)$ and $2B_{\tilde{Q}}\cap \Gamma_R\neq \varnothing$.
\end{lemma}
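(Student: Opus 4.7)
The plan is to realise $\tilde Q$ as a suitable doubling ancestor of $Q$ lying in ${\sf DbTree}(R)$, and then to bound $\delta_\mu(Q,\tilde Q)$ via a dyadic annular decomposition in which the potentially large scale ratio $r(\tilde Q)/r(Q)$ is absorbed by the telescoping density estimate of Lemma~\ref{paper3-lemcad23} for non-doubling chains.

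First, I would construct $\tilde Q$ as follows. If $Q\in\HD(R)\cup{\sf BS}(R)$, then $Q\in{\sf Stop}(R)\subset{\sf Tree}(R)$, so Lemma~\ref{paper3-lemdob32} applied to $Q$ itself produces $\tilde Q\in{\sf DbTree}(R)$ with $\tilde Q\supset Q$ and $\diam(\tilde Q)\le\lambda(A,\tau)\diam(Q)$. If $Q\in\widetilde{\UB}(R)\cup\widetilde{\mathsf{O}}(R)$, let $P$ be the (unique) cube of ${\sf Stop}(R)$ strictly containing $Q$; since $P\in{\sf Tree}(R)$, Lemma~\ref{paper3-lemdob32} applied to $P$ yields $\tilde Q\in{\sf DbTree}(R)$ with $\tilde Q\supset P$ and $\diam(\tilde Q)\le\lambda(A,\tau)\diam(P)$. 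If needed, I would then replace $\tilde Q$ by an ancestor boundedly many doubling generations higher (still in ${\sf DbTree}(R)$) so that the constant appearing in the second step below is $\le 2$.

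Second, for the intersection claim, I take any $x\in\tilde Q\subset B_0$. Since $\tilde Q\in{\sf DbTree}(R)$ contains $x$, the definition~(\ref{paper3-def_d}) of $d$ gives $d(x)\le\diam(\tilde Q)\lesssim r(\tilde Q)$, and then Lemma~\ref{paper3-lemma_gamma_R_1} yields $\dist(x,\Gamma_R)\le{\sf c_3}(\tau,A)\,d(x)\lesssim_{\tau,A} r(\tilde Q)$. After the enlargement indicated above, $\dist(x,\Gamma_R)<2r(\tilde Q)$, so $\Gamma_R\cap 2B_{\tilde Q}\neq\varnothing$.

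Third, I would estimate $\delta_\mu(Q,\tilde Q)$ by decomposing $2B_{\tilde Q}\setminus 2B_Q$ into the dyadic annuli $A_k=B(z_Q,2^{k+1}r(Q))\setminus B(z_Q,2^{k}r(Q))$, $k\ge 1$. Since the integrand on $A_k$ is at most $2^{-k}r(Q)^{-1}$, one gets
$$\delta_\mu(Q,\tilde Q)\lesssim\sum_{k\ge 1}\Theta_\mu\bigl(B(z_Q,2^{k+1}r(Q))\bigr).$$
For each relevant $k$, choose the smallest $S_k\in\mathcal D$ with $z_Q\in S_k$ and $r(S_k)\ge 2^{k+1}r(Q)$; then $B(z_Q,2^{k+1}r(Q))\subset 2B_{S_k}$ and $r(S_k)\approx 2^{k+1}r(Q)$, so $\Theta_\mu(B(z_Q,2^{k+1}r(Q)))\lesssim\Theta_\mu(100B(S_k))$. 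Thus it suffices to bound $\sum_{S\in\mathcal D:\,Q\subset S\subset\tilde Q}\Theta_\mu(100B(S))$.

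Here I group the ancestors of $Q$ below $\tilde Q$ by their consecutive doubling members: write $Q=Q^{(0)}\subsetneq Q^{(1)}\subsetneq\cdots\subsetneq Q^{(\ell)}=\tilde Q$ for the chain of doubling ancestors of $Q$ below $\tilde Q$, with every strictly intermediate cube non-doubling. The second inequality of Lemma~\ref{paper3-lemcad23} applied on each segment yields
$$\sum_{S:\,Q^{(j-1)}\subset S\subset Q^{(j)}}\Theta_\mu(100B(S))\lesssim\Theta_\mu(100B(Q^{(j)})),$$
and, since $Q^{(j)}\in\mathcal D^{db}$, the doubling property~(\ref{paper3-doubling_property}) together with~(\ref{paper3-eqdens***}) (valid because $Q\in\nex(R)$ and $Q\subset Q^{(j)}\subset R$) gives $\Theta_\mu(100B(Q^{(j)}))\lesssim_{\tau,A}\Theta_\mu(2B_R)$. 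The number $\ell$ of doubling members in the chain is $\lesssim_{\tau,A}1$: only $Q$ itself, at most a bounded number of doubling cubes between $Q$ and the stopping cube $P$ (controlled by the constructions of $\widetilde{\UB}(R)$ and $\widetilde{\mathsf{O}}(R)$, which restrict the number of intermediate generations) and boundedly many generations above $P$ up to $\tilde Q$ (from Lemma~\ref{paper3-lemdob32}). Summing over $j$ yields the desired bound $\delta_\mu(Q,\tilde Q)\lesssim_{\tau,A}\Theta_\mu(2B_R)$.

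The main obstacle is the case $Q\in\widetilde{\mathsf{O}}(R)$, in which the scale ratio $r(\tilde Q)/r(Q)$ can be arbitrarily large: the naive annular sum would yield only a bound of size $\log(r(\tilde Q)/r(Q))\,\Theta_\mu(2B_R)$. The decisive feature that rescues the argument is the non-doubling nature of every intermediate ancestor between $Q$ and its first doubling ancestor, which by Lemma~\ref{paper3-lemcad22} makes the densities $\Theta_\mu(100B(S))$ form a geometric sequence, whence Lemma~\ref{paper3-lemcad23} telescopes their sum into a single density that is controlled by $\Theta_\mu(2B_R)$.
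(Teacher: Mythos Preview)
Your argument is correct and shares with the paper the same two key ingredients: the cube $\tilde Q$ is obtained by applying Lemma~\ref{paper3-lemdob32} to the stopping cube $Q'\in{\sf Stop}(R)$ containing $Q$, and the potentially long non-doubling chain between $Q$ and its first doubling ancestor is controlled via Lemma~\ref{paper3-lemcad23}. The paper, however, organises the $\delta_\mu$ estimate more efficiently: instead of a full annular decomposition followed by a count of doubling members in the chain, it simply splits the integral at the intermediate scale $2B_{Q'}$. On $2B_{\tilde Q}\setminus 2B_{Q'}$ the integrand is bounded below by $\sim 1/r(Q')\approx_{\tau,A}1/r(\tilde Q)$, giving $\lesssim\Theta_\mu(2B_{\tilde Q})$ directly; on $2B_{Q'}\setminus 2B_Q$ one invokes Lemma~\ref{paper3-lemcad23} once for the (essentially) non-doubling chain from $Q$ to $Q'$. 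This avoids your case-by-case bound on the number $\ell$ of doubling members---a bound that is correct, but whose justification ``restrict the number of intermediate generations'' is only literally true for $\widetilde{\UB}(R)$; for $\widetilde{\mathsf{O}}(R)$ the number of generations between $Q$ and $Q'$ can be arbitrarily large, and what is actually bounded is the number of \emph{doubling} ones (namely zero, by the $\mathcal{MD}$ construction).

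For the intersection $2B_{\tilde Q}\cap\Gamma_R\neq\varnothing$, the paper uses the sharper Lemma~\ref{paper3-lemma_gamma_R_3} (with constant $\lesssim\sqrt[4]{\varepsilon_0}$) applied to some $x\in\tilde Q\setminus R_{\sf Far}$, which makes the enlargement step unnecessary. Your route through Lemma~\ref{paper3-lemma_gamma_R_1} works, but the enlargement should be handled by first passing to an ancestor of $\tilde Q$ of the desired size inside ${\sf Tree}(R)$ and then applying Lemma~\ref{paper3-lemdob32} once more to land in ${\sf DbTree}(R)$; ``boundedly many doubling generations higher'' is not quite the right phrasing since the next doubling ancestor need not be close in scale.
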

\begin{proof}
Take $Q'\supset Q$ such that $Q'\in {\sf Stop}(R)$. By Lemma~\ref{paper3-lemdob32}, there exists $\tilde{Q}\in {\sf DbTree}(R)$ such that $Q'\subset \tilde{Q}$ and $r(Q')\approx_{\tau,A}r(\tilde{Q})$. Moreover, one can easily deduce from Lemma~\ref{paper3-lemma_gamma_R_3} that $2B_{\tilde{Q}}\cap \Gamma_R\neq \varnothing$ if $\varepsilon_0$ is small enough (since $\tilde{Q}\in {\sf DbTree}(R)$, there is $x\in \tilde{Q}\setminus R_{\sf Far}$).

Furthermore, split
$$
\delta_\mu(Q,\tilde{Q})=\int_{2B_{\tilde{Q}}\setminus 2B_{Q'}}\frac{1}{|y-z_Q|}d\mu(y)+\int_{2B_{Q'}\setminus 2B_{Q}}\frac{1}{|y-z_Q|}d\mu(y).
$$
In the first integral we have $|y-z_Q|\gtrsim r(Q') \approx_{\tau,A} r(\tilde{Q})$ as $y\notin  2B_{Q'}$ and therefore
$$
\int_{2B_{\tilde{Q}}\setminus 2B_{Q'}}\frac{1}{|y-z_Q|}d\mu(y)\lesssim_{\tau,A}\Theta_\mu(2B_{\tilde{Q}})\lesssim_{\tau,A} \Theta_\mu(2B_R),
$$
where we also used the right hand side inequality in (\ref{paper3-theta_Q_Tree}) in Lemma~\ref{paper3-DbTree_properties}.
To estimate the second integral we take into account that by construction there are no doubling cubes strictly between $Q$ and $Q'$. This together with Lemma~\ref{paper3-lemcad23} and  properties of $Q'$ and $\tilde{Q}$ imply by standard estimates (in particular, splitting the domain of integration into annuli with respect to the intermediate cubes between $Q$ and $Q'$) that
$$
\int\limits_{2B_{Q'}\setminus 2B_{Q}}\frac{1}{|y-z_Q|}d\mu(y)\lesssim \Theta_\mu(100B(Q'))\lesssim_{\tau,A}\Theta_\mu(2B_{\tilde{Q}}) \lesssim_{\tau,A} \Theta_\mu(2B_R).
$$

Thus we are done.
\end{proof}

Lemma~\ref{paper3-MainLemma}, Lemma~\ref{paper3-one_more_lemma} and the property (\ref{paper3-eqdens***}) allow us to use arguments as in \cite[Lemma 17.6]{Tolsa-delta} in order to show that if $\mu(B(x,r))\le C_* r$ for all
$x\in\mathbb{C}$, then
$$
c^2(\mu) \lesssim  \sum_{R\in {\sf
Top}}\Theta_\mu(2B_R)^2\,\mu(R)
$$
for our family ${\sf Top}$. By combining this estimate and the identity (\ref{curvature_pointwise}) with Lemma~\ref{paper3-lemkey62} for fixed suitable
parameters from Section~\ref{paper3-sec_param},  we obtain
$$
p_\infty(\mu)\lesssim p_0(\mu)+{C_*^2}\mu(\mathbb{C})
$$
as wished.

\section{The case of curvature. The bi-Lipschitz invariance of the Cauchy transform}
\label{paper3-sec_pack_Tree}

Here we come back to the notion of curvature $c^2(\mu)$. Recall that
$p_\infty(\mu)=\tfrac{1}{4} c^2(\mu)$. 

It is easy to see that one can exchange $p_0$ for $c^2$ in the
stopping conditions. Then we can prove the following
analogue of Lemma~\ref{paper3-lemkey62} by the arguments used above.
\begin{lemma}
\label{paper3-lemkey2}
If the parameters and thresholds in Section~\ref{paper3-sec_param} are chosen properly, then
\begin{equation}\label{paper3-eqsum441_new}
\sum_{R\in{\sf Top}}\Theta_\mu(2B_R)^2\,\mu(R)\le
{\sf c_6}\,c^2(\mu)+c\,C_*^2\,\mu(\mathbb{C}),
\end{equation}
where ${\sf c_6}={\sf c_6}(\tau,A,\theta_0,\gamma,\varepsilon_0,\alpha,\delta)>0$ and $c>0$.
\end{lemma}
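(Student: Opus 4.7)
The plan is to run exactly the same corona construction of Sections~\ref{paper3-sec_stopping}--\ref{paper3-sec_pack_Top}, but with $p_0$ replaced everywhere by $p_\infty=\tfrac14 c^2$ in the stopping conditions. Concretely, I would redefine the ``big permutations'' class by
\begin{equation*}
\wt{\mathsf{perm}}(\tilde Q)^2 := \frac{p_\infty^{[\delta,\tilde Q]}(\mu\lfloor 2B_{\tilde Q},\mu\lfloor 2B_R,\mu\lfloor 2B_R)}{\Theta_\mu(2B_R)^2\,\mu(\tilde Q)},
\end{equation*}
and declare $Q\in\mathsf{BP}(R)$ when $\sum_{Q\subset\tilde Q\subset R}\wt{\mathsf{perm}}(\tilde Q)^2>\alpha^2$. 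The families $\HD(R)$, $\LD(R)$, ${\sf UB}(R)$, ${\sf BS}(R)$, ${\sf F}(R)$, and the derived objects ${\sf Tree}(R)$, ${\sf Stop}(R)$, ${\sf DbTree}(R)$, ${\sf Top}$, $\nex(R)$, etc., are kept unchanged.

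The key point is that pointwise non-negativity $p_\infty(x,y,z)=\tfrac14 c(x,y,z)^2\ge0$ makes every $p_0$-specific detour in the proof of Lemma~\ref{paper3-lemkey62} collapse into its ``curvature analogue''. First, Lemma~\ref{paper3-lemma_beta_perm_initial} simplifies: for any $\gamma$-balanced $Q$ one directly obtains
\begin{equation*}
\beta_{\mu,2}(2B_Q)^2\,\Theta_\mu(2B_Q) \lesssim_{\gamma}\,\frac{p_\infty^{[\delta,Q]}(\mu\lfloor 2B_Q)}{\mu(Q)},
\end{equation*}
without any recourse to the class ${\rm V}_{\sf Far}$ or Lemma~\ref{paper3-lemma_Chous_Prat}. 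Second, Lemma~\ref{paper3-lemma_R_far_2} becomes immediate since there is no need to invoke the factor ${\sf c_1}(\theta)$. Third, and most importantly, in Section~\ref{paper3-sec_small_BS} the entire distinction between $R\in{\sf T}_{VF}(\theta_0)$ and $R\notin{\sf T}_{VF}(\theta_0)$ disappears: Lemma~\ref{paper3-lemma_compar_p_0_p_infty} was only needed to convert $p_\infty$ on $\Gamma_R$ back into $p_0$, but here we work with $p_\infty$ directly and use \cite[Lemma 3.9]{Tolsa_book} in the form $\|F'\|_2^2\approx p_\infty(\mathcal H^1_{\Gamma_R})$ at the very end of the proof of Lemma~\ref{paper3-bs_lemma_1}. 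Consequently, ${\sf BS}(R)$ can be handled uniformly with $\theta(R)\equiv\theta_0$, giving
\begin{equation*}
\sum_{Q\in{\sf BS}(R)}\mu(Q)\le \tfrac13\sqrt{\tau}\,\mu(R)+\frac{c}{\alpha^2\Theta_\mu(2B_R)^2}\sum_{\tilde Q\in{\sf Tree}(R)} p_\infty^{[\delta,\tilde Q]}(\mu\lfloor 2B_{\tilde Q},\mu\lfloor 2B_R,\mu\lfloor 2B_R).
\end{equation*}
The construction of the Lipschitz graph $\Gamma_R$ in Section~\ref{section_LIP} is then carried out verbatim.

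Once these counterparts are in place, the packing argument of Lemma~\ref{paper3-lemkey62} transfers word for word, ultimately producing a bound of the form
\begin{equation*}
\sum_{R\in{\sf Top}}\Theta_\mu(2B_R)^2\,\mu(R)\lesssim c\,C_*^2\,\mu(\mathbb{C})+c(\tau,A,\delta,\alpha)\sum_{Q\in\mathcal{D}} p_\infty^{[\delta,Q]}(\mu\lfloor 2B_Q,\mu,\mu).
\end{equation*}
By finite superposition of the truncation domains $\{\delta r(Q)\le|z_1-z_2|\le\delta^{-1}r(Q)\}$ over dyadic $Q$, the right-hand double sum is controlled by $c(\delta)\,p_\infty(\mu)=\tfrac{c(\delta)}{4}c^2(\mu)$, yielding (\ref{paper3-eqsum441_new}).

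The only step requiring genuine care, and what I would expect to be the main obstacle, is verifying that every appeal to $p_0$-specific estimates in the long chain Lemmas~\ref{paper3-lemma_beta_perm_initial}--\ref{paper3-lemma_35} genuinely admits a $p_\infty$-analogue; this is essentially bookkeeping, but the most delicate check is the handling of ${\sf BS}(R)$ in Section~\ref{paper3-sec_small_BS}, since the whole machinery of Lemmas~\ref{paper3-bs_lemma_1}--\ref{paper3-lemma_35} was designed precisely to convert $c^2$ on $\Gamma_R$ into a controllable quantity involving $p_0(\mu)$. In the curvature setting this conversion is trivial: $p_\infty(\mathcal H^1_{\Gamma_R})\approx\|F'\|_2^2$ can be bounded directly by the permutations of $\mu$ on the tree cubes, precisely because $p_\infty\ge0$ everywhere.
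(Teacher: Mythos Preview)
Your proposal is correct and matches the first of the two approaches the paper sketches: exchange $p_0$ for $c^2$ in the stopping conditions and rerun the whole machinery, noting (as you do) that the ${\sf T}_{VF}(\theta_0)$ distinction in Section~\ref{paper3-sec_small_BS} evaporates because the conversion Lemma~\ref{paper3-lemma_compar_p_0_p_infty} is no longer needed.

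However, the paper also points out a far shorter route that you have overlooked: simply apply Lemma~\ref{paper3-lemkey62} as stated (keeping the original $p_0$-based ${\sf Top}$) and then use the pointwise inequality \eqref{paper2-Main_inequality}, namely $p_0(z_1,z_2,z_3)\le 2\,p_\infty(z_1,z_2,z_3)$, to conclude $p_0(\mu)\le 2\,p_\infty(\mu)=\tfrac12 c^2(\mu)$; this immediately yields \eqref{paper3-eqsum441_new} with ${\sf c}_6=\tfrac12{\sf c}_5$. So while your rerun-the-proof strategy is valid and gives useful structural insight (and is indeed needed later in Section~\ref{paper3-sec_pack_Tree} when the $c^2$-adapted ${\sf Tree}(R)$ is used), the bare inequality \eqref{paper3-eqsum441_new} follows in one line from what has already been established.
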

A more direct way to prove this is to use Lemma~\ref{paper3-lemkey62} and the inequality (\ref{paper2-Main_inequality}).

Now recall the following theorem from \cite{AT}:

\vspace{1mm}
{\it
If $\mu$ is a finite compactly supported measure such that
$\mu(B(x,r))\le r$ for all $x\in\mathbb{C}$ and $r>0$, then
\begin{equation}\label{eqFF9}
c^2(\mu) + \mu(\mathbb{C})\approx \iint_0^\infty
\beta_{\mu,2}(x,r)^2\,\Theta_\mu(x,r)\,\frac{dr}r\,d\mu(x) +
\mu(\mathbb{C}),
\end{equation}
where the implicit constants are absolute.
}\vspace{1mm}

Note that the part $\lesssim$ of \eqref{eqFF9} was proved in \cite{AT} by means of the
David-Mattila lattice and a corona type construction similar to the one we considered in this chapter. However, the part $\gtrsim$ was proved in  \cite{AT} by the corona decomposition of
\cite{Tolsa-bilip} that involved the usual dyadic lattice $\mathcal{D}(\mathbb{C})$, instead of the David-Mattila
lattice $\mathcal{D}$.

Using Lemma~\ref{paper3-lemkey2}  we can also prove the part $\gtrsim$ of \eqref{eqFF9}
using only the David-Mattila lattice and an associated corona type
construction and thus unify the approach with the proof of the part
$\lesssim$ in \cite{AT}. As predicted in \cite[Section
19]{Tolsa-delta}, this indeed simplifies some of the technical difficulties
arising from the lack of a well adapted dyadic lattice to the
measure $\mu$ in \cite{Tolsa-bilip}.

Clearly, we need  to show that
\begin{equation}
\label{supernew}
\iint_0^\infty
\beta_{\mu,2}(x,r)^2\,\Theta_\mu(x,r)\,\frac{dr}r\,d\mu(x)\lesssim
c^2(\mu) + \mu(\mathbb{C})
\end{equation}
or, equivalently, in a discrete form  that
\begin{equation}\label{paper3-eqff89}
\sum_{Q\in \mathcal{D}}
\beta_{\mu,2}(2B_Q)^2\,\Theta_\mu(2B_Q)\,\mu(Q)\lesssim
 c^2(\mu) + \mu(\mathbb{C}).
 \end{equation}
By the packing condition (\ref{paper3-eqsum441_new}) for $C_*=1$, to prove
(\ref{paper3-eqff89}) it suffices to show that for every $R\in{\sf Top}$ the following estimate holds true:
$$
S(R)=\sum_{Q\in \widetilde{{\sf Tree}}(R)} \beta_{\mu,2}(2B_Q)^2\,\Theta_\mu(2B_Q)\,\mu(Q) \lesssim
\Theta_\mu(2B_R)^2 \mu(R),
$$
where $\widetilde{{\sf Tree}}(R)$ contains cubes in $R$ not strictly
contained in $\widetilde{{\sf Stop}}(R)$. By ${\sf St}(R)$ we denote
cubes in ${\sf Stop}(R)$ not strictly contained in $\widetilde{{\sf
Stop}}(R)$. Obviously, $\beta_{\mu,2}(2B_Q)^2\le 4\Theta_\mu(2B_Q)$
for any $Q\in \widetilde{{\sf Tree}}(R)$ and therefore
$$
S(R)\le \sum_{Q\in {\sf Tree}(R)\setminus{\sf Stop}(R)} \beta_{\mu,2}(2B_Q)^2\,\Theta_\mu(2B_Q)\,\mu(Q) +\sum_{Q\in {\sf St}(R)} \,\Theta_\mu(2B_Q)^2\,\mu(Q).
$$
By Lemma~\ref{paper3-lemcad23}, the density of all intermediate cubes between $\widetilde{{\sf Stop}}(R)$ and ${\sf Stop}(R)$,
 i.e. cubes in ${\sf St}(R)$, is controlled by the density of cubes from ${\sf Stop}(R)$ so
 it can be shown that
$$
\sum_{P\in {\sf St}(R)} \,\Theta_\mu(2B_P)^2\,\mu(P)\lesssim
\sum_{Q\in {\sf Stop}(R)}\,\Theta_\mu(2B_Q)^2\,\mu(Q).
$$
Moreover,
\begin{align*}
&\sum_{Q\in {\sf Stop}(Q)}
\Theta_\mu(2B_Q)^2\,\mu(Q)\lesssim_{A}\Theta_\mu(2B_R)^2\sum_{Q\in
{\sf Stop}(R)} \mu(Q)\lesssim_{A} \Theta_\mu(2B_R)^2 \mu(R),
\end{align*}
as  cubes in ${\sf Stop}(R)$ are disjoint subsets of $R$.

What is more, arguments similar to those in Lemmas~\ref{paper3-lemma_beta_perm_initial}  and \ref{paper3-lemma_R_far_1} imply that
\begin{align*}
&\sum_{Q\in {\sf Tree}(R)\setminus{\sf Stop}(R)}
\beta_{\mu,2}(2B_Q)^2\,\Theta_\mu(2B_Q)\,\mu(Q)\\
&\qquad\qquad\qquad
 \lesssim_{\gamma} \Theta_\mu(2B_R)^2\sum_{Q\in
{\sf Tree}(R)\setminus{\sf Stop}(R)}
 \frac{c_{[\delta,Q]}^2(\mu\lfloor
2B_Q)}{\Theta_\mu(2B_R)^2} \\
&\qquad\qquad\qquad \lesssim_{\alpha,\gamma}
\Theta_\mu(2B_R)^2\mu(R).
\end{align*}

Thus, $S_R\lesssim_{\gamma,\alpha,A} \Theta_\mu(2B_R)^2\mu(R)$,
where $\gamma$, $\alpha$ and $A$ depend on other parameters and thresholds and are suitably chosen and fixed at the end.

\bigskip

The arguments above  also provide a new proof of the bi-Lipschitz invariance of the $L^2$-boundedness of the Cauchy transform, first proved in \cite{Tolsa-bilip}:\vspace{1mm}

{\it Let $\mu$ is a finite measure in the plane with linear growth and let $\wt{\mu}:=\varphi_{\#} \mu$ be the image measure of $\mu$ under a bi-Lipschitz map $\varphi$. If the Cauchy transform $C_\mu$ is $L^2(\mu)$-bounded, then the Cauchy transform $C_{\wt\mu}$ is also $L^2(\tilde{\mu})$-bounded.}

\vspace{1mm}

By an easy application of the $T1$ theorem, to prove this statement it suffices to show that for any finite measure $\mu$ with linear growth,
\begin{equation}\label{eqfin342}
c^2(\wt\mu) \lesssim
c^2(\mu)+\mu(\mathbb{C}),
\end{equation}
with the implicit constant depending only on the linear growth of $\mu$.

In Lemma \ref{paper3-lemkey2} we have shown that the measure $\mu$ has a corona decomposition
with a suitable packing condition. From this corona decomposition one can obtain an analogous one for $\wt\mu$. Indeed, consider
the lattice of the cubes 
$$\mathcal D' = \{\varphi(Q):Q\in\mathcal D\},$$
and set 
$${\sf Top}'= \{\varphi(Q):Q\in{\sf Top}\}.$$
The corona decomposition for $\wt\mu$  in terms of the family ${\sf Top}'$ satisfies the packing condition
$$\sum_{R'\in{\sf Top'}}\Theta_{\wt\mu}(2B_{R'})^2\,\wt\mu(R')\lesssim \sum_{R\in{\sf Top}}\Theta_\mu(2B_R)^2\,\mu(R)\lesssim
c^2(\mu)+\mu(\mathbb{C}).$$
Then arguing as in \cite[Main Lemma 8.1]{Tolsa-bilip}, one derives
$$c^2(\wt\mu) \lesssim
\sum_{R'\in{\sf Top'}}\Theta_{\wt\mu}(2B_{R'})^2\,\wt\mu(R') + \wt\mu(\mathbb{C})\lesssim
c^2(\mu)+\mu(\mathbb{C}),$$
which yields \eqref{eqfin342}.

\section{Acknowledgements}

The authors are very grateful to Tuomas Orponen for valuable comments on the first version of the paper.

P.C. and X.T. were supported by the ERC grant 320501 of the European Research Council (FP7/2007-2013). X.T. was also partially supported by MTM-2016-77635-P.
J.M. was supported by MTM2013-44699 (MINECO) and
MTM2016-75390 (MINECO). J.M. and X.T. were also supported 
by MDM-2014-044 (MICINN, Spain), Marie Curie ITN MAnET (FP7-607647) and 2017-SGR-395 (Generalitat de Catalunya).

\end{document}